\newcommand{\Z}{{\mathbb{Z}}}
\newcommand{\R}{{\mathbb{R}}}
\newcommand{\Q}{{\mathbb{Q}}}
\newcommand{\e}{\varepsilon}
\newcommand{\pushout}{\ar@{}[ul(0.35)]|-{\ulcorner}}
\newcommand{\pullback}{\ar@{}[dr(0.35)]|-{\lrcorner}}
\DeclareMathOperator{\Rep}{Rep}
\DeclareMathOperator{\rep}{rep}
\newcommand{\repAR}{\rep_k(A_\R)}
\newcommand{\hp}{\hat{p}}
\newcommand{\dR}{d_{\R^2}}
\newcommand{\dAR}{d_{A_\R}}
\newcommand{\deriv}{\frac{\text{d}}{\text{d}z}}
\newcommand{\ReppwfAR}{\Rep_k^{\rm{pwf}}(A_\R)}
\newcommand{\MM}{\mathop{\boldsymbol{\Gamma}}}
\newcommand{\DbAR}{{\mathcal{D}^b(A_\R)}}
\DeclareMathOperator{\Hom}{Hom}
\definecolor{purple}{RGB}{155,0,155}
\definecolor{gordanagreen}{RGB}{0,155,0}
\definecolor{readableyellow}{RGB}{200,200,0}
\newcommand{\coker}{\mathop{\text{coker}}}
\newcommand{\supp}{\mathop{\text{supp}}}
\newcommand{\Ext}{\mathop{\text{Ext}}}
\newtheorem{lemma}{Lemma}[subsection]
\newtheorem{proposition}[lemma]{Proposition}
\newtheorem{theorem}[lemma]{Theorem}
\newtheorem{cor}[lemma]{Corollary}
\newtheorem{thm}{Theorem}
\theoremstyle{definition}
\newtheorem{definition}[lemma]{Definition}
\newtheorem{remark}[lemma]{Remark}
\newtheorem{example}[lemma]{Example}
\newtheorem{notation}[lemma]{Notation}
\newtheorem{construction}[lemma]{Construction}
\title[Continuous Quivers of Type $A$ (II)]{Continuous Quivers of Type $A$ (II)\\The Auslander-Reiten Space}
\author{J.D.\ Rock}
\date{\today}
\begin{document}

\begin{abstract}
This work is the sequel to Continuous Quivers of Type A (I).
In this paper we define the Auslander-Reiten space of a continuous type $A$ quiver, which generalizes the Auslander-Reiten quiver of type $A_n$ quivers.
We prove that extensions, kernels, and cokernels of representations of type $A_\R$ can be described by lines and rectangles in a way analogous to representations of type $A_n$.
We provide a similar description for distinguished triangles in the bounded derived category whose first and third terms are indecomposable.
Furthermore, we provide a complete classification of Auslander-Reiten sequences in the category of finitely generated representations of $A_\R$.
This is part of a longer work; the other papers in this series are with Kiyoshi Igusa and Gordana Todorov.
The goal of this series is to generalize cluster categories, clusters, and mutation for type $A_n$ quivers to continuous versions for type $A_\R$ quivers.
\end{abstract}

\maketitle

\tableofcontents

\section*{Introduction}
\subsection*{History}
Auslander-Reiten sequences were introduced by Auslander and Reiten in \cite{ARSequences} with further study by the same authors in \cite{ARSequences2, ARSequences3}.
Named after these early works, Auslander-Reiten theory is still an active area of research to understand the structure of certain categories via its irreducible morphisms \cite{ARTheory1, ARTheory2, ARTheory3, ARTheory4, ARTheory5, ARTheory6, ARTheory7, ARTheory8, ARTheory9, ARTheory10}.
One particular tool is the Auslander-Reiten quiver, which has recently appeared in the study of Specht modules \cite{DanzErdmann2014}, equipped graphs \cite{CrawleyBoevey2018}, and higher homological algebra \cite{Jorgensen2018}.

Along with Igusa and Todorov, in the previous paper the author constructed continuous quivers of type $A$, denoted $A_\R$ \cite{IgusaRockTodorov2019}.
Basic results were proven about the category of point-wise finite representations ($\ReppwfAR$) and finitely generated representations ($\repAR$) over a field $k$.
In particular it was shown that all pointwise finite-dimensional representations decompose into a direct sum of indecomposable representations similar to those indecomposable representations of $A_n$.
This essentially recovers the result of Botnan and Crawley-Boevey in \cite{BotnanCrawley-Boevey}, though by a different technique.
The previous paper concluded with results about the category of finitely generated representations, denoted $\repAR$.
In particular, $\repAR$ is \emph{not} artinian.

\subsection*{Contributions}
We generalize the Auslander-Reiten quiver to the Auslander-Reiten space (Definition \ref{def:AR space}).
To do this we define the Auslander-Reiten topology and an extra generalized metric (Definitions \ref{def:AR topology} and \ref{def:nonstandard metric}) on the (isomorphism classes of) indecomposable representations using a mapping to $\R^2$ and irreducible morphisms.

The first result is the classification of Auslander-Reiten sequences in $\repAR$.
A complete list of 16 types of Auslander-Reiten sequences is provided in Table \ref{tab:AR sequence table}.
%
\begin{thm}[Corollary \ref{cor:unique AR sequence}]
Let $M_{|a,b|}$ be an indecomposable in $\repAR$ such that
\begin{itemize}
\item $M_{|a,b|}$ is not projective,
\item $M_{|a,b|}$ is not injective, and
\item $M_{|a,b|}$ is neither simple nor has support of the form $[s_n,s_{n+1}]$, where $s_n$ and $s_{n+1}$ are an adjacent sink and source.
\end{itemize}
Then, there exists a unique Auslander-Reiten sequence in $\repAR$ of one of the 16 types in Table \ref{tab:AR sequence table} containing $M_{|a,b|}$. That is, an Auslander-Reiten sequence $0\to U\hookrightarrow V\twoheadrightarrow W\to 0$ in $\repAR$ such that $M_{|a,b|}\cong U$, $M_{|a,b|}\cong W$, or there exists $M_{|c,d|}$ such that $V\cong M_{|a,b|}\oplus M_{|c,d|}$.

If $M_{|a,b|}$ does not satisfy the above conditions then it does not belong to any Auslander-Reiten sequence.
\end{thm}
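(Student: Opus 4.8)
The plan is to read the statement off the classification of Auslander-Reiten sequences that the paper has already established, namely the complete list of $16$ types in Table \ref{tab:AR sequence table}, combined with the two pieces of Auslander-Reiten rigidity that survive outside the artinian world: an Auslander-Reiten sequence is non-split with indecomposable end terms, and it is determined up to isomorphism by either its left term or its right term. Granting that \emph{every} Auslander-Reiten sequence of $\repAR$ occurs (up to isomorphism) among these $16$ types, the corollary splits into three tasks: (i) given $M_{|a,b|}$ satisfying the three hypotheses, locate it in the table; (ii) check that the type it occurs in and the position it occupies there are forced, which gives uniqueness; (iii) show that the three excluded families of indecomposables lie in no Auslander-Reiten sequence at all. (An equivalent route avoiding the table would be to build left and right almost split morphisms by hand, adding or deleting a single endpoint of $|a,b|$; but the resulting bookkeeping is precisely Table \ref{tab:AR sequence table}, so I would invoke it directly.)

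For (i), I would run a case analysis on the interval $|a,b|$: whether each endpoint is present or absent, whether $a$ and $b$ are finite, and where $a,b$ sit relative to the nearest sinks and sources of $A_\R$. The three hypotheses — not projective, not injective, not simple, and support not of the form $[s_n,s_{n+1}]$ with $s_n,s_{n+1}$ an adjacent sink–source pair — are exactly what is needed to place $|a,b|$ in one of the interval shapes appearing in Table \ref{tab:AR sequence table}, and the table then exhibits an Auslander-Reiten sequence containing $M_{|a,b|}$ together with the role it plays: roughly, a fully closed bounded interval supplies the left term, a fully open one the right term, and a half-open one a middle summand, with the analogous statements in the half-infinite and sink/source variants accounting for the remaining rows. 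Existence of each such sequence is precisely what the table and the lemmas proving it assert, so no further construction is required here.

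Uniqueness, task (ii), is where the continuous setting genuinely departs from type $A_n$. In $\repAR$ the open/closed pattern of the two endpoints of an indecomposable is an isomorphism invariant, and inspection of Table \ref{tab:AR sequence table} shows that this pattern, together with the positions of the surrounding sinks and sources, already determines which of the four roles — left term, right term, or one of the two distinct middle summands — the indecomposable can play, and in which of the $16$ types; once the type and role are fixed, the remaining three objects of the sequence are combinatorially determined. In particular, in contrast with the type $A_n$ mesh, a middle summand here is never also an end term of another sequence, because the interval shapes that would be needed to make it one (e.g. a ``doubly shrunk'' interval) do not arise in $\repAR$. Together with the standard uniqueness of the Auslander-Reiten sequence with a prescribed left or right term, this shows $M_{|a,b|}$ belongs to exactly one of the $16$ sequences.

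For (iii), if $M_{|a,b|}$ is projective it cannot be a right term; and since the left terms and middle summands appearing in Table \ref{tab:AR sequence table} have the restricted endpoint shapes just described, while the indecomposable projectives of $\repAR$ (whose interval shapes are known from the prequel and recalled here) do not, $M_{|a,b|}$ cannot occur in the table in any position, hence lies in no Auslander-Reiten sequence; the injective case is dual. The remaining two families — $M_{|a,b|}$ simple, or with support $[s_n,s_{n+1}]$ for an adjacent sink–source pair — are the real obstacle: here I must argue directly that neither a right almost split morphism onto $M_{|a,b|}$ nor a left almost split morphism out of it exists \emph{within} $\repAR$, because the morphism that the mesh would predict either forces a ``half-open at a sink or source'' interval, which is not an indecomposable of $\repAR$, or requires an infinite direct sum of indecomposables outside $\repAR$. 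Carrying this out — ruling out every conceivable such morphism, not merely the obvious candidate, and so every Auslander-Reiten sequence through these degenerate $M_{|a,b|}$ — is the step I expect to take the most effort.
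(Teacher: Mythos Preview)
Your approach is correct and matches the paper's: the corollary is read directly off Theorem~\ref{thm:AR sequence classification} (the classification) by inspecting Table~\ref{tab:AR sequence table}. The paper's own proof is four sentences and does exactly your (i)--(iii), all by ``careful observation'' of the table.

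You are overcomplicating (iii). Once you have granted the classification --- every Auslander--Reiten sequence in $\repAR$ is one of the 16 types --- the argument you correctly give for projectives and injectives (``does not appear in the table in any position, hence lies in no Auslander--Reiten sequence'') applies \emph{verbatim} to the simples and to $M_{[s_n,s_{n+1}]}$: they are visibly absent from all four positions in every row. There is no need to rule out left or right almost split morphisms by hand; that direct work is exactly what the proof of Theorem~\ref{thm:AR sequence classification} already carried out, and is the reason this statement is a corollary rather than a theorem. The step you flag as taking ``the most effort'' is free.
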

As a consequence, there can be no Auslander-Reiten translation in $\repAR$ that takes on the traditional properties.
See Remark \ref{rem:no AR translation} for more discussion.

The next contribution justifies the name ``Auslander-Reiten space.''
We show that rectangles and almost complete rectangles (Definitions \ref{def:rectangle} and \ref{def:almost complete rectangle}) are in one-to-one correspondence with nontrivial extensions of indecomposable representations.
The description of the extensions coincides with middle exact sequences in \cite[Section 5]{BotnanCrawley-Boevey}.

\begin{thm}[Theorem \ref{thm:extensions are rectangles}]
Let $V=M_{|a,b|}$ and $W=M_{|c,d|}$ be indecomposables in $\repAR$ such that $V\not\cong W$.
Then there is a nontrivial extension $V\hookrightarrow E\twoheadrightarrow W$ if and only if there exists a rectangle or almost complete rectangle whose corners are the indecomposables in the sequence with $V$ as the left-most corner and $W$ as the right-most corner.
\begin{itemize}
\item If the rectangle is complete $E$ is a direct sum of two indecomposables.
\item If the rectangle is almost complete $E$ is indecomposable.
\end{itemize}
Furthermore, there is a bijection
\begin{displaymath}\begin{tikzpicture}
\draw (0,0) node {$\{$rectangles and almost complete rectangles with ``good'' slopes of sides in AR-space of $\repAR\}$};
\draw [<->, thick] (0,-.2) -- (0,-1.2);
\draw (0,-.7) node[anchor=west] {$\cong$};
\draw (0,-1.4) node {$\{$nontrivial extensions of indecomposables by indecomposables up to scaling and isomorphisms$\}$};
\end{tikzpicture}\end{displaymath}
\end{thm}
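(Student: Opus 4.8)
The plan is to establish the geometric-algebraic dictionary in two directions, then package it as a bijection. First I would recall the classification of indecomposables of $\repAR$ from \cite{IgusaRockTodorov2019}: each is $M_{|a,b|}$ for an interval $|a,b|\subseteq\R$ with endpoint-inclusion data, and the point $p(M_{|a,b|})\in\R^2$ placing it in AR-space records (roughly) the midpoint and length of the support. I would first prove the ``only if'' direction: given a nontrivial extension $V\hookrightarrow E\twoheadrightarrow W$ with $V=M_{|a,b|}$, $W=M_{|c,d|}$ indecomposable, decompose $E$ into indecomposables and use the pointwise structure (each $E_x$ is at most two-dimensional since $V_x$, $W_x$ are each at most one-dimensional) to argue $E$ has at most two indecomposable summands. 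Analyzing the supports of those summands against the supports of $V$ and $W$ — exactly as in the middle-exact-sequence analysis of \cite[Section 5]{BotnanCrawley-Boevey} — forces the four intervals (two for $E$'s summands, or one for $E$ when it is indecomposable, together with those of $V$ and $W$) to be the corners of a rectangle or almost complete rectangle in the sense of Definitions \ref{def:rectangle} and \ref{def:almost complete rectangle}, with $V$ leftmost and $W$ rightmost; the ``good slopes'' condition is precisely the constraint that these four intervals are comparable in the nesting/overlap pattern that a genuine extension permits (and it rules out, e.g., the configurations excluded when $V\cong W$).

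For the converse I would start from a rectangle (resp.\ almost complete rectangle) with corners $V$, $W$ and the other two corners $X$, $Y$ (resp.\ the single middle term $E$), and construct the extension explicitly: define $E=X\oplus Y$ (resp.\ the prescribed indecomposable) with the evident structure maps coming from the interval inclusions, check exactness pointwise, and verify nontriviality by exhibiting a point where the short exact sequence of vector spaces does not split compatibly — equivalently, by computing that the class in $\Ext^1(W,V)$ is nonzero, which follows since a splitting would force one of the corner intervals to degenerate, contradicting that we have a genuine (almost complete) rectangle. The completeness-vs-almost-completeness dichotomy for $E$ then falls out of the construction. I expect this direction to be largely bookkeeping once the rectangle definitions are unwound.

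Finally, to get the stated bijection I would show the two constructions are mutually inverse up to the indicated equivalences. Scaling a nonzero extension class does not change $E$ up to isomorphism, so the right-hand side is genuinely ``extensions up to scaling and isomorphism,'' and each such class determines the supports of $V$, $E$, $W$ hence the rectangle; conversely a rectangle determines the extension up to exactly that data. The map is well-defined on equivalence classes because isomorphic indecomposables have the same image under $p$, and it is injective because the rectangle's four corners are recovered from the isomorphism types of $V$, $W$, and the summands of $E$. Surjectivity onto rectangles with good slopes is the ``if'' direction above. The main obstacle, I anticipate, is the first (``only if'') direction: bounding the number of indecomposable summands of $E$ and then ruling out every support configuration that is not a rectangle requires a careful case analysis of how the intervals $|a,b|$ and $|c,d|$ sit relative to the sinks and sources of $A_\R$ — this is where the ``good slopes'' hypothesis is forced, and where the comparison with \cite[Section 5]{BotnanCrawley-Boevey} does the real work. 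One subtlety to handle explicitly is the boundary behavior at endpoints (open vs.\ closed), since the inclusion data at $a,b,c,d$ affects whether a candidate rectangle is complete, almost complete, or not a valid extension at all.
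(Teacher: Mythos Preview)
Your outline is in the right spirit but organized differently from the paper, and it underestimates one key piece of work.

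The paper does not split on the number of summands of $E$ directly; it splits on whether $\Hom(V,W)\cong k$ or $\Hom(V,W)=0$. Two preparatory lemmas carry most of the weight: Lemma~\ref{lem:all double extensions} shows that when $\Hom(V,W)\cong k$ the (unique up to scaling) nontrivial extension has middle term $M_{|a,d|}\oplus M_{|c,b|}$ and that these four indecomposables form a nondegenerate rectangle; Lemma~\ref{lem:all single extensions} shows that when $\Hom(V,W)=0$ a nontrivial extension exists iff the supports abut ($|a,b|\cup|c,d|$ an interval, $|a,b|\cap|c,d|=\emptyset$), and then $E$ is indecomposable. The theorem proof then only needs to produce the almost complete rectangle in the second case and observe the bijection. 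Your plan to bound the summands of $E$ pointwise and then do a Botnan--Crawley-Boevey style support analysis would recover the same dichotomy, so that part is fine, just less streamlined.

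Where your proposal has a genuine gap is the translation from interval combinatorics to the AR-space geometry. The rectangles and almost complete rectangles of Definitions~\ref{def:rectangle} and~\ref{def:almost complete rectangle} are \emph{not} defined as interval configurations: they are collections of line segments in the AR-space, with the extra generalized metric $\dAR$, with slopes $\pm(1,1)$, and with the position data of Definition~\ref{def:position}. Your sentence ``forces the four intervals \ldots\ to be the corners of a rectangle'' hides exactly the content of Propositions~\ref{prop:diagonal endpoints} and~\ref{prop:diagonal endpoints converse} and Table~\ref{tab:slope table}, which say that sharing an endpoint (with matching inclusion) is equivalent to the connecting line segment having slope $\pm(1,1)$. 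More seriously, in the $\Hom(V,W)=0$ case the almost complete rectangle has only three genuine corners and an \emph{almost complete line segment} with a \emph{phantom endpoint} (Definition~\ref{def:almost complete line}, Proposition~\ref{prop:phantom endpoint existence}): the fourth ``corner'' is the simple $M_{\{b\}}$ or an $M_{[s_n,s_{n+1}]}$ sitting on $y=\pm\frac{\pi}{2}$, which does not appear in the extension at all. Your outline never mentions this, and ``check exactness pointwise'' will not produce it; you need the paper's argument that the line from $W$ along slope $(1,1)$ toward that boundary point fails to reach it inside the AR-space, with the phantom endpoint supplied by the adjacent line $l_3$. Finally, your description of $\MM$ as recording ``midpoint and length of the support'' is too coarse: $\MM$ depends on the orientation via the $\lambda$ functions (Definition~\ref{def:lambda functions}), and the open/closed endpoint data is encoded not in $\MM$ but in the position (Definition~\ref{def:position}), which is why the slope bookkeeping matters.
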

\noindent The ``good'' slopes in the theorem above are defined in Section \ref{sec:AR space} (Definition \ref{def:slope}).

The final contribution extends Theorem \ref{thm:extensions are rectangles} to the bounded derived category: $\DbAR$.
In a triangulated category, we consider a distinguished triangle to be distinct from its rotations for the purposes of the statement of the theorem.
\begin{thm}[Theorem \ref{thm:triangles are rectangles}]
Let $V=M_{|a,b|}[m]$ and $W=M_{|c,d|}[n]$ be indecomposables in $\DbAR$ such that $V\not\cong W$.
Then there is a nontrivial distinguished triangle $V\to U\to W\to$ if and only if there exists a rectangle or almost complete rectangle in the AR-space of $\DbAR$ whose corners are the indecomposables in the triangle with $V$ as the left-most corner and $W$ as the right-most corner.

\begin{itemize}
\item If the rectangle is complete $E$ is a direct sum of two indecomposables.
\item If the rectangle is almost complete $E$ is indecomposable.
\end{itemize}

Furthermore, there is a bijection
\begin{displaymath}\begin{tikzpicture}
\draw (0,0) node {$\{$rectangles and almost complete rectangles with ``good'' slopes of sides in AR-space of $\DbAR\}$};
\draw [<->, thick] (0,-.2) -- (0,-1.2);
\draw (0,-.7) node[anchor=west] {$\cong$};
\draw (0,-1.4) node {$\{$nontrivial triangles with first and third term indecomposable up to scaling and isomorphisms$\}$};
\end{tikzpicture}\end{displaymath}
\end{thm}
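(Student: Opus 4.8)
The plan is to reduce this statement to Theorem~\ref{thm:extensions are rectangles} and to the description of kernels and cokernels of maps between indecomposables, by sorting the nontrivial distinguished triangles $V\to U\to W\to V[1]$ according to the relative homological shift of their outer terms. First I would use the structure of $\DbAR$: every indecomposable object is a shift $M_{|a,b|}[m]$ of an indecomposable representation, and for indecomposable representations $X,Y$ one has $\Hom_{\DbAR}(X[i],Y[j])=0$ unless $j-i\in\{0,1\}$, in which case it is $\Hom_{A_\R}(X,Y)$ or $\Ext^1_{A_\R}(X,Y)$ respectively, each at most one dimensional by the classification of maps and extensions between interval representations. Applying this to the connecting morphism, an element of $\Hom_{\DbAR}(M_{|c,d|}[n],M_{|a,b|}[m+1])$, shows that a nontrivial triangle with $V=M_{|a,b|}[m]$ and $W=M_{|c,d|}[n]$ can exist only when $n=m$ or $n=m+1$, and that such a triangle is determined up to isomorphism by the connecting morphism, which the rectangle will pin down only up to the scalar action of $k^\times$ on a one-dimensional space; this accounts for the ``up to scaling and isomorphisms'' in the statement.

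Case $n=m$. Here the triangle is the image under $[m]$ of a triangle $M_{|a,b|}\to U'\to M_{|c,d|}\to M_{|a,b|}[1]$ arising from a nonzero class in $\Ext^1_{A_\R}(M_{|c,d|},M_{|a,b|})$, hence from a nontrivial short exact sequence $0\to M_{|a,b|}\to U'\to M_{|c,d|}\to 0$ in $\repAR$ with $U=U'[m]$; conversely every such short exact sequence yields such a triangle. Theorem~\ref{thm:extensions are rectangles} then supplies the rectangle or almost complete rectangle with corners $V$, $W$ and the indecomposable summand(s) of $U$, lying entirely inside the copy of the AR-space of $\repAR$ at shift $m$ in the AR-space of $\DbAR$, together with the dichotomy: complete rectangle exactly when $U$ splits as a sum of two indecomposables, almost complete exactly when $U$ is indecomposable. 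The asserted bijection on this part of the picture is the shift by $[m]$ of the one in Theorem~\ref{thm:extensions are rectangles}.

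Case $n=m+1$. Now the connecting morphism is a nonzero $g\in\Hom_{A_\R}(M_{|c,d|},M_{|a,b|})$, and rotating the triangle identifies $U$ with $\mathrm{cone}(g)[m]$; since the ambient category is hereditary, $\mathrm{cone}(g)\cong\ker(g)[1]\oplus\coker(g)$, so $U\cong\ker(g)[m+1]\oplus\coker(g)[m]$. I would then invoke the ``lines and rectangles'' description of kernels and cokernels of maps between interval representations (Section~\ref{sec:AR space}, matching Section~5 of \cite{BotnanCrawley-Boevey}) to write $\ker(g)$ and $\coker(g)$ explicitly from the endpoints of the intervals and the orientation, and check two things: (i) $U$ has at most two indecomposable summands — one verifies from the orientation constraints that a kernel or cokernel of a map between intervals may split into two intervals but that $\ker(g)$ and $\coker(g)$ cannot both be nonzero-and-split, and that when one of them splits the other vanishes; and (ii) the resulting objects $V=M_{|a,b|}[m]$, the summand(s) of $U$, and $W=M_{|c,d|}[m+1]$ are the corners of a rectangle or almost complete rectangle with ``good'' slopes (Definition~\ref{def:slope}), straddling the two adjacent copies of the AR-space of $\repAR$ at shifts $m$ and $m+1$, with $U$ indecomposable exactly in the almost complete case and a sum of two indecomposables in the complete case. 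Conversely, a rectangle with good slopes straddling shifts $m$ and $m+1$ recovers the pair $(M_{|c,d|},M_{|a,b|})$ and the one-dimensional $\Hom$-space, hence $g$ up to scalar, hence the triangle.

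Finally I would assemble the two cases: by the definition of the AR-space of $\DbAR$ and of its admissible (``good'') slopes, every rectangle or almost complete rectangle with good slopes either lies in a single copy of the AR-space of $\repAR$ or straddles two adjacent copies, so Cases $n=m$ and $n=m+1$ are exhaustive; checking that the two constructions are mutually inverse and compatible with the $k^\times$-scaling-and-isomorphism equivalence then yields the bijection, and the two bullet points follow from the corresponding ones in each case. The main obstacle is the bookkeeping in Case $n=m+1$: verifying that the mapping cone of an arbitrary nonzero morphism between interval representations really lays out as a rectangle with the prescribed good slopes across the gluing of two copies of the AR-space, and doing this uniformly over all the degenerate configurations — endpoints at $\pm\infty$, projective and injective indecomposables, and the configurations near simple and adjacent-sink-source supports — which are precisely where the AR-space and its metric behave least like the classical $A_n$ quiver.
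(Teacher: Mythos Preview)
Your approach is essentially the same as the paper's: both split into the cases $n=m$ and $n=m+1$, reduce the first case to Theorem~\ref{thm:extensions are rectangles} by shifting, and handle the second via the hereditary decomposition $\mathrm{cone}(g)\cong\ker(g)[1]\oplus\coker(g)$ together with the geometric description of kernels and cokernels (Proposition~\ref{prop:geometrickernelsandcokernels}). One small correction: your worry in (i) about a kernel or cokernel of a nonzero map between interval indecomposables \emph{splitting} is unfounded---each of $\ker(g)$ and $\coker(g)$ is itself an interval indecomposable or zero, so $U$ automatically has at most two indecomposable summands; the paper simply invokes Proposition~\ref{prop:geometrickernelsandcokernels} for this without the extra case analysis you anticipate.
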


It should be noted that the bijections in Theorems B and C are each a \emph{pair} of bijections.
Almost complete rectangles  correspond to indecomposable extensions or middle terms in a triangle.
Complete rectangles correspond to extensions or middle terms in a triangle with two indecomposable summands.

\subsection*{Future Work}
In the forthcoming Continuous Quivers of Type $A$ (III), the author, along with Kiyoshi Igusa and Gordana Todorov classify which continuous type $A$ quivers are derived equivalent.
The Auslander-Reiten space of the bounded derived category of $\repAR$ is an essential tool to the proof.
Further, they will define a generalization of the continuous cluster category constructed by Igusa and Todorov in \cite{IgusaTodorov2015}.
The new category will come with a continuous generalization of clusters.

Future work in this series will also include a continuous generalization of mutation to handle the new cluster-like objects.
The continuous generalizations allow for the embedding of existing discrete structures: cluster categories of type $A_n$ \cite{BMRRT, CalderoChapatonSchiffler2006} and of type $A_\infty$ \cite{HolmJorgensen2012}.
The continuous mutation is in a rigorous sense compatible with existing mutation.
Ordinary mutation and even transfinite mutation \cite{BaurGratz2018} commutes with these embeddings in a well-defined way.

\subsection*{Acknowledgements}
The author would like to thank Kiyoshi Igusa and Gordana Todorov for their guidance and for their work on the other papers in this series.
They would like to thank Ralf Schiffler for organizing the school on cluster algebras where the author, Kiyoshi Igusa, and Gordana Todorov, first thought of this series of papers.
They would like to thank Eric Hanson and Shijie Zhu for helpful discussions.
Finally, they would like to thank Magnus Bakke Botnan and Bill Crawley-Boevey for references to related work.

\section{The Category $\repAR$}
Fix a field $k$.
In this section we recall the definitions and theorems from \cite{IgusaRockTodorov2019} that we need for the rest of the paper.
\subsection{Continuous Quivers of Type $A$}
The first necessary definition is that of a continuous quiver of type $A$. We include a picture to give intuition followed by the definition from \cite{IgusaRockTodorov2019}. Afterwards we succinctly define a representation.
\begin{displaymath}\begin{tikzpicture}
\draw[thick, dotted] (-3,-2) -- (-2,-1);
\draw[thick, dotted] (9,0) -- (10,-1);
\draw[thick] (-2,-1) -- (-1,0) -- (0,-1) -- (4,3) -- (6,1) -- (7,2) -- (9,0);
\filldraw[fill=black](-1,0) circle[radius=.6mm];
\filldraw[fill=black](0,-1) circle[radius=.6mm];
\filldraw[fill=black](4,3) circle[radius=.6mm];
\filldraw[fill=black](6,1) circle[radius=.6mm];
\filldraw[fill=black](7,2) circle[radius=.6mm];
\draw[->, thick] (-1,0) -- (-1.5,-0.5);
\draw[->, thick] (-1,0) -- (-.5,-0.5);
\draw[->, thick] (4,3) -- (2,1);
\draw[->, thick] (4,3) -- (5,2);
\draw[->, thick] (7,2) -- (6.5,1.5);
\draw[->, thick] (7,2) -- (8,1);
\draw(0,-1) node[anchor=north] {$s_{2n}$};
\draw(4,3) node[anchor=south] {$s_{2n+1}$};
\end{tikzpicture}\end{displaymath}

\begin{definition}\label{def:AR}A \underline{quiver of continuous type $A$}, denoted by $A_\R$, is a triple $(\R,S,\preceq)$, where:
\begin{enumerate}
\item 
\begin{enumerate}
\item$S\subset \R$ is a discrete subset, possibly empty, with no accumulation points.
\item Order on $S\cup\{\pm\infty\}$ is induced by the order of $\R$, and $-\infty<s<+\infty$ for $\forall s\in S$.
\item Elements of $S\cup\{\pm\infty\}$ are indexed by a subset of $\Z\cup\{\pm\infty\}$ so that $s_n$ denotes the element of 
$S\cup\{\pm\infty\}$ with index $n$. The indexing must adhere to the following two conditions:
\begin{itemize}
\item[i1] There exists $s_0\in S\cup\{\pm\infty\}$.
\item[i2] If $m\leq n\in\Z\cup\{\pm\infty\}$ and $s_m,s_n\in S\cup\{\pm\infty\}$ then for all $p\in\Z\cup\{\pm\infty\}$ such that $m\leq p \leq n$ the element $s_p$ is in $S\cup\{\pm\infty\}$.
\end{itemize}
\end{enumerate}
\item New partial order $\preceq$ on $\R$, which we call  the \underline{orientation} of $A_\R$, is defined as:
\begin{itemize}
\item[p1\ ] The $\preceq$ order between consecutive elements of $S\cup\{\pm\infty\}$ does not change.
\item[p2\ ] Order reverses at each element of $S$.
\item[p3\ ] If $n$ is even $s_n$ is a sink.
\item[p3'] If $n$ is odd $s_n$ is a source.
\end{itemize}
\end{enumerate}
\end{definition}

\begin{definition}\label{def:representation}
Let $A_\R=(\R,S\preceq)$ be a continuous quiver of type $A$.
A \underline{representation} $V$ of $A_\R$ is the following data:
\begin{itemize}
\item A vector space $V(x)$ for all $x\in \R$.
\item For every pair $y\preceq x$ in $A_\R$ a linear map $V(x,y):V(x)\to V(y)$ such that if $z\preceq y \preceq x$ then $V(x,z)=V(y,z)\circ V(x,y)$.
\end{itemize}
We say $V$ is \underline{pointwise finite-dimensional} if $\dim V(x) < \infty$ for all $x\in\R$.
\end{definition}

We also need the definition of an interval indecomposable representation.

\begin{definition}\label{def:indecomposable}
For any interval $I$ in $\R$ let $M_I$ be the representation of $A_\R$ given as follows.
\begin{align*}
M_I(x) &= \left\{\begin{array}{ll} k & x\in I
 \\ 0 & \text{otherwise} \end{array}
 \right. 
 & 
M_I(x,y) &= \left\{\begin{array}{ll} 1_k & y\preceq x \text{ and }x,y\in I \\ 0 & \text{otherwise} \end{array}\right.
\end{align*} 
If $V\cong M_I$ we call $V$ an \underline{interval indecomposable} or \underline{interval indecomposable representation}.
\end{definition}

Theorem 2.4.13 in \cite{IgusaRockTodorov2019} (essentially \cite[Corollary 5.9]{BotnanCrawley-Boevey}), states that all pointwise finite-dimensional representations are direct sums of interval indecomposables and indecomposables themselves must interval indecomposables.

\begin{notation}\label{note:indefinite intervals}
Throughout this paper, it will often be useful to refer to an interval without knowing which endpoints are included.
We use the notation $|a,b|$ to mean any of the four valid possibilities, depending on whether or not $a$ or $b$ is $-\infty$ or $+\infty$, respectively.
The vertical bar $|$ can be thought of as an indication that the inclusion of that end point is indeterminate or inconsequential.
\end{notation}

\subsection{Projectives and Finitely Generated Represntations}
In the previous paper all projective indecomposables in the category of pointwise finite-dimensional representations were classified.
\begin{theorem}[Theorem 2.1.16 and Remark 2.4.14 from \cite{IgusaRockTodorov2019}]\label{thm:projectives}
The following is a complete classification of all indecomposable projectives in the category of pointwise finite-dimensional representations of $A_\R$. They come in three forms up to isomorphism:
\begin{enumerate}
\item $P_a$ given by
\begin{align*}
P_a(x) &= \left\{\begin{array}{ll} k & x\preceq a \\ 0 & \text{otherwise} \end{array}\right. & 
P_a(x,y) &= \left\{\begin{array}{ll} 1_k & y\preceq x \preceq a \\ 0 & \text{otherwise} \end{array}\right.
\end{align*} 
\item $P_{a)}$ given by
\begin{align*} P_{a)} &= \left\{ \begin{array}{ll}k & x\preceq a, x<a \\ 0 & \text{otherwise} \end{array}\right. & 
P_{a)}(x,y) &= \left\{\begin{array}{ll} 1_k & y\preceq x \preceq a, y\leq x < a \\ 0 & \text{otherwise} \end{array}\right. \end{align*}
\item $P_{(a}$ given by
\begin{align*} P_{(a} &= \left\{ \begin{array}{ll}k & x\preceq a, a<x \\ 0 & \text{otherwise} \end{array}\right. & 
P_{(a}(x,y) &= \left\{\begin{array}{ll} 1_k & y\preceq x \preceq a, a<x\leq y \\ 0 & \text{otherwise} \end{array}\right.\end{align*}
\end{enumerate} 
\end{theorem}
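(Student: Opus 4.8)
The plan is to prove the two inclusions separately: each module on the list is indecomposable and projective, and conversely every indecomposable projective occurs on the list. Indecomposability in both directions will be immediate from the cited classification, since each of $P_a$, $P_{a)}$, $P_{(a}$ has the form $M_I$ for an interval $I$ and, conversely, every indecomposable in $\ReppwfAR$ has this form; so the real work is projectivity and exhaustiveness.

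For projectivity I would identify the functor corepresented by each module. The map $\Hom(P_a,M)\to M(a)$, $f\mapsto f_a(1_k)$, is a natural isomorphism whose inverse sends $v\in M(a)$ to the unique morphism $P_a\to M$ with value $v$ at $a$, namely $y\mapsto(\lambda\mapsto\lambda M(a,y)(v))$ on $\{y\preceq a\}$; hence $\Hom(P_a,-)$ is evaluation at $a$, which is exact, so $P_a$ is projective. For $P_{a)}$ in the nonzero case---$a$ a source or a point interior to a leftward monotone run, where $\supp P_{a)}=[s,a)$ with $s$ the nearest sink below $a$, or $(-\infty,a)$ if there is none---the same computation yields $\Hom(P_{a)},M)\cong\varprojlim_{s<x<a}M(x)$, the inverse limit of the structure maps along the run; passing to a cofinal sequence $x_n\nearrow a$ rewrites this as $\varprojlim_n M(x_n)$. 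Since every $M(x_n)$ is finite-dimensional the tower satisfies Mittag--Leffler, so $M\mapsto\varprojlim_n M(x_n)$ is exact on $\ReppwfAR$ and $P_{a)}$ is projective there; $P_{(a}$ is handled symmetrically. This step is precisely where pointwise finite-dimensionality is used.

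For exhaustiveness, let $P\cong M_I$ be an indecomposable projective. The key step will be to exhibit a surjection onto $M_I$ from a direct sum of modules on the list. I would read off the ``peaks'' of the orientation of $A_\R$ restricted to $I$: the sources lying in $I$, the endpoints of $I$ that are $\preceq$-maximal in $I$, and the excluded endpoints of $I$ toward which the orientation ascends. A source or $\preceq$-maximal endpoint $c$ contributes $P_c$ (via $1\in M_I(c)=\Hom(P_c,M_I)$), and an ascending excluded endpoint $a$ contributes $P_{a)}$ or $P_{(a}$ according to the side of $a$ on which $I$ lies. Every point of $I$ lies $\preceq$-below one of the included peaks or in the support of one of these half-open projectives, so the resulting maps assemble into a surjection $q\colon\bigoplus_\alpha Q_\alpha\twoheadrightarrow M_I$ with each $Q_\alpha$ on the list, whose kernel is a sum of simples at the sinks of $I$ at which adjacent peaks meet. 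Since $M_I$ is projective, $q$ splits and $M_I$ is a direct summand of $\bigoplus_\alpha Q_\alpha$. Each $Q_\alpha$ is an interval indecomposable and hence has endomorphism ring $k$, so by Azumaya's theorem on decompositions into modules with local endomorphism rings, the indecomposable $M_I$ is isomorphic to some $Q_\alpha$; thus $I$ equals $\supp P_a$, $\supp P_{a)}$, or $\supp P_{(a}$, which is the assertion.

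The main obstacle I anticipate is in the exhaustiveness part: making the ``peak decomposition'' of an arbitrary interval precise---in particular handling the limit peaks at excluded endpoints and the semi-infinite runs that occur when $S$ is bounded above or below---and checking that the proposed kernels, together with the half-open projectives themselves, really map as claimed into the modules involved (so that the images of the various $Q_\alpha$ genuinely cover $M_I$). The Mittag--Leffler input underlying projectivity of $P_{a)}$ and $P_{(a}$ is a secondary point requiring care, since inverse limits are only left exact in general and exactness here depends on finite-dimensionality.
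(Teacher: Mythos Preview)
This statement is not proved in the present paper; it is recalled from the previous paper \cite{IgusaRockTodorov2019} (Theorem 2.1.16 and Remark 2.4.14 there) as background material in Section~1, so there is no proof here to compare against.

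That said, your argument is sound. The identification of $\Hom(P_{a)},-)$ with a sequential inverse limit of evaluations, together with Mittag--Leffler for towers of finite-dimensional spaces, is the right mechanism for projectivity and correctly isolates where pointwise finite-dimensionality enters. For exhaustiveness, covering $M_I$ by the peak projectives, splitting, and applying Azumaya works. One minor correction: your description of $\ker q$ as a sum of simples at interior sinks is not quite complete---when an included endpoint $c$ of $I$ is a source, $\supp P_c$ extends past $I$ and the kernel acquires an overhang interval as well. This is harmless, since your use of Azumaya only requires that each $Q_\alpha$ have local endomorphism ring, not any structural information about $\ker q$; the indecomposable summand $M_I$ of $\bigoplus_\alpha Q_\alpha$ is then isomorphic to some $Q_\alpha$ directly.
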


\begin{definition}\label{def:little rep}
Let $A_\R$ be a continuous quiver of type $A$.
The category $\repAR$ is the category of finitely generated pointwise finite-dimensional representations.
That is, there exists a finite sum $P=\bigoplus_{i=1}^n P_i$ of indecomposable projectives in Theorem \ref{thm:projectives} and an epimorphism $P\twoheadrightarrow V$.
\end{definition}

In the previous paper it is also proved that $\repAR$ is abelian, Krull-Schmidt, and each object $V$ is a finite direct sum of interval indecomposables (\cite{IgusaRockTodorov2019}, Theorem 3.0.1).
However, $\repAR$ is not artinian.

\section{The Mapping $\MM:\text{Ind}(\repAR) \to \R\times[-\frac{\pi}{2},\frac{\pi}{2}]$}\label{sec:Mapping}
In this section we define a function $\MM$ from the (isomorphism classes of) indecomposables of $\repAR$ to $\R^2$ in order to define the Auslander-Reiten space in Section \ref{sec:AR space} (Definition \ref{def:AR space}).
We extend $\tan^{-1}$ in the obvious way, $\R\cup\{\pm\infty\}\to [-\frac{\pi}{2},\frac{\pi}{2}]$.
In each of the definitions, propositions, etc., we assume that we have chosen a particular continuous quiver of type $A$.

\subsection{Projectives}
Recall that $S$ is the set of sinks and sources in $\R$ and $\bar{S}$ includes $\pm\infty$.
In this subsection we start defining our map $\MM$ by first defining it on projectives $P_s$ (see Theorem \ref{thm:projectives}) where $s\in \bar{S}$ is a sink or source.
We will then fill in the rest of the projective indecomposables.

\begin{definition}\label{def:M on projectives at s} 
If the indecomposable projective $P_{s_0}$ exists, map it to $(0,\tan^{-1} s_0 )$.
For any $s_n$ in $\bar{S}$ where $n\neq\pm\infty$, we want the slopes from $P_{s_n}$ to $P_{s_{n+1}}$ and to $P_{s_{n-1}}$ to be $\pm 1$.
In particular, we want these two slopes to be negatives of each other.
The idea is to ''wiggle'' away from the image of $P_{s_0}$ on slopes of $\pm 1$ so that the sinks sit behind their adjacent sources.
Here is one possible desired outcome:
\begin{displaymath}\begin{tikzpicture}
\draw[dotted] (-1,2) -- (1,2);
\draw[dotted] (-1,-2) -- (1,-2);
\draw[dotted] (0,2) -- (0,-2);
\draw[dashed](1,-2) --  (-0.25, -0.75) -- (0.25,-0.25) -- (0,0) -- (0.5,0.5) -- (-0.5,1.5) -- (-0.25,1.75) -- (-0.5,2);
\draw (-1,2) node [anchor=east] {$\frac{\pi}{2}$};
\draw (-1,-2) node [anchor=east] {$-\frac{\pi}{2}$};
\draw (0,-2) node [anchor=north] {$0$};
\filldraw (1,-2) circle[radius=0.6mm];
\draw (1,-2) node [anchor=north west] {$s_{-3}$};
\filldraw (-0.25,-0.75) circle[radius=0.6mm];
\draw (-0.25,-0.75) node[anchor=east] {$s_{-2}$};
\filldraw (0.25,-0.25) circle[radius=0.6mm];
\draw (0.25,-0.25) node[anchor=west] {$s_{-1}$};
\filldraw (0,0) circle[radius=0.6mm];
\draw (0,0) node[anchor=east] {$s_0$};
\filldraw (0.5,0.5) circle[radius=0.6mm];
\draw (0.5,0.5) node [anchor=west] {$s_1$};
\filldraw (-0.5,1.5) circle[radius=0.6mm];
\draw (-0.5,1.5) node[anchor=east] {$s_2$};
\filldraw (-0.25,1.75) circle[radius=0.6mm];
\draw (-0.25,1.75) node[anchor=west] {$s_3$};
\filldraw (-0.5,2) circle[radius=0.6mm];
\draw (-0.5,2) node [anchor=south east] {$s_4$};
\end{tikzpicture}\end{displaymath}

In general, we use one of two formulas, depending on whether or not $n$ is positive or negative:
\begin{align*} 
\text{if }n>0 & \\
P_n &\mapsto  \left( \sum_{j=1}^n (-1)^{j+1} (\tan^{-1} s_j - \tan^{-1} s_{j-1}) , \quad \tan^{-1}(s_n) \right) \\
\text{if }n<0 & \\
P_n &\mapsto \left( \sum_{j=1}^{-n} (-1)^j (\tan^{-1} s_{-j}  - \tan^{-1} s_{-j+1}), \quad \tan^{-1}(s_n) \right)
\end{align*}
Note that if $s_n=\pm\infty$ for some $n\in\Z$ then the formulae are using the completed $\tan^{-1}$.
\end{definition}

\begin{example}\label{xmp:M on s projectives slope example}
For example, let us examine the formulas for $s_3$ and $s_4$.
We see that the $x$-coordinates of $\MM P_{s_3}$ and $\MM P_{s_4}$ are, respectively:
\begin{align*}
(\tan^{-1} s_1 - \tan^{-1} s_0) - (\tan^{-1} s_2 - \tan^{-1} s_1) + (\tan^{-1} s_3 - \tan^{-1} s_2) & \\
(\tan^{-1} s_1 - \tan^{-1} s_0) - (\tan^{-1} s_2 - \tan^{-1} s_1) + (\tan^{-1} s_3 - \tan^{-1} s_2) & - (\tan^{-1} s_4 - \tan^{-1} s_3).
\end{align*}
When displayed this way, it is clear that the absolute difference in $x$-coordinates of $\MM P_{s_3}$ and $\MM P_{s_4}$ is the same as the absolute difference in their $y$-coordinates.
\end{example}

\begin{proposition}\label{prop:M is well-defined on s projectives} 
The formulae at the end of Definition \ref{def:M on projectives at s} are well-defined.
The slope between $\MM P_{s_n}$ and $\MM P_{s_{n+1}}$ is $\pm 1$.
In particular, if $s_n$ is a sink $\MM P_{s_n}$ is mapped to the left of the images for its adjacent sources.
\end{proposition}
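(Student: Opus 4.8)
The plan is to verify the three claims in Proposition \ref{prop:M is well-defined on s projectives} in turn, treating well-definedness, the slope assertion, and the sink-placement assertion as essentially bookkeeping around the two summation formulas in Definition \ref{def:M on projectives at s}.

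\medskip
\noindent\textbf{Well-definedness.} The only way the formulas could fail to be well-defined is if the index set of $\bar S$ is not all of $\Z\cup\{\pm\infty\}$, or if some $s_n=\pm\infty$; in the latter case we are instructed to use the completed $\tan^{-1}$, so each summand $\tan^{-1}s_j-\tan^{-1}s_{j-1}$ is a well-defined real number (differences of values in $[-\tfrac{\pi}{2},\tfrac{\pi}{2}]$), and the finite sum makes sense. If the index set of $\bar S$ is a proper sub-interval of $\Z\cup\{\pm\infty\}$, then by condition i2 it is an interval containing $0$ (by i1), so for every $n$ for which $s_n$ is defined, all intermediate $s_j$ with $0\le j\le n$ (or $n\le j\le 0$) are also defined; hence the summations only ever reference existing elements of $\bar S$. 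This is the step I would dispatch first and quickly, essentially by citing Definition \ref{def:AR}(1)(c).

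\medskip
\noindent\textbf{The slope assertion.} For $n\ge 1$, subtracting the $x$-coordinate formula for $P_{s_n}$ from that for $P_{s_{n+1}}$ telescopes to $(-1)^{n+2}(\tan^{-1}s_{n+1}-\tan^{-1}s_n)=(-1)^{n}(\tan^{-1}s_{n+1}-\tan^{-1}s_n)$, while the difference in $y$-coordinates is $\tan^{-1}s_{n+1}-\tan^{-1}s_n$; so the slope is $(-1)^n$, which is $\pm 1$. The analogous computation for $n\le -1$ using the second formula gives difference in $x$-coordinates equal to $(-1)^{-n}(\tan^{-1}s_{n+1}-\tan^{-1}s_n)$ (one must be careful with the sign conventions $(-1)^j$ versus $(-1)^{j+1}$ and the reversed summation range), again matching $\pm$ the $y$-difference. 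The one genuinely separate sub-case is the transition across $0$, i.e.\ the slopes from $P_{s_0}$ to $P_{s_1}$ and from $P_{s_0}$ to $P_{s_{-1}}$: here one checks directly from the $n=1$ and $n=-1$ instances of the formulas against $\MM P_{s_0}=(0,\tan^{-1}s_0)$ that the $x$-displacement is exactly $\pm(\tan^{-1}s_1-\tan^{-1}s_0)$ and $\pm(\tan^{-1}s_{-1}-\tan^{-1}s_0)$ respectively. Example \ref{xmp:M on s projectives slope example} already records the pattern, so I would present the telescoping cleanly once and note the $n\mapsto -n$ symmetry handles the negative side.

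\medskip
\noindent\textbf{Sink placement.} Now suppose $s_n$ is a sink, so $n$ is even (by p3), and consider its adjacent sources $s_{n-1},s_{n+1}$ (those that exist). From the slope computation, the $x$-coordinate of $\MM P_{s_{n+1}}$ minus that of $\MM P_{s_n}$ equals $(-1)^n(\tan^{-1}s_{n+1}-\tan^{-1}s_n)$; since $s_{n+1}>s_n$ as real numbers and $\tan^{-1}$ is increasing, this difference is $(-1)^n$ times a positive quantity, hence positive because $n$ is even. Similarly the $x$-coordinate of $\MM P_{s_{n-1}}$ minus that of $\MM P_{s_n}$ telescopes to $(-1)^{n-1}$ (i.e.\ $(-1)^{n+1}$) times $\tan^{-1}s_{n-1}-\tan^{-1}s_n$, and since $s_{n-1}<s_n$ the bracket is negative while $(-1)^{n+1}=-1$, so the product is again positive. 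Thus $\MM P_{s_n}$ lies strictly to the left of both $\MM P_{s_{n-1}}$ and $\MM P_{s_{n+1}}$, which is the claim. I expect the only place to be careful is the consistent handling of the two sign conventions in the two formulas and the reversed range of summation in the $n<0$ case — that is, matching up $(-1)^{j+1}$ in the positive formula with $(-1)^{j}$ in the negative formula so that the telescoping signs come out right at and near the index $0$; once that is pinned down, every assertion reduces to "positive times even power of $-1$ is positive."
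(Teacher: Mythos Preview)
Your proof is correct and follows essentially the same approach as the paper's own proof: telescoping the alternating sums to see that the $x$-displacement between consecutive $\MM P_{s_n}$ equals $\pm$ the $y$-displacement, then reading off the sign from the parity of $n$ to place sinks left of their adjacent sources. Your treatment is in fact more explicit and careful than the paper's brief argument, particularly in handling the transition across index $0$ and in tracking the sign conventions between the two summation formulas.
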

\begin{proof}
If $s_0=\pm\infty$ then $P_{s_1}$ and/or $P_{s_{-1}}$ are projective and the formulae work as defined for these values.
We can see that we are just summing the difference in $y$-coordinates from one $P_{s_n}$ to the next, but with alternating signs.
The slope of any line connecting $P_{s_n}$ and $P_{s_{n+1}}$ is $\pm 1$, since the difference in $x$-coordinates is given by $\pm (\tan^{-1} s_n - \tan^{-1} s_{n+1})$.

For $n\leq -1$ we obtain a similar result, except when $n$ is even the slope is negative and when $n$ is odd the slope is positive.
Since we're moving down when $n\leq -1$ and up when $n\geq 1$, the effect is the same.
The projectives at sinks are mapped to the left of the projectives at the adjacent sources and vice versa.
\end{proof}

\begin{notation}\label{note:images of M on s projectives}
We will denote by $(x_n,y_n)$ the image of $P_{s_n}$.
If $s_n = +\infty$ then it will be useful later to have $(x_{+\infty},y_{+\infty}) = (x_n,y_n)$.
Similarly, if $s_n=-\infty$ then $(x_{-\infty},y_{-\infty}) = (x_n,y_n)$.

If $S$ is unbounded above (respectively\ below) the sequence $\{(x_n,y_n)\}_{n\to+\infty}$ converges to $(x_{+\infty}, y_{+\infty})$ (respectively\ $\{(x_n,y_n)\}_{n\to-\infty}$ converges to $(x_{-\infty}, y_{-\infty})$).

Regardless of whether or not $S$ is bounded, $y_{-\infty}=-\frac{\pi}{2}$ and $y_{+\infty}=\frac{\pi}{2}$.
\end{notation}

Now we move on to mapping the rest of the projectives.
\begin{definition}\label{def:M on the rest of the projectives}
If $s_{+\infty}\in \bar{S}$ then there is no projective (or injective) at $+\infty$.
This is similarly true for $-\infty$.
For a source $s_n\in S$, we map $P_{(s_n}$ and $P_{s_n)}$ to the same point as $P_{s_n}$.
Sinks in $S$ and sources in $\bar{S}\setminus S$ have only one projective which have already been mapped.

For each $a\notin \bar{S}$, there is some $n\in\Z$ such that $s_{n}<a<s_{n+1}$ in $\R$.
Then there exists $t_a\in (0,1)$ such that $\tan^{-1} a = (1-t_a)\tan^{-1} s_n + t_a \tan^{-1}s_{n+1}$.
\begin{align*}
x_a &= (1-t_a)x_n + t_ax_{n+1} \\
y_a &= (1-t_a)y_n + t_ay_{n+1}
\end{align*}
We map both projectives at $a$ to $(x_a,y_a)$.
\end{definition}
\begin{example}\label{xmp:projective line example}
Here is one possibility of images of projectives under $\MM$:
\begin{displaymath}\begin{tikzpicture}
\draw[dotted] (-1,2) -- (1,2);
\draw[dotted] (-1,-2) -- (1,-2);
\draw[dotted] (0,2) -- (0,-2);
\draw(1,-2) --  (-0.25, -0.75) -- (0.25,-0.25) -- (0,0) -- (0.5,0.5) -- (-0.5,1.5) -- (-0.25,1.75) -- (-0.5,2);
\draw (-1,2) node [anchor=east] {$\frac{\pi}{2}$};
\draw (-1,-2) node [anchor=east] {$-\frac{\pi}{2}$};
\draw (0,-2) node [anchor=north] {$0$};
\draw (1,-2) node [anchor=north west] {$s_{-3}$};
\draw (-0.25,-0.75) node[anchor=east] {$s_{-2}$};
\draw (0.25,-0.25) node[anchor=west] {$s_{-1}$};
\draw (0,0) node[anchor=east] {$s_0$};
\draw (0.5,0.5) node [anchor=west] {$s_1$};
\draw (-0.5,1.5) node[anchor=east] {$s_2$};
\draw (-0.25,1.75) node[anchor=west] {$s_3$};
\draw (-0.5,2) node [anchor=south east] {$s_4$};
\end{tikzpicture}\end{displaymath}
\end{example}

\begin{proposition}\label{prop:M is sort of injective}
Suppose $a\neq b\in \R\setminus S$.
Then the following are true of Definition \ref{def:M on the rest of the projectives}.
\begin{enumerate}
\item $\MM P_a$ and $\MM P_b$ are well-defined.
\item $\MM P_a\neq \MM P_b$.
\item If $s_n < a< s_{n+1}$ then $(x_a,y_a)$ lies on the line segment from $(x_n,y_n)$ to $(x_{n+1},y_{n+1})$.
\end{enumerate}
\end{proposition}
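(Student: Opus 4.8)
The plan is to verify the three claims essentially directly from the defining formulae in Definition \ref{def:M on the rest of the projectives}, using Proposition \ref{prop:M is well-defined on s projectives} for the behavior at the $s_n$. For part (1), the key observation is that the parameter $t_a\in(0,1)$ is uniquely determined by the equation $\tan^{-1} a = (1-t_a)\tan^{-1} s_n + t_a\tan^{-1} s_{n+1}$: since $\tan^{-1}$ is strictly increasing on $\R\cup\{\pm\infty\}$ and $s_n<a<s_{n+1}$, we have $\tan^{-1} s_n<\tan^{-1} a<\tan^{-1} s_{n+1}$, so solving for $t_a$ gives a unique value strictly between $0$ and $1$. The interval $(s_n,s_{n+1})$ containing $a$ is also uniquely determined since $S$ is discrete with no accumulation points and $a\notin\bar S$. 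Hence $(x_a,y_a)$ is a well-defined point; the same argument applies to $b$. One should also note the edge cases where $s_n=-\infty$ or $s_{n+1}=+\infty$, where the completed $\tan^{-1}$ is used and the argument is identical.

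For part (3), this is immediate from the formulae: writing $(x_a,y_a)=(1-t_a)(x_n,y_n)+t_a(x_{n+1},y_{n+1})$ with $t_a\in(0,1)$ exhibits $(x_a,y_a)$ as a convex combination of the two endpoints, hence a point on the open segment between $\MM P_{s_n}$ and $\MM P_{s_{n+1}}$. I would present (3) before (2), since (2) will use it.

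For part (2), the main case is $a,b$ in the same interval $(s_n,s_{n+1})$: then $\MM P_a$ and $\MM P_b$ both lie on the segment from $(x_n,y_n)$ to $(x_{n+1},y_{n+1})$, which has nonzero length (the $y$-coordinates $y_n=\tan^{-1}s_n$ and $y_{n+1}=\tan^{-1}s_{n+1}$ differ), so the parametrization by $t$ is injective; since $t_a\neq t_b$ (because $\tan^{-1}$ is strictly monotone and $a\neq b$), we get $\MM P_a\neq\MM P_b$. If $a,b$ lie in different intervals, say $a\in(s_m,s_{m+1})$ and $b\in(s_n,s_{n+1})$ with $m<n$, then by part (3) and the convexity just noted, $y_a$ lies strictly between $y_m$ and $y_{m+1}$ and $y_b$ lies strictly between $y_n$ and $y_{n+1}$; since the $y$-coordinate equals $\tan^{-1}$ of the relevant point and $s_{m+1}\le s_n$, these two open $y$-ranges are disjoint (or meet only at the endpoint $y_{m+1}=y_n$, which neither $y_a$ nor $y_b$ attains), forcing $y_a\neq y_b$ and hence $\MM P_a\neq\MM P_b$.

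The only mild obstacle is bookkeeping: one must handle uniformly the possibility that $s_n$ or $s_{n+1}$ is $\pm\infty$ (so $y_n=\mp\frac\pi2$), confirm that the second coordinate of $\MM P_a$ is genuinely $\tan^{-1} a$ (which follows since $(1-t_a)y_n+t_a y_{n+1}=(1-t_a)\tan^{-1}s_n+t_a\tan^{-1}s_{n+1}=\tan^{-1}a$ by the defining equation for $t_a$, using Notation \ref{note:images of M on s projectives} that $y_n=\tan^{-1}s_n$), and keep straight that the claim concerns $a,b\notin S$ so that every such point genuinely falls in the interior of some interval. None of this is deep; the whole proof is a short monotonicity-plus-convexity argument.
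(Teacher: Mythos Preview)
Your proposal is correct and follows essentially the same approach as the paper, which dispatches all three parts in one line each: (1) ``compositions of well-defined formulas,'' (2) the $y$-coordinates differ, (3) ``clear by definition.'' Your case analysis for (2) is harmless but unnecessary once you have observed (as you do) that $y_a=\tan^{-1}a$ for every $a$; then $a\neq b$ immediately gives $y_a\neq y_b$ by injectivity of $\tan^{-1}$, with no need to split into same-interval and different-interval cases.
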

\begin{proof}
For (1) the formulas are compositions of well-defined formulas.
For (2) we see that the $y$-coordinates of $\MM P_a$ and $\MM P_b$ will be different and so the points must be also.
We see (3) is clear by definition.
\end{proof}

\subsection{The $\lambda$ Functions}
In this subsection we define a collection of functions that we will use in Section \ref{sec:M is finally defined} to map the rest of the indecomposable representations to $\R\times[-\frac{\pi}{2},\frac{\pi}{2}]$.
We'll use values denoted $\kappa_a^-$ and $\kappa_a^+$ to define the collection of functions based on $\lambda$ (Definition \ref{def:undecorated lambda}).

\begin{definition}\label{def:undecorated lambda}
For any $z\in\R$, $z=2n\pi + w$ for $n\in\Z$ and $0\leq w \leq 2\pi$.
So let the function $\lambda:\R\to\R$ be given by
\begin{displaymath}
\lambda(z)=\lambda(2n\pi + w) = \left\{ \begin{array}{ll} w-\frac{\pi}{2} & 0\leq w \leq \pi \\ -w + \frac{3\pi}{2} & \pi\leq w \leq 2\pi. \end{array} \right.
\end{displaymath}
\begin{displaymath}
\begin{tikzpicture}[scale=.75]
\draw[dashed] (-6,0) -- (6,0);
\draw[dashed] (0,-3) -- (0,3);
\foreach \x in {-2,2}
	\draw[dotted] (-6,\x) -- (6,\x);
\draw (-6,0) -- (-4,2) -- (0,-2) -- (4,2) -- (6,0);
\draw (-6,2) node[anchor=east] {$y=\frac{\pi}{2}$};
\draw (-6,-2) node[anchor=east] {$y=-\frac{\pi}{2}$};
\draw (0,-2) node[anchor=north west] {graph of $\lambda$};
\end{tikzpicture}
\end{displaymath}
Note $\lambda$ is continuous and its derivative, when it is defined, it is $\pm 1$.
Furthermore, $\lambda(z)=\lambda(z+2\pi)$  and $\lambda(z)=\lambda(-z)$ for all $z\in\R$.
\end{definition}

\begin{definition}\label{def:kappas}
Let $a\in\R\cup\{\pm\infty\}$, possibly a sink or source.
Define the kappa values of $a$ as
\begin{align*}
\kappa_a^- &:= x_a+y_a+\frac{\pi}{2} \\
\kappa_a^+ &:= x_a-y_a-\frac{\pi}{2} 
\end{align*}
For sinks and sources we'll use both values and for other vertices in $A_\R$ we'll only use one.
\end{definition}

\begin{proposition}\label{prop:kappa order}
Let $a<b\in\R$.
Then \[ \kappa_b^+ \leq \kappa_a^+ \leq \kappa_a^- \leq \kappa_b^-.\]
\end{proposition}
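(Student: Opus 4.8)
The plan is to reduce everything to the behavior of the single coordinate function $x_\bullet$, since the $y$-coordinate is completely understood: $y_a = \tan^{-1}(a)$ is strictly increasing in $a$, so $y_a < y_b$ whenever $a<b$. Unravelling the definitions, $\kappa_a^- = x_a + y_a + \tfrac{\pi}{2}$ and $\kappa_a^+ = x_a - y_a - \tfrac{\pi}{2}$, so the middle inequality $\kappa_a^+ \leq \kappa_a^-$ is immediate: their difference is $2y_a + \pi = 2\tan^{-1}(a) + \pi \geq 0$ because $\tan^{-1}(a) \geq -\tfrac{\pi}{2}$. (Strictness would hold unless $a = -\infty$, but we only need $\leq$.) The two outer inequalities are the substantive content.

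First I would establish the key \emph{Lipschitz-type} estimate: for any $a < b$ in $\R$, $\lvert x_b - x_a \rvert \leq \lvert y_b - y_a \rvert = y_b - y_a$. This follows by tracking the piecewise-linear structure of $x_\bullet$ along the interval $[a,b]$. By Definition \ref{def:M on the rest of the projectives} and Proposition \ref{prop:M is sort of injective}(3), on each subinterval between consecutive elements of $\bar S$ the point $(x_c, y_c)$ moves along a line segment joining consecutive images $(x_n,y_n)$ and $(x_{n+1},y_{n+1})$, and by Proposition \ref{prop:M is well-defined on s projectives} that segment has slope $\pm 1$; hence on each such piece $\lvert \Delta x\rvert = \lvert \Delta y\rvert = \Delta y$ (as $y$ is increasing). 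Summing over the finitely many pieces of $[a,b]$ (finitely many because $S$ is discrete with no accumulation points, so only finitely many sinks/sources lie in the bounded interval $[a,b]$) and using the triangle inequality gives $\lvert x_b - x_a\rvert \leq \sum \lvert \Delta x\rvert = \sum \Delta y = y_b - y_a$. Equivalently, $-(y_b - y_a) \leq x_b - x_a \leq y_b - y_a$.

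Now the two outer inequalities fall out by rearranging. From $x_b - x_a \leq y_b - y_a$ we get $x_b - y_b \leq x_a - y_a$, and subtracting $\tfrac{\pi}{2}$ from both sides yields $\kappa_b^+ \leq \kappa_a^+$. From $x_a - x_b \leq y_b - y_a$ we get $x_a + y_a \leq x_b + y_b$, and adding $\tfrac{\pi}{2}$ gives $\kappa_a^- \leq \kappa_b^-$. Combined with the middle inequality already shown, this is exactly the claimed chain $\kappa_b^+ \leq \kappa_a^+ \leq \kappa_a^- \leq \kappa_b^-$. For the cases where $a$ or $b$ equals $\pm\infty$ one uses the convention in Notation \ref{note:images of M on s projectives} that $(x_{\pm\infty}, y_{\pm\infty})$ is the limit of the $(x_n,y_n)$, so the estimate passes to the limit (the partial sums defining $x_n$ converge, and $y_{\pm\infty} = \pm\tfrac{\pi}{2}$); but since the statement is phrased for $a<b\in\R$ this may not even be needed here.

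The main obstacle is the bookkeeping in the Lipschitz estimate: one must carefully set up the partition of $[a,b]$ by the points of $S$ together with $a$ and $b$ themselves, handle the endpoints $a,b$ which may or may not lie in $S$, and confirm that the slope-$\pm 1$ property from Proposition \ref{prop:M is well-defined on s projectives} together with the convex-combination formula of Definition \ref{def:M on the rest of the projectives} genuinely forces $\lvert \Delta x\rvert = \Delta y$ on \emph{every} piece (including the pieces where an endpoint is interior to an $S$-gap). Once that telescoping-with-absolute-values argument is in place, the algebra converting it to the kappa inequalities is routine.
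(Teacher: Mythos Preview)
Your proof is correct and follows essentially the same route as the paper: both establish the Lipschitz bound $|x_a-x_b|\leq y_b-y_a$ from Propositions \ref{prop:M is well-defined on s projectives} and \ref{prop:M is sort of injective}, then rearrange to obtain the outer inequalities, and both handle the middle inequality via $y_a+\tfrac{\pi}{2}\geq 0$. The only difference is that you spell out the partition-and-telescope argument for the Lipschitz estimate explicitly, whereas the paper simply cites the two propositions and asserts the bound.
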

\begin{proof}
By Proposition \ref{prop:M is well-defined on s projectives} and Proposition \ref{prop:M is sort of injective},
\[ |x_a-x_b| \leq y_b-y_a.\]
This yields
\begin{align*}
x_a + y_a &\leq x_b + y_b \\
x_b - y_b &\leq x_a - y_a.
\end{align*}
So, immediately, we have $\kappa_b^+ \leq \kappa_a^+$ and $\kappa_a^- \leq \kappa_b^-$.
Note that $-y_a-\frac{\pi}{2} \in[-\pi,0]$ and $y_a+\frac{\pi}{2} \in[0,\pi]$.
Thus, $\kappa_a^+ \leq \kappa_a^-$, concluding the proof.
\end{proof}

\begin{definition}\label{def:lambda functions}
We define $\lambda_{s_n}^-$ and $\lambda_{s_n}^+$ for each sink and source $s_n$:
\begin{align*}
\lambda_{s_n}^-(z) &:= \lambda(z - \kappa_{s_n}^-) \\
\lambda_{s_n}^+(z) &:= \lambda(z - \kappa_{s_n}^+)
\end{align*}
Note that adjacent sinks and sources share one of these functions (Proposition \ref{prop:sink and source share}).

If $a$ is not a sink or source we only define $\lambda_a$.
Let $s_n$ and $s_{n+1}$ be the sink and source pair such that $s_n < a < s_{n+1}$ in $\R$.
Then
\begin{align*}
\lambda_a(z) &:= \lambda(z-\kappa_a^-) \text{ if }s_n\text{ is a sink} \\
\lambda_a(z) &:= \lambda(z-\kappa_a^+)  \text{ if }s_n\text{ is a source}
\end{align*}

Finally, if $s_{+\infty}\in \bar{S}$ set $\lambda_{+\infty}(z) = \lambda(z - x_{\infty})$.
If $s_{-\infty}\in \bar{S}$ set $\lambda_{-\infty}(z) = \lambda(z - x_{\infty})$.
\end{definition}

\begin{displaymath}\begin{tikzpicture}
\draw[dotted] (-3.5,2) -- (5.5,2);
\draw[dotted] (-3.5,-2) -- (5.5,-2);
\draw[dotted] (0,2) -- (0,-2);
\draw[dotted] (5,2) -- (1,-2) -- (-3,2);
\draw[very thick](1,-2) --  (-0.25, -0.75) -- (0.25,-0.25) -- (0,0) -- (0.5,0.5) -- (-0.5,1.5) -- (-0.25,1.75) -- (-0.5,2);
\draw (-3.5,2) node [anchor=east] {$\frac{\pi}{2}$};
\draw (-3.5,-2) node [anchor=east] {$-\frac{\pi}{2}$};
\draw (0,-2) node [anchor=north] {$0$};
\draw (1,-2) node [anchor=north west] {$s_{-3}$};
\draw (-0.25,-0.75) node[anchor=east] {$s_{-2}$};
\draw (0.25,-0.25) node[anchor=west] {$s_{-1}$};
\draw (0,0) node[anchor=east] {$s_0$};
\draw (0.5,0.5) node [anchor=west] {$s_1$};
\draw (-0.5,1.5) node[anchor=east] {$s_2$};
\draw (-0.25,1.75) node[anchor=north west] {$s_3$};
\draw (-0.5,2) node [anchor=south] {$s_4$};
\draw (-3,-2) -- (1,2) -- (5,-2);
\draw (-3,1) -- (0,-2) -- (4,2) -- (5,1);
\draw (-3,-1) -- (-2,-2) -- (2,2) -- (5,-1);
\draw (-3,1) -- (-2,2) -- (2,-2) -- (5,1);
\draw (-3,-2) node[anchor=east]{$\lambda_b$};
\draw (-3,-1) node[anchor=east] {$\lambda_{s_0}^+=\lambda_{s_1}^+$};
\draw (-3,1) node[anchor=north east] {$\lambda_a$};
\draw (-3,1) node[anchor=south east] {$\lambda_{s_0}^-=\lambda_{s_{-1}}^-$};
\end{tikzpicture}\end{displaymath}

For counting purposes in Proposition \ref{prop:lambda covers}, we will count the function shared by an adjacent sink and source (Proposition \ref{prop:sink and source share}) once.
If $\lambda_{s_n}^-=\lambda_{s_n}^+$ (Proposition \ref{prop:lambda properties} (4)) we also do not count these as separate functions.

\begin{notation}\label{note:lambda_*^*}
When we choose a particular $\lambda$ function without knowing which $\lambda$ function it is we will write $\lambda_*^*$.
For example, $\lambda_*^*(z)=\lambda_*^*(z+2\pi)$ for all $z\in\R$.
If we know $a$ but do not know whether or not to decorate $\lambda$ with $+$ or $-$, we write $\lambda_a^*$.

We similarly use the $*$ for the kappa values.
We use $\kappa_a^*$ to mean either $\kappa_a^-$ or $\kappa_a^+$.
\end{notation}

\begin{proposition}\label{prop:sink and source share}
Let $s_{2n}$ be a sink such that $s_{2n-1}$ and $s_{2n+1}$ are sources.
Then for all $z\in\R$
\begin{align*}
\lambda_{s_{2n}}^- (z) &= \lambda_{s_{2n-1}}^- (z) \\ 
\lambda_{s_{2n}}^+(z) &= \lambda_{s_{2n+1}}^+(z).
\end{align*}
\end{proposition}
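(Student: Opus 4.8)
The plan is to show that the two $\lambda$ functions attached (from opposite sides) to a sink $s_{2n}$ and to its two neighbouring sources agree, and this will follow directly from the definitions once I establish the corresponding equalities of $\kappa$ values, namely $\kappa_{s_{2n}}^- = \kappa_{s_{2n-1}}^-$ and $\kappa_{s_{2n}}^+ = \kappa_{s_{2n+1}}^+$. Indeed, by Definition \ref{def:lambda functions} we have $\lambda_{s_m}^{\pm}(z) = \lambda(z - \kappa_{s_m}^{\pm})$, so the two desired identities are immediate consequences of these two scalar equalities.

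First I would recall from Proposition \ref{prop:M is well-defined on s projectives} and Example \ref{xmp:M on s projectives slope example} that the slopes between $\MM P_{s_m}$ and $\MM P_{s_{m+1}}$ are $\pm 1$, with sinks sitting to the left of their adjacent sources. Concretely, writing $(x_m,y_m) = \MM P_{s_m}$, we have $x_{m+1} - x_m = \pm(y_{m+1} - y_m)$ where the sign is chosen so that the sink is on the left. Since $s_{2n}$ is a sink and $s_{2n+1}$ is a source lying above and to the right, we get $x_{2n+1} - x_{2n} = y_{2n+1} - y_{2n}$, i.e. $x_{2n} - y_{2n} = x_{2n+1} - y_{2n+1}$, which gives exactly $\kappa_{s_{2n}}^+ = x_{2n} - y_{2n} - \tfrac{\pi}{2} = x_{2n+1} - y_{2n+1} - \tfrac{\pi}{2} = \kappa_{s_{2n+1}}^+$. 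Similarly, $s_{2n-1}$ is a source lying above and to the right of the sink $s_{2n}$ as well (the quiver goes up from $s_{2n}$ towards both neighbours), so $x_{2n-1} - x_{2n} = -(y_{2n-1} - y_{2n})$, i.e. $x_{2n} + y_{2n} = x_{2n-1} + y_{2n-1}$, which yields $\kappa_{s_{2n}}^- = x_{2n} + y_{2n} + \tfrac{\pi}{2} = x_{2n-1} + y_{2n-1} + \tfrac{\pi}{2} = \kappa_{s_{2n-1}}^-$. (I should be a little careful with signs here depending on whether $n$ is positive, negative, or zero, but the uniform statement "the edge between a sink and an adjacent source has the sink on the left with slope $\pm 1$ appropriately" covers all cases; the formulas in Definition \ref{def:M on projectives at s} were designed precisely to make this hold.)

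Then I would conclude: substituting into Definition \ref{def:lambda functions},
\begin{align*}
\lambda_{s_{2n}}^-(z) &= \lambda(z - \kappa_{s_{2n}}^-) = \lambda(z - \kappa_{s_{2n-1}}^-) = \lambda_{s_{2n-1}}^-(z), \\
\lambda_{s_{2n}}^+(z) &= \lambda(z - \kappa_{s_{2n}}^+) = \lambda(z - \kappa_{s_{2n+1}}^+) = \lambda_{s_{2n+1}}^+(z),
\end{align*}
for all $z\in\R$, which is the claim.

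\textbf{Main obstacle.} The only genuinely delicate point is bookkeeping the signs in the slope relation between consecutive images of projectives at sinks and sources — that is, correctly extracting from Definition \ref{def:M on projectives at s} that $x_{2n} - y_{2n} = x_{2n+1} - y_{2n+1}$ and $x_{2n} + y_{2n} = x_{2n-1} + y_{2n-1}$ regardless of the position of $2n$ relative to $0$. Everything else is a one-line substitution. I expect to handle this by reading off the telescoping $x$-coordinate formulas (as in Example \ref{xmp:M on s projectives slope example}) and matching the alternating sign $(-1)^{j+1}$ against the parity that distinguishes sinks from sources, so that adding or subtracting the $y$-difference always lands on the correct combination.
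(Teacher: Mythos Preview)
Your proposal is correct and follows essentially the same route as the paper: reduce the identity of $\lambda$ functions to the equality of the corresponding $\kappa$ values, and obtain those from the slope $\pm 1$ relation between $\MM P_{s_{2n}}$ and its neighbours established in Proposition \ref{prop:M is well-defined on s projectives}. One small slip: $s_{2n-1}$ lies \emph{below} (not above) and to the right of $s_{2n}$, since $s_{2n-1}<s_{2n}$ forces $y_{2n-1}<y_{2n}$; however, the equation $x_{2n-1}-x_{2n} = -(y_{2n-1}-y_{2n})$ you write down is exactly the correct one (slope $-1$), so the argument goes through unchanged.
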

\begin{proof}
The proofs of the equations are similar so we only prove the top equation.
Since $s_{2n}$ is a sink, we know by Proposition \ref{prop:M is well-defined on s projectives}  that $x_{2n}-x_{2n-1} = y_{2n-1}-y_{2n}$.
Then we have $x_{2n} + y_{2n} = x_{2n-1} + y_{2n-1}$ and so $\kappa_{s_{2n}}^- = \kappa_{s_{2n-1}}^-$ or $\kappa_{s_{2n}}^-= \kappa_{s_{2n-1}}^- \pm 2\pi$.
\end{proof}

\begin{proposition}\label{prop:lambda properties}
Let $s_n < a < s_{n+1}$ in $\R$. Then the following are true.
\begin{enumerate}
\item $\lambda_a(x_a)=y_a$ and $\lambda_{s_n}^-(x_n)=\lambda_{s_n}^+(x_n)=y_n$
\item If $s_n$ is a sink then $\deriv \lambda_a$ at $x_a$ is $-1$.
\item If $s_n$ is a source then $\deriv \lambda_a$ at $x_a$ is $+1$.
\item If $s_n=\pm\infty$ then $\lambda_{s_n}^-=\lambda_{s_n}^+$.
\end{enumerate}
\end{proposition}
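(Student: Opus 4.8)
The plan is to reduce all four statements to the elementary features of $\lambda$ recorded in Definition~\ref{def:undecorated lambda}: $\lambda$ is even, has period $2\pi$, satisfies $\lambda(w)=w-\frac{\pi}{2}$ on $[0,\pi]$ (hence $\lambda(w)=-w-\frac{\pi}{2}$ on $[-\pi,0]$ by evenness), and has $\deriv\lambda=+1$ on the interior of $[0,\pi]$ and $\deriv\lambda=-1$ on the interior of $[-\pi,0]$. To these I add only the definitions of $\kappa_a^\pm$ and the fact that $y_a\in[-\frac{\pi}{2},\frac{\pi}{2}]$ for every $a$, with $y_{\pm\infty}=\pm\frac{\pi}{2}$.

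For (1) I would just evaluate. If $s_n$ is a sink then $\lambda_a$ is built from $\kappa_a^-=x_a+y_a+\frac{\pi}{2}$, so $\lambda_a(x_a)=\lambda(x_a-\kappa_a^-)=\lambda(-y_a-\frac{\pi}{2})$; since $y_a+\frac{\pi}{2}\in[0,\pi]$, evenness and the explicit formula give $\lambda(-y_a-\frac{\pi}{2})=\lambda(y_a+\frac{\pi}{2})=y_a$. If $s_n$ is a source then $\lambda_a$ uses $\kappa_a^+=x_a-y_a-\frac{\pi}{2}$, and $\lambda_a(x_a)=\lambda(y_a+\frac{\pi}{2})=y_a$ directly. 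Running the same two computations with $(x_a,y_a,\kappa_a^\pm)$ replaced by $(x_n,y_n,\kappa_{s_n}^\pm)$ proves $\lambda_{s_n}^-(x_n)=\lambda_{s_n}^+(x_n)=y_n$ for any sink or source $s_n$, including the cases $s_n=\pm\infty$ where one substitutes $y_n=\pm\frac{\pi}{2}$.

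For (2) and (3) I would differentiate via the chain rule: where $\lambda$ is differentiable, $\deriv\lambda_a(z)=\lambda'(z-\kappa_a^{*})$. When $s_n$ is a sink, at $z=x_a$ the argument equals $x_a-\kappa_a^-=-y_a-\frac{\pi}{2}$; because $a\notin S$ and $s_n<a<s_{n+1}$, the interpolation $y_a=(1-t_a)y_n+t_ay_{n+1}$ with $t_a\in(0,1)$ (Definition~\ref{def:M on the rest of the projectives}) forces $-\frac{\pi}{2}<y_a<\frac{\pi}{2}$, so $-y_a-\frac{\pi}{2}$ lies in the open interval $(-\pi,0)$ where $\lambda$ has slope $-1$; hence $\deriv\lambda_a$ at $x_a$ is $-1$. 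When $s_n$ is a source the argument at $z=x_a$ is $y_a+\frac{\pi}{2}\in(0,\pi)$, where $\lambda$ has slope $+1$. For (4): if $s_n=+\infty$ then $y_n=\frac{\pi}{2}$, so $\kappa_{s_n}^-=x_n+\pi$ and $\kappa_{s_n}^+=x_n-\pi$ differ by $2\pi$, and periodicity of $\lambda$ yields $\lambda_{s_n}^-=\lambda_{s_n}^+$; if $s_n=-\infty$ then $y_n=-\frac{\pi}{2}$, so $\kappa_{s_n}^-=\kappa_{s_n}^+=x_n$ and the two functions are literally identical.

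I do not expect a genuine obstacle here. The only points needing care are bookkeeping ones: choosing the correct $\kappa^{*}$ in each case — which is handled uniformly once one notices that evenness collapses the sink and source cases of (1) to the same output $y_a$ — and, in (2)–(3), checking that the relevant argument of $\lambda$ lands in the \emph{open} interior of a linear piece so that $\deriv\lambda_a$ is actually defined, which is exactly the strict inequality $-\frac{\pi}{2}<y_a<\frac{\pi}{2}$ guaranteed by $t_a\in(0,1)$.
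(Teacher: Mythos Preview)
Your proposal is correct and follows essentially the same approach as the paper: direct evaluation of $\lambda$ at the shifted arguments for (1), checking which linear piece of $\lambda$ the argument lands in for (2)--(3), and comparing $\kappa_{s_n}^-$ with $\kappa_{s_n}^+$ via periodicity for (4). The only cosmetic differences are that in (1) you invoke the evenness of $\lambda$ to reduce the sink case to the source case, whereas the paper shifts by $2\pi$ into $(\pi,2\pi)$ and applies the second branch of the formula, and in (2)--(3) you use the chain rule directly while the paper computes the difference quotient $\lambda_a(x_a+\varepsilon)-\lambda_a(x_a)$; both routes are equivalent.
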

\begin{proof}
(1) If $s_n$ is a sink we start with $x_a- \kappa_a^- = -y_a-\frac{\pi}{2}$ and if $s_n$ is a source we start with $x_a-\kappa_a^+ = y_a+\frac{\pi}{2}$.
Since $|y_a|<\frac{\pi}{2}$ we know $0<y_a+\frac{\pi}{2}<\pi$.
When $s_n$ is a sink,
\[ \lambda_a(x_a) = \lambda(-y_a-\frac{\pi}{2}) = \lambda(\underbrace{2\pi-y_a-\frac{\pi}{2}}_{\in (\pi,2\pi)}) = -2\pi + y_a + \frac{\pi}{2}+\frac{3\pi}{2} = y_a.\]
When $s_n$ is a source,
\[ \lambda_a(x_a) = \lambda(y_a+\frac{\pi}{2}) 
= y_a.\]
Since $\kappa_{s_n}^-$ and $\kappa_{s_n}^+$ are defined similarly the rest of (1) is true by the same arguments.

(2) and (3) Since the derivative of $\lambda$ is constant between multiples of $\pi$, $a\neq \pm\infty$, and sinks and sources do not accumulate, we see that the derivative must be constant on $[x_a,x_a+\e]$, for $\e>0$.
Then we can just check the slope of the line from $(x_a,\lambda_a(x_a))$ to $(x_a+\e,\lambda_x(x_a+\e))$.

If $s_n$ is a sink then we use $\kappa_a^-$:
\[ \lambda_a(x_a+\e) = \lambda(2\pi-y_a-\frac{\pi}{2}+\e) = y_a-\e.\]
If $s_n$ is a source then we use $\kappa_a^+$:
\[ \lambda_a(x_a+\e) = \lambda(y_a+\frac{\pi}{2} +\e) = y_a+\e.\]
Therefore, if $s_n$ is a sink the derivative is negative at $x_a$ and if $s_n$ is a source it is positive.

(4) Suppose $s_n = -\infty$. Then $\kappa_{s_n}^- = x_n$ and $\kappa_{s_n}^+ = x_n$.
If $s_n=+\infty$ then $\kappa_{s_n}^- = x_n+\pi$ and $\kappa_{s_n}^+ = x_n-\pi = \kappa_{s_n}^- -2\pi$.
\end{proof}

\begin{proposition}\label{prop:lambda intersections}
Let $\lambda^*_a$, $\lambda^*_b$ be functions from Definition \ref{def:lambda functions} such that $\lambda^*_a\neq \lambda^*_b$.
Then, the intersection points of $\lambda^*_a$ and $\lambda^*_b$ are given by, for all $n\in\Z$,
\begin{displaymath}
\left(n\pi + \frac{1}{2}(\kappa_a^*+\kappa^*_b), \pm\left( \frac{1}{2}|\kappa_a^*-\kappa_b^*| - \frac{\pi}{2}\right)\right).
\end{displaymath}
Furthermore, if $\kappa_b^*>\kappa_a^*$ then
\begin{align*}
\left(\deriv \lambda^*_a\right)\left(\frac{1}{2}(\kappa_a^*+\kappa_b^*)\right) &= +1 \\
\left(\deriv \lambda^*_b\right)\left(\frac{1}{2}(\kappa_a^*+\kappa_b^*)\right) &= -1 \\
\end{align*}
\end{proposition}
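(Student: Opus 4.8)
The plan is to reduce everything to three elementary properties of the triangle wave $\lambda$ and then to one short estimate supplied by Propositions~\ref{prop:M is well-defined on s projectives} and~\ref{prop:M is sort of injective}. The three properties, all immediate from Definition~\ref{def:undecorated lambda}, are: (i) $\lambda$ is $2\pi$-periodic and even, hence $\lambda(z+\pi)=-\lambda(z)$ for all $z$ and $\deriv\lambda$ (where defined) is an odd function; (ii) on the open interval $(0,\pi)$ one has $\lambda(w)=w-\frac{\pi}{2}$ and $\deriv\lambda=+1$; and (iii) for $x,y\in\R$, $\lambda(x)=\lambda(y)$ if and only if $x-y\in 2\pi\Z$ or $x+y\in 2\pi\Z$ --- this is clear from the graph, since on each of $[0,\pi]$ and $[\pi,2\pi]$ the map $w\mapsto\lambda(w)$ is an affine bijection onto $[-\frac{\pi}{2},\frac{\pi}{2}]$, and the two preimages of a value lying in one period are symmetric about $\pi$, hence negatives of one another modulo $2\pi$.

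First I would locate the intersection abscissae. Since $\lambda_a^*(z)=\lambda(z-\kappa_a^*)$ and $\lambda_b^*(z)=\lambda(z-\kappa_b^*)$, a point $z$ is an intersection exactly when $\lambda(z-\kappa_a^*)=\lambda(z-\kappa_b^*)$, which by (iii) means $\kappa_b^*-\kappa_a^*=(z-\kappa_a^*)-(z-\kappa_b^*)\in 2\pi\Z$ or $2z-\kappa_a^*-\kappa_b^*=(z-\kappa_a^*)+(z-\kappa_b^*)\in 2\pi\Z$. The first alternative cannot occur: a shift by a multiple of $2\pi$ leaves $\lambda$ unchanged, so $\kappa_b^*-\kappa_a^*\in 2\pi\Z$ would force $\lambda_a^*=\lambda_b^*$, contrary to hypothesis. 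Hence the intersections are precisely the $z$ with $2z\in\kappa_a^*+\kappa_b^*+2\pi\Z$, i.e.\ $z=n\pi+\frac12(\kappa_a^*+\kappa_b^*)$ for $n\in\Z$; in particular $\kappa_b^*-\kappa_a^*\notin 2\pi\Z$, so $\kappa_a^*\neq\kappa_b^*$. This is the first coordinate in the claimed list.

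Next I would evaluate the ordinates, and this is where the only non-formal input enters, namely $|\kappa_a^*-\kappa_b^*|\le 2\pi$. By Definition~\ref{def:kappas} (including the conventions at $\pm\infty$ from Notation~\ref{note:images of M on s projectives}) we may write $\kappa_c^*=x_c+\varepsilon_c\big(y_c+\tfrac{\pi}{2}\big)$ with $\varepsilon_c\in\{+1,-1\}$. If $\varepsilon_a=\varepsilon_b$ then $|\kappa_a^*-\kappa_b^*|\le|x_a-x_b|+|y_a-y_b|$; if $\varepsilon_a=-\varepsilon_b$ then $|\kappa_a^*-\kappa_b^*|\le|x_a-x_b|+|y_a+y_b|+\pi$. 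Using the inequality $|x_a-x_b|\le|y_a-y_b|$ (exactly the estimate invoked in the proof of Proposition~\ref{prop:kappa order}, coming from Propositions~\ref{prop:M is well-defined on s projectives} and~\ref{prop:M is sort of injective}) together with $|y_c|\le\frac{\pi}{2}$, the first bound is $\le 2|y_a-y_b|\le 2\pi$ and the second is $\le|y_a-y_b|+|y_a+y_b|+\pi=2\max(|y_a|,|y_b|)+\pi\le 2\pi$. Combining this with $\kappa_b^*-\kappa_a^*\notin 2\pi\Z$ from the previous step gives the \emph{strict} inequality $0<|\kappa_a^*-\kappa_b^*|<2\pi$, so $w_0:=\frac12|\kappa_a^*-\kappa_b^*|\in(0,\pi)$. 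Now at $z=n\pi+\frac12(\kappa_a^*+\kappa_b^*)$ we have $z-\kappa_a^*=n\pi+\frac12(\kappa_b^*-\kappa_a^*)$, so by property (i) (used $|n|$ times) and evenness the common value of $\lambda_a^*$ and $\lambda_b^*$ there equals $(-1)^n\lambda\big(\tfrac12(\kappa_b^*-\kappa_a^*)\big)=(-1)^n\lambda(w_0)=(-1)^n\big(w_0-\tfrac{\pi}{2}\big)$ by (ii); as $n$ ranges over $\Z$ this is precisely $\pm\big(\tfrac12|\kappa_a^*-\kappa_b^*|-\tfrac{\pi}{2}\big)$, the displayed ordinate. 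For the derivative statement, suppose $\kappa_b^*>\kappa_a^*$ and take the $n=0$ point $z=\frac12(\kappa_a^*+\kappa_b^*)$. Then $z-\kappa_a^*=w_0\in(0,\pi)$, so $\big(\deriv\lambda_a^*\big)(z)=\big(\deriv\lambda\big)(w_0)=+1$ by (ii), while $z-\kappa_b^*=-w_0\in(-\pi,0)$, so $\big(\deriv\lambda_b^*\big)(z)=\big(\deriv\lambda\big)(-w_0)=-\big(\deriv\lambda\big)(w_0)=-1$ since $\deriv\lambda$ is odd by (i).

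The one step I expect to be a genuine obstacle --- everything else is bookkeeping with the triangle wave --- is the estimate $|\kappa_a^*-\kappa_b^*|\le 2\pi$: it is what confines every intersection point to the single affine branch of $\lambda$ and so makes the clean closed form possible, and it is exactly where the earlier structure of $\MM$ (the slope-$\pm1$ property of Proposition~\ref{prop:M is well-defined on s projectives} and the near-injectivity of Proposition~\ref{prop:M is sort of injective}) is used. I would also take some care that the argument is uniform across the two possible decorations on the $\kappa$'s and across the limiting values at $\pm\infty$; the normal form $\kappa_c^*=x_c+\varepsilon_c\big(y_c+\tfrac{\pi}{2}\big)$ makes that uniformity transparent. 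A more pedestrian alternative would be to intersect the two piecewise-linear graphs branch by branch, which sidesteps property (iii) at the cost of considerably more casework, so I would prefer the symmetry argument above.
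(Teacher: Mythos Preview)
Your proof is correct and, in fact, more complete than the paper's. Both arguments rest on the same basic symmetry and periodicity of the triangle wave $\lambda$, but you organize the work differently in two respects. First, the paper simply \emph{verifies} by substitution that the listed points are intersections (plugging $z=2n\pi+\tfrac12(\kappa_a^*+\kappa_b^*)$ into both sides and using evenness of $\lambda$), whereas you use the ``if and only if'' characterization in your property~(iii) to show these are \emph{all} the intersections; this completeness is left implicit in the paper. Second, the paper asserts the key bound $\tfrac12|\kappa_a^*-\kappa_b^*|<\pi$ at the outset without justification, while you actually prove $|\kappa_a^*-\kappa_b^*|\le 2\pi$ via the normal form $\kappa_c^*=x_c+\varepsilon_c(y_c+\tfrac{\pi}{2})$ and the slope-$\pm1$ estimate from Propositions~\ref{prop:M is well-defined on s projectives} and~\ref{prop:M is sort of injective}, then upgrade to strict inequality using $\kappa_b^*-\kappa_a^*\notin 2\pi\Z$. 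For the derivatives, the paper uses an $\varepsilon$-perturbation in place of your oddness-of-$\deriv\lambda$ argument; both are fine. Your version costs a little more writing but gives a self-contained argument with no unproved assertions.
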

\begin{proof}
First note that $\lambda^*_a(\kappa_a^*)=\lambda^*_b(\kappa_b^*)=-\frac{\pi}{2}$ and $\frac{1}{2}|\kappa_a^*-\kappa_b^*| < \pi$.
Then we see
\[ \lambda^*_a\left(2n\pi + \frac{1}{2}(\kappa_a+\kappa_b)\right)
= \lambda\left( \frac{1}{2}(\kappa_b^* - \kappa_a^*)\right) \\
= \lambda\left( \frac{1}{2}(\kappa_a^*-\kappa_b^*)\right) \\
= \lambda^*_b\left( 2n\pi + \frac{1}{2}(\kappa_a^*+\kappa_b^*) \right). \]
A similar calculation shows $\lambda^*_a(2(n+1)\pi + \frac{1}{2}(\kappa_a^*+\kappa_b^*)) = \lambda^*_b(2(n+1)\pi + \frac{1}{2}(\kappa_a^*+\kappa_b^*))$.
Since this is true for all evens and odds, it's true for all of $\Z$.
By symmetry, suppose $\kappa_b^*>\kappa_a^*$.
Then 
\begin{align*}
\lambda^*_a\left(\frac{1}{2}(\kappa_a^*+\kappa_b^*)\right) &= \frac{1}{2}(\kappa_b^*-\kappa_a^*) -\frac{\pi}{2} + \e \\
\lambda^*_b\left(\frac{1}{2}(\kappa_a^*+\kappa_b^*)\right) &= \frac{1}{2}(\kappa_b^*-\kappa_a^*) -\frac{\pi}{2} - \e \qedhere
\end{align*}
\end{proof}

We define values $\hp_z$ for all $z\in\R$ that will help make some proofs easier.

\begin{definition}\label{def:p hat}
Let $s_n$ be a sink or source such that $s_n\neq\infty$.
\begin{itemize}
\item if $s_n$ is a sink then let $\hp_{s_n}=x_n - (\frac{\pi}{2} - y_n)$.
\item if $s_n$ is a source then let $\hp_{s_n}=x_n + (\frac{\pi}{2} - y_n)$.
\end{itemize}
If $s_n=+\infty$ let $\hp_{s_n}=\hp_{+\infty}=x_n$.
If $s_n=-\infty$, (i) let $\hp_{s_n}=\hp_{-\infty} = x_n-\pi$ if $s_n$ is a sink and (ii) let $\hp_{s_n} = \hp_{-\infty} = x_n+\pi$ if $s_n$ is a source.

Let $a\in\R$ such that $a$ is neither a sink nor a source.
If $\kappa_a^*  =\kappa_a^+$ let $\hp_a = x_a + (\frac{\pi}{2} - y_a)$.
If $\kappa_a^* = \kappa_a^-$ let $\hp_a = x_a - (\frac{\pi}{2} - y_a)$.
\end{definition}

\begin{example}\label{xmp:p hat example}
Let us continue with Example \ref{xmp:projective line example}.
The points below are labeled by their $x$-coordinates, which are the $\hp$ values for each sink and source.
Of course, we must prove the picture is an accurate representation (Proposition \ref{prop:p hat order}).
\begin{displaymath}\begin{tikzpicture}
\draw[dotted] (-3.5,2) -- (5.5,2);
\draw[dotted] (-3.5,-2) -- (5.5,-2);
\draw[dotted] (0,2) -- (0,-2);
\draw[dotted] (5,2) -- (1,-2) -- (-3,2);
\draw(1,-2) --  (-0.25, -0.75) -- (0.25,-0.25) -- (0,0) -- (0.5,0.5) -- (-0.5,1.5) -- (-0.25,1.75) -- (-0.5,2);
\draw (-3.5,2) node [anchor=east] {$\frac{\pi}{2}$};
\draw (-3.5,-2) node [anchor=east] {$-\frac{\pi}{2}$};
\draw (0,-2) node [anchor=north] {$0$};
\draw (1,-2) node [anchor=north west] {$s_{-3}$};
\draw (-0.25,-0.75) node[anchor=east] {$s_{-2}$};
\draw (0.25,-0.25) node[anchor=west] {$s_{-1}$};
\draw (0,0) node[anchor=east] {$s_0$};
\draw (0.5,0.5) node [anchor=west] {$s_1$};
\draw (-0.5,1.5) node[anchor=east] {$s_2$};
\draw (-0.25,1.75) node[anchor=north west] {$s_3$};
\draw (-0.5,2) node [anchor=south] {$s_4$};
\filldraw[fill=black] (-3,2) circle[radius=0.6mm]; 
\draw[dotted] (-3,2) -- (-.25,-.75);
\draw (-3,2) node[anchor=south] {$\hp_{s_{-2}}$};
\filldraw[fill=black] (-2,2) circle[radius=0.6mm]; 
\draw[dotted] (-2,2) -- (0,0);
\draw (-2,2) node[anchor=south] {$\hp_{s_0}$};
\filldraw[fill=black] (-1,2) circle[radius=0.6mm]; 
\draw[dotted] (-1,2) -- (-.5,1.5);
\draw (-1,2) node[anchor=south] {$\hp_{s_2}$};
\filldraw[fill=black] (5,2) circle[radius=0.6mm]; 
\draw[dotted] (5,2) -- (1,-2);
\draw (5,2) node[anchor=south] {$\hp_{s_{-3}}$};
\filldraw[fill=black] (2.5,2) circle[radius=0.6mm]; 
\draw[dotted] (2.5,2) -- (.25,-.25);
\draw (2.6,2) node[anchor=south] {$\hp_{s_{-1}}$};
\filldraw[fill=black] (2,2) circle[radius=0.6mm]; 
\draw[dotted] (2,2) -- (.5,.5);
\draw (2,2) node[anchor=south] {$\hp_{s_1}$};
\filldraw[fill=black] (0,2) circle[radius=0.6mm]; 
\draw[dotted] (0,2) -- (-.25,1.75);
\draw (0,2) node[anchor=south] {$\hp_{s_3}$};
\end{tikzpicture}\end{displaymath}
\end{example}

\begin{proposition}\label{prop:the correct p hat}
Let $s_n$ be a sink, $s_m$ a source, and $a\in\R$ be neither.
Then
\begin{enumerate}
\item $\lambda_{s_n}^- (\hp_{s_n}) = \frac{\pi}{2}$,
\item $\lambda_{s_m}^+(\hp_{s_m}) = \frac{\pi}{2}$, and
\item $\lambda_a(\hp_a) = \frac{\pi}{2}$.
\end{enumerate}
\end{proposition}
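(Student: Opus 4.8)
The plan is a direct unwinding of Definitions \ref{def:undecorated lambda}, \ref{def:kappas}, \ref{def:lambda functions}, and \ref{def:p hat}: in each of the three cases I compute $\hp - \kappa$ for the decoration of $\kappa$ that actually appears in the relevant $\lambda$ function, observe that it equals $\pm\pi$, and conclude using the single fact that $\lambda(\pm\pi) = \frac{\pi}{2}$. That fact itself is immediate: taking $n=0$ and $w=\pi$ in Definition \ref{def:undecorated lambda}, the two branches agree and give $\lambda(\pi) = \pi - \frac{\pi}{2} = \frac{\pi}{2}$, and then $\lambda(-\pi) = \lambda(\pi)$ by the symmetry $\lambda(z)=\lambda(-z)$ noted there.

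For (1): since $s_n$ is a sink, $\lambda_{s_n}^-(z) = \lambda(z - \kappa_{s_n}^-)$ with $\kappa_{s_n}^- = x_n + y_n + \frac{\pi}{2}$, and $\hp_{s_n} = x_n - (\frac{\pi}{2} - y_n)$, so $\hp_{s_n} - \kappa_{s_n}^- = (x_n + y_n - \frac{\pi}{2}) - (x_n + y_n + \frac{\pi}{2}) = -\pi$, giving $\lambda_{s_n}^-(\hp_{s_n}) = \lambda(-\pi) = \frac{\pi}{2}$. For (2): since $s_m$ is a source, $\lambda_{s_m}^+(z) = \lambda(z - \kappa_{s_m}^+)$ with $\kappa_{s_m}^+ = x_m - y_m - \frac{\pi}{2}$, and $\hp_{s_m} = x_m + (\frac{\pi}{2} - y_m)$, so $\hp_{s_m} - \kappa_{s_m}^+ = (x_m - y_m + \frac{\pi}{2}) - (x_m - y_m - \frac{\pi}{2}) = \pi$, giving $\lambda_{s_m}^+(\hp_{s_m}) = \lambda(\pi) = \frac{\pi}{2}$. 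For (3): by Definition \ref{def:lambda functions}, $\lambda_a$ is built from $\kappa_a^-$ when the lower member $s_n$ of the adjacent sink--source pair is a sink and from $\kappa_a^+$ when $s_n$ is a source; Definition \ref{def:p hat} sets $\hp_a = x_a - (\frac{\pi}{2} - y_a)$ in the former case and $\hp_a = x_a + (\frac{\pi}{2} - y_a)$ in the latter, so the computation reduces verbatim to that of (1) or of (2), and in either case $\lambda_a(\hp_a) = \lambda(\pm\pi) = \frac{\pi}{2}$.

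There is essentially no obstacle here beyond bookkeeping: one must match each of the two ``decorations'' of $\hp_a$ in Definition \ref{def:p hat} with the decoration of $\kappa_a$ that defines the corresponding $\lambda_a$, which is exactly how Definition \ref{def:p hat} is phrased. If the statement is intended to also cover the boundary vertices $s_n = \pm\infty$, those should be checked separately from Definition \ref{def:p hat} and the formula $\lambda_{\pm\infty}(z) = \lambda(z - x_{\pm\infty})$: for $-\infty$ one gets $\hp_{-\infty} - x_{-\infty} = \pm\pi$ and hence again $\frac{\pi}{2}$, whereas for $+\infty$ one has $\hp_{+\infty} = x_{+\infty}$, so the value is $\lambda(0) = -\frac{\pi}{2}$; thus the proposition is to be read as concerning finite sinks, finite sources, and finite points that are neither.
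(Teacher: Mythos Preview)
Your proof is correct and follows exactly the paper's approach: compute $\hp - \kappa^*$ from the definitions, observe it equals $\pm\pi$, and evaluate $\lambda(\pm\pi)=\frac{\pi}{2}$. The paper only writes out case (1) and declares (2) and (3) similar, whereas you spell out all three; your closing caveat about $\pm\infty$ is extraneous to the proposition as stated (and slightly conflates $\lambda_{+\infty}$, defined for the index $+\infty$, with $\lambda_{s_n}^\pm$ when the value $s_n=+\infty$), but it does not affect the argument.
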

\begin{proof}
We prove (1); proofs of (2) and (3) are similar. Recall $\hp_{s_n} = x_n - (\frac{\pi}{2} - y_n)$.
\[ \lambda_{s_n}^- \left(x_n - \left(\frac{\pi}{2} - y_n\right)\right) = \lambda\left(x_n - \left(\frac{\pi}{2} - y_n\right) - x_n - y_n - \frac{\pi}{2}\right) = \lambda(-\pi)  = \frac{\pi}{2}.\qedhere \]
\end{proof}

Recall that $s_n$ is a sink if $n$ is even and a source if $n$ is odd and $\bar{S}$ is the set of sinks and sources in $A_\R$ union $\{\pm\infty\}$.
\begin{proposition}\label{prop:p hat order}
Let $\cdots < s_{2n-1} < s_{2n} < s_{2n+1} < s_{2n+2} < \cdots $ be the sinks and sources of $A_\R$. Then
\[ (x_{-\infty}-\pi) \leq \hp_{s_{2n}} < \hp_{s_{2n+2}} \leq x_{+\infty} \leq \hp_{s_{2n+1}} < \hp_{s_{2n-1}}\leq (x_{-\infty}+\pi) \]
Equalities on the ends can only occur if $S$ is bounded below.
Equalities in the middle can only occur if $S$ is bounded above.
\end{proposition}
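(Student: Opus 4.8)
The plan is to reduce the whole chain to two elementary monotonicity facts about the quantities $x_m+y_m$ and $x_m-y_m$ running along the sequence of sinks and sources, and then read off the non-strict inequalities, the strict inequalities, and the equality conditions separately.

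First I would rewrite every term of the chain via Definitions \ref{def:kappas} and \ref{def:p hat}. For a sink $s_{2n}$ one has $\hp_{s_{2n}}=x_{2n}+y_{2n}-\frac{\pi}{2}=\kappa^-_{s_{2n}}-\pi$, and for a source $s_{2n+1}$ one has $\hp_{s_{2n+1}}=x_{2n+1}-y_{2n+1}+\frac{\pi}{2}=\kappa^+_{s_{2n+1}}+\pi$. Using $y_{-\infty}=-\frac{\pi}{2}$ and $y_{+\infty}=\frac{\pi}{2}$ from Notation \ref{note:images of M on s projectives}, the four boundary constants become $x_{-\infty}-\pi=\kappa^-_{-\infty}-\pi$, $x_{-\infty}+\pi=\kappa^+_{-\infty}+\pi$ (noting $\kappa^-_{-\infty}=\kappa^+_{-\infty}=x_{-\infty}$), and $x_{+\infty}=\kappa^-_{+\infty}-\pi=\kappa^+_{+\infty}+\pi$; these identities also match the $\pm\infty$ cases of Definition \ref{def:p hat} when $\pm\infty$ is itself a sink or a source. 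Under this dictionary the four non-strict inequalities of the chain are precisely $\kappa^-_{-\infty}\leq\kappa^-_{s_{2n}}$, $\kappa^-_{s_{2n+2}}\leq\kappa^-_{+\infty}$, $\kappa^+_{+\infty}\leq\kappa^+_{s_{2n+1}}$, and $\kappa^+_{s_{2n-1}}\leq\kappa^+_{-\infty}$, each of which is an instance of Proposition \ref{prop:kappa order} once that proposition is extended from $\R$ to $\bar S$. That extension is immediate, since the inequality $\kappa^+_b\leq\kappa^+_a\leq\kappa^-_a\leq\kappa^-_b$ is closed under the limits $(x_m,y_m)\to(x_{\pm\infty},y_{\pm\infty})$ guaranteed by Notation \ref{note:images of M on s projectives}.

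Next I would prove the two strict inequalities. By Proposition \ref{prop:M is well-defined on s projectives} consecutive images $\MM P_{s_m}$ and $\MM P_{s_{m+1}}$ are joined by a segment of slope $+1$ when $s_m$ is a sink (the sink lying to the left) and of slope $-1$ when $s_m$ is a source, and $y_m$ is strictly increasing in $m$. Hence passing from a sink $s_{2n}$ to the next source $s_{2n+1}$ raises $x+y$ by $2(y_{2n+1}-y_{2n})>0$ while fixing $x-y$, and passing from a source to the next sink lowers $x-y$ by a positive amount while fixing $x+y$. Therefore $\kappa^-_{s_{2n+2}}-\kappa^-_{s_{2n}}=2(y_{2n+1}-y_{2n})>0$ and dually $\kappa^+_{s_{2n-1}}-\kappa^+_{s_{2n+1}}>0$, which give $\hp_{s_{2n}}<\hp_{s_{2n+2}}$ and $\hp_{s_{2n+1}}<\hp_{s_{2n-1}}$. (The intermediate equalities $\kappa^-_{s_{2n+2}}=\kappa^-_{s_{2n+1}}$ and $\kappa^+_{s_{2n+1}}=\kappa^+_{s_{2n}}$ can also be quoted from Proposition \ref{prop:sink and source share}.) For the equality conditions, the same step computation shows that $m\mapsto x_m+y_m$ is non-decreasing and strictly increasing exactly across each sink, while $m\mapsto x_m-y_m$ is non-increasing and strictly decreasing exactly across each source; together with the convergence at $\pm\infty$ this identifies $\kappa^-_{-\infty}$ and $\kappa^-_{+\infty}$ as the infimum and supremum of the $\kappa^-$-values, and dually for $\kappa^+$. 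Consequently a middle equality, say $\hp_{s_{2n+2}}=x_{+\infty}$, forces $x+y$ to be constant on all indices above $s_{2n+2}$, which is impossible once there is even one further sink; so it can occur only when $S$ is bounded above, and then it does occur, at the largest sink (or as $\hp_{+\infty}=x_{+\infty}$ when $+\infty$ is a sink). The companion equality $\hp_{s_{2n+1}}=x_{+\infty}$ is handled identically using sources and $x-y$. Symmetrically, an end equality $\hp_{s_{2n}}=x_{-\infty}-\pi$ forces $x+y$ constant on all indices below $s_{2n}$, impossible once a further sink lies below, hence it requires $S$ bounded below and is then attained at the smallest sink (or as $\hp_{-\infty}$); likewise $\hp_{s_{2n-1}}=x_{-\infty}+\pi$.

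I expect the main obstacle to be the bookkeeping at $\pm\infty$: one must reconcile the two-case definitions of $\hp_{\pm\infty}$ and of the $\kappa$-values at $\pm\infty$ (sink versus source) with the monotone-sequence picture, check that the slope/increment computation still applies to the step adjacent to $\pm\infty$, and—for the equality clauses—correctly translate ``$x\pm y$ is constant all the way down to, or up to, infinity'' into ``$S$ is bounded in that direction,'' including verifying that the extremal equalities really are attained. None of this is deep, but it is where care is needed.
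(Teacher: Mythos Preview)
Your argument is correct and takes a genuinely different route from the paper's proof. The paper works directly with the definitions of $\hp_{s_n}$: it first shows $\hp_{s_{2n+1}}<\hp_{s_{2n-1}}$ from $|x_{2n-1}-x_{2n+1}|<y_{2n+1}-y_{2n-1}$, then handles the comparisons with $x_{\pm\infty}$ by an explicit four-case analysis (whether $\pm\infty$ equals some $s_m$ or is a limit), invoking Proposition~\ref{prop:sink and source share} to move between adjacent $\lambda$ functions. Your approach instead recognises the single identity $\hp_{s_{2n}}=\kappa^-_{s_{2n}}-\pi$, $\hp_{s_{2n+1}}=\kappa^+_{s_{2n+1}}+\pi$, so that all four non-strict inequalities become instances of Proposition~\ref{prop:kappa order} (extended to $\bar S$ by limits), and the two strict inequalities follow from the one-step slope computation $\kappa^-_{s_{2n+2}}-\kappa^-_{s_{2n}}=2(y_{2n+1}-y_{2n})>0$. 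This buys a uniform treatment: no separate case analysis at $\pm\infty$, and the equality clauses fall out from the same monotonicity picture (a further sink or source above/below forces a strict step). The paper's approach has the advantage of being self-contained at the level of $\hp$ and of making the role of Proposition~\ref{prop:sink and source share} visible; yours is shorter and more structural once Proposition~\ref{prop:kappa order} is in hand.
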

\begin{proof}
We'll first show $\hp_{s_{2n+1}}<\hp_{s_{2n-1}}$.
By Proposition \ref{prop:M is well-defined on s projectives} and Definition \ref{def:M on projectives at s} we know \[ x_{2n+1} - x_{2n-1} \leq |x_{2n-1} - x_{2n+1} | < y_{2n+1} - y_{2n-1}. \]
This immediately yields \[ \hp_{s_{2n+1}} = x_{2n+1} + \frac{\pi}{2} - y_{2n+1} < x_{2n-1} + \frac{\pi}{2} - y_{2n_1} = \hp_{s_{2n-1}} .\]
By a similar argument $\hp_{s_{2n}} < \hp_{s_{2n+2}}$.

We have two cases: either $s_m=+\infty$ for some $m$ or $s_{+\infty}\in\bar{S}$.
If there exists $m$ such that $s_m$ is a sink or source, then $\hp_{s_m}=x_{+\infty}$.
Suppose $s_m$ is a sink.
Then $s_{m-1}$ is a source and $\lambda_{s_{m-1}}^- = \lambda_{s_m}^-$ by Proposition \ref{prop:sink and source share}.
Then $\lambda_{s_{m-1}}^-(x_{m-1} - (\frac{\pi}{2} - y_{m-1})) = \frac{\pi}{2}$.
We know $\lambda_{s_{m-1}}^+(\hp_{s_{m-1}})=\frac{\pi}{2}$ (Proposition \ref{prop:the correct p hat}) and so $x_{+\infty} < \hp_{s_{m-1}}$.
Similarly, if $s_m$ is a source then $\hp_{s_{m-1}} < x_{+\infty}$.
Note the equality.

Now suppose $s_{+\infty}\in\bar{S}$.
But then $|x_{+\infty} - x_m| < \frac{\pi}{2} - y_m$ for all for all sources $s_m$.
Then $x_{+\infty} < \hp_{s_m}$ for all sources $s_m$.
Similarly, $\hp_{s_m} < x_{+\infty}$ for all sinks $s_m$.
Note the equality.

It remains to show the inequalities involving $x_{-\infty}$.
Suppose there exists $m$ such that $s_m=-\infty$.
If $s_m$ is a source then $\lambda_{s_m}^+ = \lambda_{s_m}^- = \lambda_{s_{m+1}}^-$ by Propositions \ref{prop:sink and source share} and \ref{prop:lambda properties}.
Then $\hp_{s_{m+1}} = x_{-\infty} - \pi$.
From above we know $\hp_{s_{m+2}} < \hp_{s_m} = x_{-\infty}+\pi$.
If $s_m$ is a sink the inequalities hold by a similar argument.
Note the possibility of an equality.

Finally, suppose $s_{-\infty}\in\bar{S}$.
Then $|x_m- x_{-\infty}| < y_m + \frac{\pi}{2}$ for all sources $s_m$.
Thus, $\hp_{s_m}< x_{-\infty} + \pi$.
Similarly, $x_{-\infty} - \pi < \hp_{s_m}$ for all sinks $s_m$.
Note the impossibility of an equality.
\end{proof}

\begin{proposition}\label{prop:p hat sides}
Let $a\in\R$ such that $a$ is neither a sink nor a source.
Let $s_n < a < s_{n+1}$ be the sink and source surrounding $a$.
If $s_n$ is a sink then $\hp_{s_n} < \hp_a < \hp_{s_{n+2}}$.
If $s_n$ is a source then $\hp_{s_{n+2}} < \hp_a < \hp_{s_n}$.
\end{proposition}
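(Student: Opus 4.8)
The plan is to translate the whole statement into inequalities among the $\kappa$-values of Definition \ref{def:kappas}, where it follows from the monotonicity already established in Proposition \ref{prop:kappa order} together with one elementary remark about a single linear piece of the image of $\MM$.

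The first step is to record the identity linking Definitions \ref{def:p hat} and \ref{def:lambda functions}: if $s_n$ is a sink then for every $c$ among $s_n,s_{n+1},s_{n+2},a$ one has $\hp_c=x_c-(\tfrac{\pi}{2}-y_c)=\kappa_c^--\pi$, while if $s_n$ is a source one has $\hp_c=x_c+(\tfrac{\pi}{2}-y_c)=\kappa_c^++\pi$. For $c=a$ and $c=s_n$ this is Definition \ref{def:p hat} rewritten through Definition \ref{def:kappas}; for $c=s_{n+2}$ (a sink in the first case, a source in the second) it is the matching clause of Definition \ref{def:p hat}; and in the degenerate case $c\in\{\pm\infty\}$ one checks from Notation \ref{note:images of M on s projectives} (where $y_{-\infty}=-\tfrac{\pi}{2}$, $y_{+\infty}=\tfrac{\pi}{2}$) that the same identity survives, $\hp_{+\infty}=x_{+\infty}$ agreeing with both $\kappa_{+\infty}^--\pi$ and $\kappa_{+\infty}^++\pi$, and $\hp_{-\infty}$ agreeing with whichever of $\kappa_{-\infty}^--\pi$, $\kappa_{-\infty}^++\pi$ matches the orientation of $-\infty$. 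Hence the assertion to be proved is equivalent to $\kappa_{s_n}^-<\kappa_a^-<\kappa_{s_{n+2}}^-$ when $s_n$ is a sink, and to $\kappa_{s_{n+2}}^+<\kappa_a^+<\kappa_{s_n}^+$ when $s_n$ is a source.

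For the sink case (the source case being mirror-symmetric): by Proposition \ref{prop:M is well-defined on s projectives} the segment from $\MM P_{s_n}$ to $\MM P_{s_{n+1}}$ has slope $+1$, since $s_n$ is a sink (so the segment moves rightward) and $\tan^{-1}$ is increasing (so it moves upward); by Proposition \ref{prop:M is sort of injective}(3) the point $\MM P_a$ lies on this segment, at parameter $t_a\in(0,1)$ which is strictly increasing in $a$. Therefore $x+y=\kappa^--\tfrac{\pi}{2}$ is strictly increasing along the segment, and $s_n<a<s_{n+1}$ gives $\kappa_{s_n}^-<\kappa_a^-<\kappa_{s_{n+1}}^-$. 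Finally Proposition \ref{prop:kappa order}, applied to $s_{n+1}<s_{n+2}$, gives $\kappa_{s_{n+1}}^-\leq\kappa_{s_{n+2}}^-$, and combining yields $\kappa_{s_n}^-<\kappa_a^-<\kappa_{s_{n+2}}^-$. The source case is identical with $x-y=\kappa^++\tfrac{\pi}{2}$ in place of $x+y$, the segment now having slope $-1$ so that $\kappa^+$ is strictly \emph{decreasing} along it; this gives $\kappa_{s_{n+1}}^+<\kappa_a^+<\kappa_{s_n}^+$, and then $\kappa_{s_{n+2}}^+\leq\kappa_{s_{n+1}}^+$ from Proposition \ref{prop:kappa order}.

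The degenerate configurations need only a remark. When $s_n=-\infty$ (which forces $s_{n+1},s_{n+2}$ finite) the segment argument runs unchanged with the limit point $(x_{-\infty},y_{-\infty})$ of Notation \ref{note:images of M on s projectives} playing the role of $\MM P_{s_n}$; when $s_{n+1}=+\infty$ there is no $s_{n+2}$, and reading $\hp_{s_{n+2}}$ as $\hp_{+\infty}=x_{+\infty}$, the only surviving inequality $\hp_a<x_{+\infty}$ (for $s_n$ a sink; the source case is the mirror) is $\kappa_a^--\pi<x_{+\infty}$, i.e.\ $x_a-x_{+\infty}<\tfrac{\pi}{2}-y_a$, which is contained in the estimate $|x_a-x_{+\infty}|<\tfrac{\pi}{2}-y_a$ already used in the proof of Proposition \ref{prop:p hat order}; alternatively all such cases may simply be read off Proposition \ref{prop:p hat order}. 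I expect no genuine obstacle here, since the calculation is forced once the right translation is in place; the only point that deserves care is the bookkeeping, namely that $\hp_a$ must be compared with $\hp_{s_{n+2}}$ and \emph{not} with $\hp_{s_{n+1}}$, precisely because $\hp_a$ and $\hp_{s_{n+2}}$ are built from the same flavour of $\kappa$ (both $\kappa^-$, respectively both $\kappa^+$) whereas $\hp_{s_{n+1}}$ uses the other, so that no single monotonicity relates $\hp_a$ to $\hp_{s_{n+1}}$ directly.
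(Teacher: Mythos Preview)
Your argument is correct and is essentially the paper's own computation repackaged through the $\kappa$-values. The paper works the source case directly in coordinates: from $-y_{n+1}<-y_a<-y_n$ and $x_{n+1}<x_a<x_n$ it adds to get $x_{n+1}+\tfrac{\pi}{2}-y_{n+1}<\hp_a<\hp_{s_n}$, then invokes Proposition~\ref{prop:sink and source share} (the equality $\kappa_{s_{n+1}}^+=\kappa_{s_{n+2}}^+$) to identify the left-hand bound with $\hp_{s_{n+2}}$. You do the same segment monotonicity in the language of $\kappa^{\pm}$ and close with the weaker inequality from Proposition~\ref{prop:kappa order} instead of the sharing equality; since you already have strictness on the segment, the weaker tool suffices. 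The only genuine difference is this last step, and it is a matter of taste. Your discussion of the degenerate endpoints is more than the paper provides (the paper tacitly assumes $s_{n+2}$ exists), but it does no harm.
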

\begin{proof}
Note that $\tan^{-1}s_n < \tan^{-1} a < \tan^{-1} s_{n+1}$ so $-y_{n+1} < -y_a < -y_n$.
Suppose $s_n$ is a source.
Then $x_{n+1} < x_a < x_n$.
Therefore, \[ x_{n+1} +\frac{\pi}{2} - y_{n+1} < x_a + \frac{\pi}{2} -y_a = \hp_a < x_n +\frac{\pi}{2} - y_n = \hp_{s_n}.\]
However, by Proposition \ref{prop:sink and source share} \[ x_{n+1} - \frac{\pi}{2} - y_{n+1} = x_{n+2} + \frac{\pi}{2} - y_{n+2} = \hp_{s_{n+2}} .\]
Therefore, the inequality holds.
When $s_n$ is a sink the proof is similar.
\end{proof}

\subsection{The Codomain of $\MM$}\label{sec:codomain of M}
This subsection deals with the values that $\MM$ can take. We first show that the graphs of the $\lambda$ functions cover $\R\times[-\frac{\pi}{2},\frac{\pi}{2}]$ in a convenient way.
Then we describe the range of values of $x$-coordinates that we use in Section \ref{sec:M is finally defined}.

\begin{proposition}\label{prop:lambda covers}
The $\lambda$ functions cover $\R\times\{-\frac{\pi}{2},\frac{\pi}{2}\}$ once and $\R\times(-\frac{\pi}{2},\frac{\pi}{2})$ twice.
\end{proposition}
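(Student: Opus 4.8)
The plan is to reduce everything to a single combinatorial fact about the parameters of the $\lambda$ functions. By Definition \ref{def:undecorated lambda} each $\lambda$ function is a $2\pi$-periodic piecewise-linear ``tent'' of slope $\pm 1$: write such a function as $\lambda(z-\kappa)$ for its parameter $\kappa$ (one of the $\kappa_a^{\ast}$, with $\lambda_{s_n}^{-}=\lambda_{s_n}^{+}$ for $s_n=\pm\infty$ by Proposition \ref{prop:lambda properties}(4), and adjacent sink--source functions identified by Proposition \ref{prop:sink and source share}). Then it attains $-\tfrac{\pi}{2}$ exactly on $\kappa+2\pi\Z$, attains $+\tfrac{\pi}{2}$ exactly on $\kappa+\pi+2\pi\Z$ ($=\hp_{a}+2\pi\Z$ by Proposition \ref{prop:the correct p hat}), and for each $c\in(-\tfrac{\pi}{2},\tfrac{\pi}{2})$ meets the line $y=c$ exactly twice per period, once ascending at $\kappa+c+\tfrac{\pi}{2}+2\pi\Z$ and once descending at $\kappa+\tfrac{3\pi}{2}-c+2\pi\Z$. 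Hence the proposition follows from the single claim $(\dagger)$: the set $K$ of parameters of the distinct $\lambda$ functions meets every coset of $2\pi\Z$ in exactly one point.

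Granting $(\dagger)$: on $y=-\tfrac{\pi}{2}$ each real number lies in $\kappa+2\pi\Z$ for exactly one $\lambda$ function, and on $y=+\tfrac{\pi}{2}$ in $\kappa+\pi+2\pi\Z$ for exactly one, so the two boundary lines are covered once. For $c\in(-\tfrac{\pi}{2},\tfrac{\pi}{2})$ the ascending crossings of $y=c$ over all functions form the set $K+c+\tfrac{\pi}{2}+2\pi\Z$ and the descending ones $K+\tfrac{3\pi}{2}-c+2\pi\Z$; by $(\dagger)$ each is all of $\R$, covered exactly once. Through a point $(x_{0},c)$ the unique ascending crosser has parameter $\equiv x_{0}-c-\tfrac{\pi}{2}$ and the unique descending crosser has parameter $\equiv x_{0}-\tfrac{3\pi}{2}+c$; these differ by $\pi-2c\not\equiv 0\pmod{2\pi}$, so they are different functions, and no other function passes through $(x_{0},c)$. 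Thus the open strip is covered exactly twice. (The same ascending/descending alternative at each crossing of two distinct $\lambda$ functions is furnished directly by Proposition \ref{prop:lambda intersections}.)

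To prove $(\dagger)$ I would first record the geometry of $\MM$ on projectives. By Propositions \ref{prop:M is well-defined on s projectives} and \ref{prop:M is sort of injective} the image segment joining a source to its adjacent sink has slope $-1$, and the segment joining a sink to its adjacent source has slope $+1$; hence $x_{a}+y_{a}$ (so $\kappa_{a}^{-}$) is constant on each closed ``source region'' $[s_{2n-1},s_{2n}]$, while $x_{a}-y_{a}$ (so $\kappa_{a}^{+}$) is constant on each closed ``sink region'' $[s_{2n},s_{2n+1}]$. With Proposition \ref{prop:kappa order} this makes $a\mapsto\kappa_{a}^{-}$ continuous and non-decreasing on $\R$ --- strictly increasing on sink regions, constant on source regions --- and $a\mapsto\kappa_{a}^{+}$ continuous and non-increasing, strictly decreasing on source regions, constant on sink regions. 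Using $y_{\pm\infty}=\pm\tfrac{\pi}{2}$ (Notation \ref{note:images of M on s projectives}) one gets $\kappa_{-\infty}^{-}=\kappa_{-\infty}^{+}=x_{-\infty}$ and $\kappa_{+\infty}^{-}=x_{+\infty}+\pi\equiv x_{+\infty}-\pi=\kappa_{+\infty}^{+}\pmod{2\pi}$, with $|x_{+\infty}-x_{-\infty}|\le\pi$ since the cumulative slope between $-\infty$ and $+\infty$ is at most $1$ while the second coordinate changes by $\pi$. Now, by Definition \ref{def:lambda functions}, the interior of a sink region uses $\kappa_{a}^{-}$ and the interior of a source region uses $\kappa_{a}^{+}$, while Proposition \ref{prop:sink and source share} says precisely that no jump occurs at a shared vertex; so the ``$-$''-type functions are named by the continuous non-decreasing image $\{\kappa_{a}^{-}\}=[x_{-\infty},x_{+\infty}+\pi]$ with each constant stretch collapsing to the one shared function, i.e.\ each value is realized by exactly one function, and likewise the ``$+$''-type functions realize each value of $[x_{+\infty}-\pi,x_{-\infty}]$ exactly once; the two lists overlap only in $x_{-\infty}$ (where $\lambda_{-\infty}^{-}=\lambda_{-\infty}^{+}$), their union is the length-$2\pi$ interval $[x_{+\infty}-\pi,x_{+\infty}+\pi]$ with congruent endpoints, and $\lambda_{+\infty}^{-}=\lambda_{+\infty}^{+}$ sits at those endpoints. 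Reading this modulo $2\pi$ yields $(\dagger)$.

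The delicate point --- and the main obstacle --- is exactly $(\dagger)$: reconciling the $\kappa^{-}/\kappa^{+}$ alternation on interiors of regions with the shared functions at sinks and sources and with the behaviour at $\pm\infty$ (including whether $s_{\pm\infty}\in\bar S$ and the precise parameter of $\lambda_{\pm\infty}$), and checking that after these identifications the parameters partition $\R/2\pi\Z$ with no coset missed or repeated. I would also treat the degenerate configurations separately --- $S$ empty, $S$ a single point, or $S$ unbounded on one or both sides --- in each of which there are only finitely many $\lambda$ functions or a single monotone family, so that the same slope/endpoint computation applies verbatim, or the (at most two) functions can be listed and the covering verified by hand. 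Once $(\dagger)$ is secured, the rest is the elementary geometry of translated tents described in the first two paragraphs.
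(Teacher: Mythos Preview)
Your proposal is correct and follows essentially the same route as the paper. The paper works with the values $\hp_a$ (which satisfy $\hp_a\equiv\kappa_a^\ast+\pi\pmod{2\pi}$) and uses Proposition~\ref{prop:p hat order} together with an interpolation argument to show they hit each point of a half-open interval of length $2\pi$ exactly once, then deduces the double cover of the interior by tracing the two slope-$\pm1$ lines through a point up to $y=\tfrac{\pi}{2}$; your reduction to the transversal claim $(\dagger)$ for the parameters $\kappa$ and the monotonicity of $a\mapsto\kappa_a^{\pm}$ is the same argument in slightly cleaner packaging.
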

\begin{proof}
We first prove the $\lambda$ functions in Definition \ref{def:lambda functions} cover $\R\times\{\frac{\pi}{2}\}$.
Note that each $\lambda$ function is $2\pi$-periodic.
So, it suffices to show there exists a half open interval $I$ of length $2\pi$ such that $I\times\{\frac{\pi}{2}\}$ is covered exactly once. 
We will show that the interval $[x_{-\infty} - \pi, x_{-\infty}+\pi)$ is covered exactly once by the $\lambda$ functions. 
In particular, $(x_{-\infty}-\pi, \frac{\pi}{2})$ and $(x_{+\infty},\frac{\pi}{2})$ are only in the image of $\lambda_{-\infty}$ and $\lambda_{+\infty}$, respectively.

We now show that if $\hp_{s_{n+2}} < z < \hp_{s_{n}}$ for $s_n$ a source then there exists $a\in\R$ such that $\hp_a=z$.
There exists a unique $t\in(0,1)$ such that \[ z = (1-t)\hp_{s_{n+2}}  + t\, \hp_{s_n} .\]
We know $x_{n+2} + \frac{\pi}{2} - y_{n+2} = x_{n+1} + \frac{\pi}{2} - y_{n+1}$.
Then \[ z = ((1-t) x_{n+1} + tx_n )  + \frac{\pi}{2}   - ((1-t)y_{n+1} + ty_n).\]
Since $\tan^{-1}:\R\to[-\frac{\pi}{2},\frac{\pi}{2}]$ is bijective and order preserving, there is a unique $a\in\R$ such that $s_{n} < a < s_{n+1}$ and \[ \tan^{-1} a = (1-t)\tan^{-1} s_{n+1} + t \tan^{-1} s_n. \]
Then $z =\hp_a$ and if $b\neq a$ then $\hp\neq z$.

This argument also shows that if $+\infty$ is a sink  and $x_{+\infty} < z < \hp_{s_n}$ then there is a unique $a> s_n\in\R$ such that $\hp_a = z$.
If $+\infty$ is not a sink or source, then note that $\lim_{n\to+\infty} x_n = x_{+\infty}$ and $\lim_{n\to+\infty} y_n=\frac{\pi}{2}$.
Thus, $\lim_{n\to+\infty} \hp_{s_{2n+1}} = x_{+\infty}$.
Therefore, for all $z\in (x_{+\infty},x_{-\infty}+\pi)$ there is a unique $a\in\R$ such that $\hp_a=z$.
By a similar argument we have such a unique $a$ for each $z$ between $x_{-\infty}-\pi$ and $x_{+\infty}$. 

Therefore, for all $z\in[x_{-\infty}-\pi,x_{+\infty}+\pi)$ there is a unique $\lambda$ function such that $\lambda_*^*(z)=\frac{\pi}{2}$ (recall Notation \ref{note:lambda_*^*}).
Thus $(z,\frac{\pi}{2})$ is the image of a unique $\lambda$ function for all $z\in\R$.
For a point $(z,-\frac{\pi}{2})$ use the technique above to find $z-\pi$.
Then $\lambda_*^*(z-\pi) = \frac{\pi}{2}$ and $\lambda_*^* (z) = -\frac{\pi}{2}$ as desired.

Choose some point $(x,y)\in\R\times(-\frac{\pi}{2},\frac{\pi}{2})$.
Then there are exactly two perpendicular lines, with slope $\pm 1$ that intersect at $(x,y)$.
Let $(z_1,\frac{\pi}{2})$ and $(z_2,\frac{\pi}{2})$ be the points where these two lines intersect $\R\times\{\frac{\pi}{2}\}$.
There are unique $\lambda$ functions that hit these points and so $(x,y)$ is in the image of both.
By Proposition \ref{prop:lambda intersections}, no other $\lambda$ functions will intersect these two at $(x,y)$.
\end{proof}

\begin{construction}\label{con:interior of M}
For most indecomposable representations $M$, we'll map $M$ to a specific intersection of distinct $\lambda$ functions from Definition \ref{def:lambda functions}.
Recall Notation \ref{note:indefinite intervals}.
Let $M_{|a,b|}$ be an indecomposable that is not projective, not simple, and $|a,b|\neq[s_n,s_n+1]$.
\begin{itemize}
\item If $a\notin S$, use $\lambda_a^*= \lambda_a$.
\item If $a\in S$ and $a\notin |a,b|$, use $\lambda_a^*= \lambda^+_{a}$.
If $a\in S$ and $a\in |a,b|$, use $\lambda_a^*=\lambda^-_{a}$.
\item If $b\notin S$, use $\lambda_b^*= \lambda_b$.
\item If $b\in S$ and $b\notin |a,b|$, use $\lambda_b^*=\lambda^-_{b}$.
If $b\in S$ and $b\in |a,b|$, use $\lambda_b^*=\lambda^+_{b}$. \hfill $\diamond$
\end{itemize}
\end{construction}

\begin{lemma}\label{lem:good intersection point}
Let $M_{|a,b|}$, $\lambda_a^*$, and $\lambda_b^*$ be as in Construction \ref{con:interior of M}.
Then
\begin{align*}
x_a < n&\pi +\frac{1}{2}(\kappa_a^* + \kappa_b^*) \leq x_a+\pi \\
x_b < n&\pi +\frac{1}{2}(\kappa_a^* + \kappa_b^*) \leq x_b+\pi
\end{align*}
where $n=0$ if $\kappa_b^* = \kappa_b^-$ and $n=1$ if $\kappa_b^* = \kappa_b^+$.
The inequalities are false for other values of $n$.
\end{lemma}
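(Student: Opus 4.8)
The plan is to pit the explicit description of the intersection points of two $\lambda$ functions (Proposition~\ref{prop:lambda intersections}) against the metric estimate $|x_a-x_b|\le y_b-y_a$ coming from Propositions~\ref{prop:M is well-defined on s projectives} and~\ref{prop:M is sort of injective} (equivalently, against $\kappa_a^-\le\kappa_b^-$ and $\kappa_b^+\le\kappa_a^+$ from Proposition~\ref{prop:kappa order}). First I would record the reductions that turn the statement into a computation. Since $M_{|a,b|}$ is not simple, $a<b$, so $y_a=\tan^{-1}a<\tan^{-1}b=y_b$. By Proposition~\ref{prop:lambda intersections} the $x$-coordinates of the intersection points of $\lambda_a^*$ and $\lambda_b^*$ form the arithmetic progression $\{m\pi+\tfrac12(\kappa_a^*+\kappa_b^*):m\in\Z\}$ of common difference $\pi$, while $(x_a,x_a+\pi]$ and $(x_b,x_b+\pi]$ are half-open intervals of length exactly $\pi$. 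Hence each interval contains exactly one term of the progression, and it suffices to show that the one value of $n$ named in the statement yields a term lying in \emph{both} intervals; the clause ``false for other $n$'' is then automatic from the length-$\pi$ spacing.

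Next I would carry out the core computation. Expanding $\kappa_a^\pm=x_a\pm(y_a+\tfrac\pi2)$ via Definition~\ref{def:kappas} (with signs matched as there) and likewise at $b$, I would split into the four cases according to which of $\kappa_a^-,\kappa_a^+$ and $\kappa_b^-,\kappa_b^+$ are selected by Construction~\ref{con:interior of M}. The first observation is purely bookkeeping: Construction~\ref{con:interior of M} selects $\kappa_b^-$ precisely when the statement's $n$ is $0$ and $\kappa_b^+$ precisely when it is $1$, so the value of $n$ is forced and independent of the choice at $a$. In each of the four sign combinations, substituting into $n\pi+\tfrac12(\kappa_a^*+\kappa_b^*)$ and cancelling, the two upper bounds collapse to $x_b-x_a\le y_b-y_a$ or $x_a-x_b\le y_b-y_a$, which are immediate from $|x_a-x_b|\le y_b-y_a$ (and non-strict, matching the ``$\le$'' of the statement), and the two lower bounds collapse to strict inequalities of the shape $(x_a-x_b)-(y_a+y_b)<\pi$ or $(x_b-x_a)+(y_b-y_a)<2\pi$.

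To finish the lower bounds I would note that combining $|x_a-x_b|\le y_b-y_a$ with $y_a,y_b\in[-\tfrac\pi2,\tfrac\pi2]$ already gives the non-strict form, and that equality is impossible under the standing hypotheses: tracing an equality back through Propositions~\ref{prop:kappa order} and~\ref{prop:sink and source share}, it forces either $\kappa_a^*\equiv\kappa_b^*\pmod{2\pi}$---contradicting the distinctness of $\lambda_a^*$ and $\lambda_b^*$ built into Construction~\ref{con:interior of M}---or that $a$ and $b$ are an adjacent sink and source with $M_{|a,b|}$ supported on the interval between them, which is exactly the configuration that Construction~\ref{con:interior of M} excludes. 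The cases where $a$ or $b$ lies in $S$ or equals $\pm\infty$ need no separate treatment: the above computation is uniform in $a,b$ once the chosen $\kappa$ is known, and Construction~\ref{con:interior of M} (with Proposition~\ref{prop:lambda properties}(4) at $\pm\infty$) fixes that directly.

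I expect the main obstacle to be exactly this last step: arranging the four sign-cases so one can see cleanly that the selection rules of Construction~\ref{con:interior of M} always single out the term of the progression landing in \emph{both} intervals rather than the neighbouring one, and---the genuinely delicate point---that the strict lower bounds never degenerate into equalities, which is where the exclusion of the adjacent sink--source supports and the distinctness hypothesis are actually used. Everything else is a one-line substitution, so in writing up I would treat the case $a,b\notin S$ in full and then indicate that the remaining cases differ only in the label of the chosen $\kappa$.
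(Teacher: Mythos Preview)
Your overall plan---the four sign-cases reduced via $|x_a-x_b|\le y_b-y_a$ together with $y_a,y_b\in[-\tfrac\pi2,\tfrac\pi2]$, and the arithmetic-progression framing to dispatch ``false for other $n$''---is exactly the paper's approach, and your observation that each half-open interval of length $\pi$ meets the progression exactly once is a tidy way to package the last clause.

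The gap is in your strictness argument. You claim that equality in a lower bound forces either $\kappa_a^*\equiv\kappa_b^*\pmod{2\pi}$ or the excluded $[s_n,s_{n+1}]$ configuration. That dichotomy is not what actually occurs. Take the $--$ case: chasing $x_b=\tfrac12(\kappa_a^-+\kappa_b^-)$ gives $y_a=-\tfrac\pi2$ together with $x_b-x_a=y_b-y_a$, i.e.\ $a=-\infty$ and $(x_a,y_a),(x_b,y_b)$ on a single segment of the projective line. In this situation $\kappa_b^--\kappa_a^-=2y_b+\pi\in(0,2\pi)$ whenever $b\ne+\infty$, so your alternative (a) fails; and $b$ need not be a sink or source, so (b) fails too. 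What the paper uses here is the \emph{not projective} hypothesis of Construction~\ref{con:interior of M}: under those constraints $M_{|a,b|}$ would be a projective, which is excluded. The $++$ case is symmetric (with $y_b=\tfrac\pi2$), and the $+-$ case is handled by yet another mechanism: the selection rules of Construction~\ref{con:interior of M} force $(x_a,y_a)$ and $(x_b,y_b)$ onto distinct segments of the projective zigzag, so one obtains the strict $|x_a-x_b|<y_b-y_a$ at the outset and the lower bounds are strict for free. Your writeup will go through once you replace the claimed dichotomy by these case-specific appeals to ``not projective'' and to the segment argument; the distinctness of $\lambda_a^*$ and $\lambda_b^*$ is not the operative exclusion.
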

\begin{proof}
There are four cases to check, based on which kappa values have $-$ or $+$.
We start with $\kappa_a^-$ and $\kappa_b^-$.
By Proposition \ref{prop:kappa order}, and Proposition \ref{prop:lambda covers} we know $\kappa_a^- < \kappa_b^-$.
By definition, $x_a \leq \kappa_a^-$ and $\kappa_b^- \leq x_b+\pi$.
Then $x_a < \frac{1}{2} (\kappa_a^- + \kappa_b^-) < x_b+\pi$.

We explicitly show the following technique as we will use it several times throughout the rest of the proof of the proposition.
By Definition \ref{def:M on the rest of the projectives} and Proposition \ref{prop:M is sort of injective}, $|x_a-x_b| \leq y_b-y_a$.
If $y_a = -\frac{\pi}{2}$ then $|x_a-x_b|$ must be strictly less than $y_b-y_a$ or else $M_{|a,b|}$ would be projective (Theorem \ref{thm:projectives}).
If $|x_a-x_b|=y_b-y_a$ then we know $y_a > -\frac{\pi}{2}$.
In either case we begin the same:
\begin{align*}
0 &< \frac{1}{2}(y_a + y_a) + \frac{\pi}{2} + \frac{1}{2}(y_b-y_a) + \frac{1}{2}(x_a - x_b) \\
0 &< \frac{1}{2} (y_a + y_b) + \underbrace{\frac{1}{2}(y_a-y_a)}_{=0} + \frac{1}{2} (x_a-x_b) + \frac{\pi}{2} \\
0 &< \frac{1}{2} (y_a + y_b) + \frac{1}{2}(x_a-x_b) + \underbrace{\frac{1}{2}(x_b-x_b)}_{=0} + \frac{\pi}{2} \\
0 &< \frac{1}{2} (y_a + y_b) + \frac{1}{2}(x_a+x_b) +\frac{\pi}{2} - x_b \\
x_b &< \frac{1}{2}(\kappa_a^- + \kappa_b^-) 
\end{align*}
Finally we show $\frac{1}{2}(\kappa_a^- + \kappa_b^-) \leq x_a + \pi$.
Since $x_b-x_a\leq y_b-y_a$, we have $x_b - x_a \leq \pi -(y_a+y_b)$.
Then
\begin{align*}
\frac{1}{2}(x_a - x_a) + \frac{1}{2}(x_b-x_a) &\leq \frac{\pi}{2} - \frac{1}{2}(y_a+y_b) \\
\frac{1}{2}(x_a + x_b)+ \frac{1}{2}(y_a + y_b) &\leq x_a + \frac{\pi}{2}.
\end{align*}
Thus the equations hold for $\kappa_a^-$ and $\kappa_b^-$.

Now we check $\kappa_a^+$ and $\kappa_b^-$.
If $a$ or $b$ is a sink then there must be another sink or source between $a$ and $b$ or else $M_{|a,b|}$ is projective (Theorem \ref{thm:projectives}) or $|a,b|=[s_n,s_{n+1}]$.
Since we're checking $\kappa_b^-$, if $a$ is a source then $b$ cannot be between $a$ and the next sink.
Similarly, if $b$ is a source then $a$ cannot be between $b$ and the previous sink.
Thus, $(x_a,y_a)$ and $(x_b,y_b)$ do not lie on the same line segment from Definition \ref{def:M on the rest of the projectives}.
So we may start with $|x_a-x_b|<y_b-y_a$ and use the same technique as above:
\begin{align*}
\frac{1}{2}(x_a - x_b) &< \frac{1}{2}(y_b-y_a)\\
x_a &<  \frac{1}{2}(x_b+x_a) + \frac{1}{2}(y_b-y_a) +\frac{1}{2}\left(\frac{\pi}{2} - \frac{\pi}{2}\right)
\end{align*}
Thus, $\frac{1}{2}(\kappa_a^+ + \kappa_b^-) > x_a$.
By the same technique we see $\frac{1}{2}(\kappa_a^+ + \kappa_b^-) > x_b$.

Starting with $x_b-x_a < y_b - y_a \leq \pi$, we use the same technique:
\begin{align*}
\frac{1}{2}(x_a - x_b)  + \frac{1}{2}(y_b-y_a) &\leq \pi \\
\frac{1}{2}(x_a+ x_b) + \frac{1}{2}(y_b-y_a) + \frac{1}{2}\left(\frac{\pi}{2} - \frac{\pi}{2}\right) &\leq x_b + \pi
\end{align*}
Thus, $\frac{1}{2}(\kappa_a^+ + \kappa_b^-) \leq x_b+\pi$.
If we start with $x_b-x_a$ instead, we obtain $\frac{1}{2}(\kappa_a^+ + \kappa_b^-) \leq x_a+\pi$.

We now check $\kappa_a^-$ and $\kappa_b^+$.
As with the first case, if $y_a=-\frac{\pi}{2}$ then $|x_a-x_b|<y_b-y_a$ and if $|x_a-x_b|=y_b-y_a$ then $y_a>-\frac{\pi}{2}$.
In either case
\begin{align*}
0 &< \frac{1}{2}(x_a-x_b) - \frac{1}{2}(y_b-y_a) + \pi \\
x_b &< \frac{1}{2}(x_a+x_b) + \frac{1}{2}(y_a - y_b) + \pi
\end{align*}
Thus, $x_b < \frac{1}{2}(\kappa_a^- + \kappa_b^+) + \pi$.
Using the same technique starting with $x_b-x_a$ instead we obtain $x_b < \frac{1}{2}(\kappa_a^- + \kappa_b^+) + \pi$.
We also see that just using the technique with $x_a - x_b \leq y_b-y_a$ yields \[ \frac{1}{2}(x_a + x_b) -\frac{1}{2}(y_a-y_b) < x_b.\]
Adding $\pi$ to both sides yields $\frac{1}{2}(\kappa_a^- + \kappa_b^+) + \pi \leq x_b + \pi$.
By the same argument $\frac{1}{2}(\kappa_a^- + \kappa_b^+) + \pi < x_a + \pi$.

Finally we check $\kappa_a^+$ and $\kappa_b^+$.
Similar to above if $y_b = \frac{\pi}{2}$ then $|x_a-x_b|<y_b-y_a$ and if $|x_a-x_b| = y_b-y_a$ then $y_b<\frac{\pi}{2}$.
In either case $x_b - x_a < \pi - (y_a+y_b)$.
Then we have
\begin{align*}
x_b -\frac{1}{2}(x_a + x_b) &< \frac{\pi}{2} - \frac{1}{2}(y_a+y_b) \\
x_b &< \pi + \frac{1}{2}(x_a+x_b) - \frac{1}{2}(y_a+y_b) - \frac{\pi}{2}
\end{align*}
Thus, $x_b < \frac{1}{2}(\kappa_a^+ + \kappa_b^+) + \pi$.
By a similar argumetn $x_a < \frac{1}{2}(\kappa_a^+ + \kappa_b^+) + \pi$.

Since $y_a \geq -\frac{\pi}{2}$, we have $\frac{1}{2}(x_a - x_b) \leq \frac{1}{2}(y_a+y_b) + \frac{\pi}{2}$.
Then
\begin{align*}
-\frac{1}{2}(y_a+y_b) - \frac{\pi}{2} &\leq \frac{1}{2}(x_b-x_a) \\
\frac{1}{2}(x_a+x_b) - \frac{1}{2}(y_a+y_b) - \frac{\pi}{2} &\leq x_b
\end{align*}
Adding $\pi$ to both sides yields $\frac{1}{2}(\kappa_a^+ + \kappa_b^+) +\pi <x_b + \pi$.
By a similar argument $\frac{1}{2}(\kappa_a^+ + \kappa_b^+) + \pi < x_a + \pi$.

In each case the range from $x_a$ to $x_a+\pi$ is $\pi$.
Thus, if a different value $n'$ is chosen in any case, the value of $n'\pi + \frac{1}{2}(\kappa_a^* + \kappa_b^*)$ is outside the given range.
This concludes the proof.
\end{proof}

\subsection{The Mapping}\label{sec:M is finally defined}
In this subsection we finish defining $\MM$ and prove two basic properties about its image.

\begin{definition}\label{def:the rest of M}
Let $M_{|a,b|}$, $\lambda_a^*$, and $\lambda_b^*$ be as in Construction \ref{con:interior of M}. Define $\MM M_{|a,b|}$ to be $\frac{1}{2}(\kappa_a^* + \kappa_b^*) + n\pi$ where $n=0$ if $\kappa_b^*=\kappa_b^-$ and $n=1$ if $\kappa_b^*=\kappa_b^+$.

Now consider $M_{\{a\}}$.
If $M_{\{a\}}$ is a simple injective then $a=s_n$ is a source and so define $M_{\{a\}}=(x_n+\pi,-y_n)$.
If $M_{\{a\}}$ is simple but not injective let $s_n, s_{n+1}$ be a sink and source such that $s_n<a<s_{n+1}$ in $\R$.
\begin{itemize} \item If $s_n$ is a source define $\MM M_{\{a\}} = (\hp_a, \frac{\pi}{2})$. \item If $s_n$ is a sink define $\MM M_{\{a\}} = (\hp_a+\pi,-\frac{\pi}{2})$.\end{itemize}

Finally, we consider $M_{[s_n,s_{n+1}]}$.
If $s_n$ is a source then $\lambda_{s_{n+1}}^- = \lambda_{s_n}^-$ so define $\MM M_{[s_n,s_{n+1}]} = (\hp_{s_n}+\pi,-\frac{\pi}{2})$.
If $s_n$ is a sink then $\lambda_{s_{n+1}}^+ = \lambda_{s_n}^+$ so define $\MM M_{[s_n,s_{n+1}]} = (\hp_{s_{n+1}},\frac{\pi}{2})$.
Since we already defined $\MM$ on projectives this concludes the definition of $\MM$.
\end{definition}

\begin{proposition}\label{prop:M on injectives}
Let $P_a$ be a projective indecomposable and $I_a$ and injective indecomposable in $\repAR$, both at $a$.
Then $\MM I_a = (x_a+\pi,-y_a)$ and so $y_{-a} = -y_a$.
\end{proposition}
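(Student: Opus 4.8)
The plan is to identify the injective $I_a$ explicitly as an interval indecomposable whose $\MM$-image lies on the \emph{same} $\lambda$-function that carries $\MM P_a=(x_a,y_a)$, but at the point of that graph with $x$-coordinate $x_a+\pi$; the sign in the second coordinate then comes for free from the identity $\lambda(w+\pi)=-\lambda(w)$. So first I would pin down $I_a$. Since $I_a\in\repAR$ is indecomposable it is an interval module, and dually to $P_a$ (Theorem \ref{thm:projectives}) its support is the $\preceq$-up-set $\{x:a\preceq x\}$. Because $\preceq$ agrees with or reverses $\le$ on each segment between consecutive elements of $\bar{S}$ and reverses at each element of $S$, a short check gives exactly four cases: (i) $a\notin S$ with a sink $s_n$ to its left, so $I_a=M_{[a,s_{n+1}]}$; (ii) $a\notin S$ with a source $s_n$ to its left, so $I_a=M_{[s_n,a]}$; (iii) $a=s_n$ a sink, so $I_a=M_{[s_{n-1},s_{n+1}]}$; (iv) $a=s_n$ a source, so $I_a=M_{\{s_n\}}$ is the simple injective. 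Along the way one notes $a\ne\pm\infty$ (there is neither a projective nor an injective there), that $I_a$ is never itself projective (so $\MM I_a$ is computed by Definition \ref{def:the rest of M} via Construction \ref{con:interior of M}), and that $I_a$ is never of the excluded shape $M_{[s_m,s_{m+1}]}$. In case (iv), Definition \ref{def:the rest of M} gives $\MM I_a=(x_n+\pi,-y_n)$ outright, which is the claim, so only (i)--(iii) require work.

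For those the mechanism is uniform. By Proposition \ref{prop:lambda properties}(1), $\MM P_a=(x_a,y_a)$ lies on the graph of $\mu:=\lambda_a$ when $a\notin S$ and of $\mu:=\lambda_{s_n}^-$ when $a=s_n\in S$, and $\mu=\lambda(\,\cdot-\kappa\,)$ for the appropriate kappa value of Definition \ref{def:kappas}. Reading off the decorations that Construction \ref{con:interior of M} assigns to the interval underlying $I_a$, the pair of $\lambda$-functions whose intersection is $\MM I_a$ (in the sense of Proposition \ref{prop:lambda intersections}) is $\{\lambda_a,\lambda_{s_{n+1}}^+\}$ in (i), $\{\lambda_{s_n}^-,\lambda_a\}$ in (ii), and $\{\lambda_{s_{n-1}}^-,\lambda_{s_{n+1}}^+\}$ in (iii); by Proposition \ref{prop:sink and source share} the last pair equals $\{\lambda_{s_n}^-,\lambda_{s_n}^+\}$, so in every case one of the two functions is $\mu$, and thus $\MM I_a$ lies on the graph of $\mu$. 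Its $x$-coordinate is $\tfrac12(\kappa_c^*+\kappa_d^*)+n\pi$ with $n=1$, because in each case the right-hand endpoint contributes a $\kappa^+$ value; and the kappa values collapse: since $(x_a,y_a)$ sits on a segment of slope $\pm1$ (Propositions \ref{prop:M is well-defined on s projectives} and \ref{prop:M is sort of injective}(3)) one gets $\kappa_{s_{n+1}}^+=\kappa_a^+$ in (i) and $\kappa_{s_n}^-=\kappa_a^-$ in (ii), while Proposition \ref{prop:sink and source share} gives $\kappa_{s_{n-1}}^-=\kappa_{s_n}^-$ and $\kappa_{s_{n+1}}^+=\kappa_{s_n}^+$ in (iii). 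So the $x$-coordinate becomes $\tfrac12(\kappa_a^-+\kappa_a^+)+\pi=x_a+\pi$ in (i) and (ii) and $\tfrac12(\kappa_{s_n}^-+\kappa_{s_n}^+)+\pi=x_{s_n}+\pi$ in (iii).

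To finish, since $\MM I_a$ lies on the graph of $\mu=\lambda(\,\cdot-\kappa\,)$ at $x=x_a+\pi$,
\[
\MM I_a=\bigl(x_a+\pi,\ \lambda\bigl((x_a-\kappa)+\pi\bigr)\bigr)=\bigl(x_a+\pi,\ -\lambda(x_a-\kappa)\bigr)=\bigl(x_a+\pi,\ -\mu(x_a)\bigr)=(x_a+\pi,-y_a),
\]
using $\lambda(w+\pi)=-\lambda(w)$, which is immediate from the tent-shaped graph in Definition \ref{def:undecorated lambda} (or a one-line check on the two linear pieces). The closing equality $y_{-a}=-y_a$ is then clear: the interpolation built into Definitions \ref{def:M on projectives at s} and \ref{def:M on the rest of the projectives} forces $y_c=\tan^{-1}c$ for every $c\in\R$, and $\tan^{-1}$ is odd.

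The only real obstacle is the kappa-bookkeeping in the middle step: correctly reading the $\pm$-decorations of Construction \ref{con:interior of M} off the interval underlying $I_a$ in each orientation case, and verifying the two flavors of kappa-collapse (the slope-$\pm1$ one on a segment, and the sink--source sharing one). After that the second coordinate is governed entirely by $\lambda(w+\pi)=-\lambda(w)$, and there is nothing else to do.
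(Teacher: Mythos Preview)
Your proof is correct and follows essentially the same approach as the paper: both recognize that the two $\lambda$-functions determining $\MM I_a$ already pass through $\MM P_a=(x_a,y_a)$ and then use $\pi$-periodicity of their intersections to land at $(x_a+\pi,-y_a)$; the paper phrases this via Proposition~\ref{prop:lambda intersections} and Lemma~\ref{lem:good intersection point} rather than your explicit kappa-collapse and the identity $\lambda(w+\pi)=-\lambda(w)$, but the content is identical. One small omission: the statement covers \emph{any} injective at $a$ (the paper's ``whether or not $a$ is included''), e.g.\ $M_{(a,s_{n+1}]}$ in your case~(i), but your argument handles these identically since Construction~\ref{con:interior of M} assigns them the same $\lambda$-pair.
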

\begin{proof}
If $I_a$ is a simple this is clear by Definition \ref{def:the rest of M}.
If $a=s_n$ is a sink then let $s_{n-1}$ and $s_{n+1}$ be the adjacent sources.
By the dual classification to Theorem \ref{thm:projectives} the support of $I_a$ is $[s_{n-1},s_{n+1}]$.
Then $(x_n,y_n)$ is one intersection point of $\lambda_{s_{n-1}}^-$ and $\lambda_{s_{n+1}}^+$.

By Lemma \ref{lem:good intersection point}, the image of $I_a$ must have $x$-coordinate greater than $x_{n+1}$, which is greater than $x_n$.
If $\repAR$ has this type of injective then $x_{n+1} - x_n < \pi$.
Then by Lemma \ref{lem:good intersection point} the next intersection is $(x_n+\pi,-y_n)$, which must be the coordinates of $\MM I_a$.

Now suppose $a$ is neither a sink nor a source.
Let $s_n, s_{n+1}$ be the sink and source such that $s_n < a < s_{n+1}$. 
If $s_n$ is a sink then $I_a$ has support $|a,s_{n+1}]$ (recall Notation \ref{note:indefinite intervals}).
Whether or not $a$ is included, $\kappa_a^* = \kappa_a^-$.
The formula stipulates to find an intersection between $\lambda_a$ and $\lambda_{s_{n+1}}^+$.

However, $\lambda_{s_{n+1}}^+ = \lambda_{s_n}^+$ and $(x_a,y_a)$ is already one intersection point.
By Proposition \ref{prop:lambda intersections} the next intersection must be $(x_a+\pi,-y_a)$.
Since $x_a < x_{n+1}$ (Proposition \ref{prop:M is sort of injective}) this must be the coordinates of $\MM I_a$.
If $s_n$ is a source a similar argument shows the same result.
Finally, $-y_a = -\arctan(a) = \arctan(-a) = y_{-a}$.
\end{proof}

The next proposition shows us that $\MM$ maps all the indecomposables between the ``projective line'' and ``injective line'' (the images of the projectives and injectives, respectively).

\begin{proposition}\label{prop:between projectives and injectives}
Let $M_{|a,b|}$ be an indecomposable in $\repAR$ and $(x_M,y_M)$ the coordinates of $\MM M_{|a,b|}$.
Then there exists $c\in\R$ such that $x_c < x_M < x_{-c}+\pi$ and $y_c = y_M$.
\end{proposition}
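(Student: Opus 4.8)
The plan is to read the statement geometrically. By Definitions \ref{def:M on projectives at s} and \ref{def:M on the rest of the projectives} the second coordinate of $\MM P_d$ equals $\tan^{-1}d$ for every $d\in\R\cup\{\pm\infty\}$, so the images of the projectives form the graph of a $1$-Lipschitz function $x=\phi(y)$ on $[-\frac{\pi}{2},\frac{\pi}{2}]$ with $\phi(\tan^{-1}d)=x_d$ (the Lipschitz bound is Propositions \ref{prop:M is well-defined on s projectives} and \ref{prop:M is sort of injective}), and by Proposition \ref{prop:M on injectives} the images of the injectives form the graph $x=\phi(-y)+\pi$. Since $\tan^{-1}\colon\R\cup\{\pm\infty\}\to[-\frac{\pi}{2},\frac{\pi}{2}]$ is a bijection, once $y_M\in(-\frac{\pi}{2},\frac{\pi}{2})$ the element $c$ is forced to be $c=\tan y_M\in\R$, and then $x_c=\phi(y_M)$ and $x_{-c}+\pi=\phi(-y_M)+\pi$; so the proposition asserts precisely that $\MM M_{|a,b|}$ lies between the ``projective graph'' and the ``injective graph'' at its own height, strictly unless $M$ is itself projective (left boundary) or injective (right boundary), in which cases one reads the inequality as an equality (and $c=\pm\infty$ is allowed when $y_M=\pm\frac{\pi}{2}$).

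Second, I would split along the three cases of Definition \ref{def:the rest of M}. When $M$ is simple or equals $M_{[s_n,s_{n+1}]}$, $\MM M$ lies on $\R\times\{\pm\frac{\pi}{2}\}$ with first coordinate a $\hp$-value (or, for a simple injective, $\MM M=(x_n+\pi,-y_n)$); the needed inequalities are then either $x_{+\infty}<\hp_a<x_{-\infty}+\pi$, which is contained in Propositions \ref{prop:p hat sides} and \ref{prop:p hat order}, or, taking $c=-s_n$ for a simple injective, the single inequality $x_{-s_n}<x_n+\pi$, which follows from the $1$-Lipschitz property of $\phi$ together with $|y_n|<\frac{\pi}{2}$.

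The main case is $M=M_{|a,b|}$ with $a<b$, not projective, not simple, $|a,b|\neq[s_n,s_{n+1}]$, and $\lambda_a^*,\lambda_b^*,n$ as in Construction \ref{con:interior of M} and Definition \ref{def:the rest of M}, so $x_M=n\pi+\frac{1}{2}(\kappa_a^*+\kappa_b^*)$. By Proposition \ref{prop:lambda intersections}, $y_M=\pm(\frac{1}{2}|\kappa_a^*-\kappa_b^*|-\frac{\pi}{2})$, and as $\frac{1}{2}|\kappa_a^*-\kappa_b^*|<\pi$ we get $y_M\in(-\frac{\pi}{2},\frac{\pi}{2})$, so $c:=\tan y_M\in\R$. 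I would pass to coordinates $u=x+y+\frac{\pi}{2}$, $v=x-y-\frac{\pi}{2}$, in which $\MM P_d=(\kappa_d^-,\kappa_d^+)$: the condition $y_c=y_M$ becomes $u_c-v_c=u_M-v_M$, and $x_c<x_M$ becomes $u_c<u_M$ (equivalently $v_c<v_M$). A direct computation shows $u_M=\kappa_b^*$ if $n=0$ (using $\kappa_a^*<\kappa_b^*$, which comes from Propositions \ref{prop:kappa order} and \ref{prop:lambda covers}) and $u_M=\kappa_b^*+2\pi$ if $n=1$, and likewise the relevant coordinate of $\MM I_{-c}$ is a $\kappa_a^*$-type value, so both inequalities reduce to strict comparisons of $\kappa$-values of $c,-c,a,b$. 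These follow from Proposition \ref{prop:kappa order} applied to the pairs $(c,b)$ and $(-c,a)$ — one first verifies $c\le b$ and $-c\le a$ directly from the formula for $y_M$ — with strictness supplied exactly by $M$ not being projective, via the same dichotomy ($y_a=-\frac{\pi}{2}$ forces $|x_a-x_b|<y_b-y_a$; otherwise $y_a>-\frac{\pi}{2}$) used in the proof of Lemma \ref{lem:good intersection point}.

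The main obstacle is the bookkeeping: four choices of $\pm$ on $\kappa_a^*,\kappa_b^*$, the sign in front of $y_M$, and $n\in\{0,1\}$ must each be matched so that Proposition \ref{prop:kappa order} applies in the correct direction, and in each sub-case one must identify exactly which hypothesis on $M$ upgrades a weak inequality to a strict one, with the customary extra care at $\pm\infty$ when $S$ is bounded. Once the change of coordinates is set up, though, I expect every sub-case to reduce to one invocation of Proposition \ref{prop:kappa order} plus this degeneracy dichotomy.
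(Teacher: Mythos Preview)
Your outline is sound and the case split matches the paper's, but in the main case the paper takes a shorter, more geometric route than your $(u,v)$ coordinate change and $\kappa$-comparisons. After setting $c=\tan y_M$ exactly as you do, the paper simply chains two $1$-Lipschitz estimates: since $(x_a,y_a)$ and $(x_c,y_c)$ both lie on the projective curve, $x_c-x_a\le|y_a-y_c|=|y_a-y_M|$; and since $(x_M,y_M)$ lies on the graph of $\lambda_a^*$ through $(x_a,y_a)$ with $x_M>x_a$ (Lemma \ref{lem:good intersection point}), $|y_a-y_M|\le x_M-x_a$. Hence $x_c\le x_M$, and equality is ruled out in one line: it would force $(x_c,y_c)$ and $(x_M,y_M)$ either to coincide or to sit on perpendicular slope-$\pm1$ lines through $(x_a,y_a)$, contradicting $y_c=y_M$ and $M$ not being projective. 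The right-hand bound is handled symmetrically via $x_M<x_a+\pi$ and the Lipschitz estimate $x_a-x_{-c}\le|y_a-y_{-c}|$, with strictness coming from $\supp I_{-c}\neq\supp M_{|a,b|}$ and Proposition \ref{prop:lambda covers}.

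This buys uniformity: no split over the four $\kappa^\pm$ choices, no sign on $y_M$, no $n\in\{0,1\}$. Your approach should go through, but the bookkeeping you anticipate is genuine, and in particular the strictness step needs more than ``one invocation of Proposition \ref{prop:kappa order}'' --- that proposition gives only weak inequalities, so in each sub-case you would still need an argument (of the Lemma \ref{lem:good intersection point} flavor, as you note) to upgrade them. The paper's double-Lipschitz chain sidesteps all of that.
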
 
\begin{proof}
If $M_{|a,b|}$ is projective or injective the statement is trivially true.
Then suppose $M_{|a,b|}$ is neither.
If $M_{|a,b|}$ is simple or $|a,b|=[s_n,s_{n+1}]$ then by Proposition \ref{prop:p hat order} and Definition \ref{def:the rest of M} the statement is true.
So we assume $M_{|a,b|}$ is neither of these types of indecomposables as well.

Let $y_c = y_M$.
Then $c = \tan y_c$ and so $x_c$ is just the $x$-coordinate of $\MM P_c$.
By Lemma \ref{lem:good intersection point} we know $x_M > x_a$ and $x_M > x_b$.
By Definition \ref{def:the rest of M} $|y_a-y_M| \leq x_M - x_a$.
Then we have \[x_c - x_a \leq |y_a - y_c| =|y_a - y_M| \leq x_M - x_a.\]
However if $x_c-x_a = x_M -x_a$ either $x_c=x_M$ or the slope of the line connecting $(x_a,y_a)$ and $(x_c,y_c)$ is the negative of the slope connecting $(x_a,y_a)$ and $(x_M,y_M)$.
By Construction \ref{con:interior of M} and Definition \ref{def:the rest of M} we know $x_M\neq x_c$ and other case would imply $y_c\neq y_M$, both contradictions.
Thus, $x_c < x_M$.

Since either $|x_a-x_b|$ is strictly less than $y_b-y_a$ or $y_a,y_b\notin \{\pm\frac{\pi}{2}\}$, we see that $x_M < x_a+\pi$ and similarly for $x_b$.
As we have done before we start with $x_a - x_{-c} \leq |y_a - y_{-c}|$ and recall $y_M = -y_{-c}$.
\begin{align*}
x_a - x_{-c} &\leq |y_a - y_{-c}| \\
x_a- x_{-c} &\leq |y_M - (-y_a)| \\
x_a +\pi - (x_{-c}+\pi) &\leq x_a + \pi - x_M
\end{align*}
Thus, $x_M\leq x_{-c}+\pi$.
We know $\supp I_{-c}\neq \supp M_{|a,b|}$ by assumption.
By Proposition \ref{prop:lambda covers} this means that at least one of the $\lambda$ functions that determine $\MM I_{-c}$ must be different from the $\lambda$ functions that determine $\MM M_{|a,b|}$ and so one of the coordinates must differ.
Since $y_M=-y_{-c}$ the different coordinate must be the $x$-coordinate.
\end{proof}

\begin{proposition}\label{prop:contractablemapping}
The image of $\MM$ is contractable if and only if $A_\R$ has finitely many sinks and sources.
\end{proposition}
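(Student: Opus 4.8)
The plan is to first identify the image of $\MM$ as an explicit planar region, and then analyse it separately in the two cases. Throughout I write $\im\MM$ for this image, a subset of the strip $\R\times[-\tfrac\pi2,\tfrac\pi2]$.

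\emph{Identifying $\im\MM$.} By Proposition~\ref{prop:between projectives and injectives} every point $\MM M_{|a,b|}=(x_M,y_M)$ lies strictly between the \emph{projective line} $L_P=\{(x_c,y_c):c\in\R\}$ and the \emph{injective line} $L_I=\{(x_{-c}+\pi,y_c):c\in\R\}$ (Proposition~\ref{prop:M on injectives}), both of which are piecewise-linear graphs over $y\in(-\tfrac\pi2,\tfrac\pi2)$ with slopes $\pm1$, and which have finitely many linear pieces precisely when $S$ is finite. I would show $\im\MM$ is exactly the region bounded by $L_P$ on the left, by $L_I$ on the right, and by the two horizontal segments on $y=\pm\tfrac\pi2$ carrying the simples and the $M_{[s_n,s_{n+1}]}$ (Definition~\ref{def:the rest of M}, Propositions~\ref{prop:p hat order} and~\ref{prop:p hat sides}), together with exactly those boundary points occurring as images of indecomposables. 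The inclusion ``$\subseteq$'' is Proposition~\ref{prop:between projectives and injectives} and the formulas of \S\ref{sec:M is finally defined}; for ``$\supseteq$'' one lets the two-parameter family $M_{|a,b|}$ run and uses Lemma~\ref{lem:good intersection point} together with Proposition~\ref{prop:lambda covers} to see the intersection points sweep out the region. The feature to record is combinatorial: the period index $n\in\{0,1\}$ of Lemma~\ref{lem:good intersection point} switches value exactly when an endpoint of $|a,b|$ crosses a sink or a source, and the four ``corners'' $(x_{\pm\infty},\pm\tfrac\pi2)$ and $(x_{\mp\infty}+\pi,\pm\tfrac\pi2)$ lie in $\im\MM$ if and only if the corresponding end of $S$ is bounded.

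\emph{If $S$ is finite.} Then $L_P$ and $L_I$ are polygonal arcs with finitely many segments, all four corners belong to $\im\MM$, and the two horizontal boundary segments are closed, so $\im\MM$ is a compact piecewise-linear planar region (a filled ``curvilinear polygon'', degenerating to a filled triangle when $S=\varnothing$). I would contract it explicitly: since every $\lambda$-function has slope $\pm1$, each point of $\im\MM$ can be slid down the appropriate diagonal direction onto the bottom boundary segment---this stays inside $\im\MM$ by the ``between the lines'' estimate and by Proposition~\ref{prop:lambda covers}---and the segment is then contracted to a point; alternatively one checks directly that $\im\MM$ is star-shaped about a suitable interior point. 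Either way $\im\MM$ is contractible.

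\emph{If $S$ is infinite.} Up to the evident reflection symmetry (which exchanges sinks with sources, cf.\ Proposition~\ref{prop:M on injectives}) we may assume $S$ is unbounded above. Then the zigzag arc $L_P$ accumulates on the corner $p_+:=(x_{+\infty},\tfrac\pi2)$, but $p_+\notin\im\MM$: projectives and injectives have second coordinate in the open interval $(-\tfrac\pi2,\tfrac\pi2)$, while by Proposition~\ref{prop:p hat order} every simple or $M_{[s_n,s_{n+1}]}$ on $y=\tfrac\pi2$ has first coordinate strictly greater than $x_{+\infty}$. I would then prove $\im\MM$ is not simply connected, hence not contractible: the infinite family of period switches at the sinks and sources accumulating toward $p_+$ deletes from $\im\MM$ a sequence of diagonal arcs whose endpoints are interior points of the region, and one shows these deleted arcs are not re-filled by the image of any other indecomposable, so that $\im\MM$ carries a family of loops accumulating at $p_+$ none of which is null-homotopic; one concludes either by building a retraction of $\im\MM$ onto one such loop (a copy of $S^1$) or by a Mayer--Vietoris / \v Cech argument giving $H^1(\im\MM)\neq 0$. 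The main obstacle is exactly this last point---verifying that the arcs removed by the period switches genuinely remain outside $\im\MM$---which forces one to push the bookkeeping of the $\kappa$-values (Propositions~\ref{prop:kappa order} and~\ref{prop:lambda intersections}) and of Lemma~\ref{lem:good intersection point} much further than elsewhere in the section, and to control the local structure of $\im\MM$ near the accumulation corner $p_+$.
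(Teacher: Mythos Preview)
Your treatment of the finite case is fine and matches the paper's in spirit: once $S$ is finite every pair of endpoints in $(\R\cup\{\pm\infty\})^2$ occurs, so the $\lambda$-functions fill in the whole region between $L_P$ and $L_I$ and one gets a contractible planar set.

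In the infinite case, however, you are working much too hard and in the wrong direction. You propose to show $\im\MM$ is not simply connected by producing non-null-homotopic loops accumulating at the corner $p_+$. But the paper observes something far stronger and far simpler: $\im\MM$ is not even \emph{connected}. If $S$ is unbounded above, then no indecomposable has $+\infty$ as an endpoint of its support, so the entire diagonal segment from $(x_{+\infty},\tfrac\pi2)$ to $(x_{+\infty}+\pi,-\tfrac\pi2)$ (the piece of the graph of $\lambda_{+\infty}$ lying in the strip) is disjoint from $\im\MM$. Yet there are injective indecomposables mapped strictly to the right of this line (by Proposition~\ref{prop:M on injectives}) and projectives and many others strictly to the left. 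That line therefore separates $\im\MM$ into at least two nonempty pieces, and a disconnected space is certainly not contractible.

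Besides being unnecessarily complicated, your plan may actually fail: if the image decomposes into several pieces each of which is itself contractible, there are no non-null-homotopic loops to find, and the ``deleted arcs'' you describe would not produce any nontrivial $H^1$. The obstruction to contractibility here is $\pi_0$, not $\pi_1$. So rather than chasing the $\kappa$-bookkeeping and period switches, you should simply exhibit the missing $\lambda_{\pm\infty}$ line and note that $\im\MM$ meets both half-planes it determines.
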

\begin{proof}
Suppose $S$ (the set of sinks and sources) is finite.
Then there are indecomposables with every pair of endpoints in $(\R\cup\{\pm\infty\})^2$.
Thus, every function $\lambda$ from Definition \ref{def:lambda functions} is used and by Proposition \ref{prop:lambda covers} every point $(x,y_c)$ for $x_c\leq x \leq x_{-c}+\pi$ is the intersection point of $\lambda_a$ and $\lambda_b$ corresponding to two projectives.
(The exception is the lack of projective or injective at $\pm \infty$, but the image of these non-existent indecomposables would be on the boundary of the range of $\MM$.)

Suppose $S$ is infinite.
Then $S$ is unbounded above or below.
If unbounded above, no indecomposable has endpoint $+\infty$ and so the line from $(x_{+\infty},\frac{\pi}{2})$ to $(x_{+\infty}+\pi,-\frac{\pi}{2})$ cannot be in the image of $\MM$.
However, there are injective representations to the right of this line.
Thus, the image of $\MM$ is not connected and therefore not contractable.
A similar argument holds if $S$ is unbounded below.
\end{proof}

\subsection{The Auslander-Reiten Topology}
In this subsection we define a topology on the isomorphism classes of indecomposable objects in $\repAR$, called the Auslander-Reiten topology.
This topology will help us define the Auslander-Reiten space in Section \ref{sec:AR space}.
We conclude with a proof that the interior of the image of the Hom support of an indecomposable is the same basic shape as in the discrete case.

\begin{definition}\label{def:AR topology} 
The \underline{Auslander-Reiten topology}, or AR-topology, on the set of (isomorphism classes of) indecomposables in $\repAR$ is the one where open sets are $\MM^{-1}(U)$ for each open set $U\subset\R^2$.
Note that the space of indecomposables with this topology is not Hausdorff.
\end{definition}
We provide two examples below.
In Propositions \ref{prop:between projectives and injectives} and \ref{prop:contractablemapping} we prove that these pictures are accurate.
\begin{example}\label{xmp:arspaces}
The AR-topology on indecomposable in $\repAR$, where $A_\R$ has the straight descending orientation, can be visualized as a triangle:
\begin{displaymath}\begin{tikzpicture}
\draw[thick] (-10,-2) -- (-6,2);
\draw[thick, ->] (-6,2) -- (-8,0);
\filldraw[fill=white] (-10,-2) circle[radius=.6mm];
\filldraw[fill=white] (-6,2) circle[radius=.6mm];
\draw (-8,-2.3) node[anchor=north] {$A_\R$};
\draw (0,-2.3) node[anchor=north] {AR-topology};
\draw[dashed] (-5,2) -- (5,2);
\draw[dashed] (-5,-2) -- (-4,-2);
\draw[dashed] (-4,-2) -- (5,-2);
\draw[draw=white!60!black,  pattern=crosshatch dots, pattern color=white!70!black] (-4,-2) -- (0,2) -- (4,-2) -- (-4,-2);
\draw[dashed] (-5,-1) -- (-2,2) -- (2,-2) -- (3,-1) -- (5,1);
\draw[dashed] (5, -1) -- (2,2) -- (-2,-2) -- (-5,1);
\draw[fill=black] (0,2) circle[radius=0.6mm];
\draw (0,2) node [anchor=south] {$(-\infty,+\infty)$};
\draw[fill=white] (-4,-2) circle[radius=0.6mm];
\draw[fill=white] (4,-2) circle[radius=0.6mm];
\draw[fill=black] (0,0) circle[radius=0.6mm];
\draw (0,-0.2) node [anchor=north] {$M_{|a,b|}$};
\draw[fill=black] (-1,1) circle[radius=0.6mm];
\draw (-1,1) node [anchor=east] {$P_{b|}$};
\draw[fill=black] (-3,-1) circle[radius=0.6mm];
\draw (-3,-1) node [anchor=east] {$P_{a|}$};
\draw[fill=black] (1,1) circle[radius=0.6mm];
\draw (1,1) node [anchor=west] {$I_{|a}$};
\draw[fill=black] (3,-1) circle[radius=0.6mm];
\draw (3,-1) node [anchor=west] {$I_{|b}$};
\draw[fill=black] (-2,-2) circle[radius=0.6mm];
\draw (-2,-2) node [anchor=north] {$M_{\{a\}}$};
\draw[fill=black] (2,-2) circle[radius=0.6mm];
\draw (2,-2) node [anchor=north] {$M_{\{b\}}$};
\draw (-5,-1) node [anchor=east] {$\lambda_b$};
\draw (-5,1) node [anchor=east] {$\lambda_a$};
\draw (-5,-2) node[anchor=east] {$-\frac{\pi}{2}$};
\draw (-5,2) node[anchor=east] {$\frac{\pi}{2}$};
\end{tikzpicture}\end{displaymath}
\end{example}
\begin{example}\label{xmp:arspaces2}
Suppose the sinks and sources are $s_{-2}=-\infty$, $s_{-1}=0$, $s_0=1$, and $s_1=+\infty$.
Recall that this means $0$ is a source and $1$ is a sink, from Definition \ref{def:AR}.
With this orientation, the AR-topology on the indecomposables of $\repAR$ can be visualized as:
\begin{displaymath}\begin{tikzpicture}
\draw[thick] (-10,-1) -- (-8,1) -- (-7,0) -- (-6,1);
\draw[thick,->] (-8,1) -- (-9,0);
\draw[thick,->] (-8,1) -- (-7.5,.5);
\draw[thick,->] (-6,1) -- (-6.5,.5);
\filldraw[fill=black] (-8,1) circle[radius=.6mm];
\filldraw[fill=black] (-7,0) circle[radius=.6mm];
\filldraw[fill=white] (-10,-1) circle[radius=.6mm];
\filldraw[fill=white] (-6,1) circle[radius=.6mm];
\draw (-8,-2.3) node[anchor=north] {$A_\R$};
\draw (0,-2.3) node[anchor=north] {AR-topology};
\draw[dashed] (-4, 2) -- (4,2);
\draw[dashed] (-4,-2) -- (4,-2);
\draw[draw=white!60!black, pattern=crosshatch dots, pattern color = white!70!black] (-3,-2) -- (-1,0) -- (-2,1) -- (-1,2) -- (1,2) -- (3,0) -- (2,-1) -- (3,-2) -- (-3,-2);
\draw[dashed] (-4,0) -- (-2,2) -- (2,-2) -- (3,-1) -- (4,0);
\draw[dashed] (-4, -2) -- (0,2) -- (4,-2);
\draw[fill=black] (-1,2) circle[radius=0.6mm];
\draw (-1,2) node [anchor =south] {$P_{+\infty}$};
\draw[fill=black] (1,2) circle[radius=0.6mm];
\draw (1,2) node [anchor =south] {$I_{-\infty}$};
\draw[fill=white] (-3,-2) circle[radius=0.6mm];
\draw[fill=white] (3,-2) circle[radius=0.6mm];
\draw[fill=black] (-1.5,1.5) circle[radius=0.6mm];
\draw (-1.5,1.5) node [anchor =east] {$P_{b|}$};
\draw[fill=black] (-1.5,0.5) circle[radius=0.6mm];
\draw (-1.5,0.5) node [anchor =east] {$P_{|a}$};
\draw[fill=black] (2.5,-1.5) circle[radius=0.6mm];
\draw (2.5,-1.5) node [anchor =west] {$I_{|b}$};
\draw[fill=black] (2.5,-0.5) circle[radius=0.6mm];
\draw (2.5,-0.5) node [anchor =west] {$I_{a|}$};
\draw[fill=black] (-1,1) circle[radius=0.6mm];
\draw (-1,1) node [anchor =west] {$M_{|a,b|}$};
\draw[fill=black] (0,2) circle[radius=0.6mm];
\draw (0,2) node [anchor =south] {$M_{\{a\}}$};
\draw[fill=black] (2,-2) circle[radius=0.6mm];
\draw (2,-2) node [anchor =north] {$M_{\{b\}}$};
\draw (-4,2) node [anchor=east] {$\frac{\pi}{2}$};
\draw (-4,-2) node [anchor=north east] {$-\frac{\pi}{2}$};
\draw (-4,0) node[anchor=east] {$\lambda_b$};
\draw (-4,-2) node[anchor = south east] {$\lambda_a$};
\end{tikzpicture}\end{displaymath}
\end{example}

\begin{lemma}\label{lem:hom support}
Let $V=M_{|a,b|}$ be an indecomposable such that $M_{|a,b|}$ is not simple, $M_{|a,b|}$ is not injective, and $|a,b|\neq [s_n,s_{n+1}]$.
Let $(x_V,y_V)=\MM V_{|a,b|}$ and $(x^V,y^V)=(x_V+\pi,-y_V)$.
If $M_{|a,b|}$ is not projective let $\lambda_a^*$ and $\lambda_b^*$ be as in Definition \ref{def:the rest of M}.
If $M_{|a,b|}$ is projective let $\lambda_a^*$ be the $\lambda$ function with positive slope at $(x_V,y_V)$ and $\lambda_b^*$ the $\lambda$ function with negative slope.
Let $H_V$ be the set of points $(x_W,y_W)$ in $\R^2$ such that $x_V<x_W<x^V$ and $\lambda_b^*(x_W) < y_W < \lambda_a^*(x_W)$.

If $\MM N\in H_V$ then $\Hom(V,N)\cong k$.
If $\MM N\notin \overline{H_V}$ then $\Hom(V,N)=0$.
\end{lemma}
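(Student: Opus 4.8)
The plan is to reduce the assertion to a combinatorial criterion for the (non)vanishing of $\Hom$ between interval indecomposables, and then to push that criterion through the dictionary ``endpoint $\leftrightarrow$ $\lambda$-function $\leftrightarrow$ $\kappa$-value'' built in Section \ref{sec:Mapping}.

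First I would record the combinatorial fact. By Theorem~2.4.13 of \cite{IgusaRockTodorov2019} every indecomposable of $\repAR$ is an interval indecomposable, so it is enough to understand $\Hom(M_{|a,b|},M_{|c,d|})$. Since each $M_I$ is one-dimensional on its (convex, hence connected) support, a morphism $M_I\to M_J$ is a single scalar on the overlap $I\cap J$ constrained by the commutativity squares of $A_\R$; chasing these squares gives $\dim_k\Hom(M_I,M_J)\le 1$, with $\Hom(M_I,M_J)\cong k$ exactly when $I\cap J\neq\emptyset$ and there is no pair $y\preceq x$ with $x\in I\setminus J$ and $y\in I\cap J$ (the compatible-overlap, i.e.\ ``middle-exact'', condition; cf.\ \cite[Section 5]{BotnanCrawley-Boevey}). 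Under the hypotheses on $V=M_{|a,b|}$ this says, roughly, that $N=M_{|c,d|}$ must contain the generator end of $V$ in its support together with an orientation-compatibility condition near the relevant endpoints; I would repackage this as an explicit system of inequalities among the four values $\kappa_a^*,\kappa_b^*,\kappa_c^*,\kappa_d^*$ attached to the endpoints by Construction \ref{con:interior of M}. That this can be done uniformly is guaranteed by Propositions \ref{prop:kappa order} and \ref{prop:sink and source share}: the $\kappa$-values simultaneously record the real order of the endpoints and the orientation of $A_\R$ at them. This step comes with a short case list according to whether each endpoint lies in $S$ and whether it is included in its interval.

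Next I would make $H_V$ explicit. By Proposition \ref{prop:lambda intersections}, consecutive intersection points of two distinct $\lambda$-functions differ by $\pi$ in the $x$-coordinate and change sign in the $y$-coordinate, so $\MM V=(x_V,y_V)$ and $(x^V,y^V)=(x_V+\pi,-y_V)$ are consecutive intersections of $\lambda_a^*$ and $\lambda_b^*$ (for projective $V$ this uses the choice of $\lambda_a^*,\lambda_b^*$ in the statement; otherwise Lemma \ref{lem:good intersection point}). Hence $H_V$ is the open $\pm1$-slope rectangle bounded by these two curves and the lines $x=x_V$, $x=x_V+\pi$. Since $\MM N$ is by construction the relevant intersection of $\lambda_c^*$ and $\lambda_d^*$ (or, for special $N$, a point with $y=\pm\tfrac{\pi}{2}$), the condition $\MM N\in H_V$ --- i.e.\ $x_V<x_N<x_V+\pi$ and $\lambda_b^*(x_N)<y_N<\lambda_a^*(x_N)$ --- unpacks via Proposition \ref{prop:lambda intersections} and its slope statements into precisely the same system of strict inequalities among $\kappa_a^*,\kappa_b^*,\kappa_c^*,\kappa_d^*$ as in Step~1. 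Comparing the two systems yields $\Hom(V,N)\cong k$ whenever $\MM N\in H_V$. For the second assertion, if $\MM N\notin\overline{H_V}$ then one of these inequalities fails strictly, hence the compatible-overlap criterion fails and $\Hom(V,N)=0$; one cannot sharpen $\overline{H_V}$ to $H_V$ because a point of $\partial H_V$ is the $\MM$-image of several pairwise non-isomorphic indecomposables (differing, e.g., in whether an endpoint lies in $S$ or is included), some admitting and some not admitting a nonzero map from $V$.

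The main obstacle is the bookkeeping that forces Steps~1 and~2 to land on the \emph{same} inequalities: for each endpoint configuration one must check that Construction \ref{con:interior of M} selects the $\lambda$-function (equivalently the $+$ or $-$ $\kappa$-value) whose ordering mirrors the $\preceq$-compatibility demanded by the overlap condition, and that the slope data of Proposition \ref{prop:lambda intersections} makes ``$\MM N$ lies strictly between $\lambda_b^*$ and $\lambda_a^*$'' equivalent to the overlap sitting on the orientation-correct side of the pinched end of $V$. Keeping track of strict versus non-strict inequalities through this translation is exactly what produces $\overline{H_V}$ in the second half, and it is also where the hypotheses that $V$ is not simple, not injective and not of the form $M_{[s_n,s_{n+1}]}$ are used: they force $y_V\in(-\tfrac{\pi}{2},\tfrac{\pi}{2})$ and $\lambda_a^*\neq\lambda_b^*$, so that $H_V$ is a genuine, nondegenerate rectangle.
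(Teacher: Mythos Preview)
Your plan is essentially the paper's own approach: reduce to the overlap-compatibility criterion for $\Hom$ between interval indecomposables, then translate the geometric membership $\MM N\in H_V$ (resp.\ $\MM N\notin\overline{H_V}$) into the matching system of endpoint inequalities via the $\lambda$-dictionary of Section~\ref{sec:Mapping}. The only organizational difference is that the paper carries out the bookkeeping with the $\hp$ values of Definition~\ref{def:p hat} rather than the $\kappa$ values directly, and handles the complement $\R\times[-\tfrac{\pi}{2},\tfrac{\pi}{2}]\setminus\overline{H_V}$ by a labeled six-region decomposition rather than by naming which strict inequality fails; these are equivalent packagings of the same case analysis.
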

Before we begin the proof we give the reader a visual guide to the statement of the proposition.
\begin{displaymath}\label{nice picture}\begin{tikzpicture}
\draw[dashed] (-4,2) -- (4,2);
\draw[dashed] (-4,-2) -- (4,-2);
\draw[dashed] (-4,2) -- (0,-2) -- (4,2);
\draw[dashed] (-4,-1) -- (-3,-2) -- (1,2) -- (4,-1);
\draw[fill=black] (-1.5,-0.5) circle[radius=0.6mm];
\draw (-1.5,-0.5) node[anchor=east] {$(x_V,y_V)$};
\draw[fill=black] (2.5,0.5) circle[radius=0.6mm];
\draw (2.5, 0.5) node [anchor=west] {$(x^V,y^V)$};
\draw[dotted] (-4,-0.5) -- (-1.5,2) -- (2.5,-2) -- (4,-0.5);
\draw[dotted] (-4,-0.5) -- (-2.5,-2) -- (1.5,2) -- (4,-0.5);
\draw[fill=black] (0,0.5) circle[radius=0.6mm];
\draw (0,0.5) node[anchor=west] {$(x_W,y_W)$};
\draw (-4,-1) node[anchor=east] {$\lambda^*_a$};
\draw (-4,2) node[anchor=north east] {$\lambda^*_b$};
\draw (-3.5,-1) node[anchor = west] {$\lambda^*_c$};
\draw (-3.5,0) node[anchor = east] {$\lambda^*_d$};
\draw (0.5,0) node {$H_V$};
\end{tikzpicture} \end{displaymath}
The reader should note the boundary of $H_V$ is not part of the proposition.
The boundary is more complicated and depends on exactly which indecomposable $V$ is used.
This will be covered in Section \ref{sec:AR space}.

\begin{proof}[Proof of Lemma \ref{lem:hom support}]
Let $W=M_{|c,d|}$.
Note that if $W$ is projective then so is $V$ and so $W$ would be on the boundary of $H_V$.
Thus we may assume $W$ is not projective.
Let $\lambda_c^*$ and $\lambda_d^*$ be as in Definition \ref{def:the rest of M}.
The following table summarizes necessary and sufficient conditions for $W\in H_V$ based on $\hp_a$, $\hp_b$, $\hp_c$, $\hp_d$, $\lambda_a^*$, and $\lambda_b^*$.
The fourth column is a consequence Proposition \ref{prop:p hat order} based on the second and third.
The fifth and sixth follow from Propositions \ref{prop:p hat order} and \ref{prop:lambda covers} given the second, third, and fourth.
In the table, $s_m$ and $s_n$ are the sink $s_m\preceq b$ and the source $a\preceq s_n$, respectively.
\smallskip

\centerline{\begin{tabular}{c|c|c|c|c|c}
Case & $\lambda_a^*$ & $\lambda_b^*$ & $\hp_a$ and $\hp_b$ & $\hp_c$ & $\hp_d$ \\ \hline
$- -$ & $\lambda_a^-$ & $\lambda_b^-$ & $\hp_a < \hp_b < x_{+\infty}$ & $\hp_a < \hp_c < \hp_b$ & $\hp_b < \hp_d < \hp_{s_n}$ \\
$+ -$ & $\lambda_a^+$ & $\lambda_b^-$ & $\hp_b < x_{+\infty} < \hp_a$ & $\hp_c < \hp_b$ or $\hp_a < \hp_c$ & $\hp_b < \hp_d < \hp_a$ \\
$- +$ & $\lambda_a^-$ & $\lambda_b^+$ & $\hp_a < x_{+\infty} < \hp_b$ & $\hp_a < \hp_c < \hp_{s_m}$ & $\hp_b < \hp_d < \hp_{s_n}$ \\
$+ + $ & $\lambda_a^+$ & $\lambda_b^+$ & $x_{+\infty} < \hp_b < \hp_a$ & $\hp_c<\hp_{s_m}$ or $\hp_a < \hp_c$ & $\hp_b < \hp_d < \hp_a$
\end{tabular}}
\smallskip

\noindent \underline{Claim}: These conditions imply $\Hom(V,W)\cong k$.

In the $- -$ case we immediately see that $a<c<b$ and for $l\in\Z$ such that $s_l \leq c \leq s_{l+1}$, $s_l$ is a sink.
The value of $d$ is either greater than $b$ or between $s_n$ and $b$ if $s_n<b$.
If $s_n < b < d$ then for $l\in\Z$ such that $s_l\leq c \leq s_{l+1}$, $s_l$ is a source.
Finally, $|a,b|\cap |c,d|\neq\emptyset$.
In this case we see that $\Hom(V,W)\neq 0$.

In the $+ -$ case we see $c<b$.
We also see $d> b$ or $a<d<b$ and for $l\in\Z$ such that $s_l\leq d \leq s_{l+1}$, $s_l$ is a source.
Again $|a,b|\cap |c,d|\neq\emptyset$ and $\Hom(V,W)\neq 0$.

The other two cases, $- +$ and $+ +$, follow similar reasoning.
In all cases, there exists a nontrivial morphism $V\to W$ and so $\Hom (V,W)\cong k$ by \cite[Theorem 3.0.1]{IgusaRockTodorov2019}.

Now suppose $W\notin \overline{H_V}$.
We break up $\R\times[-\frac{\pi}{2},\frac{\pi}{2}]$ into 7 regions, labeled 1--6 and $H_V$:
\begin{displaymath}\begin{tikzpicture}
\draw[dashed] (-5,2) -- (6,2);
\draw[dashed] (-5,-2) -- (6,-2);
\draw[dashed] (-4,2) -- (0,-2) -- (4,2);
\draw[dashed] (-3,-2) -- (1,2) -- (5,-2);
\draw[fill=black] (-1.5,-0.5) circle[radius=0.6mm];
\draw (-1.5,-0.5) node[anchor=east] {$(x_V,y_V)$};
\draw[fill=black] (2.5,0.5) circle[radius=0.6mm];
\draw (2.5, 0.5) node [anchor=west] {$(x^V,y^V)$};
\draw (0.5,0) node {$H_V$};
\draw[->] (-3.5,0) -- ( -4.5,0);
\draw[fill=white] (-3.5,0) circle[radius=2.5mm];
\draw (-3.5,0) node {1};
\draw[fill=white] (-1.5,1) circle [radius=2.5mm];
\draw (-1.5,1) node {2};
\draw[fill=white] (-1.5,-1.5) circle [radius=2.5mm];
\draw (-1.5,-1.5) node{3};
\draw[fill=white] (2.5,-1) circle [radius=2.5mm];
\draw (2.5,-1) node {5};
\draw[fill=white] (2.5,1.5) circle [radius=2.5mm];
\draw (2.5,1.5) node {4};
\draw[->] (4.5,0) -- (5.5,0);
\draw[fill=white] (4.5,0) circle[radius=2.5mm];
\draw (4.5,0) node {6};
\end{tikzpicture} \end{displaymath}
Then we have 6 regions to check.
Some regions have similar arguments.
In regions 2 and 5, $\hp_d$ meets the requirements in the table but $\hp_c$ does not.
In regions 3 and 4, $\hp_c$ meets the requirements but $\hp_d$ does not.
In regions 1 and 6, neither $\hp_c$ nor $\hp_d$  will meet the requirements of the table.

We first argue that if $\MM W$ is in regions 2 or 5 then $\Hom(V,W)=0$.
Assume we are in case $- -$ and suppose $\MM W$ is in region 2.
This means $\hp_c < \hp_a$.
So, there is $y\preceq x$ where: $x\geq a$ and $x\in|a,b|$ but $y\notin |a,b|$ and $y\in|c.d|$.
Then for any morphism $f:V\to W$, $f(x)$ must be 0.
This means $f$ is 0 and so $\Hom(V,W)=0$.
If $\MM W$ is in region 5 we instead have $\hp_c > \hp_b$ which means $c > b$.
Then $|a,b|\cap |c,d|=\emptyset$ and so $\Hom(V,W)=0$.

Now assume case $+ -$ and $\MM W$ is in region 2.
Then $x_{+\infty} \leq \hp_c < \hp_a$.
This means $b < c$ (and so $|a,b|\cap|c,d|=\emptyset$) or $a<c<b$.
In the latter case for $l\in\Z$ such that $s_l\leq c\leq s_{l+1}$ we have $s_l$ is a source.
However, then there exist $x$ and $y$ such that $x\in|a,b|\setminus |c,d|$, $y\in|a,b|\cap|c,d|$, and $y\preceq x$.
Thus $\Hom(V,W)=0$.
If $\MM W$ is in region 5 we have $\hp_b < \hp_c \leq x_{+\infty}$.
This forces $c>b$ and so $|a,c|\cap |b,c|=\emptyset$.

The arguments for cases $- +$ and $+ +$ are combinations of similar arguments.
The arguments for regions 3 and 4 are similar to those for regions 2 and 5 by considering $\hp_d$ instead of $\hp_c$.
This leaves regions 1 and 6.

If $\MM W$ is in region 1 then the consequences for region 2 apply to $\hp_c$ and the consequences for region 3 apply to $\hp_d$.
On the border of regions 1 and 2 (respectively the border of regions 1 and 3) the consequences for region 2 (respectively for region 3) still apply.
Thus, if $\MM W$ is in region 1 or the borders of region 1 and 2 or 1 and 3 then $\Hom(V,W)=0$.
By similar arguments if $\MM W$ is in region 6 or the borders of regions 4 and 6 or 5 and 6 then $Hom(V,W)=0$.
Therefore, if $W\notin\bar{H}_V$ then $\Hom(V,W)=0$.
\end{proof}

\section{Auslander-Reiten Sequences}\label{sec:AR sequences}
%
%
%
%

Almost split sequences, commonly called Auslander-Reiten sequences, were introduced by Auslander and Reiten in \cite{ARSequences}.
In this section we will explicitly classify all Auslander-Reiten sequences in $\repAR$.
The general description of Auslander-Reiten sequences in the category of finite sums of interval indecomposable representations over a linearly-ordered poset were described by Gabriel and Ro\u{\i}ter in \cite{GabrielRoiter}.
However, an explicit description and proof in the contemporary language of representation theory specifically for continuous quivers of type $A$ is more enlightening for Section \ref{sec:AR space} and the future works in this series.

\subsection{Types of Auslander-Reiten Sequences}\label{sec:AR sequence types}
We first we recall the definition of an Auslander-Reiten sequence.

\begin{definition}\label{def:AR sequence} 
Let $0\to N\stackrel{f}{\to} E\to M\stackrel{g}{\to} 0$ be a short exact sequence in an abelian category.
It is called an \underline{almost split sequence}, or \underline{Auslander-Reiten sequence}, if the following hold:
\begin{itemize}
\item $f$ is not a section and $g$ is not a retraction
\item $N$ and $M$ are indecomposable objects
\item Any nontrivial morphism $N\to X$, respectively $X\to M$, of indecomposable objects where $N\not\cong X$, respectively $M\not\cong X$, factors through $f$, respectively $g$.
\end{itemize}
\end{definition}

In this subsection we describe the 16 types of Auslander-Reiten sequences in $\repAR$, which are provided in Table \ref{tab:AR sequence table}.
They are proven to be the only types in Section \ref{sec:AR sequence classification}.

\begin{lemma}\label{lem:AR sequence existence}
Each of the 16 sequences in Table \ref{tab:AR sequence table} is an Auslander-Reiten sequence.
\end{lemma}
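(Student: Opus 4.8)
The plan is to check, row by row in Table~\ref{tab:AR sequence table}, that the displayed sequence $0\to N\xrightarrow{f}E\xrightarrow{g}M\to 0$ is an Auslander-Reiten sequence in the sense of Definition~\ref{def:AR sequence}; the easy and uniform parts are exactness, indecomposability of $N$ and $M$, and non-splitness, and I would dispatch these first. Indecomposability of $N$ and $M$ is immediate, since in every row they are interval indecomposables. Exactness is a pointwise check: for each $x\in\R$ the maps in the row restrict to $0\to N(x)\to E(x)\to M(x)\to 0$, a short exact sequence of vector spaces of total dimension at most $2$, and commutativity with the transition maps is built into the way $f$ and $g$ are written. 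That $f$ is not a section (equivalently $g$ is not a retraction, since $N$ and $M$ are indecomposable) is the statement that the sequence does not split, which follows from Krull--Schmidt in $\repAR$ (\cite[Theorem 3.0.1]{IgusaRockTodorov2019}): in the ``almost complete rectangle'' rows $E$ is indecomposable, while in the ``complete rectangle'' rows $E\cong E_1\oplus E_2$ with, as one reads off the table, neither $E_1$ nor $E_2$ isomorphic to $N$ or to $M$; in either case $E\not\cong N\oplus M$, so the sequence is non-split.

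The real content is that $f$ is left almost split and $g$ is right almost split, and here the plan is to combine Lemma~\ref{lem:hom support} with the one-dimensionality of $\Hom$ between indecomposables (\cite[Theorem 3.0.1]{IgusaRockTodorov2019}). Write $f=f_1$ or $f=(f_1,f_2)$ according to whether $E$ is indecomposable or $E\cong E_1\oplus E_2$, with $f_i\colon N\to E_i$ the components. If a nonzero non-isomorphism $\phi\colon N\to X$ with $X$ indecomposable factors as $\phi=\psi_i\circ f_i$ for some $i$, then the map $\psi\colon E\to X$ that is $\psi_i$ on $E_i$ and $0$ on the other summand (or just $\psi=\psi_1$ in the indecomposable case) satisfies $\psi\circ f=\phi$; so it suffices to show every nonzero non-isomorphism out of $N$ factors through $f_1$ or through $f_2$. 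Given such a $\phi\colon N\to X$, Lemma~\ref{lem:hom support} applied with $V=N$ forces $\MM X\in\overline{H_N}$, and $\MM X$ is not the apex $\MM N$ of this region since $X\not\cong N$. An inspection of the $\lambda$-functions involved --- using Propositions~\ref{prop:p hat order}, \ref{prop:lambda covers} and \ref{prop:lambda intersections} --- shows that $\overline{H_N}$ is covered by $\{\MM N\}\cup\overline{H_{E_1}}\cup\overline{H_{E_2}}$, so $\MM X\in\overline{H_{E_i}}$ for some $i$. It then remains to produce a nonzero map $\psi_i\colon E_i\to X$ with $\psi_i\circ f_i\neq 0$; granting this, $\psi_i\circ f_i$ and $\phi$ are both nonzero elements of the one-dimensional space $\Hom(N,X)$, so they agree up to a scalar, which we absorb into $\psi_i$. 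The statement for $g$ is dual, with $N$ replaced by $M$ and maps out of $N$ by maps into $M$; I would also exploit the duality $\repAR\cong\repAR^{\mathrm{op}}$ interchanging sinks with sources, together with the left--right symmetry of $A_\R$, to reduce the sixteen rows and the two halves of the almost split condition to a few representative cases.

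The step I expect to be the main obstacle is the last one: producing the nonzero $\psi_i$ with $\psi_i\circ f_i\neq 0$ --- equivalently, showing that the pushout of the sequence along $\phi$ (resp.\ the pullback along a non-isomorphism $Y\to M$) splits. This genuinely fails on the ``bad'' part of the boundary of $H_N$, and that is precisely why the hypotheses of Theorem~A (not projective, not injective, neither simple nor supported on $[s_n,s_{n+1}]$) are imposed on all the indecomposables occurring in the table, and why the table has sixteen rows rather than one: when $\MM X$ lands on the edge of $H_N$ shared with $H_{E_i}$, or very near $\MM N$, the endpoints of the interval $X$ sit at, or immediately next to, a sink or a source, and one must keep careful track of which endpoints are included --- essentially re-running the bookkeeping from the table inside the proof of Lemma~\ref{lem:hom support}, now with $N$ and the $E_i$ in the roles of $V$ and $W$. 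This boundary analysis is the only place where the sixteen rows must be treated individually; everything else in the argument is uniform.
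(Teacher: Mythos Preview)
Your proposal takes a different route from the paper and, as written, contains a gap. The paper's proof is entirely elementary: for each type it fixes a non-isomorphism $h\colon N\to M_{|c,d|}$ (or $M_{|c,d|}\to M$), analyses the endpoints $c,d$ directly against the endpoints of $N$ and of the two summands $E_1,E_2$, and exhibits the factorisation by hand. It never invokes $\MM$, Lemma~\ref{lem:hom support}, or any of the $\lambda$/$\hp$ machinery.

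Your geometric reduction does not actually reduce anything. The four indecomposables in each row of Table~\ref{tab:AR sequence table} satisfy $\MM N=\MM E_1=\MM E_2=\MM M$ (this is precisely Proposition~\ref{prop:M collapses AR sequences}); consequently $H_N=H_{E_1}=H_{E_2}$ as subsets of $\R^2$, and your covering statement $\overline{H_N}\subset\{\MM N\}\cup\overline{H_{E_1}}\cup\overline{H_{E_2}}$ is a tautology. In particular the geometry cannot tell you \emph{which} $E_i$ to factor through: that choice is exactly the interval bookkeeping. Moreover, even for $\MM X$ in the interior of $H_N$, Lemma~\ref{lem:hom support} only gives $\Hom(E_i,X)\cong k$ for both $i$; it does not give $\psi_i\circ f_i\neq 0$. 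In several rows one of the $f_i$ is a surjection of interval modules (e.g.\ in type~(5), $f_2\colon M_{[s_{2n-1},b)}\twoheadrightarrow M_{(s_{2n},b)}$), and for such $f_i$ the composition $\psi_i\circ f_i$ is zero precisely when $\supp N\cap\supp X\not\subset\supp E_i$. Deciding this --- and hence choosing the correct $i$ --- is again the endpoint analysis. So the ``uniform'' part of your argument is empty, and the ``boundary analysis'' you flag as the main obstacle is in fact the entire content, identical to what the paper does directly. Dropping the $H_V$ scaffolding and arguing straight from the interval description, as the paper does, is both shorter and avoids the circularity risk of leaning on Section~\ref{sec:Mapping} results whose fine boundary behaviour is only pinned down later (cf.\ Proposition~\ref{prop:can't reach too far}).
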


Before we begin with the proof, we provide pictures to give the reader intuition as to what these Auslander-Reiten sequences look like.
We refer the reader to Example \ref{xmp:arspaces2}, where $\bar{S}=\{-\infty,0,1,+\infty\}$ and $s_0=1$.
Let $V=M_{(0,5)}$ and $W=M_{(-\infty,5)}$.
There is an irreducible morphism $V\to W$. 
To show this, let $U$ be the indecomposable with support $[0,5)$.
Any morphism $V\to U$ factors through an indecomposable $M_{(a,5)}$ for any $a<0$. 

Further, any morphism from $V$ to such an indecomposable factors through another indecomposable $M_{(a-\e,5)}$ for all $\e>0$.
However, the morphism $V\to W$ does not factor through any other representation and is mono, thus is irreducible.
One can see this using the left picture in Example \ref{xmp:arspaces2}.
Consider the indecomposables $M_{(x,5)}$ as $x$ approaches $0$ from the left.
The support reaches 0 and ``spills over'' down to $-\infty$.
Afterwards the support can be ``drawn up'' back towards 0 from the right.

This described case a type (7) Auslander-Reiten sequence in Table \ref{tab:AR sequence table}.
The intuitive picture the reader should have is the following:
\begin{displaymath}\begin{tikzpicture}[scale=1]
\draw (0,0) -- (4,4);
\draw (0.25,0) -- (4.25,4);
\draw (0,4) -- (4,0);
\draw (0.25,4) -- (4.25,0);
\draw (0.15,0) node[anchor=north east] {$0$};
\draw (0.1,0) node[anchor=north west] {$-\infty$};
\draw (0.15,4) node[anchor=south east] {$5\notin $};
\draw (0.1,4) node[anchor=south west] {$5\in $};
\draw[fill opacity=0] (2.125,2) circle[radius=3mm];
\draw (2.125,2.3) -- (5.95,3.7);
\draw (2.125,1.7) -- (5.95,0.3);
\draw[fill opacity=0] (7,2) circle[radius=2cm];
\draw (5,2) node [anchor=west] {$M_{(0,5)}$}; 
\draw (7,4) node [anchor=north] {$M_{(0,5]}$};
\draw (7,0) node [anchor=south] {$M_{(-\infty,5)}$};
\draw (9,2) node [anchor=east] {$M_{(-\infty,5]}$};
\draw[->] (5.8,2.3) -- (6.8,3.3);
\draw[->] (5.8,1.7) -- (6.8,0.7);
\draw[->] (7.2,3.3) -- (8.2,2.3);
\draw[->] (7.2,0.7) -- (8.2,1.7);
\end{tikzpicture}\end{displaymath}

In type (4), the values $a$ and $b$ are not sinks or sources and small neighborhoods around each have $\preceq$ that is the opposite of $\leq$.
That is, $a+\e\preceq a$ and similarly for $b$.
In that case we have the following picture:
\begin{displaymath}\begin{tikzpicture}[scale=1]
\draw (0,0) -- (4,4);
\draw (0.25,0) -- (4.25,4);
\draw (0,4) -- (4,0);
\draw (0.25,4) -- (4.25,0);
\draw (0.15,0) node[anchor=north east] {{$a\notin$}};
\draw (0.1,0) node[anchor=north west] {{$a\in$}};
\draw (0.15,4) node[anchor=south east] {{$b\in$}};
\draw (0.1,4) node[anchor=south west] {{$b\notin$}};
\draw[fill opacity=0] (2.125,2) circle[radius=3mm];
\draw (2.125,2.3) -- (5.95,3.7);
\draw (2.125,1.7) -- (5.95,0.3);
\draw[fill opacity=0] (7,2) circle[radius=2cm]; 
\draw (5,2) node [anchor=west] {{$M_{(a,b]}$}}; 
\draw (7,4) node [anchor=north] {{$M_{(a,b)}$}};
\draw (7,0) node [anchor=south] {{$M_{[a,b]}$}}; 
\draw (9,2) node [anchor=east] {{$M_{[a,b)}$}}; 
\draw[->] (5.8,2.3) -- (6.8,3.3);
\draw[->] (5.8,1.7) -- (6.8,0.7);
\draw[->] (7.2,3.3) -- (8.2,2.3);
\draw[->] (7.2,0.7) -- (8.2,1.7);
\end{tikzpicture}\end{displaymath}

In type (15) both endpoints of a representation are in $\bar{S}$ and so the ``spilling over'' effect, as well as its dual, happens on both endpoints.
In this case, consider the four elements in $\bar{S}$: $s_{2m}<s_{2m+1}<s_{2n-1}<s_{2n}$.
We then have the following picture:
\begin{displaymath}\begin{tikzpicture}[scale=1]
\draw (0,0) -- (4,4);
\draw (0.25,0) -- (4.25,4);
\draw (0,4) -- (4,0);
\draw (0.25,4) -- (4.25,0);
\draw (0.15,0) node[anchor=north east] {{$s_{2m+1}$}};
\draw (0.1,0) node[anchor=north west] {{$s_{2m}$}};
\draw (0.15,4) node[anchor=south east] {{$s_{sn-1}$}};
\draw (0.1,4) node[anchor=south west] {{$s_{2n}$}};
\draw[fill opacity=0] (2.125,2) circle[radius=3mm];
\draw (2.125,2.3) -- (5.95,3.7);
\draw (2.125,1.7) -- (5.95,0.3);
\draw[fill opacity=0] (7,2) circle[radius=2cm];
\draw (4,2) node [anchor=west] {{$M_{(s_{2m+1},s_{2n-1})}$}}; 
\draw (7,4) node [anchor=north] {{$M_{(s_{2m+1},s_{2n}]}$}}; 
\draw (7,0) node [anchor=south] {{$M_{[s_{2m},s_{2n-1})}$}}; 
\draw (10,2) node [anchor=east] {{$M_{[s_{2m},s_{2n}]}$}};
\draw[->] (5.8,2.3) -- (6.8,3.3);
\draw[->] (5.8,1.7) -- (6.8,0.7);
\draw[->] (7.2,3.3) -- (8.2,2.3);
\draw[->] (7.2,0.7) -- (8.2,1.7);
\end{tikzpicture}\end{displaymath}

The table below describes the 16 types of Auslander-Reiten sequences in $\repAR$.
In Theorem \ref{thm:AR sequence classification} we prove there are no other types of Auslander-Reiten sequences in $\repAR$.
If a sink or source happens to be $\pm\infty$ we abuse notation and use $[$ or $]$ to avoid needlessly adding rows to the table.
The 16 types are grouped into 6 flavors depending on whether or not the endpoints $a<b$ of the supports of indecomposables are sinks or sources.
Types (1)--(4) have no endpoints that are sinks and sources.
Types (5)--(8) have $a$ as a sink or source.
Types (9)--(12) have $b$ as a sink or soruce.
Types (13)--(16) have both $a$ and $b$ as a sink or source.
Finally, each of the monomorphisms indicated are the diagonal map $\displaystyle\left[\begin{array}{r} 1\\1\end{array}\right]$.
The surjections are $[\begin{array}{rr} 1 & -1\end{array}]$. 

\centerline{\textbf{Table \ref{tab:AR sequence table}}}
\centerline{\refstepcounter{lemma}
\begin{tabular}{c|c|c|c}\label{tab:AR sequence table}
Type & Flavor & Condition & Auslander-Reiten sequence \\ \hline  
(1) & & $s_a<a, s_b<b$ & $M_{[a,b)}\hookrightarrow M_{[a,b]}\oplus M_{(a,b)} \twoheadrightarrow M_{(a,b]}$ \\
(2) & $a,b\notin \bar{S}$ & $s_a<a, s_b>b$ & $M_{[a,b]}\hookrightarrow M_{[a,b)}\oplus M_{(a,b]} \twoheadrightarrow M_{(a,b)}$ \\
(3) & & $s_a>a,s_b<b$ & $M_{(a,b)}\hookrightarrow M_{(a,b]}\oplus M_{[a,b)} \twoheadrightarrow M_{[a,b]}$ \\
(4) & & $s_a>a,s_b>b$ & $M_{(a,b]}\hookrightarrow M_{(a,b)}\oplus M_{[a,b]} \twoheadrightarrow M_{[a,b)}$ \\ \hline 
(5) & $s_{2n-1} < s_{2n}<b$ & $s_b<b$ & $M_{[s_{2n-1},b)} \hookrightarrow M_{[s_{2n-1},b]} \oplus M_{(s_{2n},b)} \twoheadrightarrow M_{(s_{2n},b]}$ \\
(6) & & $s_b>b$ & $M_{[s_{2n-1},b]} \hookrightarrow M_{[s_{2n-1},b)} \oplus M_{(s_{2n},b]} \twoheadrightarrow M_{(s_{2n},b)}$ \\ \hline 
(7) & $s_{2n} < s_{2n+1} < b$ & $s_b<b$ & $M_{(s_{2n+1},b)} \hookrightarrow M_{(s_{2n+1},b]}\oplus M_{[s_{2n},b)} \twoheadrightarrow M_{[s_{2n},b]}$ \\
(8) & & $s_b>b$ & $M_{(s_{2n+1},b]} \hookrightarrow M_{(s_{2n+1},b)}\oplus M_{[s_{2n},b]} \twoheadrightarrow M_{[s_{2n},b)}$ \\ \hline
(9) & $a < s_{2n} < s_{2n+1}$ & $s_a<a$ & $M_{[a,s_{2n+1}]} \hookrightarrow M_{[a,s_{2n})} \oplus M_{(a,s_{2n+1}]} \twoheadrightarrow M_{(a,s_{2n})}$ \\
(10) & & $s_a>a$ &  $M_{(a,s_{2n+1}]} \hookrightarrow M_{(a,s_{2n})} \oplus M_{[a,s_{2n+1}]} \twoheadrightarrow M_{[a,s_{2n})}$ \\ \hline
(11) & $a < s_{2n-1} < s_{2n}$ & $s_a<a$ & $M_{[a,s_{2n-1})} \hookrightarrow M_{[a,s_{2n}]} \oplus M_{(a,s_{2n-1})} \twoheadrightarrow M_{(a,s_{2n}]}$ \\
(12) & & $s_a>a$ & $M_{(a,s_{2n-1})} \hookrightarrow M_{(a,s_{2n}]}\oplus M_{[a,s_{2n-1})} \twoheadrightarrow M_{[a,s_{2n}]}$ \\ \hline
(13) & & $\scriptstyle s_{2m} < s_{2m+1} < s_{2n} < s_{2n+1}$ & $M_{(s_{2m+1},s_{2n+1}]}\hookrightarrow M_{(s_{2m+1},s_{2n})}\oplus M_{[s_{2m},s_{2n+1}]} \twoheadrightarrow M_{[s_{2m},s_{2n})}$ \\
(14)& $a,b\in \bar{S}$ & $\scriptstyle s_{2m-1}<s_{2m} < s_{2n} < s_{2n+1}$ & $M_{[s_{2m-1},s_{2n+1}]}\hookrightarrow M_{[s_{2m-1},s_{2n})}\oplus M_{(s_{2m},s_{2n+1}]} \twoheadrightarrow M_{(s_{2m},s_{2n})}$ \\
(15) & & $\scriptstyle s_{2m} < s_{2m+1} < s_{2n-1} < s_{2n}$ & $M_{(s_{2m+1},s_{2n-1})}\hookrightarrow M_{(s_{2m+1},s_{2n}]}\oplus M_{[s_{2m},s_{2n-1})} \twoheadrightarrow M_{[s_{2m},s_{2n}]}$ \\
(16) & &  $\scriptstyle s_{2m-1} < s_{2m} < s_{2n-1} < s_{2n}$ & $M_{[s_{2m-1},s_{2n-1})}\hookrightarrow M_{[s_{2m-1},s_{2n}]}\oplus M_{(s_{2m},s_{2n-1})} \twoheadrightarrow M_{(s_{2m},s_{2n}]}$
\end{tabular}}

\begin{proof}[Proof of Lemma \ref{lem:AR sequence existence}]
The first and last term in each sequence is indecomposable.
It remains to check whether the monomorphisms and epimorphisms indicated form an exact sequence and satisfy Definition \ref{def:AR sequence}. 
Types (1)--(4) were essentially proven in \cite[Theorem 3.0.1]{IgusaRockTodorov2019}.

We now prove type (5).
Types (6), (9), and (10) are all similar.
Let $M_{|c,d|}$ be an indecomposable representation such that $M_{|c,d|}\not\cong M_{[s_{2n-1},b)}$ but $\Hom(M_{|c,d|},M_{[s_{2n-1},b)})\neq 0$.
By \cite[Theorem 2.3.2]{IgusaRockTodorov2019} we know that the Hom-space is then isomorphic to $k$ and in particular any morphism $h$ is determined by by any chosen morphism $h(x)$ where $x$ is in both supports.
Since $M_{|c,d|}\not\cong M_{[s_{2n-1},b)}$, at least one endpoint of their supports must be different.
If $b\in|c,d|$ then a morphism $h$ factors through $M_{[s_{2n},b]}$ by first including $b$ in the support.
The requisite diagrams for morphisms of representations will still be satisfied as they are the same for vertices not equal to $b$.

We now show that $c\geq s_{2n-1}$.
If $c<s_{2n-1}$ then any commutative square for $h(x)$ where $x<s_{2n-1}$ and $x\preceq s_{2n-1}$ would require $h(x)=0$ since such an $x$ is not in the support of $M_{[s_{2n-1},b)}$.
If $c> s_{2n-1}$ then $c\geq s_{2n}$ by the same reasoning.
In particular, if $c>s_{2n-1}$ then $s_{2n}\notin |c,d|$ and if $c=s_{2n-1}$ then $s_{2n-1}\in|c,d|$.
So, if $c>s_{2n-1}$ then $h$ must factor through $M_{(s_{2n-1},b)}$.
Therefore, the inclusion in type (5) satisfies Definition \ref{def:AR sequence}.
By dual arguments, the surjection in type (5) does also.

Of types (7), (8), (11), and (12) we prove type (7), since these are also all similar types.
The argument on the upper endpoint $b$ is the same as in type (5).
Let $M_{|c,d|}$ be an indecomposable that is not isomorphic to $M_{(s_{2n+1},b)}$ but $\Hom(M_{(s_{2n+1},b)},M_{|c,d|})\cong k$.
Note that $\Hom(M_{[s_{2n},b)},M_{[s_{2n+1},b)})\cong k$.
Thus if $c \geq s_{2n+1}$ then any morphism $h:M_{(s_{2n+1},b)}\to M_{|c,d|}$ factors through $M_{[s_{2n+1},b)}$ and so also through $M_{[s_{2n},b)}$.
If $c\leq s_{2n}$ then any $h$ also clearly factors through $M_{[s_{2n},b)}$.
If $s_{2n}<c<s_{2n-1}$ there is a map $M_{[s_{2n},b)}\to M_{|c,d|}$ since for all $s_{2n}<y\leq x <s_{2n+1}$, the commutative diagram involving $h(x)$ and $h(y)$ will still commute.
Therefore, the inclusion in type (7) satisfies Definition \ref{def:AR sequence}.
Again by dual arguments, so does the surjection in (7).

Types (13)--(16) are proven using the arguments about lower endpoints in type (5) or type (7), except on both endpoints.
Finally, we se that the monomorphisms are exactly the kernels of the epimorphisms and the epimorphisms are exactly the cokernels of the monomorphisms.
Therefore, each of the 16 sequences listed are Auslander-Reiten sequencs.
\end{proof}

\begin{proposition}\label{prop:M collapses AR sequences}
Let $V$ and $W$ be indecomposable representations that belong to the same Auslander-Reiten sequence.
Assume further that it is one of the types in Table \ref{tab:AR sequence table}.
Then $\MM V=\MM W$.
\end{proposition}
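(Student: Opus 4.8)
The plan is to check the identity directly, type by type, for the sixteen sequences of Table \ref{tab:AR sequence table}, organized by the six flavors. The organizing principle is that, by Construction \ref{con:interior of M} and Definition \ref{def:the rest of M}, the image $\MM M_{|a,b|}$ of an indecomposable that is not projective, not simple, and not supported on some $[s_n,s_{n+1}]$ is determined entirely by the unordered pair $\{\lambda_a^*,\lambda_b^*\}$ of $\lambda$ functions attached to its two endpoints together with the integer $n\in\{0,1\}$ of Definition \ref{def:the rest of M}: it is precisely the intersection point of $\lambda_a^*$ and $\lambda_b^*$ whose first coordinate is $n\pi+\frac{1}{2}(\kappa_a^*+\kappa_b^*)$, in the notation of Proposition \ref{prop:lambda intersections}. (The sequences in which one of the four indecomposables is instead projective, simple, or supported on some $[s_n,s_{n+1}]$ must be treated separately from the explicit coordinates in Definition \ref{def:the rest of M}; in those the relevant indecomposables all land on the same boundary point of the image of $\MM$, which one reads off from Proposition \ref{prop:p hat order} and Proposition \ref{prop:M on injectives}.) Thus for each type it suffices to show that the first term, the two summands of the middle term, and the last term all produce the same pair $\{\lambda_a^*,\lambda_b^*\}$ and the same $n$.

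For the flavor of types (1)--(4) neither endpoint lies in $\bar S$, so by the first and third bullets of Construction \ref{con:interior of M} the functions $\lambda_a^*=\lambda_a$ and $\lambda_b^*=\lambda_b$ are independent of whether the endpoints are included; since all four indecomposables have support with the \emph{same} endpoints $a<b$, differing only by inclusion, they share $\{\lambda_a,\lambda_b\}$, and the value of $n$ depends only on whether the sink or source just below $b$ is a sink or a source, so it too is common. Hence $\MM$ is constant on such a sequence, and in particular $\MM V=\MM W$. For the flavor of types (5)--(12) exactly one endpoint, say $a$, lies in $\bar S$, and inspection of the table shows that the two left-endpoint data occurring in the sequence are: some $s_m\in\bar S$ included, and the \emph{adjacent} sink/source $s_{m\pm1}$ excluded. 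The crux is that these two data determine the same $\lambda$ function: this is exactly the content of Proposition \ref{prop:sink and source share} (with Proposition \ref{prop:lambda properties}(4) covering the subcase where one of the two is $\pm\infty$, in which $\lambda^-=\lambda^+$). The right endpoint $b\notin\bar S$ is handled as in types (1)--(4), so the pair $\{\lambda_a^*,\lambda_b^*\}$ is common to all four indecomposables, and Lemma \ref{lem:good intersection point} forces the integer $n$ to be the same for each. Types (9)--(12) are the left--right mirror of this; types (13)--(16) combine the two, applying Proposition \ref{prop:sink and source share} at \emph{both} endpoints.

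The step I expect to be the real obstacle is the last one: verifying that the selection rule for $n$ in Definition \ref{def:the rest of M} picks the \emph{same} intersection point of $\lambda_a^*$ and $\lambda_b^*$ for all four indecomposables, rather than two neighbouring intersections differing by a shift of $\pi$ in the first coordinate. Here Lemma \ref{lem:good intersection point} is the essential input: it identifies $\MM M_{|a,b|}$ as the unique intersection of $\lambda_a^*$ and $\lambda_b^*$ lying in a half-open interval of length $\pi$ anchored at $x_a$ (equivalently $x_b$), so the task reduces to checking that the relevant anchor points for the four indecomposables in a given sequence lie close enough together that the same interval is selected in each case. This is the point at which one must track carefully the parities of the indices of the sinks and sources involved and the $\pm$ decorations of the various $\kappa$ values; everything else is a short unwinding of Construction \ref{con:interior of M} and Definition \ref{def:the rest of M}.
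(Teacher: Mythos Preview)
Your approach is essentially the same as the paper's: the paper's proof is the single sentence ``This is true by checking each type of sequence in Table~\ref{tab:AR sequence table} against Definition~\ref{def:the rest of M},'' and your proposal is precisely a fleshed-out version of that case check, organized by the six flavors and correctly isolating Construction~\ref{con:interior of M}, Definition~\ref{def:the rest of M}, Proposition~\ref{prop:sink and source share}, and Lemma~\ref{lem:good intersection point} as the ingredients one must unwind. Your own caveat that the $\pm$ decorations of the $\kappa$ values and the selection of the integer $n$ must be tracked carefully is exactly right---that bookkeeping is the entire content of the verification, and neither you nor the paper carries it out explicitly.
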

\begin{proof}
This is true by checking each type of sequence in Table \ref{tab:AR sequence table} against Definition \ref{def:the rest of M}.
\end{proof}

\subsection{Declaration and Proof of Classification}\label{sec:AR sequence classification}

\begin{theorem}\label{thm:AR sequence classification}
Let $0\to U\to V \to W \to 0$ be an Auslander-Reiten sequence in $\repAR$.
Then it is one of the 16 types in Table \ref{tab:AR sequence table}.
\end{theorem}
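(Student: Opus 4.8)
The plan is to run the proof of Lemma \ref{lem:AR sequence existence} in reverse: starting from an arbitrary Auslander--Reiten sequence $0\to U\xrightarrow{f}V\xrightarrow{g}W\to 0$ we pin down $U$, and hence $V$ and $W$, by analysing the irreducible morphisms out of $U$. First record the easy constraints. By Definition \ref{def:AR sequence}, $U$ and $W$ are indecomposable; write $U=M_{|a,b|}$. Since $f$ is not a section, $U$ is not injective, and dually, since $g$ is not a retraction, $W$ is not projective. Decompose $V=\bigoplus_{i=1}^{r}V_i$ into indecomposables. Because every $\Hom$-space between indecomposables of $\repAR$ is at most one-dimensional (\cite[Theorem 3.0.1]{IgusaRockTodorov2019}), each component $U\to V_i$ of $f$ is a nonzero irreducible morphism, every irreducible morphism out of $U$ occurs this way, and no summand is repeated (a repeat would force $f$ to be a section). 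Thus the entire sequence is determined by the set of indecomposables $X$ admitting an irreducible morphism $U\to X$: $V$ is their direct sum, $f$ is (after rescaling) the diagonal map $\left[\begin{smallmatrix}1\\1\end{smallmatrix}\right]$, and $W=\coker f$.

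The heart of the argument is therefore the classification of irreducible morphisms out of $U=M_{|a,b|}$. Using Lemma \ref{lem:hom support} together with the explicit description of $\Hom$-spaces of interval indecomposables in \cite{IgusaRockTodorov2019}, one shows that an irreducible morphism $U\to X$ exists only if $X=M_{|a',b'|}$ differs from $M_{|a,b|}$ in exactly one endpoint, and that endpoint changes \emph{minimally}: either its inclusion status is toggled, or (only when the endpoint lies in $\bar S$) it jumps to the adjacent element of $\bar S$ with inclusion status forced by the orientation. The governing dichotomy is that toggling an endpoint yields a monomorphism precisely when the $\preceq$-orientation there points \emph{into} the current support and a proper epimorphism precisely when it points \emph{out} of it; and when neither a toggle nor (for $\bar S$-endpoints) a jump underlies any morphism, the continuum of intermediate intervals prevents any irreducible morphism from modifying that endpoint at all. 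Carrying this out case by case on whether $a,b\in\bar S$, on the inclusion statuses of $a$ and $b$, and on the local orientations, one finds exactly two alternatives. If both endpoints are ``admissible'' — inclusion status of each compatible with the orientation, so that one irreducible morphism out of $U$ is a monomorphism and the other a proper epimorphism — then $U$ is precisely the left-hand term of one of the rows of Table \ref{tab:AR sequence table} (the flavours of the table exhaust these configurations), and taking $\coker f$ reproduces the corresponding $W$; so the sequence is one of the $16$ types. Otherwise $U$ admits at most one irreducible morphism to an indecomposable, $V$ is forced to be indecomposable with $\coker f$ simple, and the candidate $f$ is a proper epimorphism rather than the required monomorphism — so no Auslander--Reiten sequence has $U$ as its left-hand term.

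Since we are given that our sequence exists, the admissible case must hold, and $0\to U\to V\to W\to 0$ is one of the $16$ types of Table \ref{tab:AR sequence table}. I would close by noting that the inadmissible configurations are exactly those with $U$ projective, simple, of support $[s_n,s_{n+1}]$, or with an endpoint whose inclusion status is forced the ``wrong'' way; all of these have too few monomorphic irreducible morphisms out of them, which is the source of Corollary \ref{cor:unique AR sequence}. Uniqueness of the resulting sequence is automatic, since $f$ is minimal left almost split and an Auslander--Reiten sequence is determined up to isomorphism by its left-hand term.

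The main obstacle I anticipate is the bookkeeping in the middle step: tracking, across the splitting into $\bar S$ versus non-$\bar S$ endpoints and the local orientation at each of $a$ and $b$, which endpoint-toggle or $\bar S$-jump actually supports a morphism and whether it is a monomorphism or a proper epimorphism — and, crucially, recognising that in the inadmissible configurations it is the absence of a \emph{minimal} intermediate interval (a genuinely non-artinian feature of $\repAR$) that obstructs the existence of an Auslander--Reiten sequence. Once the case table is set up in parallel with the proof of Lemma \ref{lem:AR sequence existence}, the remaining cokernel computations are routine.
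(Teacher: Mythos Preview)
Your strategy—classify irreducible morphisms out of $U$ and check which $U$ assemble into a valid middle term—matches the paper's approach. But two steps are wrong.

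First, the ``admissible'' criterion ``one irreducible morphism out of $U$ is a monomorphism and the other a proper epimorphism'' is false. In type~(2), $U=M_{[a,b]}$ and both components $M_{[a,b]}\to M_{[a,b)}$ and $M_{[a,b]}\to M_{(a,b]}$ are proper epimorphisms (the diagonal is mono only because the kernels $M_{\{b\}}$ and $M_{\{a\}}$ meet trivially); in type~(3), $U=M_{(a,b)}$ and both components are monomorphisms. So the mono/epi status of the individual components does not pick out the left-hand terms of Table~\ref{tab:AR sequence table}.

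Second, and more seriously, your inadmissible case is self-contradictory (``$\coker f$ simple'' yet ``$f$ a proper epimorphism'') and incomplete. For many inadmissible $U$—for instance $U=M_{(a,b)}$ in the type~(1) flavour—there \emph{is} an irreducible monomorphism $M_{(a,b)}\hookrightarrow M_{(a,b]}$, yielding a genuine short exact sequence with $W=M_{\{b\}}$. What actually rules this out, and what the paper supplies, is a separate argument that simple modules and $M_{[s_n,s_{n+1}]}$ cannot be the right-hand term of an Auslander--Reiten sequence: every nonzero morphism from an indecomposable into such a $W$ factors through a strictly larger interval, so the epimorphism $V\twoheadrightarrow W$ is never right almost split. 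Your plan omits this step, so the inadmissible configurations are not eliminated.
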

\begin{proof}
We will show that if $U$ is not one of the 16 possibilities for the left indecomposable then the sequence is not almost-split.
Dual arguments show that if $W$ is not one of the 16 possibilities for the right indecomposable then the sequence is not almost-split.

We start by showing the indecomposables in the middle of the 16 sequences cannot be the first term in an Auslander-Reiten sequence.
We will show types (1), (5), (6), and (13) as the other types are similar to one of these.
First we see any monomorphism $M_{[a,b]}\hookrightarrow M_{|c,d|}$ factors through $M_{[a,b+\e]}$ for sufficiently small $\e>0$.
Thus, there are no monomorphisms with source $M_{[a,b]}$ that satisfy Definition \ref{def:AR sequence} and thus no almost-split sequence that begins with $M_{[a,b]}$.

We instead examine $M_{(a,b)}$.
Similarly to $M_{[a,b]}$, any monomorphism must factor through $M_{(a,b+\e)}$ except $M_{(a,b)}\hookrightarrow M_{(a,b]}$.
However, the cokernel of this map is $M_{\{b\}}$, which by \cite[Theorem 3.0.1]{IgusaRockTodorov2019} cannot be the beginning or end of an Auslander-Reiten sequence.
Thus, there is no Auslander-Reiten sequence that begins with $M_{(a,b)}$.

Now consider $M_{[s_{2n-1},b]}$.
By the same argument as before there is no monomorphism with source $M_{[s_{2n-1},b]}$ that satisfies Definition \ref{def:AR sequence}.
Then again by the same argument, $M_{(s_{2n},b)}\hookrightarrow M_{(s_{2n},b]}$ satisfies the definition but the cokernel is simple so the epimorphism will not satisfy the definition.
The indecomposables in the middle of type (6) do not have any minimal monomorphisms by the above arguments.

We then consider $M_{(s_{2m+1},s_{2n})}$ in type (13).
If $s_{2n}\in|c,d|$ then $\Hom(M_{(s_{2m+1},s_{2n})},M_{|c,d|})=0$ by the proof of Lemma \ref{lem:AR sequence existence}.
Thus the monomorphism must extend the support below.
However, then it must factor through $M_{[s_{2m},s_{2n})}$.
The cokernel would then be $[s_{2m},s_{2m+1}]$, which we will show cannot be the beginning or end of an Auslander-Reiten sequence next.

Consider $M_{[s_n,s_{n+1}]}$.
Let $f:M_{[s_n,s_{n+1}]}\to M_{|a,b|}$ be a nontrivial morphism such that $M_{[s_n,s_{n+1}]}\not\cong M_{|a,b|}$.
Suppose $s_n$ is a source, since if $s_n$ is a sink the argument is dual.
Then $|a,b|=[s_n,b|$ by the arguments in the proof of Lemma \ref{lem:AR sequence existence}.
If $b>s_{n+1}$ then choose $c$ such that $s_{n+1}\preceq c$ and $s_{n+1}<c<b$. 
Then there is a nontrivial composition $M_{[s_n,s_{n+1}]}\to M_{[s_n,c|}\to M_{[s_n,b|}$ but $\Hom(M_{[s_n,c|},M_{[s_n,s_{n+1}]})=0$ and $\Hom(M_{[s_n,b|},M_{[s_n,c|})=0$.
Finally, we know $|a,b|\not\subset [s_n,s_{n+1})$ since otherwise $\Hom(M_{[s_n,s_{n+1}]},M_{|a,b|})=0$. 

Now suppose $f:M_{|a,b|}\to M_{[s_n,s_{n+1}]}$ is nontrivial and $M_{[s_n,s_{n+1}]}\not\cong M_{|a,b|}$.
Then $|a,b|=|a,s_{n+1}]$ by the proof of Lemma \ref{lem:AR sequence existence} again.
Dual to above, $a < s_n$ and so there exists $\e>0$ such that $f$ factors through $M_{[s_n-\e,s_{n+1}]}$.
Therefore, there can be no Auslander-Reiten sequence starting or ending with $M_{[s_n,s_{n+1}]}$.

This leaves projectives and injectives.
By \cite[Proposition 3.2.2]{IgusaRockTodorov2019} any morphism $P\to Q$ of projective indecomposables is a monomorphism or 0.
If $M_{\{s_{2n}\}}$ is a simple projective and $M_{|a,b|}\not\cong M_{\{s_{2n}\}}$, $\Hom(M_{|a,b|},M_{\{s_{2n}\}})=0$.
Any nontrivial morphism $M_{\{s\}}\to M_{|a,b|}$ factors through $M_{[s_{2n},s_{2n}+\e]}$ or $M_{[s_{2n}-\e,s_{2n}]}$ and is thus neither left nor right almost split.
Now suppose $P$ is projective that is not simple.
Then one or both endpoints of its supports are a sink and included.
At that endpoint, we have the same argument as with $M_{\{s_{2n}\}}$.

Thus, a monomorphism with projective source satisfying Definition \ref{def:AR sequence} must come from a projective with support $[s_{2n},b|$ or $|b,s_{2n}]$ where $b$ is not a sink.
If the support includes $b$ then there are no monomorphisms satisfying the definition by above arguments.
If $b$ is not included then the cokernel is simple or has support $[s_m,s_{m+1}]$ and so we do not have an Auslander-Reiten sequence.
Thus, a projective cannot begin an Auslander-Reiten sequence and dually an injective cannot end an Auslander-Reiten sequence.
By definition, an injective cannot begin an Auslander-Reiten sequence and a projective cannot end an Auslander-Reiten sequence.

Finally, we recall that, given a fixed source, targets of the morphisms in Definition \ref{def:AR sequence} are unique up to isomorphism.
(Targets of minimal monomorphisms are unique up to isomorphism.)
Therefore, the sequences in Table \ref{tab:AR sequence table} are the only Auslander-Reiten sequences in $\repAR$.
\end{proof}

\begin{cor}[to Theorem \ref{thm:AR sequence classification}]\label{cor:unique AR sequence}
Let $M_{|a,b|}$ be an indecomposable in $\repAR$ such that
\begin{itemize}
\item $M_{|a,b|}$ is not projective,
\item $M_{|a,b|}$ is not injective, and
\item $\MM M_{|a,b|}\neq (x,\pm\frac{\pi}{2})$, for some $x\in\R$.
\end{itemize}
Then, there exists a unique Auslander-Reiten sequence in $\repAR$ of one of the types in Table \ref{tab:AR sequence table} containing $M_{|a,b|}$. That is, an Auslander-Reiten sequence $0\to U\hookrightarrow V\twoheadrightarrow W\to 0$ in $\repAR$ such that $M_{|a,b|}\cong U$, $M_{|a,b|}\cong W$, or there exists $M_{|c,d|}$ such that $V\cong M_{|a,b|}\oplus M_{|c,d|}$.

If $M_{|a,b|}$ does not satisfy the above conditions then it does not belong to any Auslander-Reiten sequence.
\end{cor}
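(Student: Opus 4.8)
The plan is to reduce everything to Theorem \ref{thm:AR sequence classification} together with Lemma \ref{lem:AR sequence existence}: the former says every Auslander-Reiten sequence in $\repAR$ is one of the sixteen types of Table \ref{tab:AR sequence table}, and the latter says all sixteen of those are genuine Auslander-Reiten sequences. So the corollary becomes a bookkeeping statement about which indecomposables occur as terms in that table, and in how many rows and positions.

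First I would dispose of the translation between hypotheses. For $M_{|a,b|}$ neither projective nor injective, the condition $\MM M_{|a,b|}\neq(x,\pm\frac{\pi}{2})$ is equivalent to $M_{|a,b|}$ being neither simple nor of the form $M_{[s_n,s_{n+1}]}$: Definition \ref{def:the rest of M} sends every non-injective simple and every $M_{[s_n,s_{n+1}]}$ onto one of the lines $\R\times\{\pm\frac{\pi}{2}\}$, while for any other $M_{|a,b|}$ the point $\MM M_{|a,b|}$ is an intersection of two distinct $\lambda$-functions whose $y$-coordinate, by Proposition \ref{prop:lambda intersections}, is strictly between $-\frac{\pi}{2}$ and $\frac{\pi}{2}$. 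With this in hand the ``only if'' half is immediate: by Theorem \ref{thm:AR sequence classification} every Auslander-Reiten sequence is one of the sixteen types, and a glance at Table \ref{tab:AR sequence table} --- using Theorem \ref{thm:projectives} and the constraints built into the ``Condition'' column --- shows that no term (left, middle summand, or right) of any of those rows is projective, injective, simple, or an $M_{[s_n,s_{n+1}]}$; so an indecomposable failing one of the three hypotheses lies in no Auslander-Reiten sequence.

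For the ``if'' half I would attach to $M_{|a,b|}$ its intrinsic \emph{profile}: for each of $a$ and $b$, whether it lies in $\bar S$; if it does, whether it is a sink or a source and whether it is contained in $|a,b|$; if it does not, the local orientation there (whether $x+\e\preceq x$ or $x\preceq x+\e$ for small $\e>0$) and again whether it is contained in $|a,b|$. Matching this profile against the ``Flavor'' and ``Condition'' columns of Table \ref{tab:AR sequence table} singles out exactly one row, and within that row the pattern of which of $a,b$ are included singles out exactly one of the four slots $U$, left summand of $V$, right summand of $V$, $W$. By Lemma \ref{lem:AR sequence existence} that row is an Auslander-Reiten sequence and it contains $M_{|a,b|}$, giving existence; and since the row and slot were forced by the profile, and every Auslander-Reiten sequence is one of the sixteen types, any Auslander-Reiten sequence containing $M_{|a,b|}$ is isomorphic to that one, giving uniqueness.

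The main obstacle is the verification hidden in ``matching the profile singles out exactly one row.'' The rows are stated in a chosen indexing ($s_{2n-1}<s_{2n}<b$ in type (5), $s_{2n}<s_{2n+1}<b$ in type (7), and so on), which has to be re-derived from the given endpoints; one must check that every admissible profile actually appears, including borderline cases such as $M_{(s_{2n-1},s_{2n+1})}$ spanning three consecutive sink-sources, or $M_{(s_{2n-1},b)}$ with $s_{2n-1}<b<s_{2n}$ (these land, after reindexing, in types (15) and (7) respectively), and that two distinct rows never share a term once the geometry of $A_\R$ is fixed --- which holds because the ``Condition'' column records local orientations that are not free parameters. I would organize this as one table-lookup argument rather than sixteen separate checks.
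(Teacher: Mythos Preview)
Your proposal is correct and takes essentially the same approach as the paper: reduce to Theorem \ref{thm:AR sequence classification} and Lemma \ref{lem:AR sequence existence}, then check by inspection of Table \ref{tab:AR sequence table} that the terms appearing are exactly the non-projective, non-injective indecomposables with $\MM$-image off $y=\pm\frac{\pi}{2}$, each occurring once. The paper's own proof is the same bookkeeping argument compressed into the phrase ``with careful observation''; your version is more explicit, in particular by translating the third hypothesis via Definition \ref{def:the rest of M} and by spelling out the profile-matching that gives uniqueness.
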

\begin{proof}
By Theorem \ref{thm:AR sequence classification} every Auslander-Reiten sequence in $\repAR$ has to be one of the types in Table \ref{tab:AR sequence table}.
With careful observation we see none of the indecomposable representations involved are projective, injective, simple, or have support $[s_m,s_{m+1}]$, where we do mean both $s_m$ and $s_{m+1}$ are in $\R$.
Furthermore, the 16 sequences exhibit every indecomposable representation that is not projective, injective, simple, or with support $[s_m,s_{m+1}]$.
If $M_{|a,b|}$ does not meet the requirements above then by the same theorem it is not part of an Auslander-Reiten sequence.
\end{proof}

\begin{remark}\label{rem:no AR translation}
The corollary forces us to accept that there can be no Auslander-Reiten transpose with the traditional properties.
To see this, consider a representation $M_{[a,b]}$ that belongs to an Auslander-Reiten sequence of type (2) in Table \ref{tab:AR sequence table}.
Then it cannot have the usual Auslander-Reiten sequence $\tau M_{[a,b]} \hookrightarrow E\twoheadrightarrow M_{[a,b]}$ as there is no Auslander-Reiten sequence of this form in $\repAR$.
\end{remark}

\subsection{Relation to $\MM$}
In this subsection we show show that $\MM M_{|a,b|}=\MM M_{|c,d|}$ if and only if $M_{|a,b|}$ and $M_{|c,d|}$ belong to the same Auslander-Reiten sequence.
We also show that $\repAR$ has the ``one way Hom'' property exhibited in representations of type $A_n$.

\begin{proposition}\label{prop:ARquotient}
Let $M_{|a,b|}\not\cong M_{|c,d|}$ be indecomposables in $\repAR$.
Then $\MM M_{|a,b|}=\MM M_{|c,d|}$ if and only if one of the following holds:
\begin{itemize}
\item they belong to the same Auslander-Reiten sequence or
\item they are both projectives at the same vertex or both injectives at the same vertex.
\end{itemize}
\end{proposition}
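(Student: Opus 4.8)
The plan is to prove both implications. For the ``if'' direction: if $M_{|a,b|}$ and $M_{|c,d|}$ belong to a common Auslander-Reiten sequence, Theorem \ref{thm:AR sequence classification} forces that sequence to be one of the $16$ types of Table \ref{tab:AR sequence table}, and then Proposition \ref{prop:M collapses AR sequences} gives $\MM M_{|a,b|}=\MM M_{|c,d|}$. If instead $M_{|a,b|}$ and $M_{|c,d|}$ are both projective at a vertex $a$, Definition \ref{def:M on the rest of the projectives} assigns a common point to all projectives at $a$; dually, Proposition \ref{prop:M on injectives} sends all injectives at a vertex $a$ to $(x_a+\pi,-y_a)$. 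This settles the easy direction.

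For the converse, set $p=\MM M_{|a,b|}=\MM M_{|c,d|}$ with $M_{|a,b|}\not\cong M_{|c,d|}$. First I would rule out $p$ lying on the boundary $\R\times\{\pm\frac{\pi}{2}\}$: the only indecomposables mapped there are the simples that are not injective and the $M_{[s_n,s_{n+1}]}$ with $s_n,s_{n+1}\in\R$ (projectives and injectives land in the open strip, since a $y$-coordinate $\pm\frac{\pi}{2}$ would require a projective or injective at $\pm\infty$; and the indecomposables of Construction \ref{con:interior of M} land in the open strip because two distinct $\lambda$ functions meet only where $|y|<\frac{\pi}{2}$, by Proposition \ref{prop:lambda intersections}), and on that set $\MM$ is injective because the relevant $\hp$-values are pairwise distinct (Propositions \ref{prop:p hat order} and \ref{prop:p hat sides}). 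So $p$ is interior. The image now decomposes into the projective line (image of the projectives), the injective line (image of the injectives), and the open region strictly between them, with every non-projective non-injective indecomposable of Construction \ref{con:interior of M} landing in the open region (Proposition \ref{prop:between projectives and injectives}). Hence if $M_{|a,b|}$ is projective then $p$ lies on the projective line, so $M_{|c,d|}$ is projective too, and since the vertex determines the $y$-coordinate (Proposition \ref{prop:M is sort of injective}) they are projective at a common vertex; the case of $M_{|a,b|}$ injective is dual.

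There remains the case that $M_{|a,b|}$ and $M_{|c,d|}$ are both non-projective and non-injective; as $p$ is interior, neither is simple nor of the form $M_{[s_n,s_{n+1}]}$, so both are governed by Construction \ref{con:interior of M} and $p$ is realized as the intersection of their two endpoint $\lambda$ functions. By Proposition \ref{prop:lambda covers} exactly two $\lambda$ functions pass through the interior point $p$, one of positive and one of negative slope there; combining the derivative statement of Proposition \ref{prop:lambda intersections} with the $\kappa$-inequalities of Proposition \ref{prop:kappa order} and Lemma \ref{lem:good intersection point}, the positive-slope function is always the one attached to the left endpoint and the negative-slope one to the right endpoint. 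Therefore $\lambda_a^*=\lambda_c^*$ and $\lambda_b^*=\lambda_d^*$, and since distinct vertices can share a $\lambda$ function only as an adjacent sink--source pair (Proposition \ref{prop:sink and source share}), the supports $|a,b|$ and $|c,d|$ have the same infimum and supremum, differing only in which endpoints are included. Finally, reading off Table \ref{tab:AR sequence table}: for any fixed pair of endpoints, the indecomposables with those endpoints that are not projective, injective, simple, or of the form $M_{[s_n,s_{n+1}]}$ are precisely the middle and outer terms of one sequence in the table (types $(1)$--$(4)$ when neither endpoint lies in $\bar{S}$, types $(5)$--$(12)$ when exactly one does, types $(13)$--$(16)$ when both do), so $M_{|a,b|}$ and $M_{|c,d|}$ belong to a common Auslander-Reiten sequence.

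The step I expect to be the main obstacle is this last one --- matching the $\pm$-decorations of the $\lambda$ functions to the inclusion or exclusion of endpoints and checking the six flavours against the table. That said, Proposition \ref{prop:M collapses AR sequences} already shows that the terms of each sequence in Table \ref{tab:AR sequence table} share an image under $\MM$, so the only genuinely new ingredient is the converse, namely that nothing else collapses to that point; this is exactly the rigidity of the $\lambda$ functions supplied by Propositions \ref{prop:lambda covers}, \ref{prop:lambda intersections}, and \ref{prop:sink and source share}.
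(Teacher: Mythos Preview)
Your proposal is correct and follows essentially the same strategy as the paper: both directions rest on the rigidity of the $\lambda$ functions (Proposition \ref{prop:lambda covers}) together with the classification in Table \ref{tab:AR sequence table} and Proposition \ref{prop:M collapses AR sequences}. The paper argues the converse by contrapositive (assuming $\MM M_{|a,b|}\neq\MM M_{|c,d|}$) rather than directly, and is terser about the boundary $y=\pm\tfrac{\pi}{2}$ and about which $\lambda$ function carries which endpoint, but the substance is the same.
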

\begin{proof}
If $M_{|a,b|}$ and $M_{|c,d|}$ belong to the same Auslander-Reiten sequenc then by Theorem \ref{thm:AR sequence classification} it is one of the types in Table \ref{tab:AR sequence table}.
By Proposition \ref{prop:M collapses AR sequences}, $\MM M_{|a,b|}=\MM M_{|c,d|}$.
By Definitions \ref{def:M on projectives at s} and \ref{def:M on the rest of the projectives} and Proposition \ref{prop:M on injectives}, if $M_{|a,b|}$ and $M_{|c,d|}$ are both projective or both injective at the same $a\in\R$ then $\MM M_{|a,b|} = \MM M_{|c,d|}$.

We now assume $\MM M_{|a,b|}\neq \MM M_{|c,d|}$.
Suppose $M_{|a,b|}$ is a projective.
For contradiction, suppose $M_{|c,d|}$ is not projective but $\MM M_{|a,b|}=\MM M_{|c,d|}$ anyway.
Then in particular the $y$-coordinates are the same.
But by Lemma \ref{lem:good intersection point} the $x$-coordinate of $\MM M_{|c,d|}$ is strictly greater than the $x$-coordinate of $\MM M_{|a,b|}$, a contradiction.
We arrive a similar contradiction if $M_{|a,b|}$ is injective but $M_{|c,d|}$ is not.
Thus, $M_{|a,b|}$ and $M_{|c,d|}$ cannot both be projectives at the same vertex or both be injectives at the same vertex.

If both $M_{|a,b|}$ and $M_{|c,d|}$ are projective but $\MM M_{|a,b|}\neq\MM M_{|c,d|}$ then they are projectives at different vertices.
If both $M_{|a,b|}$ and $M_{|c,d|}$ are injective but $\MM M_{|a,b|}\neq\MM M_{|c,d|}$ then they are injectives at different vertices.

Now suppose neither $M_{|a,b|}$ nor $M_{|c,d|}$ is projective or injective.
Let $\lambda_a^*$, $\lambda_b^*$, $\lambda_c^*$, and $\lambda_d^*$ be as in Construction \ref{con:interior of M}.
Since $\MM M_{|a,b|} \neq \MM M_{|c,d|}$ we know either $\lambda_a^*\neq \lambda_c^*$ or $\lambda_b^* \neq \lambda_d^*$.
If $M_{|a,b|}$ belongs to an Auslander-Reiten sequence, the other three indecomposables in that sequence have the same pair of $\lambda$ functions by Proposition \ref{prop:lambda covers} and Proposition \ref{prop:M collapses AR sequences}.
Thus $M_{|c,d|}$ is not the same Auslander-Reiten sequence or else $\MM M_{|a,b|}=\MM M_{|c,d|}$, contradicting our assumption.

If $\MM M_{|a,b|}$ has $y$-coordinate $\pm\frac{\pi}{2}$ then by Corollary \ref{cor:unique AR sequence} $M_{|a,b|}$ does belong to an Auslander-Reiten sequence.
Thus $M_{|c,d|}$ certainly cannot belong the same one as $M_{|a,b|}$.
\end{proof}

This now allows us to state one of the expected properties of $\repAR$ that generalize from finitely generated representations of $A_n$.

\begin{proposition}\label{prop:one way hom}
Let $M_{|a,b|}\not\cong M_{|c,d|}$ be indecomposables in $\repAR$.
If $\Hom(M_{|a,b|},M_{|c,d|})\cong k$ then $\Hom(M_{|c,d|},M_{|a,b|})=0$.
\end{proposition}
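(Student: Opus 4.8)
The plan is to reduce the statement to the combinatorics of the Hom-support regions $H_V$ from Lemma \ref{lem:hom support}, together with the ``one way'' behavior of intervals under $\preceq$ already implicit in that lemma's proof. First I would dispose of the degenerate cases: if either $M_{|a,b|}$ or $M_{|c,d|}$ is projective, then by Proposition \ref{prop:M is sort of injective} and Lemma \ref{lem:good intersection point} any nonzero Hom out of it forces the target to have strictly larger $x$-coordinate under $\MM$, and no nonzero Hom can go back (a morphism from a non-projective into a projective $P$ followed by the structure of $P$ as analyzed via \cite[Proposition 3.2.2]{IgusaRockTodorov2019} is zero unless it is an isomorphism); the injective case is dual. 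The simple and $[s_n,s_{n+1}]$-support cases are likewise handled directly: a nonzero morphism out of a simple $M_{\{a\}}$ lands in an interval indecomposable whose support has $a$ as an endpoint on the appropriate side, and one checks there is no nonzero morphism back.

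For the main case, assume $V=M_{|a,b|}$ and $W=M_{|c,d|}$ are neither projective nor injective (nor simple nor of support $[s_n,s_{n+1}]$) and $\Hom(V,W)\cong k$. By Lemma \ref{lem:hom support}, $\MM W\in\overline{H_V}$; I would first treat the generic situation $\MM W\in H_V$ and then handle the boundary separately. The key geometric fact is that $H_V$ is cut out by $x_V<x_W<x^V=x_V+\pi$ together with $\lambda_b^*(x_W)<y_W<\lambda_a^*(x_W)$, and the analogous region $H_W$ for $W$ requires $x_W<x_V<x_W+\pi$. These two $x$-coordinate conditions are incompatible: $x_V<x_W<x_V+\pi$ together with $x_W<x_V$ is a contradiction outright, so once $\MM W\in H_V$ we cannot have $\MM V\in \overline{H_W}$ unless $x_V=x_W$. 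The case $x_V = x_W$ would by Lemma \ref{lem:good intersection point} and Proposition \ref{prop:ARquotient} force $V$ and $W$ to share an AR-sequence (or be equal), and in an AR-sequence the two non-middle indecomposables $U,W$ satisfy $\Hom$ in only one direction while the middle terms satisfy it in neither nontrivially across — this has to be checked against Table \ref{tab:AR sequence table}, but it is a finite check and $\MM U = \MM W$ does not by itself produce a two-way Hom. Hence $\MM V\notin\overline{H_W}$, and Lemma \ref{lem:hom support} gives $\Hom(W,V)=0$.

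The remaining work is the boundary: $\MM W\in\overline{H_V}\setminus H_V$. Here Lemma \ref{lem:hom support} does not immediately decide $\Hom(W,V)$, so I would argue directly with the interval description. If $\Hom(V,W)\cong k$ then the supports $|a,b|$ and $|c,d|$ overlap in a nonempty interval, and the nonzero morphism forces: on the overlap the order-reversals match up, and at the shared ``incoming'' boundary the endpoint inclusion pattern is constrained exactly as in the table rows of Lemma \ref{lem:hom support} (the $\hp_c,\hp_d$ conditions). A symmetric nonzero morphism $W\to V$ would demand the mirror-image inclusion pattern at that same boundary, and the two patterns are mutually exclusive — precisely the observation that an interval either ``spills over'' a sink/source going one way or the other, never both. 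I expect this boundary bookkeeping to be the main obstacle: one must enumerate which of the four cases $\ast\ast$ of Lemma \ref{lem:hom support} the pair $(V,W)$ falls into and verify in each that the endpoint-inclusion constraints forcing $\Hom(V,W)\neq0$ are incompatible with those forcing $\Hom(W,V)\neq0$, keeping careful track of the convention in Notation \ref{note:indefinite intervals}. Once that incompatibility is established in every case, $\Hom(W,V)=0$ follows and the proof is complete.
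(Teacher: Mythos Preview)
Your proposal is correct in outline and uses the same ingredients as the paper (Lemma~\ref{lem:hom support} and Proposition~\ref{prop:ARquotient}), but it is organized far less efficiently. The paper's proof is essentially three sentences, splitting only on whether $\MM V=\MM W$. When $\MM V\neq\MM W$, Lemma~\ref{lem:hom support} alone finishes: if both $\Hom(V,W)$ and $\Hom(W,V)$ were nonzero, then $\MM W\in\overline{H_V}$ and $\MM V\in\overline{H_W}$, giving $x_V\leq x_W$ and $x_W\leq x_V$; but $\overline{H_V}\cap\{x=x_V\}=\{\MM V\}$, so $\MM V=\MM W$, a contradiction. When $\MM V=\MM W$, Proposition~\ref{prop:ARquotient} reduces to the cases ``same AR-sequence'' or ``both projective (resp.\ injective) at one vertex,'' and a two-bullet endpoint check (there exist $y\preceq x$ with $x\in|c,d|\setminus|a,b|$, $y\in|c,d|\cap|a,b|$, or the mirror statement) kills any morphism back. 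The point you should take away is that your anticipated ``main obstacle''---the boundary bookkeeping over the four $\ast\ast$ cases of Lemma~\ref{lem:hom support}---never arises: the $x$-coordinate inequality holds on all of $\overline{H_V}\setminus\{\MM V\}$, not just on the interior $H_V$, so the only boundary point requiring separate treatment is $\MM V$ itself. Likewise your upfront disposal of projectives, injectives, simples, and $[s_n,s_{n+1}]$-supports is subsumed by the $\MM V=\MM W$ case.
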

\begin{proof}
By Lemma \ref{lem:hom support}, if $\MM M_{|a,b|}\neq \MM M_{|c,d|}$, then the statement follows.
If $\MM M_{|a,b|}=\MM M_{|c,d|}$ then by Proposition \ref{prop:ARquotient} either the morphisms $M_{|a,b|}\to M_{|c,d|}$ are irreducible or $M_{|a,b|}$ and $M_{|c,d|}$ are both projective at some vertex or both injective at some vertex.

In all cases, at one endpoint of the supports or the other we have one of the following cases.
\begin{itemize}
\item There is $x\in|c,d|\setminus |a,b|$ and $y\in|c,d|\cap|a,b|$ but $y\preceq x$.
\item There is $x\in|c,d|\cap |a,b|$ and $y\in|a,b|\setminus|c,d|$ but $y\preceq x$.
\end{itemize}
In both cases, any morphism $M_{|c,d|}\to M_{|a,b|}$ must be 0.
\end{proof}

\section{The Auslander-Reiten Space}\label{sec:AR space}
In this section we define the Auslander-Reiten space.
We prove that the properties about Auslander-Reiten sequences and other extensions in the Auslander-Reiten quiver for type $A_n$ generalize to this new space.

\subsection{The Auslander-Reiten Space}
In this subsection we introduce an extra generalized metric (Definition \ref{def:nonstandard metric}) so that we may introduce lines and slopes in Section \ref{sec:lines and slopes}.
We conclude the subsection with the definition of the Auslander-Reiten space.

\begin{remark}
Let $V\not\cong W$ be indecomposables in $\repAR$ such that one of the bulleted conditions in Proposition \ref{prop:ARquotient} hold.
Then $V$ and $W$ are topologically indistinguishable in the AR-topology on indecomposables of $\repAR$.
\end{remark}

\begin{definition}\label{def:position}
Let $V$ be an indecomposable in $\repAR$.
We define the \underline{position} of $V$ in the following way.
The positions are 1, 2, 3, or 4, thought to occupy one of four corners in a diamond:
\begin{displaymath}
\begin{tikzpicture} \draw[dashed, draw opacity=.7] (0,0) -- (1,1) -- (2,0) -- (1,-1) -- (0,0);
\draw (0,0) node[anchor=west] {1};
\draw (1,1) node[anchor=north] {2};
\draw (1,-1) node[anchor=south]{3};
\draw (2,0) node[anchor=east]{4};
\filldraw[fill=black] (0,0) circle [radius=0.6mm];
\filldraw[fill=black] (1,1) circle [radius=0.6mm];
\filldraw[fill=black] (1,-1) circle [radius=0.6mm];
\filldraw[fill=black] (2,0) circle [radius=0.6mm];
 \end{tikzpicture}
\end{displaymath}
To use the words `above' and `below' we order the positions in a poset exactly as shown: $2> 1> 3$ and $2>4>3$ but $1$ and $4$ are not comparable.
\begin{itemize}
\item If $V$ is simple or of the form $[s_n,s_{n+1}]$ we define the position of $V$ to be 3 if the $y$-coordinate of $\MM V$ is $\frac{\pi}{2}$ and 2 if the $y$-coordinate is $-\frac{\pi}{2}$.
\item If $V$ is a projective or injective, then the position is more easily defined using pictures.
\\ \noindent Here are the projectives: 
\begin{displaymath}
\begin{tikzpicture}
\draw[dashed, draw opacity=.7] (0,0) -- (1,1) -- (2,0) -- (1,-1) -- (0,0);
\draw[fill=black] (2,0) circle [radius=0.6mm];
\draw (2,0) node [anchor=north] {$M_{\{s\}}$};

\draw[dashed, draw opacity=.7] (4,1) -- (3,0) -- (4,-1) -- (5,0);
\draw (4,1) -- (5,0);
\draw[fill=black] (5,0) circle [radius=0.6mm];
\draw[fill=black] (4,1) circle [radius=0.6mm]; 
\draw (4,1) node [anchor=north] {$M_{(a,s]}$}; 
\draw (5,0) node[anchor=north] {$M_{[a,s]}$};

\draw[dashed, draw opacity=.7] (8,0) -- (7,1) -- (6,0) -- (7,-1);
\draw (7,-1) -- (8,0);
\draw[fill=black] (8,0) circle [radius=0.6mm];
\draw[fill=black] (7,-1) circle [radius=0.6mm];
\draw (8,0) node[anchor=north] {$M_{[s,b]}$}; 
\draw (7,-1) node[anchor=north] {$M_{[s,b)}$}; 

\draw[dashed, draw opacity=.7] (10,1) -- (9,0) -- (10,-1);
\draw (10,1) -- (11,0) -- (10,-1);
\draw[fill=black] (11,0) circle [radius=0.6mm];
\draw[fill=black] (10,-1) circle[radius=0.6mm];
\draw[fill=black] (10,1) circle[radius=0.6mm];
\draw (11,0) node[anchor=north] {$M_{[s_{n-1},s_{n+1}]}$}; 
\draw (10,1) node[anchor=north] {$M_{(s_n,s_{n+1}]}$};
\draw (10,-1) node[anchor=north] {$M_{[s_{n-1},s_n)}$}; 

\draw[opacity=0] (11,0) -- (13,0);
\draw[opacity=0] (-2,0) -- (0,0);
\end{tikzpicture}
\end{displaymath}
And here are the injectives:
\begin{displaymath}
\begin{tikzpicture}
\draw[dashed, draw opacity=.7] (9,0) -- (10,1) -- (11,0) -- (10,-1) -- (9,0);
\draw[fill=black] (9,0) circle [radius=0.6mm];
\draw (9,0) node [anchor=north] {$M_{\{s\}}$};

\draw[dashed, draw opacity=.7] (6,0) -- (7,1) -- (8,0) -- (7,-1);
\draw (7,-1) -- (6,0);
\draw[fill=black] (6,0) circle [radius=0.6mm];
\draw[fill=black] (7,-1) circle [radius=0.6mm]; 
\draw (7,-1) node [anchor=north] {$M_{(a,s]}$}; 
\draw (6,0) node[anchor=north] {$M_{[a,s]}$};

\draw[dashed, draw opacity=.7] (3,0) -- (4,-1) -- (5,0) -- (4,1);
\draw (4,1) -- (3,0);
\draw[fill=black] (3,0) circle [radius=0.6mm];
\draw[fill=black] (4,1) circle [radius=0.6mm];
\draw (3,0) node[anchor=north] {$M_{[s,b]}$}; 
\draw (4,1) node[anchor=north] {$M_{[s,b)}$}; 

\draw[dashed, draw opacity=.7] (1,1) -- (2,0) -- (1,-1);
\draw (1,1) -- (0,0) -- (1,-1);
\draw[fill=black] (0,0) circle [radius=0.6mm];
\draw[fill=black] (1,-1) circle[radius=0.6mm];
\draw[fill=black] (1,1) circle[radius=0.6mm];
\draw (0,0) node[anchor=north] {$M_{[s_{n-1},s_{n+1}]}$}; 
\draw (1,-1) node[anchor=north] {$M_{(s_n,s_{n+1}]}$};
\draw (1,1) node[anchor=north] {$M_{[s_{n-1},s_n)}$}; 

\draw[opacity=0] (-2,0) -- (0,0);
\draw[opacity=0] (11,0) -- (13,0);
\end{tikzpicture}
\end{displaymath}
\item If $V$ does not fit the two cases above, by Corollary \ref{cor:unique AR sequence} it belongs to a unique Auslander-Reiten sequenc $V_1\hookrightarrow V_2\oplus V_3\twoheadrightarrow V_4$.
The representations $V_2$ and $V_3$ are algebraically interchangeable.
Using Table \ref{tab:AR sequence table}, we say $V_2$ and $V_1$ have the same lower bound on their support (including openness) and $V_3$ and $V_1$ have the same upper bound on their support (including openness).
So, if $V=V_i$ then we say the position of $V$ is $i$:
\begin{displaymath}
V_1 \hookrightarrow \begin{array}{c} V_2 \\ \oplus \\ V_3 \end{array} \twoheadrightarrow V_4.
\end{displaymath}
In Table \ref{tab:AR sequence table} the Auslander-Reiten sequences are written as $V_1\hookrightarrow V_2\oplus V_3\twoheadrightarrow V_4$.
\end{itemize}
\end{definition}

\begin{example}\label{xmp:position example}
Consider type (2) in Table \ref{tab:AR sequence table}.
Then $M_{[a,b]}$ has position 1, $M_{[a,b)}$ has position 2, $M_{(a,b]}$ has position 3, and $M_{(a,b)}$ has position 4.

In type (13), $M_{(s_{2m+1},s_{2n+1}]}$ has position 1,  $M_{(s_{2m+1},s_{2n})}$ has position 2, $M_{[s_{2m},s_{2n+1}]}$ has position 3, and $M_{[s_{2m},s_{2n})}$ has position 4.
\begin{displaymath}\begin{tikzpicture}
\draw[dashed, draw opacity=.7] (1,1) -- (0,0) -- (1,-1) -- (2,0) -- (1,1);
\draw[fill=black] (0,0) circle [radius=0.6mm]; 
\draw (0,0) node[anchor=north] {$M_{[a,b]}$};
\draw[fill=black] (1,1) circle [radius=0.6mm]; 
\draw (1,1) node[anchor=north] {$M_{[a,b)}$};
\draw[fill=black] (1,-1) circle [radius=0.6mm]; 
\draw (1,-1) node[anchor=north] {$M_{(a,b]}$};
\draw[fill=black] (2,0) circle [radius=0.6mm]; 
\draw (2,0) node[anchor=north] {$M_{(a,b)}$};
\draw (1,-2) node[anchor=north] {Type (2)};

\draw[dashed, draw opacity=.7] (5,0) -- (6,-1) -- (7,0) -- (6,1) -- (5,0);
\draw[fill=black] (5,0) circle [radius=0.6mm]; 
\draw (5,0) node[anchor=north] {$M_{(s_{2m+1},s_{2n+1}]}$};
\draw[fill=black] (6,1) circle [radius=0.6mm]; 
\draw (6,1) node[anchor=north] {$M_{(s_{2m+1},s_{2n})}$};
\draw[fill=black] (6,-1) circle [radius=0.6mm]; 
\draw (6,-1) node[anchor=north] {$M_{[s_{2m},s_{2n+1}]}$};
\draw[fill=black] (7,0) circle [radius=0.6mm]; 
\draw (7.3,0) node[anchor=north] {$M_{[s_{2m},s_{2n})}$}; 
\draw (6,-2) node[anchor=north] {Type (13)};
\end{tikzpicture}\end{displaymath}
\end{example}

\begin{definition}\label{def:nonstandard metric}
Let $X$ be a set and $(G,\leq)$ a totally ordered abelian group (a totally ordered set where the commutative group operation preserves order).
Let $d:X\times X\to G$ be a function such that
\begin{enumerate}
\item $d(x_1,x_2)=e$ if and only if $x_1=x_2$,
\item $d(x_1,x_2)=d(x_2,x_1)$, and
\item $d(x_1,x_2) + d(x_2,x_3) \geq d(x_1,x_3)$.
\end{enumerate}
Then we call $d$ an \underline{extra generalized metric} on $X$.
\end{definition}

Generalized metrics are taken over arbitrary totally ordered fields, but we only want to use the abelian group structure on a ring that is not a field.
Hence, we need a notion of a metric that is extra generalized.
\begin{notation}\label{note:real metric}
We will denote by $\dR$ the standard metric on $\R^2$ and by $\dAR$ the extra generalized metric in Definition \ref{def:d function} (Proposition \ref{prop:d is an extra generalized metric}).
\end{notation}

\begin{definition}\label{def:d function}
Give the abelian group $\R\oplus\Z$ the total ordering $(x,m) \leq (y,n)$ if $x\leq y$ or if $x=y$ and $m\leq n$.
We will sometimes write $-(x,m)$ to mean $(-x,-m)$.
We define a function $\dAR$ from the (isomorphism classes of) indecomposable objects in $\repAR$ with the AR-topology to $\R\oplus\Z$.
For two indecomposables $V,W$, the $\R$-coordinate of $\dAR(V,W)$ is $\dR(\MM V,\MM W)$.
If $\MM V=\MM W$ then the $\Z$-coordinate is the number of edges between the positions of $V$ and $W$; this will be 0, 1, or 2.

If $\MM V\neq\MM W$, the $\Z$-coordinate depends on the relative locations of $\MM V$ and $\MM W$ in $\R^2$.
The line segment from $\MM V$ to $\MM W$ has some slope $r$ (possibly $\infty$).
We define four possible cases, interchanging the roles of $V$ and $W$ if necessary since this does not affect the $\R$-coordinate.
To calculate the $\Z$-coordinate we first merge two diamonds as shown below:
\begin{displaymath} \begin{tikzpicture}
\filldraw (0,0) circle [radius=0.6mm];
\draw[dashed] (-1,1) -- (1,-1);
\draw[dashed] (0,0) -- (1,1);
\draw(0,1) node {$|r| > 1$};
\draw(1,0) node {$|r| < 1$};
\draw (1,1) node[anchor=south west] {$r=1$};
\draw (1,-1) node[anchor=north west] {$r=-1$};
\draw (0,0) node [anchor=north east] {$V$};

\draw (5,0) -- (7, -2) -- (9,0) -- (6,3) -- (5,2) -- (6,1) -- (5,0);
\draw (6,1) -- (7,2);
\draw (6,1) -- (8,-1);
\draw (6,-1) -- (8,1);
\draw (6,0) node {$V$};
\draw (8,0) node {$|r| < 1$};
\draw (7,1) node {$r=1$};
\draw (7,-1) node {$r=-1$};
\draw (6,2) node {$|r|>1$};
\end{tikzpicture} \end{displaymath}
If the positions of $V$ and $W$ are the same, the $\Z$-coordinate is 0.
Otherwise, we use these tables to compute the $\Z$-coordinate.
\smallskip 

\begin{tabular}{|l|c|c|r|}
$r$ & $V$ & $W$ & $\Z$ \\ \hline
$|r| < 1$ & 1 & 2 & 1 \\
$|r| < 1$ & 1 & 3 & 1 \\
$|r| < 1$ & 1 & 4 & 2 \\
$|r| < 1$ & 2 & 1 & -1 \\
$|r| < 1$ & 2 & 3 & 0 \\
$|r| < 1$ & 2 & 4 & 1 \\
$|r| < 1$ & 3 & 1 & -1 \\
$|r| < 1$ & 3 & 2 & 0 \\
$|r| < 1$ & 3 & 4 & 1 \\
$|r| < 1$ & 4 & 1 & -2 \\
$|r| < 1$ & 4 & 2 & -1 \\
$|r| < 1$ & 4 & 3 & -1 
\end{tabular}\hfill
\begin{tabular}{|l|c|c|r|}
$r$ & $V$ & $W$ & $\Z$ \\ \hline
$r=1$ & 1 & 2 & 1 \\
$r=1$ & 1 & 3 & 1 \\
$r=1$ & 1 & 4 & 2 \\
$r=1$ & 2 & 1 & -1 \\
$r=1$ & 2 & 3 & 0 \\
$r=1$ & 2 & 4 & 1 \\
$r=1$ & 3 & 1 & 0 \\
$r=1$ & 3 & 2 & 1 \\
$r=1$ & 3 & 4 & 1 \\
$r=1$ & 4 & 1 & -1 \\
$r=1$ & 4 & 2 & 0 \\
$r=1$ & 4 & 3 & -1
\end{tabular}\hfill
\begin{tabular}{|l|c|c|r|}
$r$ & $V$ & $W$ & $\Z$ \\ \hline
$r=-1$ & 1 & 2 & 1 \\
$r=-1$ & 1 & 3 & 1 \\
$r=-1$ & 1 & 4 & 2 \\
$r=-1$ & 2 & 1 & 0 \\
$r=-1$ & 2 & 3 & 1 \\
$r=-1$ & 2 & 4 & 1 \\
$r=-1$ & 3 & 1 & -1 \\
$r=-1$ & 3 & 2 & 0 \\
$r=-1$ & 3 & 4 & 1 \\
$r=-1$ & 4 & 1 & -1 \\
$r=-1$ & 4 & 2 & -1 \\
$r=-1$ & 4 & 3 & 0
\end{tabular}\hfill
\begin{tabular}{|l|c|c|r|}
$r$ & $V$ & $W$ & $\Z$ \\ \hline
$|r|>1$ & 1 & 2 & 1 \\
$|r|>1$ & 1 & 3 & -1 \\
$|r|>1$ & 1 & 4 & 0 \\
$|r|>1$ & 2 & 1 & -1 \\
$|r|>1$ & 2 & 3 & -2 \\
$|r|>1$ & 2 & 4 & -1 \\
$|r|>1$ & 3 & 1 & 1 \\
$|r|>1$ & 3 & 2 & 2 \\
$|r|>1$ & 3 & 4 & 1 \\
$|r|>1$ & 4 & 1 & 0 \\
$|r|>1$ & 4 & 2 & 1 \\
$|r|>1$ & 4 & 3 & -1 
\end{tabular}
\smallskip

The reader can see that these values are obtained by ``teleporting'' from the diamond at $\MM V$ to the diamond at $\MM W$ while retaining the position of $V$, and then traveling forwards or backwards a number of edges to the position of $W$.
\end{definition}

\begin{proposition}\label{prop:d is an extra generalized metric}
The function $\dAR$ in Definition \ref{def:d function} is an extra generalized metric.
\end{proposition}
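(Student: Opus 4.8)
The plan is to verify the three axioms of Definition \ref{def:nonstandard metric} for $\dAR$, where the target group is $\R\oplus\Z$ with the lexicographic order. Axioms (1) and (2) are quick. For (1): the $\R$-coordinate $\dR(\MM V,\MM W)$ vanishes exactly when $\MM V=\MM W$, and in that case the $\Z$-coordinate is the graph distance between the positions of $V$ and $W$ in the $4$-cycle $C_4$ on positions with edges $1$--$2$--$4$--$3$--$1$, so it is $0$ iff $V$ and $W$ have the same position. Thus $\dAR(V,W)=(0,0)$ iff $\MM V=\MM W$ and $V,W$ have the same position; I would then invoke Proposition \ref{prop:ARquotient}: if $\MM V=\MM W$ but $V\not\cong W$, then $V$ and $W$ either lie in a common Auslander-Reiten sequence or are both projective (resp.\ injective) at one vertex, and in each such situation a direct inspection of Definition \ref{def:position} and Table \ref{tab:AR sequence table} shows the two indecomposables receive distinct positions, so the $\Z$-coordinate is nonzero. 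Conversely $V\cong W$ forces equal $\MM$-image and equal position, hence $\dAR(V,W)=(0,0)$. Axiom (2) is essentially built in: the $\R$-coordinate is symmetric because $\dR$ is, and the $\Z$-coordinate is read from the tables of Definition \ref{def:d function} only after canonicalizing the pair (``interchanging the roles of $V$ and $W$ if necessary''), so $\dAR(V,W)$ and $\dAR(W,V)$ are computed from the same data.

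The substance is the triangle inequality (3). Given $V_1,V_2,V_3$, the ordinary triangle inequality in $\R^2$ gives $\dR(\MM V_1,\MM V_2)+\dR(\MM V_2,\MM V_3)\ge \dR(\MM V_1,\MM V_3)$. Because of the lexicographic order on $\R\oplus\Z$, if this inequality is strict then $\dAR(V_1,V_2)+\dAR(V_2,V_3)\ge \dAR(V_1,V_3)$ holds regardless of the $\Z$-coordinates. So I may assume equality in $\R^2$, i.e.\ $\MM V_2$ lies on the segment $[\MM V_1,\MM V_3]$, and then the $\R$-coordinates of both sides agree and everything comes down to comparing $\Z$-coordinates. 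I would split into cases according to which of $\MM V_1,\MM V_2,\MM V_3$ coincide. If all three coincide, all three $\dAR$-values have $\R$-coordinate $0$ and $\Z$-coordinates equal to $C_4$-graph distances between positions, so the inequality holds because graph distance is a genuine metric. If exactly two of them coincide (two symmetric sub-cases $\MM V_1=\MM V_2\ne\MM V_3$ and $\MM V_1\ne\MM V_2=\MM V_3$), the inequality reduces, after a correct choice of the canonical orientation, to the assertion that for each slope type $r$ and each fixed position $q$ the maps $p\mapsto \Z_r(p,q)$ and $p\mapsto\Z_r(q,p)$ are $1$-Lipschitz with respect to the $C_4$-graph metric on positions; this is a short finite check of the four tables of Definition \ref{def:d function}.

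The remaining and central sub-case is when $\MM V_1,\MM V_2,\MM V_3$ are three distinct collinear points with $\MM V_2$ strictly between the other two. Here the three segments $[\MM V_1,\MM V_2]$, $[\MM V_2,\MM V_3]$, $[\MM V_1,\MM V_3]$ all have the same slope $r$ (the vertical case $r=\infty$ being absorbed into the $|r|>1$ row), so all three $\Z$-coordinates are read from the same table; choosing the canonical orientation consistently — so that the same endpoint along the line plays the role of the first argument in all three lookups — what must be shown is $\Z_r(p_1,p_2)+\Z_r(p_2,p_3)\ge \Z_r(p_1,p_3)$ for all positions $p_1,p_2,p_3$. For the slope types $|r|<1$ and $|r|>1$ this is an \emph{equality}: for these two tables one checks $\Z_r(i,j)=\varphi_r(j)-\varphi_r(i)$ for explicit potential functions, $\varphi_{|r|<1}\colon(1,2,3,4)\mapsto(0,1,1,2)$ and $\varphi_{|r|>1}\colon(1,2,3,4)\mapsto(0,1,-1,0)$, so the middle terms telescope. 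For $r=1$ and $r=-1$ there is no such potential (the tables are not antisymmetric), and the inequality is a direct finite verification of the remaining entries.

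The main obstacle I expect is organizational rather than conceptual: pinning down the canonicalization convention of Definition \ref{def:d function} precisely enough that the three table-lookups in the collinear case genuinely use a consistent anchor (this is also exactly the point behind axiom (2)), and then executing the finite case analysis for the $r=\pm1$ tables without sign errors. Once the reduction to the collinear (and degenerate) cases is secured via the lexicographic order and the $\R^2$ triangle inequality, nothing remains but this bounded check.
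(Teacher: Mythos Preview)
Your proposal is correct and is essentially a detailed expansion of the paper's own proof, which consists of exactly two sentences: conditions (1) and (2) are declared ``clear by construction'' and condition (3) is said to ``follow from straightforward calculations.'' Your reduction of (3) via the lexicographic order to the collinear case, together with the potential-function observation for the $|r|<1$ and $|r|>1$ tables, makes explicit what the paper leaves implicit; similarly your appeal to Proposition~\ref{prop:ARquotient} and Definition~\ref{def:position} for axiom (1) spells out precisely what ``by construction'' is meant to cover.
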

\begin{proof}
Conditions (1) and (2) in Definition \ref{def:nonstandard metric} are clear by construction.
Condition (3) follows from straightforward calculations.
\end{proof}

\begin{proposition}\label{prop:topology compatible with metric}
Open balls of radius $(\delta,3)$, $\delta>0\in\R$, generate the open sets in the indecomposables of $\repAR$.
\end{proposition}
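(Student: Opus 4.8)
The plan is to prove the following precise form of the statement: a subset $\mathcal O$ of the (isomorphism classes of) indecomposables is open in the AR-topology if and only if for every $V\in\mathcal O$ there is some $\delta>0$ with the open ball $B_{\dAR}(V,(\delta,3))=\{W:\dAR(V,W)<(\delta,3)\}$ contained in $\mathcal O$; this is the sense in which these balls generate the topology. Everything depends on first identifying the balls concretely. By Definition \ref{def:d function} the $\Z$-coordinate of $\dAR(V,W)$ always lies in $\{-2,-1,0,1,2\}$, so under the lexicographic order on $\R\oplus\Z$ the inequality $\dAR(V,W)<(\delta,3)$ holds exactly when $\dR(\MM V,\MM W)\le\delta$. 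Hence $B_{\dAR}(V,(\delta,3))=\MM^{-1}(\overline D(\MM V,\delta))$, the preimage under $\MM$ of the \emph{closed} $\dR$-disk of radius $\delta$ about $\MM V$. In particular such a ball automatically contains every $W$ with $\MM W=\MM V$, as it must, since these $W$ are topologically indistinguishable from $V$.

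For the ``only if'' direction, suppose $\mathcal O=\MM^{-1}(U)$ with $U\subseteq\R^2$ open (Definition \ref{def:AR topology}), and let $V\in\mathcal O$. Since $\MM V\in U$ and $U$ is open, there is $\delta>0$ with $\overline D(\MM V,\delta)\subseteq U$, and then $B_{\dAR}(V,(\delta,3))=\MM^{-1}(\overline D(\MM V,\delta))\subseteq\MM^{-1}(U)=\mathcal O$, with $V\in B_{\dAR}(V,(\delta,3))$ because $\dAR(V,V)=(0,0)<(\delta,3)$. For the ``if'' direction, suppose that for every $V\in\mathcal O$ we have chosen $\delta_V>0$ with $\MM^{-1}(\overline D(\MM V,\delta_V))\subseteq\mathcal O$, and put $U=\bigcup_{V\in\mathcal O}D(\MM V,\delta_V)$, a union of \emph{open} disks and hence open in $\R^2$. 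If $V\in\mathcal O$ then $\MM V\in D(\MM V,\delta_V)\subseteq U$, so $\mathcal O\subseteq\MM^{-1}(U)$; conversely, if $W\in\MM^{-1}(U)$ then $\MM W\in D(\MM V,\delta_V)$ for some $V\in\mathcal O$, so $\dR(\MM W,\MM V)<\delta_V$, whence $\MM W\in\overline D(\MM V,\delta_V)$ and $W\in\MM^{-1}(\overline D(\MM V,\delta_V))\subseteq\mathcal O$. Thus $\mathcal O=\MM^{-1}(U)$ is AR-open. Finally, any two balls $B_{\dAR}(V,(\delta_1,3))$ and $B_{\dAR}(V,(\delta_2,3))$ centered at the same $V$ are nested, so their intersection again contains a ball of this form; this supplies the remaining basis property.

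The one point requiring care is the identification in the first paragraph: because the $\Z$-coordinate of $\dAR$ is bounded by $2$ while the prescribed radius has second coordinate $3$, the ball $B_{\dAR}(V,(\delta,3))$ is the preimage of the \emph{closed} $\dR$-disk, not the open one, so an individual ball need not itself be AR-open. The statement nonetheless holds because of the elementary nesting $D(\MM V,\delta)\subseteq\overline D(\MM V,\delta)\subseteq D(\MM V,\delta')$ for $\delta<\delta'$: this is exactly what lets the ``if'' direction replace each closed disk by a slightly larger open disk when assembling $U$, and lets the ``only if'' direction fit a closed disk inside the open set $U$. With that interplay in hand the remaining verifications are routine.
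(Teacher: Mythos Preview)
Your proof is correct and in fact more careful than the paper's. The paper's argument is the one-line observation that ``an open ball of radius $(\delta,3)$ is the inverse image of an open ball of radius $\delta$ in $\R^2$,'' after which it appeals to the fact that open $\dR$-balls generate the Euclidean topology. You correctly point out that this identification is slightly off: since the $\Z$-coordinate of $\dAR$ never exceeds $2$, the condition $\dAR(V,W)<(\delta,3)$ in the lexicographic order is equivalent to $\dR(\MM V,\MM W)\le\delta$, so the $\dAR$-ball is actually the preimage of the \emph{closed} $\dR$-disk and hence not itself AR-open. Your extra work---sandwiching closed disks between open ones to verify that these balls nonetheless form a neighborhood basis at each point---is exactly what is needed to close this gap. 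The underlying idea is the same as the paper's, but your version supplies the missing step.
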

\begin{proof}
An open ball of radius $(\delta,3)$ is the inverse image of an open ball of radius $\delta$ in $\R^2$.
Since the open balls generate the topology of $\R^2$ their inverse images generate the topology on the indecomposables of $\repAR$.
\end{proof}

\begin{definition}\label{def:AR space}
The set of isomorphism classes of indecomposables in $\repAR$ with the AR-topology (Definition \ref{def:AR topology}), positions for each indecomposable (Definition \ref{def:position}), and extra generalized metric $d$  (Definition \ref{def:d function}) is called the \underline{Auslander-Reiten space}, or AR-space, of $A_\R$.
\end{definition}

\subsection{Lines and Slopes}\label{sec:lines and slopes}
In this subsection we define lines and slopes in the AR-space which generalize lines and slopes in traditional metric spaces.

\begin{definition}\label{def:line segment}
Let $U$ and $W$ be indecomposables in $\repAR$.
Let $l$ be a set of indecomposables containing $U$ and $W$ such that
\begin{enumerate}
\item for any $V\in l$, $\dAR(U,V)+\dAR(V,W)=\dAR(U,V)$,
\item for any $V_1$, $V_2$, and $V_3$ in $l$, if $\dAR(V_1,V_2)<\dAR (V_1,V_3)$ and $\dAR(V_2,V_3)<\dAR(V_1,V_3)$ then $\dAR(V_1,V_2)+\dAR(V_2,V_3)=\dAR(V_1,V_3)$,
\item $l$ is maximal with respect to property (2).
\end{enumerate}
Then $l$ is a \underline{line segment} in $\repAR$ and $U$ and $W$ are its \underline{endpoints}.
The \underline{length} of a line segment is $\dAR(U,W)$.
If the length is $(0,0)$ we say the line segment is \underline{degenerate}, otherwise it is \underline{nondegenerate}.
\end{definition}

\begin{remark}
Let $l$ be a line segment in the AR-space.
Then $\MM l$, the set $\{\MM V: V\in l\}$, is a line segment in $\R^2$.
\end{remark}

\begin{proposition}\label{prop:M coordinate ordering in lines}
Let $l$ be a line segment with endpoints $U$ and $W$ and let $V\in l$.
Let $(x_U,y_U)=\MM U$, $(x_V,y_V)=\MM V$, and $(x_W, y_W)=\MM W$.
Then if $x_U<x_W$ and $y_U<y_W$,
\begin{align*}
x_U\leq &x_V\leq x_W \\
y_U\leq &y_V\leq y_W.
\end{align*}
\end{proposition}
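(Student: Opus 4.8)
The plan is to reduce the statement to the elementary characterization of betweenness for line segments in the Euclidean plane, transporting it through $\MM$ and the definition of $\dAR$; only property (1) of Definition \ref{def:line segment} is needed, not properties (2) or (3).

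First I would apply property (1) of Definition \ref{def:line segment} to the point $V \in l$, which says $\dAR(U,V) + \dAR(V,W) = \dAR(U,W)$ in the ordered group $\R \oplus \Z$. Writing $\dAR(U,V) = (\alpha, m)$, $\dAR(V,W) = (\beta, n)$, and $\dAR(U,W) = (\gamma, p)$, the construction of $\dAR$ in Definition \ref{def:d function} identifies the $\R$-coordinates as $\alpha = \dR(\MM U, \MM V)$, $\beta = \dR(\MM V, \MM W)$, and $\gamma = \dR(\MM U, \MM W)$. Because the order on $\R \oplus \Z$ is lexicographic with the $\R$-coordinate dominant, and because $\dR$ obeys the ordinary triangle inequality (so $\alpha + \beta \geq \gamma$ always), the equality $(\alpha + \beta, m + n) = (\gamma, p)$ can hold only if $\alpha + \beta = \gamma$; that is,
\begin{displaymath}
\dR(\MM U, \MM V) + \dR(\MM V, \MM W) = \dR(\MM U, \MM W).
\end{displaymath}

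Next I would invoke the equality case of the triangle inequality in $\R^2$: since the Euclidean norm is strictly convex, the displayed identity forces $\MM V$ to lie on the closed segment joining $\MM U$ and $\MM W$, so $\MM V = (1 - t)\,\MM U + t\,\MM W$ for some $t \in [0,1]$, giving
\begin{align*}
x_V &= (1-t) x_U + t\, x_W, \\
y_V &= (1-t) y_U + t\, y_W.
\end{align*}
Since $x_U < x_W$ and $y_U < y_W$, each coordinate of $\MM V$ is a convex combination of the corresponding coordinates of the endpoints, whence $x_U \leq x_V \leq x_W$ and $y_U \leq y_V \leq y_W$, which is the claim. (This also confirms the remark preceding the proposition that $\MM l$ is a genuine line segment in $\R^2$.)

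The only step I expect to need care is the passage from property (1) to the Euclidean equality: one must be sure the $\Z$-coordinates cannot conspire to repair a strict Euclidean inequality. That is precisely what the lexicographic ordering rules out, and it is the single point a careful reader would want spelled out; the remainder is the standard fact that a point realizing equality in the triangle inequality of the Euclidean plane lies between the two endpoints.
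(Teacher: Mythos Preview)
Your proof is correct and follows the same approach as the paper, which simply says ``This follows from (1) in Definition~\ref{def:line segment}.'' You have supplied the details the paper leaves implicit: extracting the $\R$-coordinate from the $\dAR$ equality to obtain the Euclidean triangle-inequality equality, and then using strict convexity to place $\MM V$ on the segment from $\MM U$ to $\MM W$.

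One small remark: your invocation of the lexicographic order is unnecessary. The equality $\dAR(U,V)+\dAR(V,W)=\dAR(U,W)$ is an equality of elements of the abelian group $\R\oplus\Z$, and addition there is componentwise; hence the $\R$-components already agree without any appeal to how the group is ordered. Your closing paragraph worrying about the $\Z$-coordinates ``conspiring'' addresses a non-issue.
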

\begin{proof}
This follows from (1) in Definition \ref{def:line segment}.
\end{proof}

\begin{definition}\label{def:slope}
The \underline{slope} of a nondegenerate line segment  $l$ is a pair $(r_1,r_2)$ in $(\R\cup\{\infty\})\times(\Q\cup\{\infty\})$.
We define $r_1$ to be the slope of $\MM l$ when that slope is defined.
If $\MM U = \MM W$ we define $r_1$ to be equal to the $r_2$ coordinate.
The second coordinate, $r_2$, is the slope of a line connecting the positions of the endpoints after adjoining the diamonds as in Definition \ref{def:d function} with $r=r_1$, counting horizontal and vertical movement as 1.
The $r_2$-coordinate can be $0$, $\pm\frac{1}{3}$, $\pm 1$, $\pm 3$, or $\infty$.
If the length of a line segment $l$ is $(0,0)$ we instead say the slope is \underline{undefined}.

We say two nondegenerate line segments $l$ and $l'$ are \underline{perpendicular} if $|\MM l_1\cap \MM l_2|=1$ and their slopes are $(r_1,r_2)$ and $(\frac{1}{r_1},\frac{1}{r_2})$, respectively, where we consider $0=\frac{1}{\infty}$ and $\infty=\frac{1}{0}$ for this purpose.
\end{definition}

\begin{example}\label{xmp:slope example}
Let $V$ and $W$ be indecomposables in $\repAR$.
Suppose $\MM V=(0,\frac{1}{4})$ and $\MM W = (\frac{1}{4},\frac{1}{4}+\e)$.
Then the slope of a line from $V$ to $W$ has $r_1$ between $-1$ and $1$.
Suppose $V$ has position 2 and $W$ has position 3; then $r_2=-1$.
If $V$ instead has position 1 then $r_2=-\frac{1}{3}$.
\end{example}

\begin{proposition}\label{prop:3 or fewer on a line}
Let $l$ be a line segment in the AR-space and suppose $V_1,V_2,V_3,V_4\in l$ such that $\MM V_i=\MM V_j$ for all $i,j$.
Then two of the $V_i$s must be isomorphic.
\end{proposition}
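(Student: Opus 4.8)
The plan is to use Proposition~\ref{prop:ARquotient} to collapse the problem to a single configuration and then to eliminate that configuration with the metric $\dAR$ and the line-segment axioms. Assume for contradiction that no two of $V_1,V_2,V_3,V_4$ are isomorphic. Applying Proposition~\ref{prop:ARquotient} to each pair $V_i,V_j$ and using $\MM V_i=\MM V_j$, every such pair either belongs to a common Auslander--Reiten sequence or consists of two projectives at one vertex or two injectives at one vertex. By the uniqueness of Auslander--Reiten sequences (Corollary~\ref{cor:unique AR sequence}) these alternatives cannot be mixed across the four representations, and since a vertex of $A_\R$ carries at most three projective (respectively injective) indecomposables, four pairwise non-isomorphic representations with the same $\MM$-image must be exactly the four terms of a single Auslander--Reiten sequence. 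Hence, by Definition~\ref{def:position}, they occupy the four distinct positions $1,2,3,4$; relabel so that $V_i$ has position $i$, and set $p=\MM V_1=\dots=\MM V_4$.

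Next I would record, from Definition~\ref{def:d function}, the pairwise distances inside this fibre: $\dAR(V_i,V_j)=(0,e_{ij})$, where $e_{ij}$ is the number of edges between positions $i$ and $j$ in the diamond, so $e_{12}=e_{13}=e_{24}=e_{34}=1$ and $e_{14}=e_{23}=2$. Then I would split on the endpoints $U,W$ of $l$. If $\MM U=\MM W$, then property~(1) of Definition~\ref{def:line segment} together with positivity of $\dR$ forces $\MM Z=p$ for every $Z\in l$, so $l$ is the entire four-element fibre; but for a suitable indecomposable $X$ with $\MM X\neq p$ (one at position $1$ whose $\MM$-image lies on a ray from $p$ of slope $|r|<1$) the set $\{V_1,\dots,V_4,X\}$ still satisfies property~(2), so $l$ is not maximal, contradicting property~(3). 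If $\MM U\neq\MM W$, then $p$ lies on the Euclidean segment $[\MM U,\MM W]$, which carries a single well-defined slope; I would feed the triples $\{V_i,V_j,U\}$ and $\{V_i,V_j,W\}$ into property~(2) and read the $\Z$-coordinates off the teleportation tables of Definition~\ref{def:d function}. This ties the integers $\Z(\dAR(V_i,U))$ to one another through the values $e_{ij}$, and---together with the requirement (Definition~\ref{def:slope}) that $l$ have one slope, equivalently that each pair of points of $l$ lying in a common fibre realise a within-diamond direction consistent with that slope---should show that no line segment can simultaneously carry all four distinct positions. Either branch yields a contradiction, so two of the $V_i$ are isomorphic.

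I expect the real obstacle to be exactly this second branch. The axioms~(1)--(3) of Definition~\ref{def:line segment} are purely metric and do not by themselves visibly forbid several points of one fibre from lying on $l$; the obstruction surfaces only once one also tracks how the $\Z$-coordinate of $\dAR$ changes under teleportation between diamonds and imposes the single-slope constraint, and this requires a finite but somewhat delicate check across the four slope regimes $|r|<1$, $r=1$, $r=-1$, $|r|>1$, as well as care with the sign conventions in the tables of Definition~\ref{def:d function}. Everything upstream of that step---the reduction through Proposition~\ref{prop:ARquotient}, the identification of the four positions, and the tabulation of the distances $\dAR(V_i,V_j)$---is routine given the results already established, so the write-up should isolate the slope bookkeeping as the one place where genuine work is needed.
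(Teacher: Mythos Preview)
Your reduction via Proposition~\ref{prop:ARquotient} to the situation where $V_1,\dots,V_4$ occupy the four distinct positions is fine and matches what the paper does implicitly (``suppose the position of $V_i$ is $i$''). The substantive divergence is in how you handle the case $\MM U=\MM W$. You argue that $l$ coincides with the four-element fibre and then contradict maximality by adjoining an outside point $X$; this hinges on reading axiom~(3) as ``maximal with respect to~(2) alone'' (so that adding $X$ need not preserve~(1)), and it still requires checking every triple involving $X$ against~(2). The paper sidesteps this entirely: it simply cases on the first slope coordinate $r_1$ of $l$ (well-defined as soon as two of the $V_i$ are non-isomorphic, since then $l$ is nondegenerate), reduces by duality to the two cases $r_1=0$ and $r_1=1$, and for each exhibits a pair among $V_1,\dots,V_4$ that violates the distance additivity built into the line-segment axioms---for instance, when $r_1=0$ the $|r|<1$ table gives $\dAR(U,V_2)=\dAR(U,V_3)$ and $\dAR(V_2,W)=\dAR(V_3,W)$ with $V_2\neq V_3$, and when $r_1=1$ one checks $\dAR(U,V_2)+\dAR(V_2,V_3)\neq\dAR(U,V_3)$ (and symmetrically), using only the tables of Definition~\ref{def:d function}.

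Your non-degenerate branch is essentially this same slope-table computation, so the core of your argument agrees with the paper. What the paper's uniform slope-casing buys is that the degenerate situation needs no separate treatment and no appeal to maximality: the contradiction comes straight from axiom~(2) in every case, and the bookkeeping you flag as ``the real obstacle'' collapses to checking two slope values rather than four regimes.
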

\begin{proof}
Let $(r_1,r_2)$ be the slope of $l$.
There are four cases, depending on $r_1$.
However, $r_1=0$ is dual to $r_1=\infty$ and $r_1=1$ is dual to $r_1=-1$.
So, we shall prove the cases $r_1=0$ and $r_1=1$.
For contradiction and without loss of generality, suppose the position of $V_i$ is $i$, so no two $V_i$s are isomorphic.

If $r_1=0$ then $\dAR(U,V_2)=\dAR(U,V_3)$ and $\dAR(V_2,W)=\dAR(V_3,W)$ but $V_2\neq V_3$.
If $r_1=1$ and $U$ has position 1 or 2 then $\dAR(U,V_2)+\dAR(V_2,V_3)\neq \dAR(U,V_3)$ and $\dAR(U,V_3)+\dAR(V_3,V_2)\neq \dAR(U,V_2)$.
If $r_1=1$ and $U$ has position 3 or 4 then $\dAR(U,V_3)+\dAR(V_2,V_3)\neq \dAR(U,V_2)$ and $\dAR(U,V_2)+\dAR(V_3,V_2)\neq \dAR(U,V_3)$.
\end{proof}

\begin{remark}
We frequently say ``a line'' instead of ``the line'' when using a pair of indecomposables as endpoints to define a line segment.
This is because two lines with the same slope and endpoints may be different.
For example, suppose the slope of a nondegenerate line segment $l$ from $\MM V$ to $\MM W$ is $0$ in $\R^2$.
Let $l_1$ be the line in the AR-space such that the position of every indecomposable in the line is 1,2, or 4 and $\MM l_1=l$.
Let $l_2$ be a similar line except the positions are 1, 3, or 4.
By Definition \ref{def:line segment} and Proposition \ref{prop:3 or fewer on a line} these are both valid line segments but $l_1\neq l_2$.
\end{remark}

\begin{proposition}\label{prop:no back and forth}
Let $l$ be a line segment with slope $(1,r_2)$ and suppose $\dAR(V_1,V_2)+\dAR(V_2,V_3)=\dAR(V_1,V_3)$ for $V_1,V_2,V_3\in l$.
If the positions of $V_1$ and $V_3$ are both 3 or 4, so is the position of $V_2$.
If the positions of $V_1$ and $V_3$ are both 1 or 2, so is the position of $V_2$.

Similarly, when the slope is $(-1,r_2)$ and the positions of $V_1$ and $V_3$ are 1 or 3 so is the position of $V_2$ and if the positions of $V_1$ and $V_3$ are 2 or 4 so is the position of $V_2$.
In these caese $r_2=-1$.
\end{proposition}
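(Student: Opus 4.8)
The plan is to reduce the statement to a finite case-check against the $\Z$-coordinate tables in Definition~\ref{def:d function}. First I would observe that the hypothesis $\dAR(V_1,V_2)+\dAR(V_2,V_3)=\dAR(V_1,V_3)$ is an equation in $\R\oplus\Z$, hence an equation in each coordinate separately. The $\R$-coordinate of $\dAR$ is $\dR$ applied to the $\MM$-images, and $\MM l$ lies on a line of slope $\pm1$ in $\R^2$ (or else $\MM$ is constant on $\{V_1,V_2,V_3\}$); so additivity of the $\R$-coordinate forces $\MM V_2$ to lie weakly between $\MM V_1$ and $\MM V_3$ on that line. Since the statement is symmetric under swapping $V_1$ and $V_3$, I may assume $\MM V_1,\MM V_2,\MM V_3$ occur in that order along $\MM l$ in whatever orientation is fixed for the relevant table. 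Writing $p_i$ for the position of $V_i$ and $z(p,q)$ for the tabulated $\Z$-coordinate (with $z(p,p)=0$), what remains is the single equation $z(p_1,p_2)+z(p_2,p_3)=z(p_1,p_3)$.

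Next, for $r_1=1$ with $\MM V_1,\MM V_2,\MM V_3$ not all equal, I would use the $r=1$ table: assuming $p_1,p_3\in\{3,4\}$ and, for contradiction, $p_2\in\{1,2\}$, one runs through the eight triples in $\{3,4\}\times\{1,2\}\times\{3,4\}$ and checks that $z(p_1,p_2)+z(p_2,p_3)\neq z(p_1,p_3)$ in each; the mirror sub-case $p_1,p_3\in\{1,2\}$, $p_2\in\{3,4\}$ is ruled out the same way. The sub-case in which exactly two of the $\MM V_i$ coincide is handled identically, reading the $\Z$-coordinate of that pair as the (unsigned) edge-count inside its diamond and the remaining pairs from the table. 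The sub-case in which all three $\MM V_i$ coincide is immediate: within a single diamond $\dAR$ has $\Z$-coordinate $0$ only between equal positions and $\geq 1$ otherwise, while positions in the same half are adjacent, so $z(p_1,p_2)+z(p_2,p_3)=z(p_1,p_3)\in\{0,1\}$ forces $p_2\in\{p_1,p_3\}$. The case $r_1=-1$ is the same enumeration against the $r=-1$ table, now separating $\{1,3\}$ from $\{2,4\}$. Finally, for the assertion $r_2=-1$ I would invoke the merged-diamond picture for $r=-1$ from Definition~\ref{def:d function}: a slope-$-1$ segment is parallel to the two opposite diamond edges joining positions $1$--$3$ and $2$--$4$, so the positions occurring along it alternate between the two halves in a single un-branched chain along which each step moves one unit horizontally and one vertically, i.e.\ with slope $-1$.

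I expect the only real obstacle to be bookkeeping rather than mathematics: the tables in Definition~\ref{def:d function} are not symmetric in $V$ and $W$, so one must apply them with the orientation convention for which they were written, and one must not overlook the two degenerate sub-cases (two $\MM$-images equal; all three equal). The one step that is genuinely geometric rather than arithmetic is the verification that $r_1=-1$ forces $r_2=-1$; everything else is mechanical once the reduction in the first paragraph is in place. A cleaner way to organize the whole argument, which I would at least remark on, is that for slope $\pm1$ the positions along a line form a single linearly ordered chain whose alternating blocks are exactly the two halves $\{1,2\}$/$\{3,4\}$ (resp.\ $\{1,3\}$/$\{2,4\}$); additivity of $\dAR$ then says nothing more than that $V_2$ sits between $V_1$ and $V_3$ in that chain.
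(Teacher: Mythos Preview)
Your approach is correct and essentially the same as the paper's: both reduce the claim to a contradiction obtained by reading off the $\Z$-coordinates from the $r=\pm 1$ tables in Definition~\ref{def:d function} and checking that they fail to add up when $V_2$ sits in the wrong half. The paper's proof is terser---it fixes one sub-case (positions of $V_1,V_3$ in $\{1,2\}$, $V_2$ with position $3$, slope $(1,1)$) and declares the rest similar---while you spell out the degenerate sub-cases where some of the $\MM V_i$ coincide and flag the orientation convention for the tables; this extra care is welcome but does not change the substance of the argument.
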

\begin{proof}
Note that if $r_1=1$ in the proposition then $r_2=1$ and if $r_1=-1$ then $r_2=-1$.
We prove the case where the slope is $(1,1)$ since the proof when the slope is $(-1,-1)$ is similar.

Suppose the positions of $V_1$ and $V_3$ are 1 or 2 and let $V$ be an indecomposable in $\repAR$ with position 3 such that $\MM V$ is on the line segment $\MM l$ between $\MM V_1$ and $\MM V_2$.
Without loss of generality, suppose the $x$-coordinate of $\MM V_1$ is less than the $x$-coordinate of $\MM V_2$.
The $\Z$-coordinate of $\dAR(V_1, V)$ is $1$ or $0$ if the position of $V_1$ is 1 or 2, respectively.
The $\Z$-coordinate of $\dAR(V,V_2)$ is $0$ or $1$ if the position of $V_2$ is 1 or 2, respectively.

If $\dAR(V_1,V)+\dAR(V,V_2)$ has $\Z$-coordinate 0 then $\dAR(V_1,V_2)$ has $\Z$-coordinate $-1$.
If $\dAR(V_1,V)+\dAR(V,V_2)$ has $\Z$-coordinate 1 then $\dAR(V_1,V_2)$ has $\Z$-coordinate $0$.
If $\dAR(V_1,V)+\dAR(V,V_2)$ has $\Z$-coordinate 2 then $\dAR(V_1,V_2)$ has $\Z$-coordinate $1$.
In all three cases, $\dAR(V_1,V)+\dAR(V,V_2)\neq \dAR(V_1,V_2)$ and so $V\notin l$.
If $V$ has position 4 instead of 3 the proof is similar.
Then, the proof if $V_1$ and $V_2$ have positions 3 or 4 and $V$ has position 1 or 2 is similar.
\end{proof}

\begin{proposition}\label{prop:diagonal endpoints}
Let $M_{|a,b|}$ and $M_{|c,d|}$ be indecomposable reprsentations in $\repAR$ such that the $x$-coordinate of $\MM M_{|a,b|}$ is less than or equal to the $x$-coordinate of $\MM M_{|c,d|}$ and $M_{|a,b|}\not\cong M_{|c,d|}$.
\begin{enumerate}
\item If the slope of a line in the AR-space with endpoints $M_{|a,b|}$ and $M_{|c,d|}$ is $(1,1)$ then $a=c$ and $a\in|a,b|$ if and only if $c\in|c,d|$.
\item If the slope of a line in the AR-space with endpoints $M_{|a,b|}$ and $M_{|c,d|}$ is $-(1,1)$ then $b=d$ and $b\in|a,b|$ if and only if $d\in|c,d|$.
\end{enumerate}
\end{proposition}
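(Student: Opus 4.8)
We prove (1); statement (2) is the mirror image, obtained by interchanging the roles of the left and right endpoints, of $\kappa^{-}$ and $\kappa^{+}$, of slope $+1$ and slope $-1$, and of $\lambda_a^{*}$ and $\lambda_b^{*}$ throughout. So fix a line segment $l$ in the AR-space with endpoints $M_{|a,b|}$ and $M_{|c,d|}$ and slope $(1,1)$. The plan is to use the first coordinate $r_{1}=1$ of the slope (a statement in $\R^{2}$) to pin down the left endpoints, and then the position coordinate $r_{2}=1$ to settle the open/closed question and to remove one last ambiguity.

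First the geometric step. If $\MM M_{|a,b|}=\MM M_{|c,d|}$, then by Definition~\ref{def:slope} the positions of the two endpoints are joined in one diamond by a segment of slope $1$, so they lie both in $\{1,2\}$ or both in $\{3,4\}$; by Proposition~\ref{prop:ARquotient} the two indecomposables share an Auslander--Reiten sequence or are both projective (resp.\ injective) at one vertex, and inspecting Table~\ref{tab:AR sequence table} and the projective/injective configurations of Definition~\ref{def:position} shows that two indecomposables whose positions lie in the same one of $\{1,2\}$, $\{3,4\}$ have the same left endpoint and the same left-openness, which is the claim. So assume $\MM M_{|a,b|}\neq\MM M_{|c,d|}$. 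By the remark following Definition~\ref{def:line segment}, $\MM l$ is a line segment in $\R^{2}$ of slope $r_{1}=1$, so $\MM M_{|a,b|}$ and $\MM M_{|c,d|}$ lie on one line $L$ of slope $+1$. Inside the strip $\R\times[-\tfrac\pi2,\tfrac\pi2]$, $L$ is a single segment; by the proof of Proposition~\ref{prop:lambda covers} each point of the open strip lies on exactly one $\lambda$ function of slope $+1$ there, and distinct $\lambda$ functions meet only in isolated points (Proposition~\ref{prop:lambda intersections}), so $L$ lies inside a single slope-$+1$ branch of one $\lambda$ function. A short computation with the formula of Definition~\ref{def:the rest of M} and the kappa inequalities of Proposition~\ref{prop:kappa order} (and, for a projective, the definition of $\lambda_a^{*}$ in Lemma~\ref{lem:hom support}, for an injective Proposition~\ref{prop:M on injectives}) shows that $\lambda_a^{*}$ is precisely the $\lambda$ function of slope $+1$ at $\MM M_{|a,b|}$ and $\lambda_b^{*}$ the one of slope $-1$; hence $\lambda_a^{*}=\lambda_c^{*}$ as functions.

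Equality of the functions means $\kappa_a^{*}\equiv\kappa_c^{*}\pmod{2\pi}$. Recalling Definition~\ref{def:kappas} and tracking $\kappa^{-}_{\bullet}$, $\kappa^{+}_{\bullet}$ along $\R$, one checks that $\kappa^{-}_{\bullet}$ is non-decreasing and constant exactly on source-to-sink intervals, that $\kappa^{+}_{\bullet}$ is non-increasing and constant exactly on sink-to-source intervals, and that their ranges overlap modulo $2\pi$ only in values coming from $\pm\infty$; since Construction~\ref{con:interior of M} makes $\kappa_a^{*}=\kappa_a^{-}$ precisely when $a$ is preceded by a sink or is an included sink/source (and dually for $\kappa_a^{+}$), it follows that, apart from the $\pm\infty$ cases which are dispatched directly from Definition~\ref{def:the rest of M}, $\kappa_a^{*}=\kappa_c^{*}$ forces either $a=c$ with matching openness, or $\{a,c\}$ an adjacent sink--source pair, both included or both excluded, sharing the relevant $\lambda$ function via Proposition~\ref{prop:sink and source share}. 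In this last ambiguous case one of $M_{|a,b|}$, $M_{|c,d|}$ has its left endpoint the sink and the other the source; reading Table~\ref{tab:AR sequence table}, an indecomposable whose left endpoint is a sink occupies position $3$ or $4$ of its Auslander--Reiten sequence, while one whose left endpoint is a source occupies position $1$ or $2$. But a line segment of slope $(1,1)$ has $r_{2}=1$, and by the definition of $r_{2}$ (Definition~\ref{def:slope} via the $r=1$ tables of Definition~\ref{def:d function}, cf.\ Proposition~\ref{prop:no back and forth}) this forces the two endpoints into the same half $\{1,2\}$ or $\{3,4\}$; the contradiction rules out the ambiguous case, so $a=c$.

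Finally, with $a=c$, suppose first $a\in S$: by Construction~\ref{con:interior of M} the choice $\kappa_a^{*}\in\{\kappa_a^{-},\kappa_a^{+}\}$ records exactly whether $a\in|a,b|$, and $\kappa_a^{-}=\kappa_a^{+}$ only at $\pm\infty$, so $\kappa_a^{*}=\kappa_c^{*}$ already forces $a\in|a,b|\iff c\in|c,d|$. If $a=c\notin S$ the openness at $a$ does not affect $\MM$, but $M_{|a,b|}$ and $M_{|c,d|}$ have the same orientation near $a$, and Table~\ref{tab:AR sequence table} together with Definition~\ref{def:position} shows that among all indecomposables with that left endpoint the ones including $a$ occupy one of $\{1,2\}$, $\{3,4\}$ and those excluding $a$ occupy the other; since, as in the previous paragraph, a slope-$(1,1)$ line keeps its two endpoints in the same half, they agree on whether $a$ is included. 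This proves (1), and (2) follows dually. The main obstacle is the position bookkeeping used in the last two paragraphs: translating the $r_{2}$-coordinate of a slope, defined through the diamond-merging recipe of Definition~\ref{def:d function}, into a condition on the positions of the endpoints, and verifying across the relevant flavors of Table~\ref{tab:AR sequence table} and the projective/injective configurations of Definition~\ref{def:position} that ``same left endpoint and openness'' corresponds to ``same half'' while an adjacent sink--source pair always splits across the two halves; the finitely many $\pm\infty$ situations also require separate, routine checks.
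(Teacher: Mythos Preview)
Your proposal is correct and follows essentially the same approach as the paper: both arguments use that a slope-$(1,1)$ segment in the AR-space projects to a single slope-$+1$ branch of one $\lambda$ function (namely $\lambda_a^{*}=\lambda_c^{*}$), then invoke Proposition~\ref{prop:no back and forth} to force the two endpoints into the same position-half $\{1,2\}$ or $\{3,4\}$ and read off $a=c$ with matching openness via Table~\ref{tab:AR sequence table} and Definition~\ref{def:position}. The only difference is emphasis: you make the intermediate step explicit by tracking the $\kappa$-values and isolating the adjacent sink--source ambiguity arising from Proposition~\ref{prop:sink and source share}, whereas the paper absorbs this into a direct case analysis on whether $a\in S$ (and treats the simple case separately at the outset rather than via your $\MM V=\MM W$ split).
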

\begin{proof}
We'll prove (1) as the proof of (2) is similar.
First suppose $M_{|a,b|}$ is simple.
The argument for $|a,b|=[s_n,s_{n+1}]$ is similar.
Then $M_{|c,d|}$ is not simple or with support $[s_n,s_{n+1}]$ or else the slope of the line with endpoints $M_{|a,b|}$ and $M_{|c,d|}$ is not $\pm(1,1)$.
Since the slope is $(1,1)$, $\MM M_{|a,b|}$ has $y$-coordinate $-\frac{\pi}{2}$.
Thus $M_{|a,b|}$ has position 2 and so $M_{|c,d|}$ has position 1 or 2 by Proposition \ref{prop:no back and forth}.

Since $M_{|a,b|}$ is simple $|a,b|=\{a\}$ and so $\MM M_{|c,d|}$ lies on $\lambda_a^-$.
By Construction \ref{con:interior of M} if $a$ is a sink or source then $a\in|c,d|=|a,d|$.
If $a$ is neither a sink nor a source then, by Definition \ref{def:lambda functions}, for $l\in\Z$ such that $s_l<a<s_{l+1}$ we have $s_l$ is a sink.
Thus, by Definition \ref{def:position} and Table \ref{tab:AR sequence table}, $a\in|c,d|=|a,d|$.

Now suppose $M_{|a,b|}$ is not simple.
By assumption $\MM M_{|a,b|}$ and $\MM M_{|c,d|}$ are both on the graph of $\lambda_a^*$.
By Proposition \ref{prop:no back and forth} if the position of $\MM_{|a,b|}$ is 1 or 2 so is the position of $M_{|c,d|}$.
Then $M_{|c,d|}$ is neither simple nor has support of the form $[s_n,s_{n+1}]$.
By Corollary \ref{cor:unique AR sequence} $M_{|a,b|}$ belongs to a unique Auslander-Reiten sequence and $M_{|c,d|}$ belongs to a (possibly different) unique Auslander-Reiten sequence.

If $a$ is neither a sink nor a source we use Table \ref{tab:AR sequence table} and Definition \ref{def:position} and see that statement (1) follows.
If $a$ is a sink or source the same table and definition show $a$ is a source.
Since $M_{|c,d|}$ also has position 1 or 2 statement (1) follows again.

Now suppose $M_{|a,b|}$ has position 3 or 4.
If $a$ is not a sink or source and $M_{|c,d|}$ is simple then $|c,d|=\{a\}$ by Definition \ref{def:the rest of M} and $a\in|a,b|$ by Table \ref{tab:AR sequence table} and Definition \ref{def:position} again.
If $M_{|c,d|}$ is not simple then by the same table and definition we see statement (1) follows.
If $a$ is a sink or source then by the same table and definition again $a$ is a sink.
If $|c,d|=[s_n,s_{n+1}]$ for some $n\in\Z$ then $s_n$ is a sink and by the table and definition again, along with Construction \ref{con:interior of M}, $s_n=a\in|a,b|$.
If $|c,d|$ is not of the form $[s_n,s_{n+1}]$ then by the same table, definition, and construction again statement (1) follows.
\end{proof}

\begin{proposition}\label{prop:diagonal endpoints converse}
The converse to the statements in Proposition \ref{prop:diagonal endpoints} are also true.
\end{proposition}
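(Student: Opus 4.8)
The plan is to prove the converse of statement (1) and then obtain the converse of statement (2) by the dual argument, interchanging the roles of sinks and sources (equivalently, $\lambda^-\leftrightarrow\lambda^+$ and the slopes $+1\leftrightarrow-1$), or simply by rerunning the argument below with the right endpoint $b=d$ and $\lambda_b^*$ in place of $\lambda_a^*$. So throughout I assume $a=c$, that $a\in|a,b|$ if and only if $c\in|c,d|$, that $M_{|a,b|}\not\cong M_{|c,d|}$, and (harmlessly) that $\MM M_{|a,b|}$ has $x$-coordinate at most that of $\MM M_{|c,d|}$; the goal is to show that a line in the AR-space with endpoints $M_{|a,b|}$ and $M_{|c,d|}$ has slope $(1,1)$.

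First I would pin down the $\R^2$-slope $r_1$. In the generic case, where neither indecomposable is projective, injective, simple, nor of the form $M_{[s_n,s_{n+1}]}$, Construction \ref{con:interior of M} shows that the $\lambda$-function chosen at the left endpoint depends only on the endpoint value and on whether it is included in the support; hence $a=c$ together with matching openness at $a$ forces $\lambda_a^*=\lambda_c^*$ and $\kappa_a^*=\kappa_c^*$, so both $\MM M_{|a,b|}$ and $\MM M_{|c,d|}$ lie on the graph of the one function $\lambda_a^*$. The key small computation is that in each of the four sign-cases the index $n$ chosen in Definition \ref{def:the rest of M} places the $x$-coordinate of each image in the interval $(\kappa_a^*,\kappa_a^*+\pi]$, i.e.\ on the single rising segment of $\lambda_a^*$ where the derivative is $+1$; I would extract this from $\lambda_a^*(\kappa_a^*)=-\tfrac{\pi}{2}$ (proof of Proposition \ref{prop:lambda intersections}), the bound $\tfrac12|\kappa_a^*-\kappa_b^*|<\pi$, the $\kappa$-ordering of Proposition \ref{prop:kappa order}, and Lemma \ref{lem:good intersection point}. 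Since a rising segment of a $\lambda$ function is an honest line segment of slope $+1$ in $\R^2$, the two images are colinear of slope $+1$ (or coincide), so $r_1=1$. For the degenerate cases I would instead check the ad hoc formulas of Definition \ref{def:the rest of M}, Propositions \ref{prop:M is sort of injective}(3), \ref{prop:M on injectives}, \ref{prop:M is well-defined on s projectives}, and the $\hp$-orderings of Propositions \ref{prop:p hat order} and \ref{prop:p hat sides}; these are precisely the reverse implications of the cases already settled in the proof of Proposition \ref{prop:diagonal endpoints}, so I would present them by reading that proof backwards.

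Next I would determine the second coordinate $r_2$. By Definition \ref{def:position} and Table \ref{tab:AR sequence table}, an indecomposable belonging to an Auslander--Reiten sequence has position in $\{1,2\}$ exactly when its left endpoint lies in its support and position in $\{3,4\}$ otherwise, and the pictures in Definition \ref{def:position} give the same dictionary for the projective, injective, simple, and $M_{[s_n,s_{n+1}]}$ cases. Thus the hypothesis ``$a\in|a,b|$ iff $c\in|c,d|$'' says exactly that the positions of $M_{|a,b|}$ and $M_{|c,d|}$ both lie in $\{1,2\}$ or both lie in $\{3,4\}$. Combining $r_1=1$ with this restricted family of position pairs in the $r=1$ table of Definition \ref{def:d function} (as in the proof of Proposition \ref{prop:no back and forth}, where it is noted that $r_1=1$ then forces $r_2=1$) yields $r_2=1$, so the slope is $(1,1)$. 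The sub-case $\MM M_{|a,b|}=\MM M_{|c,d|}$ is handled via Proposition \ref{prop:ARquotient}: the two then lie one edge apart in a common Auslander--Reiten sequence (positions $\{1,2\}$ or $\{3,4\}$), or are both projective, or both injective, at $a$, and in each such case the edge joining their positions has slope $1$, so the slope is again $(1,1)$.

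I expect the main obstacle to be bookkeeping rather than anything conceptual: keeping the position dictionary straight across all the sub-cases of Definition \ref{def:position} (the projective and injective positions being defined only by pictures) and verifying that the ``both positions in $\{1,2\}$ or both in $\{3,4\}$'' translation holds uniformly, including in the degenerate cases. The cleanest safeguard is to observe that every implication needed here is just the reverse reading of a case already dispatched in the proof of Proposition \ref{prop:diagonal endpoints}, so no genuinely new verification is required beyond the slope-$+1$ placement of Step 1.
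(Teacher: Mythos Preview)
Your approach is essentially the same as the paper's: show both images lie on the graph of the same $\lambda_a^*$ to obtain $r_1=1$, then use the position information from Definition~\ref{def:position} and Table~\ref{tab:AR sequence table} to conclude that both positions lie in $\{1,2\}$ or both lie in $\{3,4\}$, forcing $r_2=1$. You are in fact more careful than the paper in Step~1, explicitly arguing that the image lands on the \emph{rising} segment of $\lambda_a^*$; the paper merely asserts that both images lie on the graph of $\lambda_a^*$ and moves on.

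There is, however, one incorrect intermediate claim you should repair. You write that ``an indecomposable belonging to an Auslander--Reiten sequence has position in $\{1,2\}$ exactly when its left endpoint lies in its support.'' This is false: in type~(3) of Table~\ref{tab:AR sequence table}, $V_1=M_{(a,b)}$ has position~$1$ but $a\notin(a,b)$; in types~(7) and~(8), positions~$3$ and~$4$ have the left endpoint \emph{included}. The correct dictionary depends on the flavor of the sequence at the lower endpoint (for $a\notin S$, on whether the nearest sink/source lies above or below $a$; for $a$ a source the position is always in $\{1,2\}$; for $a$ a sink always in $\{3,4\}$). Fortunately your \emph{conclusion}---that matching left endpoint value and matching openness forces matching position parity---remains valid under the corrected dictionary, since that dictionary depends only on the value and openness of the left endpoint. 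The paper sidesteps this by not stating a universal rule and instead appealing directly to the table and the definition.
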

\begin{proof}
We prove the contrapositive to (1) in Proposition \ref{prop:diagonal endpoints} as the contrapositive to (2) is similar.
Suppose $M_{|a,b|}\not\cong M_{|a,d|}$ are indecomposables in $\repAR$ such that $a\in|a,b|$ if and only if $a\in|a,d|$.
If $M_{|a,b|}$ and $M_{|a,d|}$ belong to the same Auslander-Reiten sequence in Table \ref{tab:AR sequence table} then the proposition follows.
So, suppose they do not.

By Definition \ref{def:the rest of M} both $\MM M_{|a,b|}$ and $\MM M_{|a,d|}$ lie on the graph of $\lambda_a^*$.
First suppose neither is simple or with support of the form $[s_n,s_{n+1}]$.
Then using Table \ref{tab:AR sequence table} and Definition \ref{def:position} we see that $M_{|a,b|}$ and $M_{|a,d|}$ must both have position 1 and/or 2 or both have position 3 and/or 4.
In either case the converse to (1) in Proposition \ref{prop:diagonal endpoints} holds.

If $M_{|a,b|}$ is simple or has support $[s_n,s_{n+1}]$ then $M_{|a,d|}$ does not.
So, $M_{|a,b|}$ has position 2.
Observing Table \ref{tab:AR sequence table} and Definition \ref{def:position} we see $M_{|a,d|}$ must have position 2 or 3.
Therefore, the slope is $(1,1)$.
If instead $M_{|a,d|}$ is simple or with support $[s_n,s_{n+1}]$ we use a similar argument. 
\end{proof}

By Proposition \ref{prop:diagonal endpoints converse}, line segments with endpoints in the left two columns below have slope in the third column (assuming both representations are nonzero).
\smallskip

\centerline{\textbf{Table \ref{tab:slope table}}}
\centerline{\begin{tabular}{c|c|r}\refstepcounter{lemma}\label{tab:slope table}
$V$ & $W$ & $(r_1,r_2)$ \\ \hline
$M_{|a,b|}$ & $M_{|a,d|}$ & $(1,1)$ \\
$M_{|a,b|}$ & $M_{|c,b|}$ & $-(1,1)$ \\
$M_{|a,d|}$ & $M_{|c,d|}$ & $-(1,1)$ \\
$M_{|c,b|}$ & $M_{|c,d|}$ & $(1,1)$
\end{tabular}}
\smallskip

\begin{proposition}\label{prop:homline}
Let $l$ be a line segment in the AR-space of $A_\R$ with slope $\pm(1,1)$ and endpoints $V,W$.
Then either $\Hom(V,W)\cong k$ or $\Hom (W,V)\cong k$.
\end{proposition}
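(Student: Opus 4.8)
The plan is to translate the hypothesis into a statement about supports and then run a short direct argument with morphisms of interval representations. First I would reduce to the case where $l$ has slope $(1,1)$; the slope $-(1,1)$ case is dual, exchanging the roles of the lower and upper endpoints (and of sinks and sources). Assuming the slope is $(1,1)$, Proposition \ref{prop:diagonal endpoints} lets me write $V\cong M_{|a,b|}$ and $W\cong M_{|a,d|}$ with a common lower endpoint $a$, with $a\in|a,b|$ if and only if $a\in|a,d|$; since $l$ is nondegenerate, $V\not\cong W$. After possibly interchanging $V$ and $W$ (which does not affect the conclusion), and relabelling accordingly, I may assume $I:=|a,b|\subseteq|a,d|=:J$ as subsets of $\R$, and set $K:=J\setminus I$. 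A brief check, using that the openness at $a$ agrees on the two sides, shows $K$ is a nonempty subinterval of $\R$ lying ``above'' $I$: every element of $I$ is $\le b$, every element of $K$ is $\ge b$, and $I\cap K=\emptyset$.

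Next I would record the combinatorial description of the relevant Hom-spaces, from Definition \ref{def:representation} and \cite[Theorem 3.0.1]{IgusaRockTodorov2019}, in the same spirit as the proof of Lemma \ref{lem:hom support}: each of $\Hom(M_I,M_J)$ and $\Hom(M_J,M_I)$ is $0$ or $k$, a nonzero morphism is the diagonal map and is determined by its value at any single point of $I\cap J=I$, and $\Hom(M_I,M_J)\cong k$ holds precisely when there is no pair $y\in K$, $x\in I$ with $y\preceq x$, while $\Hom(M_J,M_I)\cong k$ holds precisely when there is no pair $x\in I$, $z\in K$ with $x\preceq z$. A pair of the first kind produces a commuting square of structure maps through which any morphism $M_I\to M_J$ must vanish at $x$, hence (being constant on the connected poset $I$) vanish everywhere; absent such a pair the diagonal map is well defined and nonzero, and symmetrically for the other direction.

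The heart of the proof is to show these two conditions cannot both fail. I would argue by contradiction: suppose there are $y\in K$, $x\in I$ with $y\preceq x$ and $x'\in I$, $z\in K$ with $x'\preceq z$. Since $I\cap K=\emptyset$ and $x,x'\le b\le y,z$ in $\R$, we get $x<y$ and $x'<z$, and the closed real intervals $[x,y]$ and $[x',z]$ each contain $b$. Here I would use the elementary structural fact, immediate from Definition \ref{def:AR}, that two reals $u<v$ are $\preceq$-comparable if and only if $(u,v)\cap S=\emptyset$, and in that case $[u,v]$ lies inside a single maximal segment of $A_\R$ between consecutive elements of $\bar{S}$, on which $\preceq$ is a total order that either agrees with $\le$ throughout or reverses it throughout. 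Applying this, $(x,y)\cap S=\emptyset$ and $(x',z)\cap S=\emptyset$; since both intervals contain $b$, they lie in one and the same such segment. But on that segment $\preceq$ cannot simultaneously reverse $\le$ (which $y\preceq x$, $x<y$ demands) and agree with $\le$ (which $x'\preceq z$, $x'<z$ demands), a contradiction. Hence at least one of $\Hom(V,W)$, $\Hom(W,V)$ is nonzero, and is therefore isomorphic to $k$.

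The one genuinely delicate point, and the step I would carry out most carefully, is the case where $b$ itself lies in $S$: then $b$ cannot be interior to $[x,y]$ or $[x',z]$ (else $b\in(x,y)\cap S$, contradicting the above), so $b$ is forced to be a common endpoint of both intervals, and both still lie in the single monotone segment adjacent to $b$ on one fixed side; this is a finite check on whether $b\in I$ or $b\in K$. Everything else is formal once the support translation is in place, so I expect the obstacle to be purely this bookkeeping rather than anything conceptual.
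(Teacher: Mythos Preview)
Your argument is correct and takes a genuinely different route from the paper's. Both proofs begin by invoking Proposition \ref{prop:diagonal endpoints} to obtain a common lower endpoint, but then diverge. The paper stays in the geometric language of the AR-space: it places $W$ on the boundary of the region $H_V$ from Lemma \ref{lem:hom support}, consults the $\hp$-value table in that lemma's proof to conclude $\Hom(V,W)\cong k$, and then separately handles the boundary cases where $V$ is simple or has support $[s_n,s_{n+1}]$. You instead work entirely at the level of supports: after arranging $I\subseteq J$, you characterize each Hom-space by a single combinatorial obstruction in $K=J\setminus I$ and run a short contradiction argument on the monotonicity of $\preceq$ on a single segment. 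Your approach is more elementary and uniform (no separate treatment of simples or $[s_n,s_{n+1}]$), while the paper's approach has the advantage of determining \emph{which} direction is nonzero---namely $\Hom(V,W)$ when the $x$-coordinate of $\MM V$ is smaller---information that your symmetry reduction discards. The delicate case you flag, $b\in S$, does resolve exactly as you outline: when $b\in I$ one forces $x=x'=b$, when $b\notin I$ one forces $y=z=b$, and in either case the sink/source parity of $b$ makes one of the two $\preceq$-relations impossible.
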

\begin{proof}
If $\MM V=\MM W$ the statement is true since they separated only by a minimal morphism.
So, suppose $\MM V\neq \MM W$ and by symmetry, suppose the $x$-coordinate of $\MM V$ is less than the $x$-coordinate of $\MM W$.
We'll assume the slope is $(1,1)$ since if the slope is $(-1,-1)$ the argument is similar.

Since the slope is  $(1,1)$, $\MM V$ and $\MM W$ lie on the same $\lambda_a^*$ from Definition \ref{def:lambda functions} and by Proposition \ref{prop:no back and forth} the positions of $V$ and $W$ are, without loss of generality, (i) 1 and/or 2 or (ii) 3 and/or 4.
By Proposition \ref{prop:diagonal endpoints} we see that the lower bounds of $V$ and $W$ are the same.
First, assume the $y$-coordinates of $\MM V$ and $\MM W$ are not $\pm\frac{\pi}{2}$.

So, $W$ is on the boundary of $H_V$ and region 2 in Lemma \ref{lem:hom support}.
Then the $\hp$ values for the upper bounds of the supports of $V$ and $W$ satisfy the conditions in the table in the proof of Lemma \ref{lem:hom support}.
Thus, there is a nontrivial morphism and by \cite[Theorem 3.0.1]{IgusaRockTodorov2019} $\Hom(V,W)\cong k$.
In either case, there is a nonzero map $V\to W$.

For contradiction, suppose the $y$-coordinates of both $\MM V$ and $\MM W$ are $\pm\frac{\pi}{2}$.
But then by Definition \ref{def:slope} the slope of the line from $V$ to $W$ is not $(1,1)$.
Thus at least one of $\MM V$ or $\MM W$ has $y$-coordinate not equal to $\pm\frac{\pi}{2}$.

If $V$ is a non-projective simple or has support of the form $[s_n,s_{n+1}]$ then $\MM V=-\frac{\pi}{2}$ by assumption of the slope of the line.
If $V$ is the simple at some $a$ then for $l\in\Z$ such that $s_l < a < s_{l+1}$ then $s_l$ is a sink.
Then there is a nontrivial morphism from $V$ to any indecomposable with support of the form $[a,b|$, including $W$.
If $V$ has support of the form $[s_n,s_{n+1}]$ then $s_n$ is a source.
Since the position of $V$ in this case must be 2 the position of $W$ is 1 or 2.
Then $W$ belongs to an Auslander-Reiten sequence of type (5), (6), (14), or (16) (Table \ref{tab:AR sequence table}).
In each case the support of $W$ contains the support of $V$ and there is a nontrivial morphism $V$ to $W$.
\end{proof}

\begin{proposition}\label{prop:homline slope}
Let $l$ be a line segment in the AR-space of $A_\R$ with endpoints $V$ and $W$.
If the slope of $l$ is greater than $(1,1)$ or less than $(-1,-1)$, $\Hom(V,W)=0=\Hom(W,V)$.
\end{proposition}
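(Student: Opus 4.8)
The plan is to extract the statement from the single geometric fact that the region $\overline{H_V}$ of Lemma~\ref{lem:hom support} is bounded near its ``left corner'' $\MM V$ by two graphs of slope $+1$ and $-1$, together with the fact (Definition~\ref{def:undecorated lambda}) that every $\lambda$ function is $1$-Lipschitz. First I would observe that the hypothesis ``slope of $l$ is $>(1,1)$ or $<-(1,1)$'' splits into exactly two cases: either $\MM V\neq\MM W$ and the line segment $\MM l\subset\R^2$ is strictly steeper than $\pm 1$ (i.e.\ $|y_W-y_V|>|x_W-x_V|$, reading a vertical segment as slope $\infty$), since $r_1=\pm 1$ would force $r_2=\pm 1$; or $\MM V=\MM W$, in which case $r_1=r_2$ and, inspecting the four positions in one diamond (Definition~\ref{def:position}), the only diamond-slope of absolute value $>1$ that occurs between two positions is $\infty$, realized precisely by the pair of positions $2$ and $3$.

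I would dispose of the degenerate case $\MM V=\MM W$ first. Then $V$ and $W$ occupy positions $2$ and $3$, so by Proposition~\ref{prop:ARquotient} they are either the two middle summands of one of the Auslander--Reiten sequences of Table~\ref{tab:AR sequence table}, or the pair $P_{(s},P_{s)}$ of projectives at a source $s$, or the dual pair of injectives at a sink. In the projective/injective case the two supports are disjoint, so $\Hom(V,W)=0=\Hom(W,V)$ trivially. In the Auslander--Reiten case, Table~\ref{tab:AR sequence table} shows that the supports of the two middle summands differ at \emph{both} endpoints; at each endpoint of $\supp V\cap\supp W$ the orientation condition attached to that row produces a pair $y\preceq x$ with $x$ in one support and $y$ in the other (but not conversely), which, exactly as in the proof of Proposition~\ref{prop:one way hom}, kills every morphism in both directions.

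Now assume $\MM V\neq\MM W$, so $\MM l$ is strictly steeper than $\pm 1$, and suppose first that $V$ is not simple, not injective, and not of support $[s_n,s_{n+1}]$, so Lemma~\ref{lem:hom support} applies to $V$ with boundary functions $\lambda_a^*$ (positive slope at $\MM V$) and $\lambda_b^*$ (negative slope at $\MM V$). I claim $\MM W\notin\overline{H_V}$. Since $\overline{H_V}\subseteq\{x\ge x_V\}$ and $\MM W\neq\MM V$, this is immediate unless $x_W>x_V$; in that case $1$-Lipschitz-ness of $\lambda_a^*$ gives $\lambda_a^*(x_W)\le y_V+(x_W-x_V)$, whereas if $\MM l$ has slope $r_1>1$ then $y_W=y_V+r_1(x_W-x_V)>y_V+(x_W-x_V)$, so $\MM W$ lies strictly above the graph of $\lambda_a^*$ and hence outside $\overline{H_V}$; symmetrically, if $\MM l$ has slope $<-1$ then $\MM W$ lies strictly below $\lambda_b^*$. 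Either way Lemma~\ref{lem:hom support} gives $\Hom(V,W)=0$, and interchanging the roles of $V$ and $W$ (when $W$ is also of none of the excluded types) gives $\Hom(W,V)=0$.

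It remains to handle the directions whose source is one of the excluded representations — $V$ (or $W$) simple, injective, or of support $[s_n,s_{n+1}]$ — where Lemma~\ref{lem:hom support} is silent and, worse, the target could, a priori, land in the uninformative closure $\overline{H_V}$. Here I would compute $\Hom$ directly: a simple or a ``glued sink--source'' representation has such a small support that a nonzero morphism out of it (resp.\ into it) forces the other representation's support to meet it in a specific one-sided way, and the steepness hypothesis, read through Definition~\ref{def:position}, Table~\ref{tab:AR sequence table} and Table~\ref{tab:slope table}, rules that out; for an injective $V$ one may instead transport the argument through the order-reversing automorphism of $A_\R$ carrying injectives to projectives and invoke the already-established projective case. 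I expect this bookkeeping for the excluded representations to be the main obstacle: the geometric heart of the proof (a steep line immediately escapes $\overline{H_V}$) is a one-line Lipschitz estimate, but verifying by hand that none of the special endpoints slips back into the closure requires patiently running through the position data.
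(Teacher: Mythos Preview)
Your case split at the start contains an error. You claim that when $\MM V\neq\MM W$, the hypothesis ``slope $>(1,1)$ or $<-(1,1)$'' forces $|r_1|>1$, on the grounds that ``$r_1=\pm 1$ would force $r_2=\pm 1$.'' This is false. In the merged-diamond picture of Definition~\ref{def:d function} for $r_1=1$, the $W$-diamond is attached along the upper-right edge of the $V$-diamond; taking $V$ in position~$3$ and $W$ in position~$2$ gives a vertical rise of~$3$ against a horizontal run of~$1$, so $r_2=3$. Thus slope $(1,3)$ (and symmetrically $(-1,-3)$) genuinely occurs with $\MM V\neq\MM W$, and these are precisely the cases the paper singles out as ``it remains to check the slopes $(1,3)$ and $(-1,-3)$.''

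This matters because your Lipschitz argument cannot handle it: when $r_1=1$, the point $\MM W$ lies \emph{on} the graph of $\lambda_a^*$, i.e.\ on the boundary of $H_V$, exactly where Lemma~\ref{lem:hom support} is silent. The paper disposes of this case by invoking the endpoint-comparison argument of Proposition~\ref{prop:one way hom}: since $V$ and $W$ share a $\lambda$-function (hence a common support endpoint $a$), the position data (position~$3$ versus position~$2$) forces, via Table~\ref{tab:AR sequence table}, that the containment of $a$ differs between $|a,b|$ and $|a,d|$ in a way that produces a pair $y\preceq x$ lying in one support but not the other, killing morphisms in both directions. You already deploy exactly this reasoning in your degenerate case $\MM V=\MM W$; the fix is simply to recognise that the same argument is needed for $r_1=\pm 1$, $r_2=\pm 3$, rather than asserting that this case cannot arise.
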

\begin{proof}
In Lemma \ref{lem:hom support} it was shown that if the slope is $(r_1,r_2)$ and $|r_1|>1$ then $\Hom(V,W)=0=\Hom(W,V)$.
It remains to check the slopes $(1,3)$ and $(-1,-3)$.
However, the argument is the same as in the proof of Proposition \ref{prop:one way hom}.
\end{proof}

\subsection{Properties of the AR-space}
In this subsection we prove that the AR-space has the desired properties.
To do this we define rectangles in the AR-space, which generalize rectangles in metric spaces that permit them.
We conclude by proving that extensions with two indecomposables are in one-to-one correspondence with rectangles in the AR-space.

\begin{definition}\label{def:rectangle}
A \underline{rectangle} is a set of four line segments $l_1,l_2,l_3,l_4$ where
\begin{itemize}
\item the image $\MM( l_1\cup l_2\cup l_3\cup l_4 )$ is a (possibly degenerate) rectangle in $\R^2$;
\item the slopes of $l_i$ and $l_j$ are the same (possibly undefined) if and only if $l_i\cap l_j=\emptyset$ or $l_i=l_j$; and
\item the slopes of $l_i$ and $l_j$ are distinct (possibly one undefined) if and only if $|l_i\cap l_j|=1$.
\end{itemize}
The intersection points of the $l_i$s are called the \underline{corners} of the rectangle.
If one or more of the $l_i$s has length $(0,0)$ we say the rectangle is \underline{degenerate}.
\end{definition}

We now finish the description of Hom-support started in Lemma \ref{lem:hom support}.
Proposition \ref{prop:homline} and Proposition \ref{prop:homline slope} show what happens if $\MM W$ is on left-most two lines of the boundary of $H_V$ for some $V$.

\begin{proposition}\label{prop:can't reach too far}
Let $V$ and $W$ be indecomposables in $\repAR$ such that $\MM W$ is on the boundary of $H_V$ from Lemma \ref{lem:hom support}.
(In particular, the $x$-coordinate of $\MM V$ is not $\pm\frac{\pi}{2}$.)
Further assume $\MM W$ is on a part of the boundary of $H_V$ that borders region 4, 5, or 6 in the same proposition.
\begin{itemize}
\item If the position of $V$ is 1 then $\Hom(V,W)=0$.
\item If the position of $V$ is 2 and $\MM W$ does not border region 5 then $\Hom(V,W)=0$.
\item If the position of $V$ is 3 and $\MM W$ does not border region 4 then $\Hom(V,W)=0$.
\end{itemize}
\end{proposition}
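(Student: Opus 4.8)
The plan is to reduce everything to Lemma~\ref{lem:hom support} together with the combinatorial description of positions in Definition~\ref{def:position} and Table~\ref{tab:AR sequence table}. The key observation is that $\MM W$ lying on the boundary of $H_V$ does not by itself force $\Hom(V,W)=0$ or $\Hom(V,W)\cong k$; the answer depends on which of the four $\lambda$-functions bounding $H_V$ the point $\MM W$ sits on, and on the positions of $V$ and $W$, which record the precise openness of the endpoints of the supports. So first I would set up notation: write $V=M_{|a,b|}$ with $\lambda_a^*,\lambda_b^*$ as in Lemma~\ref{lem:hom support}, so that the boundary of $H_V$ consists of arcs of $\lambda_a^*$, $\lambda_b^*$, $\lambda_c^*$, $\lambda_d^*$ (the two ``outgoing'' functions through $(x^V,y^V)$). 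The portions of the boundary bordering regions $4$, $5$, $6$ are precisely the arcs lying on $\lambda_c^*$ and $\lambda_d^*$ (the ones through $(x^V,y^V)$), together with the rightmost corner $(x^V,y^V)=\MM I$ where $I$ is the injective with the same support data up to openness.

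Next I would analyze each case by translating the geometric condition into a condition on supports. If the position of $V$ is $1$, then $V=M_{[a,b]}$ has both endpoints included. For $\MM W$ on the right boundary of $H_V$: on the arc bordering region $5$ we have (via the $\hp$-ordering of Proposition~\ref{prop:p hat order} and the $-{-}$, $+{-}$, \dots\ case analysis exactly as in the proof of Lemma~\ref{lem:hom support}) that $c>b$, hence $|a,b|\cap|c,d|=\emptyset$ and $\Hom(V,W)=0$; on the arc bordering region $4$ one gets $d<a$, again forcing disjoint supports or, more carefully, a vertex $x\in|a,b|$ with $x\succeq y$ for some $y\in|a,b|\cap|c,d|$ but $y\notin|c,d|$ — this is exactly the obstruction used in the proof of Lemma~\ref{lem:hom support} for regions $2$ and $5$, and it kills every morphism $V\to W$. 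The corner $(x^V,y^V)$ corresponds to $\MM W=\MM I_{b}$ or similar, where the support of $W$ again fails to receive a nonzero map from $M_{[a,b]}$ at the relevant endpoint (the included endpoint of $V$ meets a source/sink mismatch). For position $2$, $V=M_{[a,b)}$: now the upper endpoint is open, so a nonzero map to $W$ along the $\lambda_b^*$-side is possible — that is precisely the arc bordering region $5$, which is why that case is excluded — while along the $\lambda_a^*$, $\lambda_c^*$, $\lambda_d^*$ arcs (bordering $4$ and $6$) the same support/orientation obstruction as before applies. Symmetrically, for position $3$, $V=M_{(a,b]}$, the open lower endpoint allows a map along the side bordering region $4$, which is excluded, and all other arcs bordering $4,5,6$ force $\Hom(V,W)=0$.

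Concretely, each subcase reduces to one of the two bulleted obstructions already isolated at the end of the proof of Lemma~\ref{lem:hom support}: either $|a,b|\cap|c,d|=\emptyset$, or there exist $x\in|a,b|$ and $y\preceq x$ with $y\in|a,b|\cap|c,d|$ but the relevant map must vanish because $x\notin|c,d|$ (or the dual). I would organize the write-up as a table or short list: for each position of $V$ ($1$, $2$, $3$) and each admissible boundary arc ($\lambda_c^*$-side versus $\lambda_d^*$-side versus the right corner), state which of the $-{-}/+{-}/-{+}/+{+}$ cases occurs, cite Proposition~\ref{prop:p hat order} and Proposition~\ref{prop:lambda covers} for the resulting $\hp$-inequalities on $c$ and $d$, and conclude $\Hom(V,W)=0$ by the morphism-vanishing argument. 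The main obstacle I anticipate is bookkeeping: correctly matching up ``which boundary arc borders which region'' with ``which $\lambda$-function'' and with ``which endpoint of $V$ is open,'' since a sign error there collapses the whole statement. Careful use of the picture following Lemma~\ref{lem:hom support} and of Definition~\ref{def:position}'s position diagrams should keep this under control; none of the individual inequalities are hard, they are the same manipulations (adding $\pi/2$, comparing $|x_a-x_b|$ to $y_b-y_a$) already performed repeatedly in Lemma~\ref{lem:good intersection point} and Lemma~\ref{lem:hom support}.
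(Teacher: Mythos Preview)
Your overall strategy matches the paper's: translate the boundary condition on $\MM W$ into a statement about which $\lambda$-function the point lies on, convert that into endpoint information for $\supp V$ and $\supp W$, and then invoke one of the two standard obstructions (disjoint supports, or a vertex of $\supp V$ forced to map to zero) to conclude $\Hom(V,W)=0$.

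The gap is in your identification of positions with interval types. You write ``If the position of $V$ is $1$, then $V=M_{[a,b]}$ has both endpoints included,'' and analogously for positions $2$ and $3$. That is the pattern only in type~(2) of Table~\ref{tab:AR sequence table}. In type~(3) the position-$1$ representation is $M_{(a,b)}$ with \emph{neither} endpoint included; in type~(1) it is $M_{[a,b)}$; in types~(5)--(16) the endpoints may be sinks or sources and the pattern shifts again. The position label records which slot of the AR~sequence $V$ occupies, and the translation into openness of endpoints depends on the flavor. Since your deductions ``$c>b$'', ``$d<a$'', and ``the upper endpoint is open, so a map along the $\lambda_b^*$-side is possible'' all rest on this identification, the argument as written does not cover the general case.

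The paper avoids this by arguing geometrically first: if $\MM W$ borders region~4 then $\MM W$ sits on the descending arc of $\lambda_a^*$, which forces $\lambda_a^*=\lambda_a^+$; from Construction~\ref{con:interior of M} and Definition~\ref{def:lambda functions} this already pins down whether $a\in S$, whether $a\in|a,b|$, and (when $a\notin S$) that the $s_l$ below $a$ is a source and $a$ is the upper bound of $\supp W$. Only after these support facts are established does the paper invoke the position hypothesis, and then only to say which of the region-4 and region-5 arguments applies (position~2 uses the region-4 argument, position~3 the region-5 argument; position~1 uses both). To repair your approach you would either have to run the endpoint analysis across all sixteen types separately, or adopt the paper's route through the $\lambda_a^+$/$\lambda_b^-$ identification and push the positional hypothesis to the end.
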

\begin{proof}
Note that $V$ cannot be projective if its position is 1, by Definition \ref{def:position}.
We know it cannot be injective or else there would exist no $W$ in the proposition, by Lemma \ref{lem:good intersection point}.
Furthermore, $\MM W$ cannot be on the corner bordering region 6 by the same lemma.
Let $|a,b|=\supp V$.

Suppose $\MM W$ borders region 4.
Then since $\MM V$ and $\MM W$ are both on the graph of $\lambda_a^*$, we must have $\lambda_a^*=\lambda_a^+$.
If $a$ is neither a sink nor a source then for $l\in\Z$ such that $s_l<a<s_{l+1}$ we have $s_l$ is a source and $a$ is the upper bound of $\supp W$.
Then $a\notin\supp V$ and so $\Hom(V,W)=0$.

If $a$ is a sink or source then $a\notin\supp |a,b|$ by Construction \ref{con:interior of M} and Definition \ref{def:the rest of M}.
We know $V$ belongs to a unique Auslander-Reiten sequence by Corollary \ref{cor:unique AR sequence}.
Since $a=s_n$ is a sink or source, using Theorem \ref{thm:AR sequence classification} and Table \ref{tab:AR sequence table}, we see that $V$ must be a source and belong to an Auslander-Reiten sequence of type (7), (8), (13), or (15).

Also by Construction \ref{con:interior of M} and Definition \ref{def:the rest of M} we see the upper limit of $\supp W$ must be a sink or source and contained in the support.
In particular, it must be the source $s_n$ or the sink $s_{n-1}$.
However, $s_n\notin\supp V$ and so $\supp V\cap \supp W=\emptyset$.
A similar argument applies if $\MM W$ borders region 5 in Lemma \ref{lem:hom support}.
Furthermore, if $V$ has position 2 the argument for region 4 applies and if $V$ has position 3 the argument for region 5 applies.

Now suppose $V$ is projective and has position 2.
The argument if $V$ is projective and has position 3 is similar.
Then $V=P_{(a}$ by Definition \ref{def:position}.
If $W$ is injective, $W=I_{a)}$ or $W=I_a$; in either case $\supp V\cap\supp W=\emptyset$.

If $W$ is not injective suppose $\MM W$ borders region 4.
Again, $\MM V$ and $\MM W$ are both on the graph of $\lambda_a^*$.
As before $a$ cannot be a sink and $a$ is an upper bound on the support of $W$.
However, $a$ is again a lower bound on the support of $V$ but not an element of the support of $V$.
Therefore, $\supp V\cap \supp W$.
\end{proof}

\begin{lemma}\label{lem:all double extensions}
Let $V=M_{|a,b|}$ and $W=M_{|c,d|}$ be indecomposables in $\repAR$ such that $V\not\cong W$ and $\Hom(V,W)\cong k$.
Then there is, up to isomorphism and scaling, a unique nontrivial extension $V\hookrightarrow M_{|a,d|}\oplus M_{|c,b|}\twoheadrightarrow W$ if and only if the indecomposables in the exact sequence form a non-degenerate rectangle.
\end{lemma}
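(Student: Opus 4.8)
The plan is to exhibit an explicit candidate sequence, show it is exact and nonsplit precisely when the rectangle is non-degenerate, and then show that any nontrivial extension of the prescribed shape must agree with it up to isomorphism and scaling.

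First I would fix notation. Since $\Hom(V,W)\cong k$, Lemma \ref{lem:hom support} — together with Propositions \ref{prop:homline}, \ref{prop:homline slope}, and \ref{prop:can't reach too far} for the boundary cases where $\MM V$ or $\MM W$ lies on $\R\times\{\pm\frac{\pi}{2}\}$ — forces the supports of $V$ and $W$ to overlap, and, after orienting things so that the $x$-coordinate of $\MM V$ is at most that of $\MM W$, forces $a$ to be the ``outer'' lower endpoint and $d$ the ``outer'' upper endpoint. Hence $|a,b|\cup|c,d|=|a,d|$ is again an interval and $|a,b|\cap|c,d|=|c,b|$, with the open/closed status of each endpoint determined by exactly the analysis used in the proof of Lemma \ref{lem:AR sequence existence}. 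I would then write down the complex $V\hookrightarrow M_{|a,d|}\oplus M_{|c,b|}\twoheadrightarrow W$ whose first map is the diagonal and whose second is the difference, as these maps appear in Table \ref{tab:AR sequence table}; these are morphisms of representations because every nonzero component is an identity on the overlap of supports.

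Second, I would prove exactness by a pointwise dimension count: over each $x\in\R$ the fibre sequence is $0\to k\to k^2\to k\to 0$ when $x\in|c,b|$, $0\to k\to k\to 0\to 0$ when $x$ lies in the left part $|a,b|\setminus|c,b|$, $0\to 0\to k\to k\to 0$ when $x$ lies in the right part $|c,d|\setminus|c,b|$, and is zero otherwise, each of which is exact; this uses only $|c,b|\neq\emptyset$, which is guaranteed by $\Hom(V,W)\neq 0$. Thus the complex is always a short exact sequence. It splits iff $M_{|a,d|}\oplus M_{|c,b|}\cong M_{|a,b|}\oplus M_{|c,d|}$, which by Krull--Schmidt happens iff $\{|a,d|,|c,b|\}=\{|a,b|,|c,d|\}$ as multisets of intervals, i.e. iff $|a,d|=|a,b|$ or $|c,b|=|a,b|$. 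By Proposition \ref{prop:ARquotient} (and the observation that $\dAR$ is $(0,0)$ exactly for isomorphic indecomposables) these are precisely the cases in which one of the four sides $l_i$ has length $(0,0)$, i.e. the rectangle is degenerate. Conversely, when the rectangle is non-degenerate the four corners $\MM V,\MM M_{|a,d|},\MM W,\MM M_{|c,b|}$ are distinct, the four sides have slopes $(1,1)$ and $-(1,1)$ by Table \ref{tab:slope table}, so $\MM(l_1\cup l_2\cup l_3\cup l_4)$ is a genuine non-degenerate rectangle in $\R^2$ and the complex is a nontrivial extension of the required form. For uniqueness, given any nontrivial $0\to V\to E\to W\to 0$ with $E$ a sum of two indecomposables, $\dim E(x)=\dim V(x)+\dim W(x)$ forces $E\cong M_{I_1}\oplus M_{I_2}$ with $I_1\cup I_2=|a,d|$ and $I_1\cap I_2=|c,b|$; the only options are $\{I_1,I_2\}=\{|a,d|,|c,b|\}$ or $\{|a,b|,|c,d|\}$, and the second makes the sequence split, so $E\cong M_{|a,d|}\oplus M_{|c,b|}$. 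The inclusion $V\to M_{|a,d|}\oplus M_{|c,b|}$ then has scalar components since all Hom-spaces between these indecomposables are at most one-dimensional by \cite[Theorem 3.0.1]{IgusaRockTodorov2019}, both scalars are nonzero, and rescaling identifies the extension with the candidate. Finally, if the rectangle is degenerate then one of $M_{|a,d|},M_{|c,b|}$ is isomorphic to $V$ or to $W$, and the only extension of $W$ by $V$ with that particular middle term is the split one, so no nontrivial extension of the prescribed form exists.

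I expect the genuine obstacle to be the first step: extracting from $\Hom(V,W)\cong k$ the exact open/closed status of the endpoints of $|a,d|$ and $|c,b|$, in particular when $a$, $b$, $c$, or $d$ is a sink or source (where the ``spillover'' behaviour from Section \ref{sec:AR sequences} intervenes) and when $\MM V$ or $\MM W$ lies on $\R\times\{\pm\frac{\pi}{2}\}$. Once the configuration is correctly identified, the exactness computation, the Krull--Schmidt argument, and the scaling argument for uniqueness are all routine.
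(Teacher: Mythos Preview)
Your argument is correct and follows essentially the same skeleton as the paper's proof, but you do considerably more work than the paper does. The paper's proof is three sentences: it first invokes \cite[Theorem 3.0.1]{IgusaRockTodorov2019} to say that $\Ext^1(W,V)$ is at most one-dimensional, so uniqueness is automatic once a single nontrivial extension is exhibited; it then writes down the same diagonal/difference sequence you do (quoting Proposition \ref{prop:homline} for the maps) and simply asserts exactness; and for the converse it reads off the slopes from Table \ref{tab:slope table} and the lengths from Definition \ref{def:d function}. Your pointwise exactness check, your Krull--Schmidt analysis of when the sequence splits, and your explicit identification of degeneracy with a coinciding pair of corners all unpack things the paper leaves implicit. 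The one place your route costs you something is uniqueness: your hands-on argument only treats extensions whose middle term already has two indecomposable summands, whereas the paper's one-line appeal to $\dim\Ext^1(W,V)\leq 1$ covers all extensions at once. Since you already cite \cite[Theorem 3.0.1]{IgusaRockTodorov2019} for the Hom-dimensions, you may as well cite it for the Ext-dimension too and drop the two-summand hypothesis; otherwise your proof is a strictly more detailed version of the paper's.
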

\begin{proof}
We first note that if $\Ext^1(W,V)\neq 0$ then, up to isomorphism and scaling, there is a unique extension, by \cite[Theorem 3.0.1]{IgusaRockTodorov2019}.
Thus if we have the non-degenerate rectangle we need only to show the existence of one nontrivial extension.
If the described indecomposables form a non-degenerate rectangle then by Proposition \ref{prop:homline} we have the following maps
\begin{displaymath}\xymatrix@C=10ex{ 0\ar[r] & M_{|a,b|} \ar[r]^-{\left[\begin{array}{c}1\\1\end{array}\right]} & M_{|a,d|}\oplus M_{|c,b|} \ar[r]^-{\left[\begin{array}{rr} 1 & -1\end{array}\right]} & M_{|c,d|} \ar[r] & 0}\end{displaymath}
We see this is an exact sequence.

Now suppose we have the exact sequence.
By Table \ref{tab:slope table} the slopes of lines connecting the nonzero representations in the exact sequence as desired are all $\pm(1,1)$.
By Definition \ref{def:d function} we see the line segments are also of the correct lengths.
This concludes the proof.
\end{proof}

\begin{lemma}\label{lem:all single extensions}
Let $V=M_{|a,b|}$ and $W=M_{|c,d|}$ be indecomposables in $\repAR$ such that $V\not\cong W$, $\Hom(V,W)=0$, and the $x$-coordinate of $\MM V$ is less than or equal to the $x$-coordinate of $\MM W$.
Then there is, up to isomorphism and scaling, a unique nontrivial extension $V\hookrightarrow E \twoheadrightarrow W$ where $E$ is indecomposable if and only if $|a,b|\cup|c,d|$ is an interval and $|a,b|\cap|c,d|=\emptyset$.
\end{lemma}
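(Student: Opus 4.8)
I would prove the two implications separately; the reverse direction carries all of the real content.

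\emph{Forward direction (a dimension count).} Suppose $0\to V\to E\to W\to 0$ is a nontrivial extension with $E$ indecomposable. Since $\repAR$ is abelian and $E$ is finitely generated (lift a projective cover of $W$ along $E\twoheadrightarrow W$ and add a projective cover of $V$ mapping through $V\hookrightarrow E$ to get a finite projective presentation of $E$), we have $E\in\repAR$; by the classification of objects of $\repAR$ in \cite{IgusaRockTodorov2019} it is then interval indecomposable, say $E\cong M_{|e,f|}$. Exactness of $0\to V\to E\to W\to 0$ in $\repAR$ is pointwise, so $\dim E(x)=\dim V(x)+\dim W(x)$ for every $x\in\R$. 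As $\dim E(x)\le 1$, no point lies in both $|a,b|$ and $|c,d|$, whence $|a,b|\cap|c,d|=\emptyset$; and $\supp E=\supp V\cup\supp W$, so $|a,b|\cup|c,d|=|e,f|$ is an interval.

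\emph{Reverse direction.} Assume $I:=|a,b|\cup|c,d|$ is an interval and $|a,b|\cap|c,d|=\emptyset$. Since the two supports are disjoint with union an interval, one of them lies entirely to the left of a single point $e\in\R$ and the other entirely to the right, with exactly one of them containing $e$. Consequently, one of $|a,b|,|c,d|$ is $\preceq$-downward closed in $I$ and the other $\preceq$-upward closed in $I$: when $e\notin\bar S$ this is immediate from p1--p2 of Definition~\ref{def:AR}, and when $e\in S$ one checks the sink/source cases at the junction directly. If $|a,b|$ is the downward-closed piece, the pointwise identity maps assemble into morphisms of representations $M_{|a,b|}\hookrightarrow M_I$ and $M_I\twoheadrightarrow M_{|c,d|}$ forming a short exact sequence $0\to V\to M_I\to W\to 0$, with $E:=M_I$ interval indecomposable; it is non-split because $M_I$ is indecomposable while $V$ and $W$ are both nonzero. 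The extension being nontrivial forces $\Ext^1(W,V)\ne 0$, and then by \cite[Theorem 3.0.1]{IgusaRockTodorov2019} every nontrivial extension of $W$ by $V$ is a scalar multiple of this one; in particular each has middle term $M_I$, giving uniqueness up to isomorphism and scaling.

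\emph{The main obstacle} is to show that it is indeed $|a,b|$ (and not $|c,d|$) that is the downward-closed piece, i.e.\ that the hypothesis ``the $x$-coordinate of $\MM V$ is at most that of $\MM W$'' forces $\supp V$ to be $\preceq$-downward closed in $I$. This is exactly where the construction of $\MM$ must be used: $\MM V$ and $\MM W$ lie on the graph of a common $\lambda$-function for the endpoint at $e$, and one must read off from Construction~\ref{con:interior of M} and Definition~\ref{def:the rest of M} --- which encode endpoint inclusion and sink/source type into the choice of $\lambda_*^-$ versus $\lambda_*^+$ and into the shift by $n\pi$ --- together with Lemma~\ref{lem:good intersection point}, that a smaller $x$-coordinate corresponds precisely to being the downward-closed piece. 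Alternatively one can invoke Table~\ref{tab:slope table} and Proposition~\ref{prop:diagonal endpoints} applied to the two lines in the putative rectangle, from $\MM V$ to $\MM E$ (slope $(1,1)$) and from $\MM E$ to $\MM W$ (slope $-(1,1)$), to pin down the orientation. Once this correspondence is established the argument is complete; the remaining checks --- that the pointwise identity maps commute with the structure maps of the relevant representations, and that $\repAR$ is closed under extensions --- are routine.
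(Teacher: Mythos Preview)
Your proof is correct and follows essentially the same approach as the paper: a pointwise dimension count for the forward direction, and an explicit construction of $M_I$ for the reverse direction with uniqueness via \cite[Theorem~3.0.1]{IgusaRockTodorov2019}. You are in fact more careful than the paper about the one genuine subtlety --- why the $x$-coordinate hypothesis forces $|a,b|$ to be the $\preceq$-downward-closed piece of $I$ --- which the paper dispatches tersely with ``by symmetry'' together with Table~\ref{tab:slope table} and Proposition~\ref{prop:homline} (whose proof does use the $x$-coordinate ordering to pin down the direction of the nonzero Hom).
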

\begin{proof}
First suppose $|a,b|\cup|c,d|$ is an interval and $|a,b|\cap|c,d|=\emptyset$.
Then either $c=b$ or $a=d$.
We assume $c=b$ by symmetry.
Using Table \ref{tab:slope table} and Proposition \ref{prop:homline} we see $\Hom(V,M_{|a,d|})\cong k$ and $Hom(M_{|a,d|},W)\cong k$.
In particular $0\to V\to M_{|a,d|} \to W \to 0$ is a short exact sequence.
By \cite[Theorem 3.0.1]{IgusaRockTodorov2019} this is unique up to isomorphism and scaling.

Now suppose $|a,b|\cup|c,d|$ is not an interval or $|a,b|\cap|c,d|\neq\emptyset$.
Let $E$ be an extension of $W$ by $V$.
Then $\dim E(x) = \dim V(x) + \dim W(x)$ for all $x\in\R$.
But then $E = V\oplus W $ and so the extension is trivial.

Now suppose $|a,b|\cap|c,d|\neq\emptyset$.
For contradiction, suppose there is a nontrivial extension $E$.
Since $|a,d|$ and $|c,b|$ are nonempty and so we have the exact sequence from Lemma \ref{lem:all double extensions} and so $E$ is not indecomposable.
But then there is a nonzero composition of morphisms $h:V\stackrel{f}{\to} M_{|a,d|}\stackrel{g}{\to} W$ by taking $h(x)=g(x)\circ h(x)$ for all $x\in\R$.
This contradicts $\Hom(V,W)=0$.
\end{proof}

\begin{definition}\label{def:almost complete line}
Let $l$ be a totally ordered set in AR-space that meets the following conditions.
\begin{itemize}
\item There exists a minimal element $V$.
\item For any pair $U<W$ in $l$, the set $\{X\in l : U\leq X\leq W\}$ is a line segment with endpoints $U$ and $W$.
\item The slope of any line segment with endpoints $U< W$ in $l$ is equal to the slope of any other line segment with endpoints $U'<W'$ in $l$.
\item If $U\in l$ and a line segment with endpoints $V$ and $W$ contains $U$ then $W\in l$ and $V<U<W$.
\end{itemize}
If $l$ has no maximal element we call $l$ an \underline{almost complete line segment}.
Since slopes must be constant, the slope of $l$ is the slope of any line segment with endpoints both in $l$.

A \underline{phantom end point} of an almost complete line segment is an indecomposable $E$ in $\repAR$ such that $\MM l \cup \MM E$ is a line segment in $\R^2$ with endpoints $\MM V$ and $\MM E$.
\end{definition}
\begin{remark}\label{rem:almost complete slope remark}
The definition immediately implies that the slope of an almost complete line segment is $\pm(1,1)$.
\end{remark}
\begin{proposition}\label{prop:almost complete coordinate order}
Let $l$ be an almost complete line segment with $V$ its minimal element and $(x_U,y_U)=\MM U$ for all $U\in l$.
Then $\{y_U: U\in l\}$ is totally ordered by the usual order of $\R$ and $y_V$ is the maximal or minimal element.
\end{proposition}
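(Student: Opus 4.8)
The plan is to transfer the question to the image in $\R^2$ and exploit that the slope of an almost complete line segment is $\pm(1,1)$. Since $\{y_U : U\in l\}$ is a subset of $\R$ it is totally ordered automatically, so the only real content is that $y_V$ is an extreme element of that set. First I would record that, by Remark \ref{rem:almost complete slope remark}, the slope of $l$ is $\pm(1,1)$, and by Definition \ref{def:almost complete line} this is also the slope of every line segment with both endpoints in $l$; hence, for such a line segment $l'$ whose endpoints have distinct $\MM$-images, $\MM l'$ is a genuine line segment of slope $\pm1$ in $\R^2$ (the remark following Definition \ref{def:line segment}), so the $y$-coordinates of the points of $\MM l'$ fill exactly the closed interval between the $y$-coordinates of its two endpoints.

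The heart of the argument is a proof by contradiction. Suppose $y_V$ is neither the minimum nor the maximum of $\{y_U : U\in l\}$; then there are $W_1,W_2\in l$ with $y_{W_2}<y_V<y_{W_1}$. Since $V$ is the minimal element of $l$ and $W_1,W_2\neq V$, we have $V<W_1$ and $V<W_2$; let $W$ be whichever of $W_1,W_2$ is larger in the total order on $l$, so that $l':=\{X\in l: V\le X\le W\}$ is a line segment with endpoints $V$ and $W$ containing both $W_1$ and $W_2$. If $\MM V=\MM W$, then Definition \ref{def:line segment}(1) (together with the fact that the $\R$-coordinate of $\dAR$ is $\dR$ of the $\MM$-images) forces every element of $l'$ to have $\MM$-image equal to $\MM V$, contradicting $y_{W_1}>y_V$. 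Otherwise $\MM l'$ is a slope-$\pm1$ segment in $\R^2$ whose $y$-coordinates occupy a closed interval $J$ having $y_V$ as one endpoint. By Proposition \ref{prop:M coordinate ordering in lines} applied to $l'$ (or directly from $\MM l'$ being that segment), $y_{W_1}$ and $y_{W_2}$ both lie in $J$; but $y_{W_2}<y_V<y_{W_1}$ then places points of $J$ strictly on both sides of its own endpoint $y_V$, which is impossible. Hence $y_V=\min\{y_U:U\in l\}$ or $y_V=\max\{y_U:U\in l\}$.

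I do not anticipate a serious obstacle, but two points need care. First, one must not assume $\MM$ is injective on $l'$: a line segment in the AR-space can be nondegenerate even when its two endpoints have equal $\MM$-image, so the case $\MM V=\MM W$ must be dispatched via the metric identity of Definition \ref{def:line segment}(1) rather than via slopes. Second, Proposition \ref{prop:M coordinate ordering in lines} is stated for endpoints with simultaneously increasing coordinates; for a slope-$(-1,-1)$ line segment I would instead invoke the remark following Definition \ref{def:line segment} that $\MM l'$ is a line segment in $\R^2$ with endpoints $\MM V$ and $\MM W$, which immediately confines the intermediate $y$-coordinates to the interval $J$. No case split on the sign of the slope is otherwise needed, and $W$ must be taken to be the larger of $W_1,W_2$ precisely so that both offending representations sit inside a single line segment emanating from $V$.
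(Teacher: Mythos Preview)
Your argument is correct. The proof by contradiction works cleanly: once $W$ is taken to be the larger of $W_1,W_2$, the segment $l'=\{X\in l:V\le X\le W\}$ is a line segment with endpoints $V$ and $W$ by Definition~\ref{def:almost complete line}, and condition~(1) of Definition~\ref{def:line segment} applied to the $\R$-coordinate of $\dAR$ forces $\MM X$ to lie on the Euclidean segment from $\MM V$ to $\MM W$ for every $X\in l'$. Since the slope is $\pm(1,1)$, the $y$-coordinates of that Euclidean segment form a closed interval with $y_V$ at one end, and the presence of both $y_{W_1}>y_V$ and $y_{W_2}<y_V$ inside it is impossible. Note that the case $\MM V=\MM W$ you single out is actually vacuous here, since $W\in\{W_1,W_2\}$ already has $y_W\neq y_V$; still, including it does no harm.

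This is a genuinely different route from the paper. The paper instead proves a structural fact: any line segment contained in $l$ that passes through $V$ must have $V$ as one of its endpoints. It does so by invoking the position constraints of Proposition~\ref{prop:no back and forth} on slope-$(1,1)$ segments and then comparing two competing line segments via the distance axioms. Only after establishing $U=V$ does it appeal to Proposition~\ref{prop:M coordinate ordering in lines} to conclude. Your approach bypasses the position machinery entirely and works purely at the level of the $\R$-coordinate of $\dAR$; it is shorter and more elementary. The paper's approach, on the other hand, isolates the intermediate statement ``$V$ is always an endpoint,'' which is conceptually clarifying even if not reused explicitly later.
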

\begin{proof}
By Remark \ref{rem:almost complete slope remark} and symmetry assume the slope of $l$ is $(1,1)$.
Let $l'\subset l$ be a line segment that contains $V$; let its endpoints be $U, W\in l$.
Since $l$ is totally ordered, suppose by symmetry $U< W$.

The slope of $l'$ must be $(1,1)$ so if $U$ has position 1 or 2 so do $V$ and $W$ and similar statement is true if $U$ has position 3 or 4.
The same is true for the line segment $\{X\in l : V\leq X \leq W\}$, which contains $U$.
By the distance requirement in Definition \ref{def:line segment}, $l'=\{X\in l : V\leq X \leq W\}$ and so $U=V$.
Then the proposition follows by Proposition \ref{prop:M coordinate ordering in lines} and Definition \ref{def:almost complete line} .
\end{proof}
\begin{proposition}\label{prop:phantom endpoint existence}
Let $l$ be an almost complete line segment with minimal element $V$.
Then the phantom endpoint of $l$ exists and is unique if and only if there exists $V'\in l$ and  a line segment $l'$ such that
\begin{itemize}
\item $\MM \{X\in l: X\geq V'\} \subset \MM l'$ and
\item $l\cap l'=\emptyset$.
\end{itemize}
\end{proposition}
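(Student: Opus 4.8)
The plan is to translate both sides of the equivalence into a statement about the single point $p\in\R^2$ at which $\MM l$ ``runs out,'' and then to match the fibre of $\MM$ over $p$ against the existence of a second, disjoint parallel segment. First I would record the geometry of $\MM l$. By Remark \ref{rem:almost complete slope remark} the slope of $l$ is $\pm(1,1)$, so $\MM l$ lies on a line of slope $\pm 1$; by Propositions \ref{prop:M coordinate ordering in lines} and \ref{prop:almost complete coordinate order} the points $\MM X$ ($X\in l$) are arranged monotonically along that line with $\MM V$ at one extreme, so $\MM l$ is an interval on the line having $\MM V$ as one endpoint. It is not closed at the other end: if some $p\in\MM l$ were that far endpoint, then the fibre $\MM^{-1}(p)\cap l$ would be a nonempty finite set (Proposition \ref{prop:3 or fewer on a line}), hence totally ordered with a greatest element, and by the distance axiom of Definition \ref{def:line segment} together with the monotonicity just noted that element would be maximal in $l$, contradicting almost-completeness. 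So I may write $\MM l=[\MM V,p)$ with $p\notin\MM l$. By Definition \ref{def:almost complete line}, an indecomposable $E$ is a phantom endpoint of $l$ exactly when $\MM E=p$; hence a phantom endpoint exists if and only if $p$ lies in the image of $\MM$, and it is unique if and only if, in addition, $\MM^{-1}(p)$ is a single isomorphism class.

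For the direction ($\Leftarrow$), suppose $V'\in l$ and a line segment $l'$ satisfy $\MM\{X\in l:X\ge V'\}\subseteq\MM l'$ and $l\cap l'=\emptyset$. The tail $\{X\in l:X\ge V'\}$ is again an almost complete line segment with minimal element $V'$, so by the previous paragraph its image is a half-open segment with the same open far endpoint $p$; since $\MM l'$ is the image of a genuine line segment it is a closed segment of $\R^2$, whence $[\MM V',p]\subseteq\MM l'$ and in particular $p$ is in the image of $\MM$, so a phantom endpoint exists. For uniqueness I would assume $|\MM^{-1}(p)|\ge 2$ and aim for a contradiction: by Proposition \ref{prop:ARquotient} the indecomposables over $p$ form either the four terms of an Auslander--Reiten sequence or two projectives (injectives) at a common vertex, and in each such configuration one must check, reading off Table \ref{tab:AR sequence table}, Construction \ref{con:interior of M} and Definition \ref{def:position}, that any slope-$\pm(1,1)$ line segment whose $\R^2$-image is $[\MM V',p]$ is forced onto the same track as $l$ near $p$ — because it must have the same constant lower, resp. upper, endpoint as $l$ (Proposition \ref{prop:diagonal endpoints}) — and therefore meets $l$, contradicting $l\cap l'=\emptyset$. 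Thus $|\MM^{-1}(p)|=1$ and the phantom endpoint is unique.

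For ($\Rightarrow$), assume the phantom endpoint $E$ exists and is unique, so $p$ is in the image of $\MM$ and $\MM^{-1}(p)=\{E\}$. I would then exhibit $l'$ explicitly. Using Proposition \ref{prop:no back and forth}, choose $V'\in l$ close enough to $p$ that the tail $\{X\in l:X\ge V'\}$ lies entirely in one half ($\{1,2\}$ or $\{3,4\}$) of the diamonds, and take $l'$ to be a maximal slope-$\pm(1,1)$ line segment with $\R^2$-image $[\MM V',p]$ whose interior indecomposables occupy the complementary position and whose indecomposable over $p$ is $E$; that such a segment of indecomposables exists uses Lemma \ref{lem:hom support} and Proposition \ref{prop:homline} to produce the members of $l'$, while uniqueness of $E$ is exactly what makes the single class over $p$ compatible with this complementary track (when $|\MM^{-1}(p)|\ge 2$ the track would instead collide with $l$, which is why that case is excluded). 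Since every element of $l'$ other than $E$ has position disjoint from the positions occurring along the tail of $l$, and $E\notin l$ because $\MM E=p\notin\MM l$, we get $l\cap l'=\emptyset$, while $\MM\{X\in l:X\ge V'\}=[\MM V',p)\subseteq[\MM V',p]=\MM l'$, as required.

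The main obstacle is the uniqueness half of ($\Leftarrow$) together with the matching construction of the disjoint $l'$ in ($\Rightarrow$): both rest on a somewhat delicate case analysis — organized according to whether the relevant support endpoints are sinks, sources, or neither — establishing that the ``collapsing'' behaviour of $\MM$ over the limit point $p$, as catalogued by Proposition \ref{prop:ARquotient} and Table \ref{tab:AR sequence table}, permits a second slope-$\pm(1,1)$ track running into $p$ and disjoint from $l$ precisely when the fibre over $p$ is a single class. Everything else amounts to bookkeeping with the distance function of Definition \ref{def:d function}.
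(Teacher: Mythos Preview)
Your outline misses the key geometric fact that drives the paper's argument: the limit point $p$ of $\MM l$ necessarily lies on the boundary $y=\pm\frac{\pi}{2}$. The paper establishes this first, by an explicit halving construction: starting from $V$ one repeatedly picks $U_i\in l$ with $y$-coordinate halfway between $y_{U_{i-1}}$ and $\frac{\pi}{2}$, using Proposition~\ref{prop:between projectives and injectives} to guarantee that such $U_i$ exist (the alternative, hitting the injective line, would produce a maximal element of $l$) and the maximality clause of Definition~\ref{def:almost complete line} to force them into $l$. Hence $\{\MM U_i\}$ converges to a point with $y$-coordinate $\frac{\pi}{2}$. Once this is known, uniqueness is free: by Definition~\ref{def:the rest of M} the fibre $\MM^{-1}(x,\pm\frac{\pi}{2})$ has at most one element (a simple or an $M_{[s_n,s_{n+1}]}$), so ``exists and is unique'' collapses to ``exists''. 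Both directions of the equivalence then become short.

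Your proposal instead treats $p$ as a potentially interior point and attempts to derive uniqueness from the disjointness $l\cap l'=\emptyset$ via a case analysis on the fibre $\MM^{-1}(p)$. That case analysis does not work as you describe it. If $p$ were interior with four preimages (the terms of an Auslander--Reiten sequence), then there are \emph{two} slope-$(1,1)$ tracks through $p$: one on positions $\{1,2\}$ and one on positions $\{3,4\}$ (Proposition~\ref{prop:no back and forth}). A segment $l'$ on the track complementary to $l$ satisfies $\MM l'\supseteq[\MM V',p]$ and $l\cap l'=\emptyset$, yet $\lvert\MM^{-1}(p)\rvert=4$, so your claimed contradiction (``$l'$ is forced onto the same track as $l$'') fails. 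The reason this vacuously causes no trouble is precisely that $p$ is never interior; but you have not proved that, and your argument appeals to no substitute for it. The same omission undermines your $(\Rightarrow)$ construction of $l'$: the compatibility of the single phantom endpoint $E$ with the complementary track relies on $E$ being a simple or an $M_{[s_n,s_{n+1}]}$ sitting at the boundary with a definite position (Definition~\ref{def:position}), which again presupposes $p\in\R\times\{\pm\frac{\pi}{2}\}$.
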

\begin{proof}
Let $l$ be an almost complete line segment and $V$ its minimal element.
By symmetry suppose the slope of $l$ is $(1,1)$.
For all $U\in l$ let $(x_U,y_U)=\MM U$.
By further symmetry and Proposition \ref{prop:almost complete coordinate order}, suppose that if $\MM U\neq \MM V$ for $U\in V$ then $y_U>y_V$.

Let $U_0=V$ and let $U_1\in l$ such that $\MM U_1 \neq \MM U_0$.
We now inductively choose $U_i$, for $i>1$, in the following way.
Let $z_i=\frac{1}{2}(\frac{\pi}{2} + y_{U_{i-1}}) - y_{U_{i-1}}$.
By Proposition \ref{prop:between projectives and injectives} there is either an indecomposable $W$ such that $\MM W = (x_{U_{i-1}}+z_i,y_{U_{i-1}}+z_i)$ or there is an injective $I$ such that $y_I < z$.
But then $l$ would have a maximal element and so $W$ exists instead.
By maximality of $l$ again, a $U_i$ that exists the slope of a line segment containing both $V$ and $U_i$ must be $(1,1)$.
Then there is an infinite sequence of $U_i\in l$ such that $\{\MM U_i\}$ converges to some $(\bar{x},\frac{\pi}{2})$.

If there is a line segment $l'$ as in the proposition then there is a representation $M$ that is simple or has support $[s_n,s_{n+1}]$ such that $\MM M=(\bar{x},\frac{\pi}{2})$.
By definition $M$ is the phantom endpoint.
Since no other indecomposable is sent to $(\bar{x},\frac{\pi}{2})$ by Definition \ref{def:the rest of M}, $M$ is unique (up to isomorphism).

If the phantom endpoint $M$ exists it must be unique as the inverse image $\MM^{-1}(x,\frac{\pi}{2})$ contains 1 or 0 elements.
For any $U > V$, $U$ belongs to a unique Auslander-Reiten sequence by Corollary \ref{cor:unique AR sequence}.
If $V$ has position 3 or 4 then the slope from $V$ to the phantom endpoint would be $(1,1)$, a contradiction.
Thus, $V$ must have position 1 or 2 and so must $U$.
Then choose $V'$ in the same Auslander-Reiten sequence as $U$ such that the position of $V'$ is 3.
A line segment with endpoints $V'$ and $M$ satisfy the requirements in the proposition.
\end{proof}

\begin{definition}\label{def:almost complete rectangle}
An \underline{almost complete rectangle} is a collection of three line segments $l_1,l_2,l_3$ and an almost complete line segment $l_4$ such that the following hold.
\begin{itemize}
\item (i) $l_1$ and $l_3$ are parallel or (ii) one is degenerate and the other has length $(0,1)$.
\item $l_2$ and $l_4$ have the same slope and no intersection.
\item $l_2$ is perpendicular to whichever $l_1$ and $l_3$ are not degenerate.
\item $\MM (l_1\cup l_2\cup l_3\cup l_4)$ is a (possibly degenerate) rectangle in $\R^2$.
\end{itemize}
\end{definition}
\begin{remark}\label{rem:almost complete rectangle}
The almost complete line segment $l_4$ has a phantom endpoint that is an endpoint of $l_1$ or $l_3$.
\end{remark}

\begin{theorem}\label{thm:extensions are rectangles}
Let $V=M_{|a,b|}$ and $W=M_{|c,d|}$ be indecomposables in $\repAR$ such that $V\not\cong W$.
Then there is a nontrivial extension $V\hookrightarrow E\twoheadrightarrow W$ if and only if there exists a rectangle or almost complete rectangle whose corners are the indecomposables in the sequence with $V$ as the left-most corner and $W$ as the right-most corner.

\begin{itemize}
\item If the rectangle is complete $E$ is a direct sum of two indecomposables.
\item If the rectangle is almost complete $E$ is indecomposable.
\end{itemize}

Furthermore, there is a bijection
\begin{displaymath}\begin{tikzpicture}
\draw (0,0) node {$\{$rectangles and almost complete rectangles with slopes of sides $\pm(1,1)$ in AR-space of $\repAR\}$};
\draw [<->, thick] (0,-.2) -- (0,-1.2);
\draw (0,-.7) node[anchor=west] {$\cong$};
\draw (0,-1.4) node {$\{$nontrivial extensions of indecomposables by indecomposables up to scaling and isomorphisms$\}$};
\end{tikzpicture}\end{displaymath}
\end{theorem}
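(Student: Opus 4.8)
The plan is to split according to whether the middle term $E$ is decomposable and then reduce to Lemma~\ref{lem:all double extensions} and Lemma~\ref{lem:all single extensions}, translating between extensions and rectangles via Table~\ref{tab:slope table} and Proposition~\ref{prop:diagonal endpoints}. Two generalities come first. By \cite[Theorem 3.0.1]{IgusaRockTodorov2019} the spaces $\Hom$ and $\Ext^1$ between indecomposables of $\repAR$ are at most one-dimensional over $k$, so a nontrivial extension $V\hookrightarrow E\twoheadrightarrow W$ is unique up to scaling and $E$ is determined by $V,W$; and since $\dim E(x)=\dim V(x)+\dim W(x)\le 2$, the module $E$ is a direct sum of at most two interval indecomposables. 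Moreover, if the extension is nontrivial then either the supports of $V$ and $W$ overlap, or they are disjoint and adjacent (by the dimension count, disjoint and non-adjacent forces $E\cong V\oplus W$); combined with Lemma~\ref{lem:hom support} and Proposition~\ref{prop:one way hom} this pins down which of $V,W$ is mapped to the left by $\MM$, so that $V$ is indeed forced to be the left-most corner.

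Suppose $E\cong E_1\oplus E_2$ with both summands nonzero; then the supports overlap. If $\MM V\ne\MM W$, then after confirming $V$ is the one mapped to the left we get $\Hom(V,W)\cong k$, and Lemma~\ref{lem:all double extensions} identifies $\{V,E_1,E_2,W\}$ with $\{M_{|a,b|},M_{|a,d|},M_{|c,b|},M_{|c,d|}\}$ whose $\MM$-image is a non-degenerate rectangle; Table~\ref{tab:slope table} shows its four sides have slopes $\pm(1,1)$, giving the complete rectangle claimed. The remaining case $\MM V=\MM W$ is not covered by Lemma~\ref{lem:all double extensions} (its rectangle is degenerate): here Proposition~\ref{prop:ARquotient} and Corollary~\ref{cor:unique AR sequence} put $V,W$ at the two extreme positions of a unique Auslander--Reiten sequence $V_1\hookrightarrow V_2\oplus V_3\twoheadrightarrow V_4$ of Table~\ref{tab:AR sequence table}, so $E\cong V_2\oplus V_3$ by Lemma~\ref{lem:AR sequence existence}; the four terms have one common $\MM$-image, and computing the slope between their four positions as in Definition~\ref{def:slope} exhibits them as a degenerate complete rectangle with every side of slope $\pm(1,1)$.

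Now suppose $E$ is indecomposable. Then $\Hom(V,W)=0$ (overlapping supports would make $\dim E(x)=2$ somewhere, hence $E$ decomposable), so Lemma~\ref{lem:all single extensions} gives that $|a,b|\cup|c,d|$ is an interval with $|a,b|\cap|c,d|=\emptyset$; with $V$ on the left this means $c=b$ and $E\cong M_{|a,d|}$. I would then check that $V$, $E=M_{|a,d|}$, $W$ together with a fourth ``phantom'' corner assemble into an almost complete rectangle in the sense of Definition~\ref{def:almost complete rectangle}: take $l_1$ the segment from $V$ to $E$, $l_2$ from $E$ to $W$, $l_3$ from $W$ to the phantom endpoint, and $l_4$ the almost complete line segment issuing from $V$ with that same phantom endpoint, which is the simple or $[s_n,s_{n+1}]$-representation $M$ with $\MM M$ on $y=\pm\tfrac{\pi}{2}$ occupying the vacant ``$M_{|b,b|}$ position'' (its existence and uniqueness is Proposition~\ref{prop:phantom endpoint existence}); the slopes $\pm(1,1)$ of the $l_i$ come from Table~\ref{tab:slope table} and Remark~\ref{rem:almost complete slope remark}, and the degenerate alternative in Definition~\ref{def:almost complete rectangle} (one of $l_1,l_3$ degenerate, the other of length $(0,1)$) is precisely the subcase in which $V$ or $W$ is itself a simple or $[s_n,s_{n+1}]$-representation. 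For the converse implication: from a complete rectangle of slopes $\pm(1,1)$, Proposition~\ref{prop:diagonal endpoints} identifies its corners as $M_{|a,b|},M_{|a,d|},M_{|c,d|},M_{|c,b|}$ with $V=M_{|a,b|}$ left-most and $W=M_{|c,d|}$ right-most; non-degeneracy makes $\supp V\cap\supp W$ a nonempty interval, so the composite inclusion-then-projection $V\hookrightarrow M_{|a,d|}\twoheadrightarrow W$ is nonzero, $\Hom(V,W)\cong k$, and Lemma~\ref{lem:all double extensions} returns the extension (or, in the degenerate case, Proposition~\ref{prop:ARquotient} shows the rectangle is an Auslander--Reiten sequence and Lemma~\ref{lem:AR sequence existence} returns the extension); from an almost complete rectangle one reads off $c=b$, $E\cong M_{|a,d|}$ and applies Lemma~\ref{lem:all single extensions}.

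For the bijection: each of Lemma~\ref{lem:all double extensions}, Lemma~\ref{lem:all single extensions} and Corollary~\ref{cor:unique AR sequence} supplies its object uniquely up to isomorphism and scaling, so the assignments rectangle$\,\mapsto\,$extension and extension$\,\mapsto\,$rectangle above are well defined on the indicated classes; they are mutually inverse because the dichotomy ``$E$ decomposable / $E$ indecomposable'' matches ``complete / almost complete,'' because a rectangle with sides of slope $\pm(1,1)$ is determined by its corners (Propositions~\ref{prop:no back and forth} and \ref{prop:diagonal endpoints} make $\pm(1,1)$-line segments unique given their endpoints), and because those corners are recovered from the extension as $V$, $W$ and the summand(s) of $E$ (plus the phantom endpoint, itself determined). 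I expect the genuine work to be in the $E$ indecomposable case: locating the phantom endpoint and verifying Definition~\ref{def:almost complete rectangle} on the nose — that $l_2$ and $l_4$ are parallel and disjoint while $l_2$ is perpendicular to whichever of $l_1,l_3$ is non-degenerate, and separating the subcases where $V$ or $W$ lies on $y=\pm\tfrac{\pi}{2}$ — together with making sure the degenerate complete rectangles (the Auslander--Reiten sequences, which Lemma~\ref{lem:all double extensions} explicitly excludes) are folded in correctly through Proposition~\ref{prop:ARquotient} and Lemma~\ref{lem:AR sequence existence}.
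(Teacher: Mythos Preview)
Your approach is essentially the paper's: reduce to Lemmas~\ref{lem:all double extensions} and~\ref{lem:all single extensions}, construct the almost complete rectangle via the phantom endpoint, and read off the bijection from uniqueness. Two small corrections are in order.

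First, in the $E$ indecomposable case with $b=c$ and $b\in|a,b|$, you have the almost complete line segment issuing from $V$; it must issue from $W$. The indecomposable $V=M_{|a,b]}$ shares its upper endpoint $b$ (included) with the phantom $D=M_{\{b\}}$ (or $M_{[s_n,s_{n+1}]}$), so by Proposition~\ref{prop:diagonal endpoints converse} there is a genuine slope-$-(1,1)$ line segment from $V$ to $D$; this is your $l_3$ (the paper's $l_3$ as well). By contrast $W=M_{(b,d|}$ has $b$ excluded, so no slope-$(1,1)$ line segment from $W$ reaches $D$: the line through $W$ with fixed lower endpoint consists of the $M_{(b,d'|}$ and never contains $D$. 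That is the almost complete side $l_4$, with minimal element $W$ and phantom endpoint $D$, exactly as the paper builds it before invoking Proposition~\ref{prop:phantom endpoint existence}.

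Second, you need not carve out $\MM V=\MM W$ as a separate case. An Auslander--Reiten sequence $V_1\hookrightarrow V_2\oplus V_3\twoheadrightarrow V_4$ has $\Hom(V_1,V_4)\cong k$, and its four terms form a \emph{non}-degenerate rectangle in the sense of Definition~\ref{def:rectangle}: the sides have length $(0,1)$, not $(0,0)$, and the slopes computed from positions via Definition~\ref{def:slope} are $\pm(1,1)$. So Lemma~\ref{lem:all double extensions} already covers this, and the paper simply splits on whether $\Hom(V,W)$ vanishes --- which is equivalent to your split on whether $E$ is indecomposable.
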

\begin{proof}
If $\Hom(V,W)\neq 0$ this follows from Lemma \ref{lem:all double extensions}.
If $\Hom(V,W)=0$ we know by Lemma \ref{lem:all single extensions} that a nontrivial extension $E$ is indecomposable and we already have two line segments.

By symmetry suppose $|a,d|$ is an interval.
Then $b=c$ and $b\in|a,b|$ or $c\in|c,d|$.
In either case, we have a third line segment.
By more symmetry suppose $b\in|a,b|$, including $|a,b|=\{b\}$.
By Table \ref{tab:slope table} the slope of a line segment $l_2$ with endpoints $M_{|a,b|}$ and $M_{|a,d|}$ is $(1,1)$.
Similarly, the slope of a line segment $l_1$ with endpoints $M_{|a,d|}$ and $M_{|c,d|}$ is $-(1,1)$.

Since $b$ is the upper bound of $|a,b|$ and the lower bound of $|c,d|$ we see that $\MM V$ and $\MM W$ lie on the graph of $\lambda_b^-$.
If $b$ is not a sink or source then the slope of a line segment $l_3$ with endpoints $V$ and $D:=M_{\{b\}}$ is $-(1,1)$ or the line segment $l_3$ is degenerate.
If $b=s_{n+1}$ is a sink or source then by Definition \ref{def:the rest of M} it is a sink and the slope of a line segment $l_3$ with endpoints $V$ and $D:=M_{[s_n,s_{n+1}]}$ is $-(1,1)$ or the line segment $l_3$ is degenerate.
In either case, observing Table \ref{tab:AR sequence table} we see then that there is an irreducible morphism of indecomposables $U\to W$ whose kernel is $D$.
Then taking the almost complete line segment $l_4$ with minimal element $W$, slope $(1,1)$, where all $U\in l_4$ have lower bound $b$ that is not included.
By Proposition \ref{prop:phantom endpoint existence} $l_4$ is an almost complete line segment with phantom endpoint $D$.
Then $l_1$, $l_2$, $l_3$, and $l_4$ form an almost complete rectangle.

An almost complete rectangle has one phantom vertex that is simple or has support $[s_n,s_{n+1}]$.
This immediately gives an exact sequence of $M_{|a,b|}\to M_{|a,d|}\to M_{|c,d|}$ or $M_{|a,b|}\to M_{|c,b|}\to M_{|c,d|}$.

To see the bijection, note that changing the indecomposables changes the extensions and thus the rectangle or almost complete rectangle.
In the other direction, changing the rectangle changes the endpoints and thus changes the indecomposables and thus the extension.
\end{proof}

\subsection{Kernels and Cokernels}
We conclude this section with a subsection on the geometry of kernels and cokernels in the AR-space.

\begin{notation}\label{note:shifted Gamma}
Let $V$ be an indecomposable in $\repAR$ and let $(x,y)=\MM V$.
Then we denote by $\MM[n]U$ the point $(x+n\pi, (-1)^n y)$ in $\R^2$.
\end{notation}

\begin{proposition}\label{prop:geometrickernelsandcokernels}
Let $V=M_{|a,b|}$ and $W=M_{|c,d|}$ be indecomposables in $\repAR$ such that $\Hom(V,W)\cong k$ and let $f:V\to W$ be some nonzero morphism.
Then $\MM W$, $\MM\coker f$, $\MM[1]\ker f$, $\MM[1] V$, and possibly a point on one of $y=\pm\frac{\pi}{2}$ are the corners of a rectangle in $\R^2$ whose sides have slope $\pm 1$.
The slopes of the lne segments that exist in the AR-space are $\pm(1,1)$.
\end{proposition}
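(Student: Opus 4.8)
The plan is to identify $\ker f$, $\im f$, and $\coker f$ explicitly as interval indecomposables and then to see all of the points in the statement as intersection points of four of the $\lambda$-functions of Definition \ref{def:lambda functions}, the ``rectangle'' being simply the quadrilateral they cut out. First I would invoke the morphism theory of $\repAR$ together with Lemma \ref{lem:hom support}: since $\Hom(V,W)\cong k$ with $f\neq 0$, after swapping $V$ and $W$ so that the $x$-coordinate of $\MM V$ is at most that of $\MM W$, the supports $|a,b|$ and $|c,d|$ overlap in a single interval, $f$ is the obvious map up to scalar, and $\im f\cong M_{|c,b|}$, $\ker f\cong M_{|a,c|}$, $\coker f\cong M_{|b,d|}$, with the openness of the endpoints $b$ and $c$ forced by the orientation near those vertices and by exactness of the short exact sequences $0\to\ker f\to V\to\im f\to 0$ and $0\to\im f\to W\to\coker f\to 0$. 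A direct check against Construction \ref{con:interior of M} and Definition \ref{def:the rest of M} then shows that $V$ and $\ker f$ lie on a common $\lambda_a^*$, that $\ker f$ and $W$ lie on a common $\lambda_c^*$, that $V$ and $\coker f$ lie on a common $\lambda_b^*$, and that $W$ and $\coker f$ lie on a common $\lambda_d^*$ (the $\pm$-decorations at $b$ and $c$ agreeing precisely because of the partition of supports in the two sequences). The degenerate situations are recorded here: if $f$ is mono (resp.\ epi) then $\ker f=0$ (resp.\ $\coker f=0$), and if $\ker f$ or $\coker f$ is simple or of the form $M_{[s_n,s_{n+1}]}$ then its $\MM$-image already lies on $y=\pm\tfrac{\pi}{2}$; in all of these cases the ``missing'' corner is supplied by a point of $\R\times\{\pm\tfrac{\pi}{2}\}$, which is exactly the phantom endpoint produced by Theorem \ref{thm:extensions are rectangles} applied to the two sequences above and explains the clause ``possibly a point on one of $y=\pm\tfrac{\pi}{2}$''.

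Next I would use Proposition \ref{prop:lambda intersections}: for $U=M_{|x,y|}$ the point $\MM[1]U$ of Notation \ref{note:shifted Gamma} is exactly the intersection point of $\lambda_x^*$ and $\lambda_y^*$ that immediately follows $\MM U$ (advancing the index $n$ by one in Proposition \ref{prop:lambda intersections} adds $\pi$ to the first coordinate and negates the second, which is the operation $\MM[1]$). Hence $\MM[1]V\in\lambda_a^*\cap\lambda_b^*$, $\MM[1]\ker f\in\lambda_a^*\cap\lambda_c^*$, $\MM W\in\lambda_c^*\cap\lambda_d^*$, and $\MM\coker f\in\lambda_b^*\cap\lambda_d^*$, so these four points close up into a quadrilateral whose sides run, respectively, along $\lambda_a^*$ (from $\MM[1]V$ to $\MM[1]\ker f$), along $\lambda_c^*$ (from $\MM[1]\ker f$ to $\MM W$), along $\lambda_d^*$ (from $\MM W$ to $\MM\coker f$), and along $\lambda_b^*$ (from $\MM\coker f$ to $\MM[1]V$). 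Since every $\lambda$-function is piecewise linear with slopes alternating between $+1$ and $-1$ and turns only on $y=\pm\tfrac{\pi}{2}$, each side is either a genuine segment of slope $\pm 1$ or bends once at a peak or valley lying on $y=\pm\tfrac{\pi}{2}$.

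Finally I would verify the quadrilateral is a rectangle. Using Lemma \ref{lem:good intersection point} to confine the $x$-coordinate of each relevant $\MM M_{|x,y|}$ to a single branch relative to the $x$-coordinates of the corresponding projectives, together with the orderings in Proposition \ref{prop:kappa order} and Propositions \ref{prop:p hat order} and \ref{prop:p hat sides}, one shows that at most one of the four sides meets $y=\pm\tfrac{\pi}{2}$, and then only at one peak or valley, which is the unique extra corner allowed by the statement. A case analysis on whether $a,b,c,d$ are sinks, sources, or neither --- using Propositions \ref{prop:lambda properties}, \ref{prop:sink and source share}, and \ref{prop:no back and forth}, and Table \ref{tab:slope table} on the endpoint-sharing pattern from the first step --- shows that the sides along $\lambda_a^*$ and $\lambda_d^*$ share one slope in $\{\pm 1\}$ while those along $\lambda_b^*$ and $\lambda_c^*$ share the other, so after unbending the at most one reflected side the figure is a rectangle in $\R^2$ with sides of slope $\pm 1$. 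The second assertion of the proposition, that the line segments in the AR-space among these corners have slope $\pm(1,1)$, then follows from Table \ref{tab:slope table} and Proposition \ref{prop:diagonal endpoints} applied to the pairs of genuine indecomposables $\ker f, V$; $V, \coker f$; $\ker f, W$; $W, \coker f$ sharing endpoints. The main obstacle is the bookkeeping in this last step: correctly tracking which linear piece of each $\lambda$-function every corner occupies, hence locating the single possible reflection point, uniformly across the full list of endpoint types and orientations; once the branches are pinned down, the rectangle assertion is immediate from the slope-$\pm 1$ structure of the $\lambda$-functions.
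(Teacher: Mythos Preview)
Your overall strategy --- identify the $\lambda$-functions carrying $V$, $W$, $\ker f$, $\coker f$ and read off the rectangle from their intersections --- matches the paper's. But your case analysis of $\ker f$ and $\coker f$ is incomplete in a way that breaks the argument.

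You assert $\ker f\cong M_{|a,c|}$ and $\coker f\cong M_{|b,d|}$ as single interval modules, with only the degeneracies $\ker f=0$ or $\coker f=0$ flagged. This covers only the situation where $\supp V$ and $\supp W$ overlap with neither contained in the other. The paper's proof explicitly lists four subcases (injection, surjection, and two partial overlaps), and in the first two the picture is different: when $f$ is mono with the slope from $V$ to $W$ strictly between $-(1,1)$ and $(1,1)$, Propositions \ref{prop:diagonal endpoints} and \ref{prop:diagonal endpoints converse} force $V$ and $W$ to share no endpoint, so $\supp V$ sits strictly inside $\supp W$ and $\coker f$ has \emph{two} indecomposable summands. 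Those two summands supply both middle corners of the rectangle; there is no missing corner and no boundary point. The surjection case is dual. So your claim that in the mono/epi cases ``the `missing' corner is supplied by a point of $\R\times\{\pm\tfrac{\pi}{2}\}$'' is wrong.

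The clause ``possibly a point on one of $y=\pm\tfrac{\pi}{2}$'' does not arise from $f$ being mono or epi; it arises when the slope from $V$ to $W$ equals $\pm(1,1)$, i.e.\ when $V$ and $W$ share an endpoint. In that case exactly one of $\ker f$, $\coker f$ vanishes while the other is a single indecomposable, and the fourth corner is the phantom endpoint of an almost complete rectangle produced by Theorem \ref{thm:extensions are rectangles}. You have conflated this shared-endpoint case with the mono/epi dichotomy. Separating the argument first by whether the slope is $\pm(1,1)$ (as the paper does) and then, in the non-boundary case, observing that $\ker f\oplus\coker f$ always has exactly two indecomposable summands, is what makes the rectangle close correctly.

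A minor point: ``swapping $V$ and $W$'' is not available here, since $\Hom(W,V)=0$ by Proposition \ref{prop:one way hom}; the ordering of $x$-coordinates is already forced by Lemma \ref{lem:hom support}.
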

\begin{proof}
By Proposition \ref{prop:homline slope} we know the slope from $V$ to $W$ is at least $-(1,1)$ and at most $(1,1)$.
First suppose the slope is not $\pm(1,1)$.
By Propositions \ref{prop:diagonal endpoints} and \ref{prop:diagonal endpoints converse} $V$ and $W$ do not share an endpoint.
Thus, we have four cases as displayed below:
\begin{displaymath}\begin{tikzpicture}
\draw(0,1) -- (1.5,1);
\draw(2.5,1) -- (3,1);
\draw(4,1) -- (5,1);
\draw(6.5,1)--(7.5,1);

\draw (0.5,2) -- (1,2);
\draw (2,2) -- (3.5,2);
\draw (4.5,2) -- (5.5,2);
\draw (6,2) -- (7,2);

\draw (0,0) -- (0.5,0);
\draw (1,0) -- (1.5,0);
\draw (2,3) -- (2.5,3);
\draw (3,3) -- (3.5,3);
\draw (4,0) -- (4.5,0);
\draw (5,3) -- (5.5,3);
\draw (6,3) -- (6.5,3);
\draw (7,0) -- (7.5,0);

\draw[>->](0.75,1.8) -- (0.75,1.2);
\draw[->>](0.75,0.8) -- (0.75,0.2);
\draw[>->](2.75,2.8) -- (2.75,2.2);
\draw[->>](2.75,1.8) -- (2.75,1.2);
\draw[>->](4.75,2.8) -- (4.75,2.2);
\draw[->](4.75,1.8) -- (4.75,1.2);
\draw[->>](4.75,0.8) -- (4.75,0.2);
\draw[>->](6.75,2.8) -- (6.75,2.2);
\draw[->](6.75,1.8) -- (6.75,1.2);
\draw[->>](6.75,0.8) -- (6.75,0.2);

\draw (0,0) node [anchor=east] {$\coker$};
\draw (0,1) node [anchor=east] {$W$};
\draw (0,2) node [anchor=east] {$V$};
\draw (0,3) node [anchor=east] {$\ker$};
\draw[dashed] (1.75,0) -- (1.75,3);
\draw[dashed] (3.75,0) -- (3.75,3);
\draw[dashed] (5.75,0) -- (5.75,3);
\draw (0.75,-0.5) node {injection};
\draw (2.75,-0.5) node {surjection};
\draw (4.75,-0.5) node {A};
\draw (6.75,-0.5) node {B.};
\end{tikzpicture} \end{displaymath}
We know $\MM V$ is the intersection of $\lambda_a^*$ and $\lambda_b^*$ from Definition \ref{def:lambda functions} and $\MM[1]V$ is the next intersection point of $\lambda_a^*$ and $\lambda_b^*$.
$\MM W$ is the intersection of $\lambda_c^*$ and $\lambda_d^*$.
Again since the slope is not $\pm(1,1)$ the kernel and cokernel have two path components in their combined support and are thus given by two indecomposables.
Their image under $\MM$ is given by the intersection of $\lambda_a^*$ and $\lambda_c^*$ and by the intersection of $\lambda_b*$ and $\lambda_d^*$.
The image of the cokernel (if it exists) will be between the image of $\MM W$ and $\MM[1] V$.
The image of the kernel under $\MM[1]$ will also be between $\MM W$ and $\MM[1] V$.
However these are just the intersections or next intersections of the same $\lambda$ functions as seen below:
\begin{displaymath}\begin{tikzpicture}
\draw[dashed] (-4,-2) -- (4,-2);
\draw[dashed] (-4,2) -- (4,2);
\draw (-4,-2) -- (0,2) -- (4,-2);
\draw (-4, 0) -- (-2,-2) -- (2,2) -- (4,0);
\draw (-4,0) -- (-2,2) -- (2,-2) -- (4,0);
\draw (-4,1) -- (-3,2) -- (1,-2) -- (4,1);
\filldraw[fill=black]  (-2.5,1.5) circle [radius=0.6mm];
\draw (-2.5,1.5) node [anchor=west] {$\MM\ker$};
\filldraw[fill=black]  (-3,-1) circle [radius=0.6mm];
\draw (-3,-1) node [anchor=west] {$\MM\ker$};
\filldraw[fill=black] (-1.5,0.5) circle [radius=0.6mm];
\draw (-1.5,0.5) node [anchor=east] {$\MM V$};
\filldraw[fill=black] (2.5,-0.5) circle [radius=0.6mm];
\draw (2.5,-0.5) node [anchor=east] {$\MM[1] V$};
\filldraw[fill=black]  (0,0) circle [radius=0.6mm];
\draw (0,0) node [anchor=west] {$\MM W$};
\filldraw[fill=black]  (1.5,-1.5) circle [radius=0.6mm];
\draw (1.5,-1.5) node [anchor=east] {$\MM \coker$};
\draw (1.5,-1.5) node [anchor=west] {$\MM[1]\ker$};
\filldraw[fill=black]  (1,1) circle [radius=0.6mm];
\draw (1,1) node [anchor=east] {$\MM\coker$};
\draw (1,1) node [anchor=west] {$\MM[1]\ker$};
\draw (-4,1) node[anchor=east] {$\lambda_b$};
\draw (-4,-2) node[anchor=east] {$\lambda_a$};
\draw (-4,0) node[anchor= south east] {$\lambda_d$};
\draw (-4,0) node[anchor=north east] {$\lambda_c$};
\end{tikzpicture}\end{displaymath}
%

Note that kernels and cokernels share endpoints $V$ and $W$.
Therefore, the slopes in the AR-space must be $\pm(1,1)$.

In the case where the slope of a line segment with endpoints $V$ and $W$ is $\pm (1,1)$, $W$ is then the extension of the cokernel by $V$ and we have an almost complete rectangle by Theorem \ref{thm:extensions are rectangles}.
The image of a rectangle or almost complete rectangle in AR-space is a rectangle in $\R^2$ by definition.
\end{proof}

\section{AR-Space of the Bounded Derived Category}\label{sec:derived}
In this short section we describe the Auslander-Reiten space of the bounded derived category of $\repAR$, denoted $\DbAR$.
Those familiar with representations of quivers and their derived categories can skip Section \ref{sec:derived properties}.
In Section \ref{sec:derived AR space} we define the AR-space and prove it has the desired properties one would expect.

\subsection{Essential Properties}\label{sec:derived properties}
We begin by defining $\DbAR$ to be the standard bounded derived category of $\repAR$.
\begin{definition}\label{def:derivedcategory}
Taking bounded chains of representations in $\rep(A_\R)$ and adding the formal inverse of maps that induce isomorphisms on homology gives us the homotopy category $\mathcal{K}(A_\R)$.
Then, modding out by those inverses gives us $\DbAR$ in the usual way.
\end{definition}

\begin{proposition}\label{prop:derived is krull-schmidt}
The category $\DbAR$ is Krull-Schmidt.
The indecomposable objects are shifts of indecomposables in $\repAR$.
\end{proposition}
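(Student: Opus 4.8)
The plan is to prove this by invoking the standard structure theory for bounded derived categories of hereditary abelian categories. First I would recall that $\repAR$ is abelian, hereditary, and Krull–Schmidt (this is Theorem 3.0.1 of \cite{IgusaRockTodorov2019}, cited in the excerpt). For any hereditary abelian category $\mathcal{A}$, every object of $\mathcal{D}^b(\mathcal{A})$ is isomorphic to the direct sum of shifts of its cohomology objects; in symbols, a complex $X^\bullet$ is isomorphic to $\bigoplus_{n\in\Z} H^n(X^\bullet)[-n]$. This is a classical fact and I would state it as a lemma with a one-line indication of proof: heredity forces all the connecting $\Ext^2$ obstructions to vanish, so the complex is formal. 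Since $\repAR$ is Krull–Schmidt, each cohomology object $H^n(X^\bullet)$ is a finite direct sum of indecomposables of $\repAR$, and only finitely many $H^n$ are nonzero because we work with bounded complexes.

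Next I would assemble these observations. Given any object $X^\bullet$ of $\DbAR$, write each $H^n(X^\bullet)\cong\bigoplus_i M_i^{(n)}$ with $M_i^{(n)}$ indecomposable in $\repAR$; then $X^\bullet\cong\bigoplus_{n,i} M_i^{(n)}[-n]$, exhibiting $X^\bullet$ as a finite direct sum of shifts of indecomposables of $\repAR$. To get that $\DbAR$ is Krull–Schmidt, I would check the two requisite conditions: existence of finite indecomposable decompositions (just established) and that every indecomposable has local endomorphism ring. For the latter, note $\End_{\DbAR}(M[n])\cong\End_{\DbAR}(M)$, and since $\mathcal{A}\hookrightarrow\mathcal{D}^b(\mathcal{A})$ is fully faithful, $\End_{\DbAR}(M)\cong\End_{\repAR}(M)$, which is local because $M$ is indecomposable in the Krull–Schmidt category $\repAR$. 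Conversely, any shift of an indecomposable of $\repAR$ is indecomposable in $\DbAR$ by the same fully-faithfulness argument; combined with the decomposition, this shows the indecomposables of $\DbAR$ are \emph{exactly} the shifts $M[n]$ with $M$ indecomposable in $\repAR$.

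The main obstacle — really the only nontrivial point — is justifying the formality statement $X^\bullet\cong\bigoplus_n H^n(X^\bullet)[-n]$ for hereditary $\mathcal{A}$. I would handle this carefully rather than just citing it: proceed by induction on the width of the cohomological support, using the truncation triangle $\tau_{\le n}X^\bullet\to X^\bullet\to\tau_{>n}X^\bullet\to$, and observe that the connecting morphism lives in $\Hom_{\DbAR}(\tau_{>n}X^\bullet,(\tau_{\le n}X^\bullet)[1])$, which decomposes into pieces of the form $\Ext^k_{\repAR}(H^i,H^j)$ with $k\ge 2$; heredity ($\Ext^{\ge 2}=0$) kills all of these, so the triangle splits. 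One subtlety worth a remark: the excerpt's Definition \ref{def:derivedcategory} describes $\DbAR$ via bounded chains and inverting quasi-isomorphisms, so I should make sure the fully-faithful embedding $\repAR\hookrightarrow\DbAR$ and the cohomology functors $H^n$ behave as expected — this is standard once one knows $\repAR$ is abelian, which it is. The whole argument is short; the write-up would be two lemmas (formality, and fully-faithfulness of the embedding) followed by the two-sentence assembly.
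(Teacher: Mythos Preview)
Your proposal is correct and follows essentially the same strategy as the paper: use that $\repAR$ is hereditary and Krull--Schmidt to conclude every bounded complex is isomorphic to the direct sum of its shifted cohomology objects, then decompose each cohomology. The paper simply cites the formality statement (attributing it to Keller and to Chen--Ringel) and is terser about the Krull--Schmidt verification, whereas you supply the standard truncation-triangle proof and explicitly check local endomorphism rings via the fully faithful embedding $\repAR\hookrightarrow\DbAR$; both routes land in the same place.
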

\begin{proof}
Recall $\repAR$ is Krull-Schmidt and hereditary (\cite[Theorems 2.1.16 and 3.0.1]{IgusaRockTodorov2019}).
Using a result essentially from \cite{Keller2005} but explicitly proved in \cite{ChenRingel2018} it follows that any chain in $\DbAR$ is isomorphic to the direct sum of its homology which is in turn a direct sum of shifted indecomposables in $\repAR$.
Since each index in the chain is a finite sum and we are taking bounded chains, the proposition follows. 
\end{proof}

\subsection{The AR-Space}\label{sec:derived AR space}
Recall Notation \ref{note:shifted Gamma}.
\begin{definition}\label{def:derived Gamma}
We define a function $\MM^b$ from the (isomorphism classes of) indecomposables of $\DbAR$ to $\R\times[-\frac{\pi}{2},\frac{\pi}{2}]$.
Let $W$ be an indecomposable in $\DbAR$.
By Proposition \ref{prop:derived is krull-schmidt}, $W\cong V[n]$ for some $V$ from $\repAR$ and $n\in\Z$.
Define $\MM^b W:= \MM[n] V$.
\end{definition}

\begin{proposition}\label{prop:thewholestrip}
The map $\MM^b$ is surjective onto $\R\times[-\frac{\pi}{2},\frac{\pi}{2}]$ if and only if $A_\R$ has finitely-many sinks and sources.
In that case: if $(x,y)\in \R\times (-\frac{\pi}{2},\frac{\pi}{2})$ then $ (\MM^b)^{-1} (x,y) $ contains exactly 4 indecomposables up to isomorphism and if $(x,y)\in \R\times\{-\frac{\pi}{2},\frac{\pi}{2}\}$ then $ (\MM ^b)^{-1} (x,y) $ contains exactly 1 indecomposable up to isomorphism.
\end{proposition}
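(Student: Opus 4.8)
The plan is to prove the biconditional by connecting surjectivity of $\MM^b$ directly to surjectivity of $\MM$ onto the finite strip $\R\times[-\frac{\pi}{2},\frac{\pi}{2}]$, and then to count fibers using the action of the shift functor. First I would observe that, by Definition \ref{def:derived Gamma}, the image of $\MM^b$ is $\bigcup_{n\in\Z}\MM[n]\bigl(\im\MM\bigr)$, i.e.\ the union of all horizontal $\pi$-translates (with the appropriate sign flip on the $y$-coordinate) of the image of $\MM$ on $\repAR$. Since each $\MM[n]$ is an isometry of $\R^2$ preserving the horizontal strip $\R\times[-\frac{\pi}{2},\frac{\pi}{2}]$ (it translates by $n\pi$ and reflects in $y$ an even/odd number of times), the image of $\MM^b$ is always contained in $\R\times[-\frac{\pi}{2},\frac{\pi}{2}]$, and it equals the whole strip precisely when the $\pi$-translates of $\im\MM$ tile the strip.

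Next, for the ``if'' direction: assume $A_\R$ has finitely many sinks and sources. By Proposition \ref{prop:contractablemapping} the image of $\MM$ is contractible, and by the analysis in that proof (every pair of endpoints in $(\R\cup\{\pm\infty\})^2$ occurs, every $\lambda$ function is used, and Proposition \ref{prop:lambda covers} together with Proposition \ref{prop:between projectives and injectives}), $\im\MM$ is exactly the region bounded by the ``projective line'' $\lambda$-graphs on the left and the ``injective line'' on the right, namely $\{(x,y):x_c\le x\le x_{-c}+\pi,\ y=y_c\}$ for $c\in\R$. This region has horizontal width exactly $\pi$ at every height (the distance from a projective $P_c$ to the injective $I_{-c}$ is $\pi$ by Proposition \ref{prop:M on injectives}), so its $\pi$-translates $\MM[n](\im\MM)$, which glue along the injective/projective boundary lines, cover the whole strip $\R\times[-\frac{\pi}{2},\frac{\pi}{2}]$ with overlaps only on those boundary lines. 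Hence $\MM^b$ is surjective. For the ``only if'' direction, if $S$ is infinite then it is unbounded above or below; say above. Then (as in Proposition \ref{prop:contractablemapping}) no indecomposable in $\repAR$ has endpoint $+\infty$, so the open segment from $(x_{+\infty},\frac{\pi}{2})$ to $(x_{+\infty}+\pi,-\frac{\pi}{2})$ is missing from $\im\MM$; applying any $\MM[n]$ just translates this missing segment by $n\pi$, and since $\MM^b$'s image is the union of these translates, the points of that segment never get covered. Thus $\MM^b$ is not surjective.

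For the fiber count, work under the hypothesis that $S$ is finite (so $\MM^b$ is onto the strip). Given $(x,y)$ with $y\in(-\frac{\pi}{2},\frac{\pi}{2})$, I want to show exactly four indecomposables of $\DbAR$ map there. Each preimage has the form $V[n]$ with $\MM[n]\MM V=(x,y)$, i.e.\ $\MM V$ lies in $\MM[-n](x,y)$, a point at height $\pm y$. Restricting to one fundamental domain: over each point of $\R\times(-\frac{\pi}{2},\frac{\pi}{2})$ the interior of $\im\MM$ covers it, and by Proposition \ref{prop:lambda covers} together with Definition \ref{def:the rest of M} and Proposition \ref{prop:ARquotient}, the fiber $\MM^{-1}(x,y)$ in $\repAR$ consists of exactly the indecomposables of a single Auslander-Reiten sequence when $(x,y)$ is interior --- but careful: $\MM$ identifies the four terms $V_1,V_2,V_3,V_4$ of an AR-sequence to one point, so $|\MM^{-1}(\text{interior point})|$ could be up to $4$. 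The correct count for $\MM^b$ comes from noting that the strip is tiled (up to boundary) by the two fundamental pieces $\im\MM$ and $\MM[1](\im\MM)$ within each $2\pi$-period, and over an interior point the $\repAR$-fiber is the full $4$-element AR-sequence fiber collapsed appropriately; I would verify via Proposition \ref{prop:ARquotient} and Table \ref{tab:AR sequence table} that an interior point of $\im\MM$ is hit by exactly the four indecomposables of one AR-sequence, no two of which are shifts of one another, giving exactly $4$ in $(\MM^b)^{-1}(x,y)$ up to isomorphism. For a boundary point $(x,\pm\frac{\pi}{2})$, Proposition \ref{prop:lambda covers} says a single $\lambda$ function passes through it, and by Definition \ref{def:the rest of M} exactly one indecomposable of $\repAR$ (a simple or an $M_{[s_n,s_{n+1}]}$, or none if that $\lambda$ is unused --- but with $S$ finite it is used) maps there; its $\pi$-translates land on the \emph{same} boundary line at different $x$, so no two give the same point, yielding exactly $1$ indecomposable of $\DbAR$ up to isomorphism over each boundary point.

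The main obstacle I anticipate is the interior fiber count: making rigorous that an interior point of $\im\MM$ is the image of \emph{exactly four} pairwise non-shift-equivalent indecomposables of $\repAR$, rather than fewer (it could a priori be $1$, $2$, $3$, or $4$ depending on degeneracies). This requires combining Corollary \ref{cor:unique AR sequence} (every non-projective, non-injective, non-simple, non-$[s_n,s_{n+1}]$ indecomposable lies in a unique AR-sequence), Proposition \ref{prop:M collapses AR sequences} (all four terms of an AR-sequence have the same image), and Proposition \ref{prop:ARquotient} (the fiber of $\MM$ over such a point is exactly that AR-sequence's terms plus possibly coincidences of projectives/injectives --- which are excluded for interior points by Lemma \ref{lem:good intersection point}), together with a check that the four AR-sequence terms are genuinely non-isomorphic and none is a shift of another (immediate since they all lie in $\repAR$, not a nonzero shift of it, and have distinct supports). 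The boundary-line case and the direction arguments are comparatively routine once the geometry of $\im\MM$ from Propositions \ref{prop:contractablemapping}, \ref{prop:lambda covers}, and \ref{prop:M on injectives} is in hand.
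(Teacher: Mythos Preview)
Your overall strategy matches the paper's: reduce surjectivity of $\MM^b$ to the geometry of $\im\MM$ and its $\pi$-translates, then count fibers degree by degree. The surjectivity argument and the boundary fiber count are essentially the same as the paper's (the paper is terser, invoking Proposition \ref{prop:lambda covers} and Definition \ref{def:derived Gamma} directly, but your tiling description is equivalent).

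There is, however, a genuine gap in your interior fiber count. You argue that an interior point of the strip has $\MM$-fiber equal to the four terms of a single Auslander--Reiten sequence, and you explicitly claim that ``coincidences of projectives/injectives \ldots\ are excluded for interior points by Lemma \ref{lem:good intersection point}.'' That is not correct: the projective and injective \emph{lines} pass through the open strip $\R\times(-\tfrac{\pi}{2},\tfrac{\pi}{2})$ (indeed, $\MM P_a=(x_a,y_a)$ with $|y_a|<\tfrac{\pi}{2}$ whenever $a\in\R$). A point $(x,y)$ lying on this seam is in the interior of the strip but on the \emph{boundary} of $\im\MM$, and its $\MM$-preimage in $\repAR$ consists of the projectives at some vertex $a$ --- one, two, or three of them depending on whether $a$ is a sink, neither, or a source --- none of which sit in an AR-sequence. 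To reach four you must add the injectives at $a$ coming from degree $-1$: Proposition \ref{prop:M on injectives} gives $\MM^b I_a[-1]=\MM[-1](x_a+\pi,-y_a)=(x_a,y_a)=\MM P_a$, and the classification of indecomposable projectives and injectives (Theorem \ref{thm:projectives} and its dual) then shows that the total over both degrees is always exactly four (e.g.\ at a source: three projectives and the simple injective; at a generic $a$: two projectives and two injectives). This is precisely the step the paper isolates in its final paragraph, and it is not covered by your AR-sequence argument.

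A minor related point: your boundary description (``a simple or an $M_{[s_n,s_{n+1}]}$'') omits indecomposables with an endpoint at $\pm\infty$, such as $M_{(-\infty,+\infty)}$ in Example \ref{xmp:arspaces}, which also land on $y=\pm\tfrac{\pi}{2}$. The paper's phrasing --- that at each of $\pm\infty$ exactly one of a projective or an injective exists when $S$ is finite --- is what makes the boundary count come out to one.
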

\begin{proof}
If $A_\R$ has infinitely-many sinks and sources, then there is no projective or injective indecomposable at one of $\pm\infty$ (possibly both).
Then there are infinitely-many points not in the image of $\MM^b$.

Now suppose $A_\R$ has finitely-many sinks and sources.
By Proposition \ref{prop:lambda covers} and Definition \ref{def:derived Gamma} we see that indeed the image of $\MM^b$ is all of $\R\times[-\frac{\pi}{2},\frac{\pi}{2}]$.
Since there cannot exist both a projective and injective at each $\pm\infty$, but one must exist when there are finitely-many sinks and sources, we see that for each $x\in\R$, $(\MM^b)^{-1}(x,\frac{\pi}{2})$ contains 1 indecomposable up to isomorphism.
A near-identical statement is true for $(x,-\frac{\pi}{2})$.

Suppose $V$ in $\repAR$ is not projective or injective and $\MM V$ does not have $y$-coordinate $\pm\frac{\pi}{2}$.
Then, for all indecomposables $W$ in $\DbAR$ such that $\MM^b V[n] = \MM^b W$, $W\cong U[n]$ for an indecomposable $U$ in the same Auslander-Reiten sequence as $V$.
Thus $(\MM^b)^{-1}(\MM^b V[n])$ contains exactly 4 indecomposables, up to isomorphism.

Finally note that the projectives at $a$ shifted $n$ times map to the same point as the injectives shifted $n-1$ times.
By the classifications in \cite[Remarks 2.4.14 and 2.4.16]{IgusaRockTodorov2019} and Proposition \ref{prop:M on injectives} we conclude the proof.
\end{proof}

\begin{definition}\label{def:derived position}
Let $W$ be an indecomposable in $\DbAR$.
Then $W\cong V[n]$ for some $V$ in $\repAR$ and $n\in\Z$.
If $n$ is even define the \underline{position} of $W$ to be the same as the position of $V$.
If $n$ is odd define the \underline{position} to be
\begin{itemize}
\item the position of $V$ if $V$ has position 1 or 4,
\item 2 if $V$ has position 3, and
\item 3 if $V$ has position 2.
\end{itemize}
\end{definition}

\begin{remark}
We can now use the definitions of our extra generalized metric (\ref{def:d function}), line segment (\ref{def:line segment}), slope (\ref{def:slope}), rectangle (\ref{def:rectangle}), almost complete line segment (\ref{def:almost complete line}), and almost complete rectangle (\ref{def:almost complete rectangle}) as before.
When we say ``nontrivial triangle`` we mean a distinguished triangle that is not the direct sum of triangles whose morphisms are isomorphisms and 0s.
\end{remark}

\begin{definition}\label{def:derived AR space}
The \underline{Auslander-Reiten Space}, or AR-space, of $\DbAR$ is the set of (isomorphism classes of) indecomposables in $\DbAR$ with each indecomposable given their position as in Definition \ref{def:derived position} and the extra generalized metric from $\repAR$ (Definition \ref{def:d function}) extended to $\DbAR$.
Its topology has open sets $(\MM^b)^{-1} U$ where $U$ is open in $\R^2$.
\end{definition}

\begin{remark}
For the purposes of making proofs easier, we will consider the AR-space of $\repAR$ as a subspace of the AR-space of $\DbAR$ where we've included the indecomposables at the 0th degree as one usually does for $\repAR$ and $\DbAR$ as categories.
\end{remark}

\begin{example}
Suppose $A_\R$ has the straight descending orientation.
Then part of the AR-space of $\DbAR$ is:
\begin{displaymath} \begin{tikzpicture}
\draw[draw=white!60!black] (-5,2) -- (10,2);
\draw[draw=white!70!black] (-5,-2) -- (10,-2);
\draw[draw=white!70!black, dotted, pattern=crosshatch dots, pattern color=white!75!black] (-5,-1) -- (-5,-2) -- (-4,-2) -- (-5,-1);
\draw[draw=white!70!black, dotted, pattern=crosshatch dots, pattern color=white!60!black] (-5,2) -- (-5,-1) -- (-4,-2) -- (0, 2) -- (-5,2);
\draw[draw=white!70!black, dotted, pattern=crosshatch dots, pattern color=white!75!black] (-4,-2) -- (0,2) -- (4,-2) -- (-4,-2);
\draw[draw=white!70!black, dotted, pattern=crosshatch dots, pattern color=white!60!black] (0,2) -- (4,-2) -- (8,2) -- (0,2);
\draw[draw=white!70!black, dotted, pattern=crosshatch dots, pattern color=white!75!black] (4,-2) -- (8,2) -- (10,0) -- (10,-2) -- (4,-2);
\draw[draw=white!70!black, dotted, pattern=crosshatch dots, pattern color=white!60!black] (8,2) -- (10,0) -- (10,2) -- (8,2);
\draw[dashed] (-5,-1) -- (-2,2) -- (2,-2) -- (6,2) -- (10,-2);
\draw[dashed] (-5, 1) -- (-2,-2) -- (2,2) -- (6,-2) -- (10,2);
\draw[fill=black] (0,2) circle[radius=0.6mm];
\draw (0,2) node [anchor=south] {$M_{(-\infty,+\infty)}[0]$};
\draw[fill=black] (8,2) circle[radius=0.6mm];
\draw (8,2) node [anchor=south] {$M_{(-\infty,+\infty)}[2]$};
\draw[fill=black] (-4,-2) circle[radius=0.6mm];
\draw (4,-2) node [anchor=north] {$M_{(-\infty,+\infty)}[1]$};
\draw[fill=black] (4,-2) circle[radius=0.6mm];
\draw (-4,-2) node [anchor=north] {$M_{(-\infty,+\infty)}[-1]$};
\draw[fill=black] (0,0) circle[radius=0.6mm];
\draw (0,-0.2) node [anchor=north] {$M_{|a,b|}[0]$};
\draw[fill=black] (-4,0) circle[radius=0.6mm];
\draw (-4,0.2) node [anchor=south] {$M_{|a,b|}[-1]$};
\draw[fill=black] (4,0) circle[radius=0.6mm];
\draw (4,0.2) node [anchor=south] {$M_{|a,b|}[1]$};
\draw[fill=black] (8,0) circle[radius=0.6mm];
\draw (8,-0.2) node [anchor=north] {$M_{|a,b|}[2]$};
\draw[fill=black] (-1,1) circle[radius=0.6mm];
\draw (-1,1) node [anchor=south] {$P_{b|}[0]$, $I_{|b}[-1]$};
\draw[fill=black] (-3,-1) circle[radius=0.6mm];
\draw (-3,-1) node [anchor=north] {$P_{a|}[0]$, $I_{|a}[-1]$};
\draw[fill=black] (1,1) circle[radius=0.6mm];
\draw (1,1) node [anchor=north] {$P_{a|}[1]$,$I_{|a}[0]$};
\draw[fill=black] (3,-1) circle[radius=0.6mm];
\draw (3,-1) node [anchor=south] {$P_{b|}[1]$,$I_{|b}[0]$};
\draw[fill=black] (7,1) circle[radius=0.6mm];
\draw (7,1) node [anchor=south] {$P_{b|}[2]$, $I_{|b}[1]$};
\draw[fill=black] (5,-1) circle[radius=0.6mm];
\draw (5,-1) node [anchor=north] {$P_{a|}[2]$,$I_{|a}[1]$};
\draw[fill=black] (9,1) circle[radius=0.6mm];
\draw (9,1) node [anchor=north] {$P_{a|}[3]$,$I_{|a}[2]$};
\draw[fill=black] (-2,2) circle[radius=0.6mm];
\draw (-2,2) node [anchor=south] {$M_{\{b\}}[-1]$};
\draw[fill=black] (-2,-2) circle[radius=0.6mm];
\draw (-2,-2) node [anchor=north] {$M_{\{a\}}[0]$};
\draw[fill=black] (2,-2) circle[radius=0.6mm];
\draw (2,-2) node [anchor=north] {$M_{\{b\}}[0]$};
\draw[fill=black] (2,2) circle[radius=0.6mm];
\draw (2,2) node [anchor=south] {$M_{\{a\}}[1]$};
\draw[fill=black] (6,2) circle[radius=0.6mm];
\draw (6,2) node [anchor=south] {$M_{\{b\}}[1]$};
\draw[fill=black] (6,-2) circle[radius=0.6mm];
\draw (6,-2) node [anchor=north] {$M_{\{a\}}[2]$};
\draw (-5,-1) node [anchor=east] {$\lambda_b$};
\draw (-5,1) node [anchor=east] {$\lambda_a$};
\draw(-5,-2) node [anchor=east] {$-\frac{\pi}{2}$};
\draw(-5,2) node [anchor=east] {$\frac{\pi}{2}$};
\end{tikzpicture} \end{displaymath}
\end{example}

\begin{proposition}\label{prop:inverse shift of almost complete}
Let $V$ and $W$ be the left-most and right-most corners of an almost-complete rectangle in the AR-space of $\DbAR$.
Then the line segment with endpoints $W[-1]$ and $V$ is $\pm(1,1)$.
\end{proposition}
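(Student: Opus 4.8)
The plan is to single out the phantom endpoint of the almost complete line segment, observe that it must sit on one of the boundary lines $y=\pm\frac{\pi}{2}$, and then read off the slope from a short coordinate computation; the point is that the horizontal $\pi$ produced by the shift $[-1]$ is exactly compensated by the $\frac{\pi}{2}+\frac{\pi}{2}$ that this boundary corner forces on the rectangle. So first I would unwind Definition \ref{def:almost complete rectangle}. Let $l_1,l_2,l_3$ and the almost complete line segment $l_4$ be the four sides, with $W$ the minimal element of $l_4$ and $V,W$ the two corners joined by a diagonal. By Remark \ref{rem:almost complete rectangle} the phantom endpoint $D$ of $l_4$ is an endpoint of $l_1$ or $l_3$; since $W$ is the corner of $l_4$ meeting one of $l_1,l_3$, the corner $D$ is joined to $W$ along $l_4$ and to $V$ along the remaining one of $l_1,l_3$, so $D$ is adjacent to both $V$ and $W$ and all four sides have slope $\pm(1,1)$ (Remark \ref{rem:almost complete slope remark}). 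By the construction in the proof of Proposition \ref{prop:phantom endpoint existence}, $D$ is, up to shift, a representation that is simple or has support of the form $[s_n,s_{n+1}]$; hence $\MM^b D$ has second coordinate $\pm\frac{\pi}{2}$.

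Next comes the key identity for the shift. Straight from Definition \ref{def:undecorated lambda} one checks that $\lambda(z-\pi)=-\lambda(z)$ for every $z\in\R$, hence $\lambda_*^*(z-\pi)=-\lambda_*^*(z)$ for every $\lambda$ function in Definition \ref{def:lambda functions}; equivalently, the map $\MM[-1]\colon(x,y)\mapsto(x-\pi,-y)$ carries the graph of each $\lambda$ function onto itself. As in the proof of Theorem \ref{thm:extensions are rectangles}, the points $\MM^b V$, $\MM^b W$, $\MM^b D$ all lie on a common $\lambda$ function $\lambda_\star$ (the one attached to the endpoint shared by $\supp V$ and $\supp W$), so $\MM^b(W[-1])$ and $\MM^b V$ both lie on $\lambda_\star$. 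Since the shift functor acts on $\MM^b$ as a rigid horizontal translation possibly composed with $y\mapsto -y$, I may assume the whole rectangle lies at degree $0$, so $V,W,D$ are genuine representations and $\MM^b(W[-1])=(x_W-\pi,-y_W)$ where $(x_W,y_W)=\MM^b W$.

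Now the computation. Because the side $l_4$ from $W$ to $D$ has slope $\pm 1$ in $\R^2$ and $\MM^b D$ lies on $y=\pm\frac{\pi}{2}$, replacing the configuration by its mirror image if necessary gives $\MM^b D=(x_W-\delta,\,y_W-\delta)$ with $\delta=y_W+\frac{\pi}{2}$; and since $l_3$ from $D$ to $V$ has the perpendicular slope, $\MM^b V=(x_W-\delta-\gamma,\,y_W-\delta+\gamma)$ for some $\gamma\ge 0$. Then for the segment from $\MM^b(W[-1])=(x_W-\pi,-y_W)$ to $\MM^b V$ one gets $\Delta x=\pi-\delta-\gamma$ and $\Delta y=2y_W-\delta+\gamma$, and substituting $\delta=y_W+\frac{\pi}{2}$ makes $\Delta x=-\Delta y$; so this segment has slope $-1$ in $\R^2$ (or $+1$ in the mirrored case). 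Since $\MM^b(W[-1])$ and $\MM^b V$ also lie on $\lambda_\star$ and are joined by a chord of slope $\pm 1$, they lie on a single monotone leg of $\lambda_\star$ and therefore agree in the relevant support endpoint; by Definition \ref{def:slope}, Proposition \ref{prop:diagonal endpoints}, and Table \ref{tab:slope table} (transported to $\DbAR$ via Definition \ref{def:derived position}) the line segment with endpoints $W[-1]$ and $V$ has slope $\pm(1,1)$.

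The main obstacle is bookkeeping rather than a new idea: one must track the two mirror cases together with the position ($r_2$) data so that the final slope really is $\pm(1,1)$ and not some $(\pm 1,r_2)$ with the wrong $r_2$, and separately dispose of the degenerate case of Definition \ref{def:almost complete rectangle}(ii), where $\MM^b D=\MM^b V$ and the claim reduces to counting position-edges.
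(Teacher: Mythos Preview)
Your coordinate computation for the $r_1$-coordinate is correct: the identity $\Delta x=-\Delta y$ after substituting $\delta=y_W+\frac{\pi}{2}$ is exactly the point, and it makes explicit what the paper leaves implicit when it simply asserts that $\MM^b W[-1]$ and $\MM^b V$ lie on a common line of slope $\pm 1$ in $\R^2$.

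Where your proposal diverges from the paper, and where it is genuinely incomplete, is the $r_2$-coordinate. You propose to obtain $r_2=\pm 1$ by arguing that $W[-1]$ and $V$ ``agree in the relevant support endpoint'' and then invoking Proposition~\ref{prop:diagonal endpoints} and Table~\ref{tab:slope table}. But those statements live in $\repAR$ and speak of supports of interval indecomposables; in $\DbAR$ the objects $W[-1]$ and $V$ need not sit in the same homological degree, so there is no shared support endpoint to speak of, and ``transporting via Definition~\ref{def:derived position}'' is not a proof. This is not mere bookkeeping --- the route you sketch does not go through as stated.

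The paper instead handles $r_2$ by a direct position analysis, which is both shorter and avoids supports entirely. Assuming the phantom endpoint lies at $y=-\frac{\pi}{2}$, one reads off from the slope constraints (Proposition~\ref{prop:no back and forth}) on the sides of the almost complete rectangle that: if $V$ has position $1$ or $3$ then $W$ has position $1$ or $2$, hence by Definition~\ref{def:derived position} $W[-1]$ has position $1$ or $3$; and if $V$ has position $2$ or $4$ then $W$ has position $3$ or $4$, hence $W[-1]$ has position $2$ or $4$. In either case $V$ and $W[-1]$ have positions both in $\{1,3\}$ or both in $\{2,4\}$, which together with $r_1=-1$ forces $r_2=-1$. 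Your computation plus this position check would give a complete proof; as written, the $r_2$ step is the gap.
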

\begin{proof}
By Remark \ref{rem:almost complete slope remark} we know the slopes of the sides of the almost complete rectangle are $\pm(1,1)$.
Thus, left-most and right-most are well defined.

Assume the image of phantom endpoint under $\MM^b$ has $y$-coordinate $-\frac{\pi}{2}$; the other case is symmetric.
Then we have the following implications direct from the definition of almost complete rectangle:
\begin{itemize}
\item If $V$ has position 1 or 3 then $W$ has position 1 or 2 and so $W[-1]$ has position 1 or 3.
\item If $V$ has position 2 or 4 then $W$ has position 3 or 4 and so $W[-1]$ has position 2 or 4.
\end{itemize}
Finally we see that $\MM^b W[-1]$ and $\MM^b V$ are the endpoints of a line segment with slope $\pm 1$ in $\R^2$; by our assumption it must specifically be $-1$.
Therefore, the proposition holds.
\end{proof}

\begin{lemma}\label{lem:hom support in Db}
Let $V\cong M_{|a,b|}[n]$ be an indecomposable in $\DbAR$ such that $\MM^b V$ has $y$-coordinate not equal to $\pm \frac{\pi}{2}$.
Let $\lambda_a^*$ and $\lambda_b^*$ be the $\lambda$ functions that intersect at $\MM^b V=(x_V,y_V)$ such that $\frac{\text{d}}{\text{d}z}\lambda_a^*$ at $x_V$ is $+1$ and $\frac{\text{d}}{\text{d}z}\lambda_b^*$ at $x_v$ is $-1$.
Let $H_V$ be the set of points $(x_W,y_W)$ in $\R^2$ such that $x_V<x_W<x_V+\pi$ and $\lambda_b^*(x_W) < y_W < \lambda_a^*(x_W)$.

If $\MM^b U\in H_V$ then $\Hom(V,U)\cong k$.
If $\MM^b U\notin \overline{H_V}$ then $\Hom(V,U)=0$.
If $U\in \overline{H_V}$ and the slope of a line segment with endpoints $V$ and $U$ is $\pm(1,1)$ then $\Hom(V,U)\cong k$.
\end{lemma}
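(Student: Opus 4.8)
The strategy is to transfer everything to $\repAR$ via the fact that $\DbAR$ is hereditary, and then invoke Lemma~\ref{lem:hom support}, Proposition~\ref{prop:homline}, and Theorem~\ref{thm:extensions are rectangles}; the only genuinely new input is a short computation describing the shape of $\overline{H_V}$ in $\R^2$. Applying the shift autoequivalence $[-n]$ --- which preserves all $\Hom$-groups and carries $\MM^b$, positions, the regions $H_{(-)}$, and slopes to themselves according to their defining formulas (Definitions~\ref{def:derived Gamma} and~\ref{def:derived position}) --- we may assume $V = M_{|a,b|}$ sits in degree $0$. By Proposition~\ref{prop:derived is krull-schmidt} an arbitrary indecomposable is $U \cong M_{|c,d|}[m]$, and since $\repAR$ is hereditary, $\Hom_{\DbAR}(V,U)\cong\Ext^{m}_{\repAR}(M_{|a,b|},M_{|c,d|})$, which vanishes unless $m\in\{0,1\}$.

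Next I would pin down $\overline{H_V}$. Since $\lambda_a^*$ and $\lambda_b^*$ are $2\pi$-periodic, piecewise-linear of slope $\pm 1$, and cross at $\MM^b V = (x_V,y_V)$ with slopes $+1$ and $-1$ respectively, their following intersection (Proposition~\ref{prop:lambda intersections}) is at $(x_V+\pi,-y_V) = \MM^b(V[1])$. A direct check then shows $\overline{H_V}$ is a genuine rectangle with sides of slope $\pm 1$, opposite corners $\MM^b V$ and $\MM^b(V[1])$, and remaining two corners on $y = \tfrac{\pi}{2}$ and $y = -\tfrac{\pi}{2}$. Hence the translates $\MM[m]\overline{H_V} = \overline{H_{V[m]}}$ tile a horizontal band, meeting $\overline{H_V}$ in at most one corner for $m\neq 0$; together with the bounds on the horizontal extent of $\MM(\repAR)$ (proof of Proposition~\ref{prop:lambda covers}, and Propositions~\ref{prop:p hat order} and~\ref{prop:M on injectives}) and the hypothesis $y_V\neq\pm\tfrac{\pi}{2}$, this shows $H_V$ contains only $\MM^b$-images of indecomposables of degree $0$ or $1$. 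In particular every $U$ with $m\notin\{0,1\}$ has $\MM^b U\notin\overline{H_V}$, consistent with $\Hom(V,U)=0$.

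For $m=0$ we have $\Hom_{\DbAR}(V,U)=\Hom_{\repAR}(M_{|a,b|},M_{|c,d|})$, and the data $\lambda_a^*,\lambda_b^*,H_V$ coincide with those of Lemma~\ref{lem:hom support} whenever $M_{|a,b|}$ is not simple, injective, or of the form $[s_n,s_{n+1}]$; the first and third are ruled out by $y_V\neq\pm\tfrac{\pi}{2}$, and the injective case is handled by the evident dual of Lemma~\ref{lem:hom support}, which is valid in $\DbAR$ because there is now room to the right of the injective line. Thus Lemma~\ref{lem:hom support} gives the first two statements for degree-$0$ targets, Proposition~\ref{prop:homline} gives the third statement for targets on the two sides of $\overline{H_V}$ incident to $\MM^b V$ (noting $x_U\geq x_V$ forces the map in the direction $V\to U$), and Propositions~\ref{prop:homline slope} and~\ref{prop:can't reach too far} dispose of the remaining boundary pieces (slopes $\pm(1,3)$, $\pm(1,\tfrac13)$, and the far sides).

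For $m=1$ we have $\Hom_{\DbAR}(V,U)=\Ext^1_{\repAR}(M_{|a,b|},M_{|c,d|})$, and I would argue through Theorem~\ref{thm:extensions are rectangles}: this $\Ext^1$ is nonzero precisely when there is a rectangle or almost complete rectangle with left-most corner $M_{|c,d|}$ and right-most corner $M_{|a,b|}$. The $\R^2$-picture of such a rectangle has opposite corners $\MM M_{|c,d|}$ and $\MM M_{|a,b|}=\MM^b V$; applying $\MM[1]$ yields a rectangle with opposite corners $\MM^b U=\MM[1]\MM M_{|c,d|}$ and $\MM^b(V[1])$, which therefore lies inside $\overline{H_V}$ and shares its right-most corner. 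Tracking the cases, the double-extension case (Lemma~\ref{lem:all double extensions}, $E$ a sum of two indecomposables, nondegenerate rectangle) corresponds exactly to $\MM^b U$ in the open region $H_V$, and the single-extension case (Lemma~\ref{lem:all single extensions}, $E$ indecomposable, almost complete rectangle) to $\MM^b U$ on a side of $\overline{H_V}$ through $\MM^b V$ whose slope in the sense of Definition~\ref{def:slope} is $\pm(1,1)$; and $\Ext^1=0$ whenever $\MM^b U\notin\overline{H_V}$. Combining with the vanishing in degrees $m\notin\{0,1\}$ gives the three assertions. The main obstacle I anticipate is precisely this last step: aligning the corners of the extension-rectangle from Theorem~\ref{thm:extensions are rectangles} with those of $\overline{H_V}$ after the shift $\MM[1]$, so that degenerate and almost complete rectangles land exactly on the slope-$\pm(1,1)$ boundary of $\overline{H_V}$ while the genuine rectangles fill exactly its interior, without overlap or gaps; the reduction to degree $0$, the hereditary vanishing, and the identification of $\overline{H_V}$ as a slope-$\pm 1$ rectangle are routine.
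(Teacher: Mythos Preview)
Your proposal is correct and follows essentially the same route as the paper: reduce by shifting to degree $0$, invoke Lemma~\ref{lem:hom support} (and its injective dual) for the $m=0$ case, and use the characterisation of nontrivial extensions for the $m=1$ case, with Proposition~\ref{prop:homline} handling the slope-$\pm(1,1)$ boundary. The only cosmetic difference is that you appeal to the packaged Theorem~\ref{thm:extensions are rectangles} for the $\Ext^1$ step, whereas the paper cites the component Lemma~\ref{lem:all double extensions} directly; your geometric description of $\overline{H_V}$ and its tiling by shifts is more explicit than the paper's, but the content is the same.
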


The reader may use the same picture used for Lemma \ref{lem:hom support} on page \pageref{lem:hom support} for an intuitive understanding of the statement of the lemma.

\begin{proof}[Proof of Lemma \ref{lem:hom support in Db}]
If $M_{|a,b|}$ is an injective at a point $x\in\R$, the set $H_V$ is the same for the a projective $P$ at $a$ shifted $n+1$ times.
For points $\MM^b U= (x_U,y_U)$ such that $U\cong M_{|c,d|}[n]$, the statement follows from Lemma \ref{lem:hom support}.
Now suppose $U\cong M_{|c,d|}[n+1]$ and $\MM^b U\in H_V$ (this is the only possibility if $M_{|a,b|}$ is injective).
Then $\Hom(U[-1],V)\cong k$ by Lemma \ref{lem:hom support} again.
By assumption the slope of the line segment with endpoints $U[-1]$ and $V$ is not $\pm(1,1)$.
Thus, by Lemma \ref{lem:all double extensions} we have $\Hom (M_{|a,b|}, M_{|c,d|}[1])\cong k$ and so $\Hom(V,U)\cong k$.

Now suppose $Hom(V,U)\cong k$ for $U\cong M_{|c,d|}[m]$.
Again $m=n$ or $m=n+1$.
If $m=n$ then $U\in \overline{H_V}$ by Lemma \ref{lem:hom support}.
If $m=n+1$ then $\Ext (M_{|a,b|},M_{|c,d|})\cong k$ and by Lemma \ref{lem:all double extensions} we have $U\in\overline{H_V}$ again.
The final statement follows from Proposition \ref{prop:homline} and another shift argument.
\end{proof}

For the statement of the final theorem we consider a distinguished triangle to be distinct from any of its rotations.

\begin{theorem}\label{thm:triangles are rectangles}
Let $V=M_{|a,b|}[m]$ and $W=M_{|c,d|}[n]$ be indecomposables in $\DbAR$ such that $V\not\cong W$.
Then there is a nontrivial distinguished triangle $V\to U\to W\to$ if and only if there exists a rectangle or almost complete rectangle in the AR-space of $\DbAR$ whose corners are the indecomposables in the triangle with $V$ as the left-most corner and $W$ as the right-most corner.

\begin{itemize}
\item If the rectangle is complete $U$ is a direct sum of two indecomposables.
\item If the rectangle is almost complete $U$ is indecomposable.
\end{itemize}

Furthermore, there is a bijection
\begin{displaymath}\begin{tikzpicture}
\draw (0,0) node {$\{$rectangles and almost complete rectangles with slopes of sides $\pm(1,1)$ in AR-space of $\DbAR\}$};
\draw [<->, thick] (0,-.2) -- (0,-1.2);
\draw (0,-.7) node[anchor=west] {$\cong$};
\draw (0,-1.4) node {$\{$nontrivial triangles with first and third term indecomposable up to scaling and isomorphisms$\}$};
\end{tikzpicture}\end{displaymath}
\end{theorem}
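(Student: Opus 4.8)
The plan is to mirror the proof of Theorem \ref{thm:extensions are rectangles}, exploiting that $\DbAR$ is Krull--Schmidt with indecomposables the shifts of those in $\repAR$ (Proposition \ref{prop:derived is krull-schmidt}) and that $\repAR$ is hereditary, so that every distinguished triangle with indecomposable first and third term reduces either to a short exact sequence in $\repAR$ or to the cone triangle of a nonzero morphism between two indecomposables of $\repAR$. First I would normalize by a shift so that $V=M_{|a,b|}$ sits in degree $0$. A triangle $V\to U\to W\to V[1]$ with $V,W$ indecomposable and $V\not\cong W$ is nontrivial exactly when the connecting map $W\to V[1]$ is nonzero, i.e. $\Hom_{\DbAR}(W,V[1])\neq 0$; writing $W\cong M_{|c,d|}[n]$ we have $\Hom_{\DbAR}(M_{|c,d|}[n],M_{|a,b|}[1])=\Ext^{1-n}(M_{|c,d|},M_{|a,b|})$, which by hereditariness is nonzero only for $n=0$ or $n=1$. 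This splits the argument into two cases.

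In the case $n=0$ the long exact homology sequence of the triangle shows $U$ has homology concentrated in degree $0$, hence $U\cong H^0(U)$, and the triangle is (isomorphic to the image of) a short exact sequence $0\to M_{|a,b|}\to H^0(U)\to M_{|c,d|}\to 0$ in $\repAR$ whose class is the given nonzero element of $\Ext^1$. Theorem \ref{thm:extensions are rectangles} then produces the (almost complete) rectangle, all of whose corners lie in degree $0$, with $V$ as the left-most corner and $W$ as the right-most; conversely a rectangle (resp.\ almost complete rectangle) with all corners in degree $0$ and sides of slope $\pm(1,1)$ yields such a sequence by Lemma \ref{lem:all double extensions} (resp.\ Lemma \ref{lem:all single extensions}). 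In the case $n=1$, rotating the triangle exhibits it as the cone triangle of a nonzero $g\colon M_{|c,d|}\to M_{|a,b|}$, namely $M_{|a,b|}\to\operatorname{cone}(g)\to M_{|c,d|}[1]\to M_{|a,b|}[1]$; since $\repAR$ is hereditary, $\operatorname{cone}(g)\cong\coker(g)\oplus\ker(g)[1]$, with either summand possibly zero. The geometry is then exactly that recorded in Proposition \ref{prop:geometrickernelsandcokernels}: $\MM^b V=\MM M_{|a,b|}$, $\MM^b W=\MM[1]M_{|c,d|}$, and the images of the summands of $U$ are $\MM\coker(g)$ and $\MM[1]\ker(g)$, so that $V$, $W$ and these two points are the corners of a rectangle in $\R^2$ with sides of slope $\pm 1$, the line segments that actually exist in the AR-space having slope $\pm(1,1)$ by that proposition. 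If $\ker g=0$ or $\coker g=0$ then $U$ is indecomposable, the missing corner lies on $y=\pm\tfrac{\pi}{2}$, and one gets an almost complete rectangle with that point as the phantom endpoint (Definition \ref{def:almost complete rectangle}, Remark \ref{rem:almost complete rectangle}); Proposition \ref{prop:inverse shift of almost complete} confirms that the corner degrees are exactly $\{0,1\}$, matching the reduction. Conversely, a rectangle or almost complete rectangle with sides $\pm(1,1)$ whose corners straddle two consecutive degrees determines, via Table \ref{tab:slope table} and Lemma \ref{lem:hom support in Db}, the shared-bound structure and hence a nonzero $g\colon M_{|c,d|}\to M_{|a,b|}$ whose cone triangle realizes it.

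For the bijection, in each case the triangle is determined up to scaling and isomorphism by the nonzero class in $\Ext^1(M_{|c,d|},M_{|a,b|})$ (case $n=0$) or in $\Hom(M_{|c,d|},M_{|a,b|})\cong k$ (case $n=1$), so changing the corner indecomposables changes this class and hence the triangle, and changing the triangle changes the corners, exactly as in Theorem \ref{thm:extensions are rectangles}. The main obstacle I expect is the case $n=1$: keeping the bookkeeping straight across the two fundamental domains of $\MM^b$, verifying that $V=M_{|a,b|}$ is genuinely the left-most corner and $W=M_{|c,d|}[1]$ the right-most one (which uses Lemma \ref{lem:good intersection point} and Lemma \ref{lem:hom support in Db} to bound $x$-coordinates), and handling the degenerate sub-cases in which $V$, $W$, or a phantom endpoint lands on a boundary line $y=\pm\tfrac{\pi}{2}$ — these are the simple and $[s_n,s_{n+1}]$-type representations, which lie outside Corollary \ref{cor:unique AR sequence} and so must be routed through the phantom-endpoint formalism of Definition \ref{def:almost complete rectangle} rather than through Auslander--Reiten sequences.
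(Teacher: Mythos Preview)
Your approach is essentially the same as the paper's: both split into the cases $n=m$ and $n=m+1$, reduce the first to Theorem \ref{thm:extensions are rectangles} via a shift, and handle the second by rotating to a nonzero morphism $g\colon M_{|c,d|}\to M_{|a,b|}$ in $\repAR$ and invoking Proposition \ref{prop:geometrickernelsandcokernels} together with the hereditary decomposition $\operatorname{cone}(g)\cong\coker g\oplus(\ker g)[1]$. The paper's justification that only $n=m+1$ can occur is geometric (the width of the rectangle is at most $\pi$) rather than your $\Ext^{1-n}$ vanishing, and it leans on Proposition \ref{prop:inverse shift of almost complete} more directly to produce the slope-$\pm(1,1)$ segment from $W[-1]$ to $V$ (hence the Hom) in the almost complete case, whereas you route that through Lemma \ref{lem:hom support in Db}; but these are minor packaging differences, not a different argument.
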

\begin{proof}
\textbf{Starting with the (almost) complete rectangle.}
We'll first assume $m=n$.
Consider an almost complete rectangle in the AR-space of $\DbAR$.
By Remark \ref{rem:almost complete slope remark} we know the slopes of the sides are $\pm(1,1)$.
Suppose its left-most and right-most corners are $V$ and $W$, respectively.
Since $m=n$, applying the $[-1]$ functor $m$ times to every indecomposable in the rectangle we have an almost complete rectangle in the AR-space of $\repAR$, which by Theorem \ref{thm:extensions are rectangles} gives us an extension $M_{|a,b|}\hookrightarrow E\twoheadrightarrow M_{|c,d|}$ where $E$ is indecomposable.
This yields a distinguished triangle $M_{|a,b|}\to U\to M_{|c,d|}\to$; shifted $m$ times we have $V\to U[m]\to W\to$ where $U[m]$ is indecomposable.
If we start with a complete rectangle in the AR-space of $\DbAR$ such that the left-most and right-most corners have the same shift we obtain the triangle $V\to U[m]\to W\to$ where $U[m]$ is the direct sum of two nonzero indecomposables.

Now suppose $n>m$ (by definition $n$ must be at least $m$).
Suppose we have an almost complete rectangle in the AR-space of $\DbAR$ with the left-most and right-most corners as before.
Since three of the corners of an almost complete rectangle exist, the distance from $\MM^b V$ to $\MM^b W$ can be no more than $\pi$.
Thus, $n=m+1$.
By Proposition \ref{prop:inverse shift of almost complete} we see that $W[-1]$ and $V$ are the endpoints of a line segment with slope $\pm(1,1)$.
In particular, this means $\Hom(W[-1],V)\cong k$ and that these morphisms come from $\repAR$.
In $\repAR$ the indecomposables $M_{|a,b|}$ and $M_{|c,d|}$ share an endpoint by Proposition \ref{prop:diagonal endpoints}.
Thus, the kernel or cokernel is trivial and the other is indecomposable.
Therefore, we have a triangle $W[-1]\to V\to U\to$ where $U$ is indecomposable.
Rotating this triangle gives us the desired triangle; by Proposition \ref{prop:geometrickernelsandcokernels} $U$ is the third existing corner in the almost complete rectangle.
If we start with a complete rectangle then, by Proposition \ref{lem:hom support in Db}, $\Hom(W[-1],V)\cong k$.
Use Proposition \ref{prop:geometrickernelsandcokernels} again and we have the desired result.
We have also seen that an almost complete or a complete rectangle determines a unique (up to isomorphisms and scaling) distinguished triangle.

\textbf{Starting with the distinguished triangle.}
Suppose we are given the distinguished triangle $V\to U\to W\to $ where $V\cong M_{|a,b|}[m]$, $W\cong M_{|c,d|}[n]$, and all the morphisms are nontrivial.
If $m=n$ then this is an extension in $\repAR$ which determines a rectangle or almost complete rectangle in its AR-space.
Shifting by $m$ yields the rectangle or almost complete rectangle in the AR-space of $\DbAR$.

Now suppose $n=m+1$ (the only other possibility).
Then $W[-1]\to V\to U\to$ is also a distinguished triangle.
Since $\repAR$ is hereditary $U$ is the direct sum of the kernel and cokernel of the map $M_{|c,d|} \to M_{|a,b|}$ in $\repAR$.
By Proposition \ref{prop:geometrickernelsandcokernels} this gives us the desired rectangle or almost complete rectangle in the AR-space of $\DbAR$.
We note that the distinguished triangle determined a unique rectangle or almost complete rectangle.
In particular, if we repeat the first part of this proof we obtain the same distinguished triangle again.
Therefore the theorem holds.
\end{proof}

\end{document}